\begin{document}

\newtheorem{theorem}{Theorem}[section]
\newtheorem{lemma}[theorem]{Lemma}
\newtheorem{corollary}[theorem]{Corollary}
\newtheorem{proposition}[theorem]{Proposition}
\newtheorem{question}[theorem]{Question}
\newtheorem{conjecture}[theorem]{Conjecture}
\newtheorem{case}{Case}
\newtheorem{subcase}{Case}[case]
\newtheorem{subsubcase}{Case}[subcase]
\newtheorem{claim}[theorem]{Claim}
\newtheorem{step}{Step}

\theoremstyle{definition}
\newtheorem{remark}[theorem]{Remark}
\newtheorem{definition}[theorem]{Definition}
\newtheorem{example}[theorem]{Example}

\newcommand\bY{\mathbf{Y}}

\newcommand\E{\mathbb{E}}
\newcommand\Z{\mathbb{Z}}
\newcommand\R{\mathbb{R}}
\newcommand\T{\mathbb{T}}
\newcommand\C{\mathbb{C}}
\newcommand\CC{\mathcal{C}}
\newcommand\N{\mathbb{N}}
\newcommand\G{\mathbf{G}}
\newcommand\A{\mathbb{A}}
\newcommand\HH{\mathbb{H}}
\newcommand\SL{\operatorname{SL}}
\newcommand\Upp{\operatorname{Upp}}
\newcommand\Cay{\operatorname{Cay}}
\newcommand\im{\operatorname{im}}
\newcommand\bes{\operatorname{bes}}
\newcommand\sss{\operatorname{ss}}
\newcommand\GL{\operatorname{GL}}
\newcommand\PGL{\operatorname{PGL}}
\newcommand\SO{\operatorname{SO}}
\newcommand\SU{\operatorname{SU}}
\newcommand\PSL{\operatorname{PSL}}
\newcommand\Rad{\operatorname{Rad}}
\newcommand\Mat{\operatorname{Mat}}
\newcommand\ess{\operatorname{ess}}
\newcommand\rk{\operatorname{rk}}
\newcommand\Alg{\operatorname{Alg}}
\newcommand\Supp{\operatorname{Supp}}
\newcommand\sml{\operatorname{sml}}
\newcommand\lrg{\operatorname{lrg}}
\newcommand\tr{\operatorname{tr}}
\newcommand\Hom{\operatorname{Hom}}
\newcommand\Lie{\operatorname{Lie}}
\newcommand\Inn{\operatorname{Inn}}
\newcommand\Out{\operatorname{Out}}
\newcommand\Aut{\operatorname{Aut}}
\renewcommand\P{\mathbb{P}}
\newcommand\F{\mathbb{F}}
\newcommand\Q{\mathbb{Q}}
\renewcommand\b{{\bf b}}
\def\g{\mathfrak{g}}
\def\h{\mathfrak{h}}
\def\n{\mathfrak{n}}
\def\a{\mathfrak{a}}
\def\p{\mathfrak{p}}
\def\q{\mathfrak{q}}
\def\b{\mathfrak{b}}
\def\r{\mathfrak{r}}

\def\bn{\mathbf{n}}

\newcommand{\kf}[1]{\marginpar{\tiny #1 }}
\newcommand{\red}{\textcolor{red}}
\newcommand{\blue}{\textcolor{blue}}

\title[Joint spectral radius and uniform growth]{On the joint spectral radius for isometries of non-positively curved spaces and uniform growth}
\date{\today}

\author{Emmanuel Breuillard}
\address{University of Cambridge, DPMMS}
\email{breuillard@maths.cam.ac.uk}

\author{Koji Fujiwara}
\address{Department of Mathematics, Kyoto University}
\email{kfujiwara@math.kyoto-u.ac.jp}
\thanks{ }

\begin{abstract} We recast the notion of joint spectral radius in the setting of groups acting by isometries on non-positively curved spaces and give geometric versions of results of Berger-Wang and Bochi valid for  $\delta$-hyperbolic spaces and for symmetric spaces of non-compact type. This method produces nice hyperbolic elements in many classical geometric settings.  Applications to uniform growth are given, in particular a new proof and a generalization of a theorem of Besson-Courtois-Gallot.
\end{abstract}

\maketitle

\section{Introduction}

This paper is concerned with the following general problem. Given a group $G$ generated by a finite set $S$. Suppose that $G$ contains elements with a certain property $\mathcal{P}$. Can  we estimate the shortest length of an element $g \in G$ which has the property $\mathcal{P}$ ?  Here the length of an element $g \in G$ is the smallest length of a word with letters in $S$ and $S^{-1}$, which represents $g$.

 For example the property $\mathcal{P}$ could be ``being of infinite order'', or  ``having an eigenvalue of modulus different from $1$'', say when $G$ is a matrix group. Or else when $G$ is a subgroup of isometries of a non-positively curved space $\mathcal{P}$ could be : ``having positive translation length''. This type of question is ubiquitous in problems dealing with uniform exponential growth. In this paper we give a method to find nice short words in a variety of situations when the group acts on a space with non-positive curvature. As an application we will give several uniform exponential growth results, in particular for groups acting on products of $\delta$-hyperbolic spaces. In \cite{breuillard-fujiwara} we will consider applications to the growth of mapping class groups. 

This method consists in studying the growth of the \emph{joint minimal displacement} of the generating set $S$. To fix ideas say $(X,d)$ is a metric space and $S \subset Isom(X)$ is a finite set of isometries of $X$. Let $G:=\langle S \rangle$ be the group generated by $S$ and $x\in X$ is a point. We define the \emph{joint displacement at $x$} by
$$L(S,x):= \max_{s \in S} d(x,sx)$$
and the \emph{joint minimal displacement} of $S$ by
$$L(S) := \inf_{x \in X} L(S,x).$$ 

This quantity appears in many places in geometric group theory, for example in the construction of harmonic maps as in the work of Gromov-Schoen \cite{gromov-schoen} and Korevaar-Schoen \cite{korevaar-schoen}, or in Kleiner's proof of Gromov's polynomial growth theorem \cite{kleiner}, and, when $X$ is a Hilbert space, in relation to reduced first cohomology and Hilbert compression as in \cite{Cornulier-Tessera-Valette} ; it is ubiquitous in the study of free group automorphisms and it is key to constructing limits of representations, e.g. see \cite{bestvina-survey}, and or in recent work about surface group representations in higher rank Lie groups, e.g. \cite{burger-pozzetti}.

In fact a more common quantity is the $\ell^2$ version of $L(S)$, sometimes called the \emph{energy} of $S$ and defined as the infimum of the averaged squared displacement $\frac{1}{|S|} \sum_{s \in S} d(sx,x)^2$. In this paper however we will exclusively consider $L(S)$, which is the $\ell^{\infty}$ version of the energy. One advantage of working with $L(S)$ is that it behaves well under set theoretic powers $S^n$ of $S$ (while the energy is more suitable to random walks as expounded for example in Gromov's paper \cite{gromov-random}).

It is natural to also consider the \emph{asymptotic joint displacement}
$$\ell(S) := \lim_{n \to +\infty} L(S^n)/n.$$
It is not hard to show (see Section \ref{gen} below) that this limit exists and coincides with the limit of $L(S^n,x)/n$ for any point $x$. When $S=\{g\}$ is a singleton, $L(g)$ is called the \emph{translation length} of $g$ and we call $\ell(g)$ the \emph{asymptotic translation length}. We may also consider the maximal translation length
$$\lambda(S) : = \max_{s \in S} \ell(s)$$
and the corresponding notion for products
$$\lambda_k(S) : = \max_{ 1 \leq j \leq k} \frac{1}{j} \lambda(S^j),$$
$$\lambda_\infty(S) : = \sup_{  j \in \N} \frac{1}{j} \lambda(S^j)$$
and compare these quantites to those defined above. The choice of normalization in the definition of $\lambda_k(S)$ is made so as to guarantee the following straightforward inequalities (see Section \ref{gen}).
\begin{lemma}[general nonsense lemma]\label{gen-ineq} If $(X,d)$ is any metric space and $S \subset Isom(X)$ a finite set of isometries, then  for all $k \in \N$
$$\lambda(S) \leq \lambda_k(S) \leq \lambda_\infty(S) \leq \ell(S) \leq \frac{1}{k}L(S^k) \leq L(S).$$ Moreover $\ell(S^k)=k\ell(S)$ and $\lambda_\infty(S^k)=k\lambda_\infty(S)$. Finally $$\lambda_\infty(S)=\limsup_{n \to +\infty} \frac{1}{n}\lambda(S^n).$$  
\end{lemma}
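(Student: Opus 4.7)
The plan is to reduce everything to one subadditivity estimate and then read the chain of inequalities off it. For any $x\in X$, any finite sets $T_1,T_2\subset Isom(X)$, and any $t_i\in T_i$, the triangle inequality together with $t_1$ being an isometry gives
$$d(x,t_1t_2x)\leq d(x,t_1x)+d(t_1x,t_1t_2x)=d(x,t_1x)+d(x,t_2x),$$
so taking maxima yields $L(T_1T_2,x)\leq L(T_1,x)+L(T_2,x)$, and in particular $L(S^{m+n},x)\leq L(S^m,x)+L(S^n,x)$. By Fekete's lemma, $L(S^n,x)/n$ converges to $\inf_n L(S^n,x)/n$, and the Lipschitz bound $|L(S^n,x)-L(S^n,y)|\leq 2d(x,y)$ (for each $s\in S^n$ one has $|d(x,sx)-d(y,sy)|\leq 2d(x,y)$ by isometry) shows the limit is independent of $x$, hence equals $\ell(S)$. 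In particular $\ell(S)\leq L(S^n,x)/n$ for every $n,x$, and the scaling $\ell(S^k)=k\ell(S)$ is immediate by passing to the subsequence $L(S^{kn})/(kn)$.

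I would then run through the five inequalities. At the outer ends, iterating subadditivity at a fixed $x$ gives $L(S^k,x)\leq kL(S,x)$, so infimizing yields $L(S^k)/k\leq L(S)$; at the other end, $\lambda(S)\leq \lambda_k(S)\leq \lambda_\infty(S)$ is immediate from the definitions. The middle inequality $\ell(S)\leq L(S^k)/k$ is the infimum over $x$ of the Fekete bound from the previous paragraph. The only remaining step, $\lambda_\infty(S)\leq \ell(S)$, uses one extra observation: for any single $g\in Isom(X)$ the stable translation length $\ell(g)=\lim_n d(x,g^n x)/n$ exists (Fekete again, on the subadditive sequence $n\mapsto d(x,g^n x)$), and for $g\in S^j$ the crude bound $d(x,g^n x)\leq L((S^j)^n,x)$ yields $\ell(g)\leq \ell(S^j)=j\ell(S)$. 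Taking the max over $g\in S^j$, dividing by $j$ and supping over $j$ gives $\lambda_\infty(S)\leq \ell(S)$.

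For $\lambda_\infty(S^k)=k\lambda_\infty(S)$, the inequality $\leq$ is straightforward since $\lambda_\infty(S^k)/k=\sup_j \lambda(S^{kj})/(kj)$ is a sup over a subsequence of the family defining $\lambda_\infty(S)$. For $\geq$ I use that $g\in S^m$ implies $g^k\in S^{km}$ together with the power identity $\ell(g^k)=k\ell(g)$ (a consequence of the isometric action), so $\lambda(S^{km})\geq k\lambda(S^m)$ and hence $\lambda(S^{km})/(km)\geq \lambda(S^m)/m$; supping over $m$ gives the reverse inequality. The same monotonicity, now fixing $m$ and letting $k\to\infty$, shows $\limsup_n \lambda(S^n)/n\geq \lambda(S^m)/m$ for every $m$, hence $\limsup\geq \lambda_\infty(S)$; the opposite inequality is trivial since $\lambda_\infty(S)$ is a $\sup$.

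No single step is really an obstacle: this is genuinely ``general nonsense'', whose only nontrivial ingredients are the triangle inequality, Fekete's subadditivity lemma, and the identity $\ell(g^k)=k\ell(g)$. The main thing to be careful about is distinguishing where one uses ``isometry'' rather than merely ``$1$-Lipschitz'': the subadditivity of $L(S^n,x)$, the basepoint-independence of $\ell(S)$, and the power scaling $\ell(g^k)=k\ell(g)$ all rely on it, and it is exactly these three inputs that drive the whole lemma.
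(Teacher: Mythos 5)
Your proof is correct and follows essentially the same route as the paper's: subadditivity of $L(\cdot,x)$ via the triangle inequality, Fekete's lemma with the $2d(x,y)$ Lipschitz bound for basepoint independence, and the scaling identities $\ell(S^k)=k\ell(S)$, $\ell(g^k)=k\ell(g)$ to handle $\lambda_\infty$ and the $\limsup$ formula. The only difference is presentational (you bound $\ell(g)$ for $g\in S^j$ directly rather than stating the paper's intermediate inequality $\lambda_n(S)\leq\frac{1}{k}\lambda_n(S^k)\leq\lambda_{kn}(S)$), so no further comment is needed.
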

We will be interested in the following questions: To what extent are these inequalities sharp ? What is the growth of $L(S^n)$ as $n$ grows ? For which spaces do we always have $\lambda_\infty(S) = \ell(S)$ ? 

These quantities are interesting when $X$ is an unbounded metric space, especially in presence of some form of non-positive curvature, for example a $CAT(0)$-space or a $\delta$-hyperbolic space.

When $X=\SL_d(\R)/\SO_d(\R)$  and the metric $d$ is given by \begin{equation} \label{finsler} d(g,h) := \log ||g^{-1}h||,\end{equation} where the norm is the operator norm associated to a Euclidean scalar product on $\R^d$,  then any $S \subset \SL_d(\R)$ acts by isometries on $X$ and the quantity $\ell(S)$ is the ($\log$ of the) well-known \emph{joint spectral radius} of the set of matrices $S$. This quantity has been first studied by Rota and Strang in 1960 \cite{rota-strang} and further by Daubechies-Lagarias \cite{daubechies-lagarias} in the context of wavelets and iterated function systems and by Berger-Wang \cite{berger-wang}. The main result of Berger-Wang \cite[Thm. IV]{berger-wang} says that 
\begin{equation}\label{bw}\lambda_\infty(S)=\ell(S).\end{equation}
This means that the rate of growth of $\|S^n\|$ is identical to that of the maximal eigenvalue of an element in $S^n$, where
$$ \|S^n\| := \max_{g \in S^n} \|g\|,$$
 and a posteriori justifies the terminology ``joint spectral radius'' for $\exp(\ell(S))$ in that it generalizes the classical Gelfand formula expressing the modulus of the maximal eigenvalue  as the rate of exponential growth of the norm of powers of a single matrix. 

In the literature on the joint spectral radius authors usually consider arbitrary finite sets of matrices $S \subset M_d(\R)$ which are not necessarily invertible. In this paper however, because of our geometric point of view, we will focus on the invertible case.

\subsection{Geometric Berger-Wang identity and geometric Bochi inequality} One of the leitmotivs of this paper will be extend the Berger-Wang identity $(\ref{bw})$ and its refinements to geometric actions on non-positively curved spaces $X$. We will mainly focus on fairly classical spaces such as  Euclidean spaces, Hilbert spaces, or non-positively curved spaces such as symmetric spaces of non-compact type. We will also obtain satisfactory results for arbitrary $\delta$-hyperbolic spaces and will mention some properties valid in arbitrary $CAT(0)$-spaces. We hope that our work may provide motivation to study the joint minimal displacement and Berger-Wang type identities in other geometries of interest, such as the Teichmuller space or the Outer space. We begin with:

\begin{theorem}[geometric Berger-Wang identity]\label{bw-thm} The Berger-Wang identity $(\ref{bw})$ holds for every finite set $S$ of isometries of $X$, when $X$ is either a symmetric space of non-compact type, a tree, or an arbitrary $\delta$-hyperbolic space.
\end{theorem}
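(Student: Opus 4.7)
By Lemma~\ref{gen-ineq} one always has $\lambda_\infty(S)\le \ell(S)$, so the content of the theorem is the reverse inequality $\ell(S)\le \lambda_\infty(S)$. The unifying theme is, starting from a word $w \in S^n$ with $d(x,wx)$ close to $n\ell(S)$, to produce in some bounded power $S^{Cn}$ a hyperbolic element whose asymptotic translation length is comparable to $n\ell(S)$. I would treat the three cases by rather different tools.

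\emph{Symmetric spaces.} My plan is to reduce to the classical Berger--Wang theorem $(\ref{bw})$. Any $X=G/K$ of non-compact type embeds equivariantly in $\SL_d(\R)/\SO_d(\R)$ via a faithful linear representation of the connected isometry group. Under this embedding, $L(g,x_0)$ and $\ell(g)$ are given by a fixed $K$-invariant norm on $\mathfrak{a}$ applied to the Cartan projection $\mu(g)$ and the Jordan projection $\lambda(g)$ respectively, and the convergence $L(S^n)/n \to \lambda_\infty(S)$ follows by applying the classical Berger--Wang weight by weight (or via an intermediate vector-valued sharpening in which $\mu(g_n)/n$ is controlled in all directions simultaneously).

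\emph{Trees.} Every isometry of a tree is either elliptic with $\ell=0$, or hyperbolic with $\ell=L$ attained on an axis. By Serre's fixed-point theorem, $L(S)>0$ forces $\langle S\rangle$ to contain a hyperbolic element. I would then split along the Bass--Serre classification of actions on trees (lineal, focal, general type). In the general-type case, two hyperbolic elements with sufficiently disjoint axes can be combined by a ping-pong argument inside a bounded power $S^k$. In the lineal and focal cases, $\langle S\rangle$ preserves an axis or fixes a unique end and the action admits a Busemann-type character $\tau\colon \langle S\rangle \to \R$; one then shows that both $\lambda_\infty(S)$ and $\ell(S)$ equal $\sup_{w} |\tau(w)|/|w|$.

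\emph{$\delta$-hyperbolic spaces.} This is where I expect the main obstacle, since the clean tree identities survive only up to error $O(\delta)$ and parabolic behavior must be excluded. The key technical ingredient would be a Gromov-type detection lemma: if $d(x,g^2 x)\ge d(x,gx)+d(gx,g^2x)-C\delta$ with $d(x,gx)$ large compared with $\delta$, then $g$ is hyperbolic and $\ell(g)\ge d(x,gx)-C'\delta$. Given $w \in S^n$ realizing $d(x,wx)\approx n\ell(S)$, one either verifies this criterion for $w$ directly or uses thin triangles to find a second word whose orbit diverges from that of $w$; a ping-pong argument on the pair then produces a hyperbolic element of some $S^{Cn}$ with translation length at least $n\ell(S)-O(\delta)$. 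The delicate point is keeping the constants dependent only on $\delta$ so that the additive loss is absorbed upon dividing by $n$ and letting $n\to\infty$.
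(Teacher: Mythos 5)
Your symmetric-space outline is essentially the paper's route: embed $X$ equivariantly in $\SL_d(\C)/\SU_d(\C)$, express $\ell(g)$ through the Jordan projection, and recover the $K$-invariant ($\ell^2$) norm on the Weyl chamber as a sup over finitely many dominant integral weights, applying the classical Berger--Wang (the paper actually uses Bochi's quantitative refinement, Proposition \ref{bochi-original}) in each corresponding highest-weight representation, where $\langle \mathbf{n},\kappa(g)\rangle=\log\|\pi_{\mathbf{n}}(g)\|$ and $\langle \mathbf{n},j(g)\rangle\le\|\mathbf{n}\|\,\ell(g)$. Just be aware that, as the paper stresses, the geometric identity does not follow from the matrix Berger--Wang "as is'' because the CAT(0) metric is not the operator-norm (Finsler) metric; the weight-by-weight approximation is exactly the needed bridge, and the functionals must be dominant weights, not arbitrary directions. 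Your parenthetical ``vector-valued sharpening controlling all directions simultaneously'' is not something you can invoke off the shelf, but the first route works and is Theorem \ref{bochi-sym}.

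In the $\delta$-hyperbolic case (and the general-type case for trees, which is just $\delta=0$) there is a genuine gap at the crucial step. Starting from $w\in S^n$ with $d(x,wx)\ge n\ell(S)$, your dichotomy breaks in the second branch: if $w$ fails the local-to-global criterion at $x$, the typical reason is simply that $x$ is far from the minimizing set of $w$ (in a tree $d(x,wx)=\ell(w)+2\,d(x,\Delta_w)$), not that two orbits diverge; and ping-pong with some auxiliary word yields freeness and hyperbolicity but gives no reason why the resulting element of $S^{Cn}$ has translation length at least $n\ell(S)-O(\delta)$ — you neither produce the second word nor justify the bound, and this bound is the whole content of the theorem. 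The paper supplies precisely this missing mechanism: take $x$ to nearly minimize the joint displacement of $S^n$, prove via a quantitative Serre-lemma/midpoint analysis (Lemma \ref{displacement.versus.distance}, Proposition \ref{formula.pair.hyp}) that for the two extremal isometries $a,b$ at such a point one has $\ell(ab)\ge 2L(S^n)-O(\delta)$, and then pass from pairs to arbitrary finite sets with a Helly-type theorem for $28\delta$-neighborhoods of the convex sets $Fix_M(\cdot)$ (Theorem \ref{helly.hyperbolic}); this gives $\lambda_2(S^n)\ge L(S^n)-K\delta$ (Theorem \ref{formula.S.hyp}), and dividing by $n$ yields the identity. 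No Bass--Serre trichotomy is needed for trees (your lineal/focal discussion is fine but the general-type case inherits the same gap, since Proposition \ref{formula.S} there is an exact identity $L(S)=\lambda_2(S)$ proved by the same midpoint/Helly argument), and ping-pong enters the paper only later, for the free-semigroup and uniform growth applications, not for Berger--Wang.
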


By contrast, we will show in Section \ref{euclidean} that the Berger-Wang identity fails for isometries of a Euclidean space. 

In the case of symmetric spaces, this result is closely related to the original Berger-Wang identity for matrices. However in higher rank it does not follow directly (nor does it imply it) as we stress in the following remark. In the case of $\delta$-hyperbolic spaces Theorem \ref{bw-thm} was recently obtained by Oregon-Reyes \cite{oregon-reyes}, whose paper addresses for the first time the question of extending the classical results on the joint spectral radius to other geometric contexts. 

\begin{remark}Note that, as usual, symmetric spaces  are considered with their defining $CAT(0)$ metric induced by a left-invariant Riemannian metric associated to the Killing form of the group. The quantities studied in this paper such as $L(S)$ and $\lambda(S)$ are sensitive to the choice of metric. For example the classical Berger-Wang identity $(\ref{bw})$ for matrices does not follow from the geometric Berger-Wang identity proved in the above theorem, nor does it imply it, say for $X=\SL_d(\R)/\SO_d(\R)$. Recall that the distance $(\ref{finsler})$ on $X=\SL_d(\R)/\SO_d(\R)$ is not the usual $CAT(0)$ Riemannian symmetric space metric, but rather a Finsler-type norm-like distance (as in \cite{abels-margulis} and \cite{parreau-preprint}).\end{remark}

In \cite{bochi} Bochi gave a different proof of $(\ref{bw})$, which yields a stronger inequality of the form \begin{equation}\label{bochi-ineq}\lambda_{k_0}(S) \geq L(S) -C,\end{equation} where $k_0$ and $C$ are constants depending only on the dimension. The Berger-Wang identity follows immediately by applying the Bochi inequality to $S^n$ and letting $n$ go to infinity. Note that a different proof of Bochi's inequality was given in \cite[Cor 4.6]{breuillard-gelander} and extended in \cite{breuillard-heightgap} to non-archimedean local fields. In this paper we  will prove geometric analogues of the Bochi inequality $(\ref{bochi-ineq})$  in the geometric settings mentioned in the above theorem. For example:

\begin{theorem}[geometric Bochi-type inequality for hyperbolic spaces]\label{bochi-hyp} There is an absolute constant $K>0$ such that the following holds. If  $(X,d)$ is a geodesic $\delta$-hyperbolic space, then
 $$\lambda_{2}(S)  \geq L(S) - K \delta $$ for every finite set $S \subset Isom(X)$.
\end{theorem}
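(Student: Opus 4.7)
The plan is to combine a single-point lower bound on the asymptotic translation length with a variational argument at a near-minimizer of the joint displacement function. Fix $\varepsilon > 0$, set $L := L(S)$, and choose $x \in X$ with $L(S,x) \leq L + \varepsilon$. I would first record the following standard estimate in a $\delta$-hyperbolic space: there is a universal constant $C_0 > 0$ such that for every isometry $g$ of $X$ and every $y \in X$,
$$\ell(g) \geq d(y, g^2 y) - d(y, gy) - C_0 \delta.$$
This is obtained by iterating the thin-triangle inequality to show $d(y, g^n y) \geq n[d(y, g^2 y) - d(y, gy)] - O(\delta)$ for every $n \geq 1$ and dividing by $n$. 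Writing $\tau(g,y) := d(y, g^2 y) - d(y, gy) = d(y, gy) - 2(gy|g^{-1}y)_y$, the bound reads $\ell(g) \geq \tau(g,y) - C_0 \delta$.

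I would then split into two cases at the near-minimizer $x$. In Case A, some $s \in S$ with $d(x, sx) \geq L - \varepsilon$ satisfies $\tau(s, x) \geq L - C_1 \delta$ for a suitable universal $C_1$; the single-point bound then gives $\ell(s) \geq L - (C_0 + C_1)\delta$, so $\lambda(S) \geq L - O(\delta)$ and we are done. In Case B, every $s \in S$ with $d(x, sx) \geq L - \varepsilon$ satisfies $\tau(s, x) < L - C_1 \delta$, equivalently $(sx|s^{-1}x)_x = \Omega(\delta)$; geometrically, $x$ lies $\Omega(\delta)$-far from each geodesic $[s^{-1}x, sx]$, so $sx$ and $s^{-1}x$ emanate from $x$ in essentially the same direction.

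To conclude in Case B I would invoke near-optimality of $x$. If all orbit points $sx$ with $d(x,sx) \approx L$ lay in a common $O(\delta)$-thin angular sector at $x$, translating $x$ by $O(\delta)$ in the opposite direction would strictly decrease $L(S,\cdot)$ (by the standard monotonicity of displacement along geodesics in a $\delta$-hyperbolic space), contradicting near-minimality. This forces the existence of $s_1, s_2 \in S$ with $d(x, s_i x) \geq L - O(\delta)$ and $(s_1^{-1}x | s_2 x)_x = O(\delta)$, whence
$$d(x, s_1 s_2 x) = d(s_1^{-1}x, s_2 x) = 2L - 2(s_1^{-1}x | s_2 x)_x - O(\varepsilon) \geq 2L - O(\delta).$$
The same angular analysis applied to $(s_1 s_2)^2 x$ gives $\tau(s_1 s_2, x) \geq 2L - O(\delta)$, and the single-point bound then yields $\ell(s_1 s_2) \geq 2L - O(\delta)$, so $\lambda(S^2)/2 \geq L - O(\delta)$ and $\lambda_2(S) \geq L - O(\delta)$.

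The main obstacle will be the variational step in Case B: quantifying how near-minimality of the $\ell^{\infty}$ joint displacement at $x$ prevents all maximal orbit points from clustering into a common direction, in a general $\delta$-hyperbolic space where the $\mathrm{CAT}(0)$ gradient-flow picture is not directly available. One must use the quasi-convexity (up to $O(\delta)$) of the individual displacement functions together with a careful $\delta$-scale perturbation of $x$. A secondary subtlety is arranging the resulting pair to lie in $S$ rather than in $S \cup S^{-1}$; this is handled using Case B itself, since the near-alignment of $sx$ and $s^{-1}x$ transfers any angular spread among $\{sx : s \in S \cup S^{-1}\}$ back to angular spread among $\{sx : s \in S\}$.
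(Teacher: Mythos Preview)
Your overall strategy---a variational argument at a near-minimizer $x$ combined with the single-point bound $\ell(g)\ge d(y,g^2y)-d(y,gy)-C_0\delta$---is sound and shares its core idea with the paper's proof of the pair case (Proposition~\ref{formula.pair.hyp}). The genuine structural difference is that the paper does \emph{not} attempt Case~B for the full set $S$: it first proves the inequality for pairs $\{a,b\}$ via a careful analysis of the sets $Fix_M(a)$, $Fix_M(b)$ (Lemma~\ref{displacement.versus.distance}), and then passes to arbitrary $S$ by a Helly-type theorem for $\delta$-hyperbolic spaces (Theorem~\ref{helly.hyperbolic}), observing that the sublevel sets $Fix_M(s)$ are convex and pairwise intersect. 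This modularization buys a clean two-element variational problem; your approach avoids the Helly machinery but pays for it with a more delicate perturbation argument.

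Your Case~B variational step is fillable but is not as light as your sketch suggests, and two points need real work. First, moving $x$ toward a common orbit direction does \emph{not} by itself decrease $d(y,sy)$: one must show that $sy$ also moves toward $y$, which requires the additional Gromov-product control $(s_0x\mid s^{-1}x)_x$ large (not just $(s_0x\mid sx)_x$ large), so that $ss_0x$ lies behind $sx$ from $y$'s viewpoint. This is available from your Case~B hypothesis plus the assumed clustering, but you must thread three separate scales: the near-minimizer tolerance $\varepsilon_0$, the ``nearly-maximal'' threshold $D$, and the clustering constant $C_2\delta$, with $\varepsilon_0 \ll D \ll C_1\delta$ and $t$ chosen in the resulting window; your sketch conflates these. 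Second, the sentence ``the same angular analysis applied to $(s_1s_2)^2x$ gives $\tau(s_1s_2,x)\ge 2L-O(\delta)$'' hides a genuine chain of four-point inequalities: from $(s_1^{-1}x\mid s_2x)_x=O(\delta)$ and the Case~B hypothesis one must deduce $(s_1x\mid s_2^{-1}x)_x=O(\delta)$, then show $[x,s_1s_2x]$ passes $O(\delta)$-close to $s_1x$ and $[x,(s_1s_2)^{-1}x]$ passes $O(\delta)$-close to $s_2^{-1}x$, and finally conclude $(s_1s_2x\mid (s_1s_2)^{-1}x)_x=O(\delta)$. Each step is routine once written out, but none is immediate. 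The paper sidesteps this last issue entirely by invoking $(\ref{midpoint})$ of Lemma~\ref{displacement.versus.distance} to get $L(ab)$ directly, then appealing to Lemma~\ref{translation.length} for $\ell(ab)\ge L(ab)-O(\delta)$.
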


Consequently $\ell(S) \geq L(S) - K \delta$, which answers a question raised in \cite[Question 6.1]{oregon-reyes}.

For isometries of trees (case when $\delta=0$) we have $\lambda_2(S)=\ell(S)=L(S)$. In follows in particular that $\lambda_2(S)=0$ implies $L(S)=0$, which is the content of a well-known lemma of Serre about tree isometries (\cite[I. Prop. 26]{serre}). The proof for $\delta$-hyperbolic spaces is a quasification of the corresponding proof for isometries of trees. It involves in particular a Helly-type theorem for hyperbolic spaces, which we prove in Section \ref{helly-sec}. See \cite{farb} and \cite{caprace-lytchak} for a related use of Helly-type theorems in geometric group theory.

As already mentioned Theorem \ref{bw-thm} for $\delta$-hyperbolic spaces follows from Theorem \ref{bochi-hyp}. We now record one more consequence of Theorem
\ref{bochi-hyp}  (see Proposition \ref{ell=0} for the proof), which is well-known and due to Gromov \cite[8.1]{gromov.hyp} (see also \cite[Thm 1.10]{oregon-reyes}).
\begin{corollary}\label{intro.ell=0}
Let $X$ be a geodesic $\delta$-hyperbolic space and $S \subset Isom(X)$ a finite set.
Assume that $\ell(S)=0$.
Then  $\langle S \rangle$ either has a bounded orbit on $X$, or 
fixes a unique point in $\partial X$.
\end{corollary}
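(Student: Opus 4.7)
The plan is to leverage Theorem \ref{bochi-hyp} applied to the powers $S^n$ to obtain a uniform displacement bound, and then analyze where a resulting sequence of near-fixed points can cluster in $\bar X := X \cup \partial X$. By Lemma \ref{gen-ineq}, $\lambda_2(S^n) \leq \lambda_\infty(S^n) \leq \ell(S^n) = n\ell(S) = 0$, so Theorem \ref{bochi-hyp} applied to $S^n$ yields $L(S^n) \leq K\delta$ uniformly in $n$. After replacing $S$ by $S \cup S^{-1}$ (which preserves the hypothesis $\ell(S)=0$), we may assume $e \in S^2$, so every $g \in \langle S \rangle$ eventually lies in $S^n$. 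Choose $x_n \in X$ with $L(S^n, x_n) \leq K\delta + 1/n$; then $d(x_n, g x_n) \leq K\delta + o(1)$ for each fixed $g \in \langle S \rangle$ and all large $n$.

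First alternative: if $(x_n)$ is bounded, say $x_n \in B(x_0, R)$, then for every $g \in \langle S \rangle$ and $n$ large enough that $g \in S^n$, the triangle inequality gives $d(x_0, g x_0) \leq 2R + K\delta + 1/n$, so $x_0$ has bounded $\langle S \rangle$-orbit and we are done. Otherwise $d(x_0, x_n) \to \infty$, and passing to a subsequence we may assume $x_n$ converges to a point $\xi \in \partial X$. Since $d(x_n, g x_n) = O(\delta)$, a standard Gromov-product computation yields $(g x_n \mid x_n)_{x_0} \to \infty$, so $g x_n \to \xi$ as well; continuity of the isometry $g$ on $\bar X$ then forces $g \xi = \xi$, and $\langle S \rangle$ fixes $\xi$.

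The main obstacle is uniqueness. Suppose $\eta \neq \xi$ is another boundary fixed point of $\langle S \rangle$ and let $\gamma$ be a bi-infinite geodesic from $\eta$ to $\xi$. Each $g \in \langle S \rangle$ maps $\gamma$ to another $\eta$-to-$\xi$ geodesic, which by $\delta$-hyperbolicity lies within Hausdorff distance $O(\delta)$ of $\gamma$; this produces a coarsely additive translation function $\tau \colon \langle S \rangle \to \R$ with $g\gamma(t)$ within $O(\delta)$ of $\gamma(t + \tau(g))$, and satisfying $\ell(g) \geq |\tau(g)| - O(\delta)$. Lemma \ref{gen-ineq} gives $\ell(g) = 0$ for every $g \in \langle S \rangle$ (since $\lambda_\infty(S) \leq \ell(S) = 0$), and combining this with $\tau(g^n) \approx n \tau(g)$ forces $\tau \equiv 0$. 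But then every $g \in \langle S \rangle$ displaces each point of $\gamma$ by at most $O(\delta)$, so points of $\gamma$ already have bounded orbit, contradicting the unbounded case.

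I expect the first two steps to be fairly routine given Theorem \ref{bochi-hyp} and standard properties of the Gromov boundary, while the uniqueness step is the most delicate: it requires turning the qualitative vanishing of $\ell$ on $\langle S \rangle$ into a rigidity statement about the translation homomorphism along the quasi-axis joining two putative boundary fixed points, in order to contradict the very hypothesis of the second alternative.
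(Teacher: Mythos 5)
Your overall route is the paper's: apply Theorem \ref{bochi-hyp} to $S^n$ to get $L(S^n)\leq K\delta$ uniformly, pick near-minimizing points $x_n$, split into a bounded case and a boundary-limit case, and treat uniqueness via the (coarsely invariant) geodesic joining two putative fixed points. However, as written there are two genuine gaps.

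First, your case analysis is not exhaustive. $X$ is not assumed proper, and in a general geodesic $\delta$-hyperbolic space an unbounded sequence need not have any subsequence converging to a point of $\partial X$: for instance, points escaping to infinity along infinitely many distinct rays issuing from a single vertex of a tree have all pairwise Gromov products bounded, so no subsequence converges in $\partial X$. The paper's proof of Proposition \ref{ell=0} inserts exactly the missing third possibility: a subsequence of $(x_n)$ which ``coarsely rotates'' about some point $x$ (every geodesic between two of its terms meets $B(x,20\delta)$), and in that case it is this coarse center $x$ --- not $x_0$ or any $x_n$ --- whose orbit is shown to be bounded, using that both endpoints of $[x_{m_n},x_{m_k}]$ are displaced by at most $K\delta$ by $S^{m_n}$. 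Your bounded case and your boundary-limit case are fine, but in general the bounded orbit is produced precisely in this third case, which your argument omits.

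Second, the symmetrization step is unjustified, and it is not a technicality. From $\ell(S)=0$, Lemma \ref{gen-ineq} (or the Berger--Wang identity $\lambda_\infty(S)=\ell(S)$) gives $\ell(g)=0$ only for $g$ in the sub-semigroup $\bigcup_n S^n$, i.e.\ for positive words in $S$; it gives neither $\ell(S\cup S^{-1})=0$ nor, as you assert in the uniqueness paragraph, ``$\ell(g)=0$ for every $g\in\langle S\rangle$''. That upgrade from the semigroup to the group is equivalent to asserting that $\langle S\rangle$ contains no hyperbolic isometry, which is essentially the content of the uniqueness statement you are trying to prove and is exactly where the difficulty lies when $S$ is not symmetric; it should not be expected to follow formally from the positive-word hypothesis. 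Note that the paper's proof of the main dichotomy is arranged so as not to need it: displacement bounds are only ever used for the positive powers $S^{m_n}$, and to get a global fixed point on $\partial X$ it suffices to check that each generator fixes $\xi$ (then so do its inverse and all products). Your bounded-orbit step and your uniqueness step both lean on the unproved claim from the outset, so the proof as written does not go through; either add the hypothesis that $S$ is symmetric (in which case your argument, supplemented by the missing ``rotating'' case, matches the paper's), or supply a genuine argument for the semigroup-to-group upgrade.
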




A similar but slightly weaker statement when $X$ is a symmetric space is proven in Section \ref{sym-spaces}, based on the Bochi inequality applied to various linear embeddings of $X$ (see Theorem \ref{bochi-sym} and Proposition \ref{compa-compa}). In particular:

\begin{proposition}\label{boch-symsym} Let $X$ be a symmetric  space of non-compact type and $S \subset Isom(X)$ be a finite subset. Then : 
$$\lambda_{k_0}(S) \geq \frac{1}{\sqrt{d}} L(S) - C,$$ where $d \in \N$ is such that $X$ is a convex subspace of $\SL_d(\C)/\SU_d(\C)$,  $k_0\leq d^2$, and $C>0$ is a constant depending on $d$ only.
\end{proposition}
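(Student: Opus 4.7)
The plan is to deduce this from the classical matrix Bochi inequality for $\SL_d(\C)$, together with a careful comparison between the $\log\|\cdot\|_{\mathrm{op}}$ Finsler distance and the CAT(0) Killing-form metric on $Y:=\SL_d(\C)/\SU_d(\C)$, transported to $X$ via the hypothesized convex embedding.

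First, convexity of $X\hookrightarrow Y$ implies that the Riemannian distance on $X$ is the restriction of the one on $Y$, and the nearest-point projection $Y\to X$ is $1$-Lipschitz; hence $L(S)$ and each Riemannian translation length $\ell^R(s)$ are unchanged on passing from $X$ to $Y$, so we may assume $X=Y$. Next I would invoke the classical matrix Bochi inequality (see \cite{bochi}, also \cite{breuillard-gelander}): there exist $k_0\leq d^2$ and $C_0=C_0(d)>0$ such that, for every finite $S\subset\SL_d(\C)$, there are $j\leq k_0$ and $g\in S^j$ with $\tfrac{1}{j}\log\rho(g)\geq L^F(S)-C_0$, where $\rho$ is the spectral radius and $L^F(S)=\inf_x\max_{s\in S}\log\|s\|_x$ is the joint minimal displacement for the asymmetric Finsler distance $d^F(x,hx)=\log\|h\|_x$ on $Y$.

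Using the Cartan decomposition I would then compare the two metrics. If $h$ has singular values $\sigma_1\geq\cdots\geq\sigma_d$ with $\prod\sigma_i=1$, then $d^R(x,hx)=\sqrt{\sum_i(\log\sigma_i)^2}$, and the constraint $\sum_i\log\sigma_i=0$ forces $|\log\sigma_i|\leq\max(\log\sigma_1,-\log\sigma_d)$ for every $i$. Hence $d^R(x,hx)\leq\sqrt{d}\cdot\max(\log\|h\|_x,\log\|h^{-1}\|_x)$ and therefore $L^R(S)\leq\sqrt{d}\cdot L^F(S\cup S^{-1})$. The same $\ell^2$-versus-$\ell^\infty$ inequality applied to the Jordan projection also gives $\ell^R(h)\geq\max(\log\rho(h),\log\rho(h^{-1}))$ for every $h$.

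Combining matrix Bochi applied to the symmetric set $S\cup S^{-1}$ with these two comparisons produces some $j\leq k_0$ and some $t\in(S\cup S^{-1})^j$ with $\tfrac{1}{j}\ell^R(t)\geq\tfrac{1}{\sqrt{d}}L^R(S)-C_0$. The main obstacle is then to promote this into a bound on $\lambda_{k_0}^R(S)$ rather than on $\lambda_{k_0}^R(S\cup S^{-1})$: the word $t$ is a priori mixed, whereas $\lambda_{k_0}(S)$ only records translation lengths of positive products in $S^j$. I would expect this to be resolved either by a refined matrix Bochi whose output already lies in $S^{j'}$, or by a cyclic-reduction and conjugation argument exploiting the invariances $\ell^R(g)=\ell^R(g^{-1})=\ell^R(hgh^{-1})$ to replace $t$ by a positive word of comparable length (possibly inflating $k_0$ by a factor depending only on $d$). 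A softer variant, applying matrix Bochi directly to $S$ and using the pointwise asymmetric comparison $d^R\leq\sqrt{d(d-1)}\,d^F$ (valid since $\sqrt{\sum a_i^2}\leq\sqrt{d(d-1)}\,a_1$ whenever $a_1\geq\cdots\geq a_d$ sum to zero), yields the proposition with $\sqrt{d(d-1)}$ in place of $\sqrt{d}$; the improvement to $\sqrt{d}$ is where the genuine technical work lies.
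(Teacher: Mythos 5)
Your reduction to $X=P_d$ by convex projection is exactly the paper's, and your ``softer variant'' is in substance the paper's argument; but your main route has a genuine gap, and it is one you create yourself by symmetrizing before applying Bochi. The matrix Bochi inequality (Proposition \ref{bochi-original}) holds for an \emph{arbitrary} finite set of matrices --- no symmetry and not even invertibility is required --- so there is no reason to pass to $S\cup S^{-1}$: applied directly to $S$ it already yields $j\le k_0$ and $g\in S^j$ with $\frac{1}{j}\log\Lambda(g)\ge \log R(S)-C(d)$, and since $\log\Lambda(g)\le\ell(g)$ this feeds straight into $\lambda_{k_0}(S)$ with no mixed words to repair. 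The promotion step you leave open is not a routine reduction: a word containing inverse letters is in general not conjugate to any positive word in $S$, and the invariances $\ell(g)=\ell(g^{-1})=\ell(hgh^{-1})$ give no comparison between its translation length and those of elements of $S^j$; and the ``refined Bochi with output in $S^{j'}$'' you hope for is just the ordinary Bochi inequality for $S$, whose other side is then $R(S)$ rather than the symmetrized quantity your metric comparison produces --- so the two halves of your intended fix are in tension. The paper's proof of Proposition \ref{compa-compa} is a two-line deduction: Bochi applied to $S$, the inequality $\log\Lambda(g)\le\ell(g)$, and the comparisons $(\ref{comp1})$--$(\ref{comp2})$ between $L^{P_d}(S)$, $\ell^{P_d}(S)$ and $\log R(S)$ (John's ellipsoid theorem supplies the additive constant, the $\ell^2$--$\ell^\infty$ comparison of Cartan vectors the multiplicative $\sqrt d$), with $k_0\le d^2$ quoted from the proof in \cite{breuillard-gelander}.

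That said, your scruple about $\sqrt d$ versus $\sqrt{d(d-1)}$ is pointing at something real rather than at missing routine work. For a non-symmetric set the minimizing (John) point controls $\|xsx^{-1}\|$ but not $\|xs^{-1}x^{-1}\|$, and the pointwise comparison between the Riemannian displacement and $\log\|\cdot\|$ is then only $\sqrt{d(d-1)}$, which is sharp: already $g=\mathrm{diag}(t,t,t^{-2})\in\SL_3(\C)$ has $\ell^{P_3}(g)=\sqrt6\,\log t$ while $\log\Lambda(g)=\log R(\{g\})=\log t$, so the upper bounds in $(\ref{comp1})$--$(\ref{comp2})$ with the constant $\sqrt d$ require symmetrized data (for $S=S^{-1}$ they are fine, and then the paper's chain does give $\lambda_{k_0}(S)\ge\frac{1}{\sqrt d}L(S)-C$ with the Bochi output already in $S^j$). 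For general $S$ what comes out painlessly is your $\frac{1}{\sqrt{d(d-1)}}$ (slightly improvable using $\ell(g)\ge\sqrt{d/(d-1)}\,\log\Lambda(g)$), and reaching $\frac{1}{\sqrt d}$ without assuming $S=S^{-1}$ is exactly the soft spot the paper's one-line citation of $(\ref{comp1})$--$(\ref{comp2})$ passes over. So: as a proof of the statement as printed your attempt is incomplete and the proposed repairs would not close it, but the obstruction you isolate is genuine and coincides with the point at which the paper's own proof is most fragile.
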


We suspect but were unable to prove that the multiplicative constant $\frac{1}{\sqrt{d}}$ in the above result can be taken to be $1$. See Theorem \ref{bochi-sym} and Question 4 in Section \ref{questions-sec}. On the other hand we will prove in Section \ref{escape} that the constant $C$ cannot be taken to be zero. In fact for each $n$ one may find an $S$ with $\ell(S)>0$ but $\lambda_n(S)=0$ (see Lemma \ref{so41}).

\subsection{Growth of joint minimal displacement} A consequence of the geometric Berger-Wang identity is that in order to guarantee the existence of a single element $g$ with positive translation length in the group generated by $S$, it is enough to show that $\ell(S)>0$. Accordingly this suggests that in order to find a nice hyperbolic element in a power of $S$, it is important to study the growth rate of $L(S^n)$.  In a CAT(0) space a simple geometric argument, similar to one used by V. Lafforgue in \cite[Lemma 2]{lafforgue} for Hilbert spaces, shows the following:

\begin{lemma}\label{cat0-growth} If $X$ is a CAT(0) metric space and $S$ a finite set of isometries, then 
$$L(S^n) \geq \frac{\sqrt{n}}{2} L(S).$$
\end{lemma}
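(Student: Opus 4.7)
Following V.~Lafforgue's argument for Hilbert spaces in \cite[Lemma 2]{lafforgue}, I would aim to show directly that for every $x \in X$ there exists a word $w \in S^n$ with $d(x, wx) \ge \frac{\sqrt n}{2} L(S)$; this gives $L(S^n, x) \ge \frac{\sqrt n}{2} L(S)$ for every $x$, and hence $L(S^n) \ge \frac{\sqrt n}{2} L(S)$ after taking the infimum. Equivalently, given $x$ with $L(S^n, x) = C$ one wishes to produce $y \in X$ with $L(S, y) \le 2C/\sqrt n$, which then satisfies $L(S) \le L(S, y) \le 2C/\sqrt n$.

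The word $w = s_1 s_2 \cdots s_n$ would be built inductively: given $y_0 = x$ and $y_1, \ldots, y_{k-1}$ with $y_j = s_j y_{j-1}$, pick $s_k \in S$ so that $y_k := s_k y_{k-1}$ lies ``further from $y_0$'' than $y_{k-1}$. The CAT(0) tool that produces the $\sqrt n$ growth is the law of cosines expressed with the Alexandrov angle: in a CAT(0) space one has, for any triangle $p, q, r$,
\[
d(q,r)^2 \ \ge\ d(p,q)^2 + d(p,r)^2 - 2\, d(p,q)\, d(p,r) \cos \angle_p(q,r).
\]
Applied with $p = y_{k-1}$, $q = y_0$, $r = y_k$, the condition $\angle_{y_{k-1}}(y_0, y_k) \ge \pi/2$ gives the Pythagorean lower bound $d(y_0, y_k)^2 \ge d(y_0, y_{k-1})^2 + d(y_{k-1}, y_k)^2$. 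Combined with $d(y_{k-1}, y_k) \ge \tfrac{1}{2} L(S)$ and induction, this yields $d(y_0, y_n)^2 \ge \tfrac{n}{4} L(S)^2$, and since $y_n \in S^n y_0$ one gets $L(S^n, x) \ge d(y_0, y_n) \ge \frac{\sqrt n}{2} L(S)$.

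The \textbf{main obstacle} is verifying that at every step $k$ there is some $s_k \in S$ simultaneously satisfying the displacement bound $d(y_{k-1}, s_k y_{k-1}) \ge \tfrac{1}{2} L(S)$ and the obtuse angle condition $\angle_{y_{k-1}}(y_0, s_k y_{k-1}) \ge \pi/2$. The displacement half is immediate from $L(S, y_{k-1}) \ge L(S)$, but the angle condition can fail for the maximally displacing $s$ when it pushes $y_{k-1}$ \emph{toward} $y_0$. Resolving this is precisely where the factor $1/2$ (rather than $1$) enters: one averages over $s \in S$ (or over $s$ and $s^{-1}$ after symmetrizing) and argues via convexity of the function $y \mapsto d(y_0, y)^2$ that, for any $y \ne y_0$, some $s \in S$ supplies an increment $d(y_0, sy)^2 - d(y_0, y)^2 \ge L(S)^2/4$. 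Making this dichotomy quantitative via the CAT(0) variance/semi-parallelogram inequality, and handling the boundary case $y = y_0$, is the technical heart of the argument.
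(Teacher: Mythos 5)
Your overall strategy (fix a base point $y_0=x$ and greedily build $w=s_n\cdots s_1\in S^n$ so that $d(y_0,\cdot)^2$ increases by at least $L(S)^2/4$ at each step) is genuinely different from the paper's argument, but the step you yourself flag as the "technical heart" is not merely technical: it is false. Take $X=\R^3$ and $S=\{g\}$ a single screw motion, i.e.\ rotation by an angle $\theta$ about the $z$-axis composed with translation by a small $h>0$ along it; then $L(S)=L(g)=h$, while for a base point $y_0$ at distance $r$ from the axis one has $d(y_0,g^ky_0)^2=(kh)^2+4r^2\sin^2(k\theta/2)$. For $r$ large and $h$ small this is far from monotone in $k$: at any $k$ where $\sin^2(k\theta/2)$ decreases, applying the only available generator to $y=g^{k-1}y_0$ \emph{decreases} $d(y_0,\cdot)^2$, by an amount much larger than $L(S)^2/4$, and correspondingly the angle $\angle_y(y_0,gy)$ is acute. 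So the claim "for any $y\neq y_0$ some $s\in S$ gives $d(y_0,sy)^2-d(y_0,y)^2\geq L(S)^2/4$" fails; with a singleton $S$ there is nothing to average over, and symmetrizing does not help: at the turning points of the helix both $g$ and $g^{-1}$ pull the point back toward $y_0$ (and in any case replacing $S$ by $S\cup S^{-1}$ would change the statement, since the lemma concerns $S^n$ for an arbitrary, not necessarily symmetric, finite $S$). Note also that the convexity/semi-parallelogram inequality controls $d(y_0,m)^2$ for $m$ the \emph{midpoint} of $[sy,s'y]$, and there is no reason for $y$ itself to be such a midpoint, so convexity of $y\mapsto d(y_0,y)^2$ alone cannot rescue the dichotomy. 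The conclusion of the lemma is of course still true in this example ($d(y_0,g^ny_0)\geq nh$), but not via monotone increments from a fixed base point, which is exactly what your induction requires.

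The paper's proof (Proposition \ref{disp}) avoids this obstruction by not fixing the reference point at all: it works with the circumradius $r(S^k)$ of the sets $S^kx$ (Lemma \ref{randl}), and applies the CAT(0) inequality not at a point of the orbit but at the \emph{centers} of enclosing balls. Concretely, if $S^nx\subset B(y,r)$ then $S^{n-1}x\subset B(s_1^{-1}y,r)\cap B(s_2^{-1}y,r)$ for all $s_1,s_2\in S$, and the two-ball Lemma \ref{intersection-cat} (which is your semi-parallelogram inequality, now legitimately applied at the midpoint of the two centers) gives $r(S^{n-1})^2\leq r(S^n)^2-\frac14 L(SS^{-1},y)^2$; telescoping yields the $\sqrt n$ lower bound. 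The moving circumcenter is precisely the device that replaces your fixed $y_0$, so if you want to salvage your approach you should reformulate the inductive quantity as a squared circumradius (or some other base-point-free quantity) rather than the squared distance to the starting point.
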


The square root $n$ growth behavior is sharp as examples of Cornulier, Tessera and Valette \cite{Cornulier-Tessera-Valette} show (see Remark \ref{rkCTV}).  Of course if $\ell(S)>0$ then $L(S^n)$ grows linearly, but the above sublinear growth is useful in order for $L(S^n)$ to go above a certain threshold after which the linear growth can start. For example if $X$ is $\delta$-hyperbolic, we will show that $L(S^n) \geq n (L(S) - K\delta)$ for some absolute constant $K$, so that we get linear growth provided $L(S)>K\delta$. While if $X$ is a symmetric space of non-compact type a consequence of our analysis is:

\begin{proposition}\label{ell-L-sym} There is $c=c(X)>0$ such that if $S$ is a finite set of isometries of a symmetric space $X$ of non-compact type
$$c \cdot \min\{L(S),L(S)^2\} \leq \ell(S) \leq L(S).$$
\end{proposition}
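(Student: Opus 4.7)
The plan is to combine the geometric Bochi-type inequality of Proposition~\ref{boch-symsym} with the CAT(0) growth estimate of Lemma~\ref{cat0-growth}, applied to a suitable power $S^n$, and then optimize over $n$.

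The upper bound $\ell(S) \le L(S)$ is the last inequality of the general nonsense lemma (Lemma~\ref{gen-ineq}), so nothing needs to be done there.

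For the lower bound, I would first apply Proposition~\ref{boch-symsym} to the finite set $S^n$ and combine with the inequality $\lambda_{k_0}(S^n) \le \ell(S^n) = n\,\ell(S)$ from Lemma~\ref{gen-ineq} to obtain
\[
n\,\ell(S) \;\ge\; \frac{1}{\sqrt{d}}\,L(S^n) - C,
\]
where $d$, $C$, $k_0$ depend only on $X$. I would then substitute the CAT(0) growth bound $L(S^n) \ge \tfrac{\sqrt{n}}{2}\,L(S)$ of Lemma~\ref{cat0-growth}, which is available because the symmetric space $X$ is in particular CAT(0), to conclude that for every integer $n \ge 1$,
\[
\ell(S) \;\ge\; \frac{L(S)}{2\sqrt{d}\,\sqrt{n}} \,-\, \frac{C}{n}.
\]

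The remaining step is a straightforward optimization over $n$. In the regime $L(S) \ge 2C\sqrt{d}$ the choice $n=1$ already yields the linear lower bound $\ell(S) \ge L(S)/(2\sqrt{d})$. In the regime $L(S) < 2C\sqrt{d}$ I would choose $n$ comparable to $16\,C^2 d / L(S)^2$ (rounded to an integer $\ge 1$, which is allowed in this regime); at this value the two competing terms balance with the positive one winning by a factor of two, producing a bound of the shape $\ell(S) \ge c'\, L(S)^2$ for an explicit $c' = c'(d,C)$. Taking $c(X)$ to be the minimum of the two regime-constants gives the stated inequality.

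The conceptual content is that Proposition~\ref{boch-symsym} only becomes effective once $L(S)$ has surpassed the additive noise $C$, and the role of passing to a power $S^n$ is precisely to boost $L(S^n)$ past this threshold; the quadratic (rather than linear) behavior in the small $L(S)$ regime is then forced by the $\sqrt{n}$ growth rate in Lemma~\ref{cat0-growth}. Essentially all of the work has already been absorbed into Proposition~\ref{boch-symsym} and Lemma~\ref{cat0-growth}; the only step that could in principle go wrong is the optimization argument if the exponents in these two inputs did not match favorably, but they do, and the remaining manipulation is a short calibration.
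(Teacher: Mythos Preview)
Your proposal is correct and follows essentially the same route as the paper: apply a Bochi-type inequality to $S^n$, feed in the CAT(0) growth bound $L(S^n)\ge \tfrac{\sqrt{n}}{2}L(S)$, and optimize over $n$ to get the linear bound for large $L(S)$ and the quadratic bound for small $L(S)$. The only cosmetic difference is that the paper invokes Proposition~\ref{comp-comp} (which already bounds $\ell(S)$ directly), whereas you go through Proposition~\ref{boch-symsym} and the trivial inequality $\lambda_{k_0}\le \ell$; these yield the same working inequality $n\,\ell(S)\ge \tfrac{1}{\sqrt d}L(S^n)-C$. One small calibration slip: at $n=1$ your displayed inequality gives $\ell(S)\ge \tfrac{L(S)}{2\sqrt d}-C$, which vanishes at the threshold $L(S)=2C\sqrt d$ rather than yielding $\tfrac{L(S)}{2\sqrt d}$; either use the exact $L(S^1)=L(S)$ (no factor $1/2$) at $n=1$, or take the threshold to be $4C\sqrt d$.
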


Observe that a consequence of the left hand-side and of Theorem \ref{bw-thm} is the well-known fact that if a finitely generated group of isometries of $X$ is entirely made of elliptic elements then it fixes a point on $X$ or on its visual boundary (see \ref{bdy-fixed}).

When $X$ is a symmetric space of non-compact type, the Margulis lemma asserts the existence of a constant $\epsilon=\epsilon(X)>0$ depending only on $\dim X$ such that the following holds :  every finite set $S$ of isometries of $X$, which generates a discrete subgroup of $Isom(X)$, and has $L(S)<\epsilon$ must generate a virtually nilpotent subgroup. Combining this with the previous proposition, one obtains:

\begin{corollary}  There are constants $c_0,c_1>0$ depending only on $X$,  such that if $S$ is a finite set of isometries of a symmetric space $X$ of non-compact type generating a non virtually nilpotent discrete subgroup of $Isom(X)$, then
$$ c_1 \leq \ell(S) \leq L(S) \leq  c_0 \cdot \ell(S).$$
\end{corollary}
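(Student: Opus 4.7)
The plan is to deduce the Corollary as a direct combination of the preceding Proposition \ref{ell-L-sym} and the Margulis lemma. There is really no new geometric content beyond those two ingredients.

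First, the middle inequality $\ell(S) \leq L(S)$ is immediate from Lemma \ref{gen-ineq}, so I only need the outer two bounds.

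Next I invoke the Margulis lemma as stated in the paragraph preceding the Corollary: since $\langle S \rangle$ is discrete but not virtually nilpotent, we must have $L(S) \geq \epsilon$ for some constant $\epsilon = \epsilon(X)>0$. After possibly shrinking $\epsilon$, I assume $\epsilon \leq 1$. Combining with Proposition \ref{ell-L-sym} I get
$$\ell(S) \;\geq\; c \cdot \min\{L(S), L(S)^2\} \;\geq\; c\epsilon^2,$$
which supplies the absolute lower bound $c_1 := c\epsilon^2$.

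For the comparison $L(S) \leq c_0 \ell(S)$, I split into two cases. If $L(S) \geq 1$ then $\min\{L(S),L(S)^2\} = L(S)$, so Proposition \ref{ell-L-sym} directly gives $\ell(S) \geq c L(S)$. If instead $\epsilon \leq L(S) \leq 1$, then $\min\{L(S),L(S)^2\} = L(S)^2$ and Proposition \ref{ell-L-sym} yields $\ell(S) \geq c L(S)^2 \geq c\epsilon \cdot L(S)$. In either case $\ell(S) \geq c\epsilon \cdot L(S)$, so I can take $c_0 := 1/(c\epsilon)$. This completes the proof.

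There is essentially no obstacle here: all the difficulty is hidden in Proposition \ref{ell-L-sym} (which requires the Bochi-type analysis in symmetric spaces developed in Section \ref{sym-spaces}) and in the Margulis lemma (a classical input). The role of the Margulis lemma is precisely to rule out the regime $L(S) \to 0$, where the quadratic lower bound $\ell(S) \gtrsim L(S)^2$ from Proposition \ref{ell-L-sym} would degenerate and make the ratio $L(S)/\ell(S)$ unbounded.
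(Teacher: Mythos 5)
Your proposal is correct and is exactly the argument the paper intends: the corollary is stated as a direct combination of the Margulis lemma (which rules out $L(S)<\epsilon$ for discrete non virtually nilpotent $\langle S\rangle$) with Proposition \ref{ell-L-sym}, and your case split merely spells out the elementary bookkeeping. Nothing further is needed.
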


In \cite[Cor 1.3]{burger-pozzetti} Burger and Pozzetti show that in the case of maximal representations of a surface group in the Siegel upper-half space, the above inequalities hold already with $\lambda(S)$  in place of $\ell(S)$ and with an explicit value of $c_1$.

\subsection{Euclidean spaces}
When $X$ is a finite dimensional Euclidean space, it is still true that $L(S)=0$ if and only if $\ell(S)=0$, namely:

\begin{proposition} If $S$ is a finite set of isometries of Euclidean $\R^d$, then the following are equivalent:
\begin{enumerate}
\item $S$ has a common fixed point,
\item $L(S)=0$,
\item $\ell(S)=0$.
\end{enumerate}
\end{proposition}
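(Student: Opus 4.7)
The chain $(1)\Rightarrow(2)\Rightarrow(3)$ is immediate: a common fixed point $x_0$ gives $L(S,x_0)=0$, and Lemma~\ref{gen-ineq} supplies $\ell(S)\leq L(S)$. I plan to handle $(2)\Rightarrow(1)$ by exploiting the linear structure of $\R^d$: the function $F(x):=\sum_{s\in S}|sx-x|^2$ is a non-negative quadratic polynomial on $\R^d$ and so attains its infimum at some point $x^*$. The sandwich $L(S,x)^2\leq F(x)\leq|S|\,L(S,x)^2$ makes $\inf F=0$ equivalent to $L(S)=0$, in which case $F(x^*)=0$ forces $sx^*=x^*$ for every $s\in S$.

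The substantive direction is $(3)\Rightarrow(2)$. Writing each isometry as $g(x)=A_g x+b_g$ with $A_g\in O(d)$, the screw-motion decomposition yields $\ell(g)=|\Pi_{\ker(A_g-I)}b_g|$, so $\ell(g)=0$ is equivalent to $b_g\in\operatorname{im}(A_g-I)=\ker(A_g-I)^\perp$, i.e.\ to $g$ being elliptic. Using $\ell(g)\leq\ell(S^{|g|_S})=|g|_S\cdot\ell(S)$ from Lemma~\ref{gen-ineq}, the hypothesis $\ell(S)=0$ makes every $g\in\Gamma:=\langle S\rangle$ elliptic. The problem then reduces to the classical statement that a subgroup of $\mathrm{Isom}(\R^d)$ consisting entirely of elliptic elements has a common fixed point, which I would prove by induction on $d$. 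Letting $\bar\Gamma\subset O(d)$ be the linear-part image and $V_0$ the subspace of $\bar\Gamma$-fixed vectors, the inclusion $V_0\subset\ker(A_g-I)$ together with ellipticity gives $b_g\in V_0^\perp$ for all $g$; hence $\Gamma$ preserves the orthogonal decomposition $\R^d=V_0\oplus V_0^\perp$ coordinate-wise and restricts to an all-elliptic action on $V_0^\perp$. When $V_0\neq 0$ the inductive hypothesis in lower dimension supplies a common fixed point on $V_0^\perp$, which we combine with any point of $V_0$.

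When $V_0=0$, the closure $K$ of $\bar\Gamma$ in $O(d)$ is compact with no fixed vector in $\R^d$, so the continuous function $v\mapsto\sup_{A\in K}|Av-v|$ on $S^{d-1}$ is strictly positive, giving some $c>0$; and $\Gamma$ contains no nontrivial translation (these have $\ell>0$), so $\Gamma\to\bar\Gamma$ is injective. I plan to show the $\Gamma$-orbit of the origin is bounded, after which the CAT(0) circumcenter theorem supplies a common fixed point. This orbit boundedness is the main obstacle: a clean cohomological approach would extend the cocycle $g\mapsto b_g$ continuously from $\bar\Gamma$ to $K$ and invoke $H^1(K,\R^d)=0$, but this extension itself requires an a priori bound. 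My strategy for the bound is by contradiction from a sequence $|b_{g_n}|\to\infty$: pass to a subsequence with $A_{g_n}\to A_\infty\in K$, and use the positivity of $c$ together with the ellipticity constraint on carefully chosen ratios $g_ng_m^{-1}$ (whose linear parts are forced close to $I$) to extract a contradiction.
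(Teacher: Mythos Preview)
Your reduction of $(3)\Rightarrow(1)$ has a genuine gap. You correctly deduce from $\ell(S)=0$ that every element of $\Gamma=\langle S\rangle$ is elliptic, but you then attempt to prove the intermediate statement ``a subgroup of $\mathrm{Isom}(\R^d)$ consisting entirely of elliptic elements has a common fixed point.'' That statement is \emph{false} in dimension $d\geq 4$: this is precisely Example~\ref{exBass} (due to Bass), where two rotations of $\R^4$ with distinct centers and linear parts freely generating a subgroup of $\SO(4)$ with no eigenvalue~$1$ produce a group with no global fixed point, yet every nontrivial element is elliptic. In that example one has $V_0=0$, $\Gamma\to\bar\Gamma$ injective, and no nontrivial translations --- exactly the setup of your final paragraph --- but the orbit of the origin is unbounded. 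Your proposed contradiction via ratios $g_ng_m^{-1}$ cannot close: the ellipticity constraint $b\in\operatorname{im}(A-I)$ does not force $|b|$ to be small when $A$ is close to $I$ (a small-angle rotation about a very distant center has large translation part while remaining elliptic).

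The point is that passing from $\ell(S)=0$ to ``every element is elliptic'' (i.e.\ $\lambda_\infty(S)=0$) discards information, and the paper shows explicitly that the Berger--Wang identity $\lambda_\infty(S)=\ell(S)$ fails for Euclidean isometries. The paper's proof of Proposition~\ref{linearescape} instead works directly with the closure $H$ of $\Gamma$ in $\mathrm{Isom}(\R^d)$, which has the form $R\ltimes(\Delta\oplus V)$. When the translation part $V$ is nonzero, density is used to find finitely many elements $\gamma_1,\dots,\gamma_N\in S^k$ whose translation parts form an $\epsilon$-net on the unit sphere of $V$ with rotation parts $\epsilon$-close to the identity; a direct estimate then shows that for any $x$ some $\gamma_i$ satisfies $\|\gamma_i x\|\geq\|x\|+\tfrac12$, forcing $\ell(S)\geq 1/(2k)>0$. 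This argument exploits the hypothesis $\ell(S)=0$ globally rather than element by element, which is essential.
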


While it is clear that $L(S)=0$ implies $\ell(S)=0$, the converse is slightly more subtle, because it can happen that $S$ has no global fixed point, and yet every single element in the group generated by $S$ has a fixed point (see Example \ref{exBass}). In particular one can have  $\ell(S)>0$ while $\lambda_\infty(S)=0$. This means that the Berger-Wang identity $(\ref{bw})$ fails for isometries of Euclidean spaces. See Section \ref{euclid-sec}.

By contrast, when $X$ is an infinite dimensional Hilbert space one can have $\ell(S)=0$ while $L(S)>0$. A sublinear cocycle need not be trivial in the reduced first cohomology \cite[3.9]{Cornulier-Tessera-Valette}, see Section \ref{hilbert-sec}.

\subsection{Large torsion balls and escape from elliptics}

Schur proved in 1907 \cite{schur} that a subgroup of $\GL_n(\C)$ all of whose elements are of finite order must be finite. In \cite{breuillard-heightgap} the first author proved a uniform version of Schur's theorem:

\begin{theorem}[uniform Schur theorem \cite{breuillard-heightgap}] There is $N=N(d) \in \N$ such that for  every finite symmetric subset $S \subset \GL_d(\C)$ containing $1$ and generating an infinite group, there is an element $g \in S^N$ of infinite order.
\end{theorem}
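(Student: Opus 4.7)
Proof proposal. I would approach this by a dichotomy on the joint spectral radius $\ell(S)$ coming from the action of $S$ on the Finsler symmetric space $X=\SL_d(\C)/\SU_d(\C)$ of $(\ref{finsler})$.

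Suppose first that $\ell(S)>0$. Then Proposition~\ref{boch-symsym} produces $j\leq k_0=k_0(d)$ and $g\in S^{j}$ with $\ell(g)>0$. Translated back to matrices, $g$ has spectral radius strictly greater than $1$, hence an eigenvalue off the unit circle, and in particular $g$ is of infinite order. In this regime one may take $N=k_0$.

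Suppose now that $\ell(S)=0$. Proposition~\ref{ell-L-sym} then forces $L(S)=0$, so $\langle S\rangle$ has at least approximately a global fixed point in $X$. By a standard argument (minimiser of the displacement function, or a centre of mass), either $\langle S\rangle$ fixes a point in $X$, in which case up to conjugation $\langle S\rangle\subset \SU_d(\C)$, or it fixes a point at infinity and one can induct on $d$ via the Levi factor of the stabilising parabolic. We may therefore reduce to the case where $\langle S\rangle$ is contained in a compact subgroup of $\GL_d(\C)$; in particular every element is semisimple with eigenvalues on the unit circle, so the Berger--Wang content of Theorem~\ref{bw-thm} is automatically verified. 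Since $\langle S\rangle$ is infinite, Schur's classical theorem produces an element of infinite order, and the remaining task is to bound the word length in which such an element appears.

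This last quantitative step is the main obstacle; it cannot be extracted from abstract Schur and is where the height-gap theorem of \cite{breuillard-heightgap} enters. After a specialisation argument reducing to matrices over $\bar{\Q}$, the key input is that an algebraic number of modulus $1$ which is not a root of unity has Weil height bounded below by some $c(d)>0$. Eigenvalues of elements of $\GL_d(\bar{\Q})$ are algebraic of degree $\leq d$, so roots of unity among them have uniformly bounded order $M=M(d)$ (namely all $n$ with $\varphi(n)\leq d$). Thus the subset $V_M\subset\SU_d(\C)$ of matrices all of whose eigenvalues are roots of unity of order $\leq M$ is a proper Zariski-closed subset. A quantitative escape from $V_M$, combined with the height-gap lower bound (which prevents eigenvalues from accumulating near roots of unity without actually being low-order roots of unity), then yields $g\in S^N$ with an eigenvalue that is not a root of unity, hence of infinite order, for some $N=N(d)$ depending only on $d$.
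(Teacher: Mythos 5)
Before comparing, note that this statement is not proved in the paper at all: it is quoted from \cite{breuillard-heightgap}, where it is deduced from the height gap theorem (a uniform lower bound on the normalized height of generating sets of non-virtually-solvable subgroups of $\GL_d(\overline{\Q})$), together with a specialization argument and a separate treatment of the virtually solvable case. So your proposal has to stand on its own, and it has a decisive gap in the first branch. From $\ell(S)>0$ you cannot conclude via Proposition~\ref{boch-symsym} that some $g\in S^{j}$, $j\leq k_0(d)$, has $\ell(g)>0$: the inequality $\lambda_{k_0}(S)\geq L(S)/\sqrt{d}-C$ carries an additive constant, so it yields nothing unless $L(S)$ exceeds $C\sqrt{d}$, and neither $\ell(S)>0$ nor $L(S)>0$ gives such a bound (applying it to $S^n$ requires $n$ of size roughly $1/\ell(S)$, which is not controlled by $d$). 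This is not a repairable technicality: the paper itself shows (Lemma~\ref{so41}, Proposition~\ref{almostell}, and the discussion following Proposition~\ref{boch-symsym}) that for every $n$ there exist $S$ with $\ell(S)>0$ but $\lambda_n(S)=0$, i.e.\ there is no uniform escape from elliptic elements. Hence in the regime $\ell(S)>0$ you cannot certify infinite order by exhibiting positive translation length within boundedly many steps; the short words may all be elliptic, and you are then facing exactly the arithmetic difficulty (unit-modulus eigenvalues that are not roots of unity) which your dichotomy was supposed to confine to the case $\ell(S)=0$. The dichotomy therefore does not isolate the hard case, and the height-gap machinery would be needed in both branches, at which point the geometric preamble buys nothing.

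The second branch also has gaps in its quantitative step. The assertion that roots of unity occurring as eigenvalues have order at most $M(d)$ (those $n$ with $\varphi(n)\leq d$) presupposes that the eigenvalues have degree at most $d$ over $\Q$; after specialization the entries lie in a number field of unbounded degree, and the eigenvalues are only of degree at most $d$ over that field, so no bound on the order of a root of unity follows. Similarly, the reduction ``fix a point at infinity and induct via the Levi factor'' needs a uniform argument when the image in the Levi is finite: the obstruction then sits in the unipotent radical, and producing a nontrivial unipotent (hence infinite-order) element inside $S^{N(d)}$ is not automatic — this is essentially the virtually solvable/amenable case that \cite{breuillard-heightgap} must treat by separate arguments, and it is also where the examples of Grigorchuk--de la Harpe and Bartholdi--Cornulier (which force $N(d)\to\infty$) live. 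In sum, you correctly identify the height gap theorem as the indispensable input, but the surrounding argument as written is contradicted by the paper's own counterexamples in one branch and is only a sketch with genuine holes in the other.
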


Examples of Grigorchuk and de la Harpe \cite{grigorchuk-harpe} and of Bartholdi and Cornulier \cite{bartholdi-cornulier} show that $N(d)$ must grow to infinity with $d$. In Section \ref{escape} we will show that, by contrast with the case of torsion elements, one cannot escape elliptic elements in general:

\begin{lemma}[no uniform escape from elliptics]\label{so41} In the real Lie group $\SO(4,1)$, for each $N \in \N$ one can find a pair $S=\{a,b\}$ such that for all words $w$ of length at most $N$ in $a^{\pm 1},b^{\pm 1}$ the cyclic subgroup $\langle w \rangle$ is bounded, but the subgroup $\langle S \rangle$ is unbounded and even Zariski dense (in particular it does not fix a point on hyperbolic $4$-space $\mathbb{H}^4$ nor on the boundary $\partial \mathbb{H}^4$).
\end{lemma}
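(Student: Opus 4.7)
The approach places $a, b$ as small perturbations of a pair $(a_0, b_0)$ inside a maximal compact subgroup $K = \SO(4) \subset G = \SO(4,1)$. The rank discrepancy (a rank $2$ compact subgroup in a rank $1$ ambient group) is crucial: a generic element of $K$, viewed in $G \subset \GL_5(\R)$, has five distinct eigenvalues $\{1, e^{\pm i \theta_1}, e^{\pm i \theta_2}\}$ and is therefore regular semisimple in $G$.

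I would first choose $(a_0, b_0) \in K \times K$ so that for every nontrivial reduced word $w$ of length at most $N$, the element $w(a_0, b_0) \in K$ is regular in $G$. The failure of regularity for a fixed $w$ cuts out a proper Zariski closed subvariety of $K^2$, and the set $W_N$ of words of length $\le N$ is finite, so a generic choice of $(a_0, b_0)$ satisfies this for all $w \in W_N \setminus \{1\}$ simultaneously. Every such $w(a_0, b_0)$ is then regular elliptic in $G$.

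Next, I would argue that the regular elliptic locus $\mathcal{E}^{\mathrm{reg}} \subset G$ is open. For $g \in \SO(4,1)$ the eigenvalues obey both complex-conjugate pairing (real matrix) and reciprocal pairing (preservation of the form), so a conjugate pair on the unit circle can only leave the unit circle by splitting from its reciprocal partner, i.e.\ by colliding with another eigenvalue; this is excluded near a regular element by continuity of the spectrum and the fixed cardinality $5$. Consequently there is $\epsilon > 0$ such that every $(a, b) \in G^2$ with $d_G(a, a_0), d_G(b, b_0) < \epsilon$ has $w(a, b) \in \mathcal{E}^{\mathrm{reg}}$ for every $w \in W_N$, and in particular $\langle w(a,b) \rangle$ is bounded.

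Finally I would use density of Zariski-dense generating pairs to pick $(a, b)$ in the $\epsilon$-ball with $\langle a, b \rangle$ Zariski dense in $G$: the non-Zariski-dense locus is contained in $\bigcup_{H} (H \times H)$ taken over the countably many conjugacy classes of proper algebraic subgroups $H \subsetneq G_{\C}$, hence is meager, and by Baire category its complement meets every nonempty open set. For the resulting $S = \{a,b\}$ the subgroup $\langle S \rangle$ is Zariski dense, and therefore unbounded and without fixed point on $\mathbb{H}^4 \cup \partial \mathbb{H}^4$, since each such stabilizer is a proper algebraic subgroup. The main obstacle is the openness of $\mathcal{E}^{\mathrm{reg}}$ in $G$; this is what powers the perturbation, and it is exactly where rank matters — for $\SO(3,1)$ the maximal compact $\SO(3)$ has rank $1$ and every element of $K$ has a double eigenvalue $1$ in $G$, so elements of $K$ are never regular and the same strategy fails.
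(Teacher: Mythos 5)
Your strategy is essentially the paper's (Example \ref{noescape}): start from a pair in $K=\SO(4)$ whose short word evaluations are ``robustly'' elliptic in $\SO(4,1)$, perturb using openness, and then use density of Zariski-dense pairs. Your openness argument for the regular elliptic locus, via the conjugate and reciprocal symmetries of the spectrum and the impossibility of collisions near a regular element, is correct and is a legitimate self-contained substitute for the paper's argument that the union of conjugates of $\SO(4)$ has non-empty interior because $\rk_\C \SO(4)=\rk_\C\SO(4,1)$.

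However, the two remaining steps are asserted exactly where the real content lies. First, you claim that for each nontrivial reduced word $w$ the set $\{(a_0,b_0)\in K^2 : w(a_0,b_0)\ \text{not regular in } G\}$ is a \emph{proper} Zariski-closed subvariety of $K^2$. Closedness is clear (vanishing of the discriminant of the characteristic polynomial), but properness is the whole point: you must rule out that the image of the word map $w\colon K\times K\to K$ lies entirely in the non-regular locus, and for an arbitrary word this needs the Borel--Larsen theorem on dominance of word maps on semisimple algebraic groups (applied to $\SO_4$, whose compact real points are Zariski dense). This is precisely the external ingredient the paper flags; without it your ``generic choice'' of $(a_0,b_0)$ has no justification. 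Second, your argument that Zariski-dense pairs are dense near $(a_0,b_0)$ does not work as written: the non-dense pairs are not contained in $\bigcup_H (H\times H)$ over conjugacy-class \emph{representatives} $H$ (a pair may generate a subgroup of some conjugate of $H$), and the union over all proper algebraic subgroups is uncountable, so Baire category does not apply directly. The correct statement --- that pairs generating a non-Zariski-dense subgroup form a proper closed subvariety of $G\times G$ --- is the theorem of Guralnick and of Breuillard--Green--Guralnick--Tao that the paper cites; once you invoke it, the bad set has empty interior in the real topology and your perturbation argument closes as intended.
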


The proof is based on the Tits alternative and the Borel-Larsen theorem \cite{borel} on the dominance of word maps on semisimple algebraic groups.

On the other hand  the geometric Bochi-type inequalities proved in this paper give a tool to produce quickly (i.e. in $S^n$ for $n$ small) a non-elliptic element. The time needed to produce this element however depends on $L(S)$. For example Theorem \ref{formula.S.hyp} in Section \ref{serre-hyp} shows that, when $X$ is a $\delta$-hyperbolic space, there is $g \in S \cup S^2$ with $\ell(g)>0$ (and in particular of infinite order) provided $L(S) > K\delta$. However $L(S^n)$ may well remain under $K\delta$ for large $n$, even though $\ell(S)>0$. For example quotients of infinite Burnside groups provide examples of $\delta$-hyperbolic groups with large torsion balls in their Cayley graph (\cite{olshanski, bartholdi-cornulier}).

Lemma \ref{so41} also shows that even though the geometric Berger-Wang identity $\lambda_\infty(S)=\ell(S)$ holds in symmetric spaces by Theorem \ref{bw-thm}, the additive constant $C$ in the Bochi-type inequality $\lambda_k(S) \geq \ell(S)/\sqrt{d} - C$, which is a consequence of Proposition \ref{boch-symsym}, cannot be zero.

\subsection{Uniform exponential growth (UEG)}

If $S$ is a finite generating subset in a group $\Gamma$, we define
$$h(S)=\lim_{n\to +\infty} \frac{1}{n}\log |S^n|, $$
and call $h(S)$ the {\it entropy} of $\Gamma$ for $S$.

The original motivation for the present paper was a theorem of Besson-Courtois-Gallot \cite{BCG-jems}. They showed that given $a>0$ and $n \in \N$, there is a constant $c(n,a)>0$ such that if $M$ is a complete Riemannian manifold of dimension $n$ with pinched sectional curvature $\kappa_M \in [-a^2,-1]$ and $\Gamma$ is a discrete group of isometries of $M$ generated by a finite set $S$, then 
$$h(S)=\lim_{n\to +\infty} \frac{1}{n}\log |S^n| > c(n,a)>0,$$
provided $\Gamma$ is not virtually nilpotent.

We will generalize this result and give a new proof of it, seeing it as a fairly direct consequence of the Bochi-type inequality of Theorem \ref{bochi-hyp}. Our approach provides the additional information that generators of a free semigroup can be found in a bounded ball -- a feature the Besson-Courtois-Gallot proof did not yield. The proof will be given in Section \ref{application-ueg}, but let us briefly explain here how this works. Due to its negative curvature, the manifold $M$ is a $\delta$-hyperbolic metric space for some $\delta>0$ which is independent of $n$ and $a$. Once a hyperbolic element in $S$ or a bounded power $S^n$ has been found, a simple ping-pong argument gives generators of a free semi-group (cf. Section \ref{ping-pong} below). Thanks to the geometric Bochi inequality for hyperbolic spaces, i.e. Theorem \ref{bochi-hyp}, in order to find a hyperbolic element, we only need to check that $L(S)>K \delta$ or at least that $L(S^n)>K\delta$ for some controlled $n$ (cf. Theorem \ref{hyp.tri}). 

The manifold is also $CAT(0)$, so Lemma \ref{cat0-growth} applies, and we see that we only need to rule out the possibility that $L(S)$ is very small.
 But if we assume that the group $\Gamma$ is discrete, the Margulis lemma tells us that there is a constant $\epsilon=\epsilon(n,a)>0$ such that if $L(S)<\epsilon$ and $\langle S \rangle$ is discrete, then $\langle S \rangle$  is virtually nilpotent. This ends the proof.

Using instead the generalized Margulis lemma proved by Green, Tao and the first author in \cite{BGT} this argument yields the following generalization of the Besson-Courtois-Gallot theorem. 

\begin{theorem}[UEG for hyperbolic spaces with bounded geometry]\label{bgt-bcg} Given $P \in \N$, there is a constant $N(P)\in \N$ such that the following holds. Assume that $X$ is a geodesic $\delta$-hyperbolic space with the property that every ball of radius $2\delta$ can be covered by at most $P$ balls of radius $\delta$.  Let $S$ be a finite symmetric set of isometries generating a group $\Gamma$ and containing $1$. Then either $\Gamma$ is virtually nilpotent, or $S^N$ contains two generators of a free semigroup and in particular:
$$h(S) > \frac{1}{N}\log 2.$$
\end{theorem}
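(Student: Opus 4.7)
The plan is to combine three ingredients: the generalized Margulis lemma of Breuillard--Green--Tao \cite{BGT} for doubling metric spaces, the geometric Bochi inequality for $\delta$-hyperbolic spaces (Theorem \ref{bochi-hyp}), and a ping-pong argument at the Gromov boundary. All quantities can be rescaled so that $\delta = 1$, so every constant will depend on $P$ only.

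The dichotomy is driven by the size of the joint displacement of bounded powers of $S$. Let $K$ be the absolute constant from Theorem \ref{bochi-hyp}. By the generalized Margulis lemma of \cite{BGT}, applied to the $P$-doubling metric space $X$ at scale $\delta$, there is $\epsilon = \epsilon(P) > 0$ such that if $L(S) \leq \epsilon \delta$ then $\Gamma = \langle S\rangle$ is virtually nilpotent; more generally the same conclusion holds if $L(S^n)$ stays below $\epsilon\delta$ for all $n$ up to a controlled $n_0(P)$. In this case we are in the first alternative of the theorem. Otherwise, I would invoke the growth estimate for $L(S^n)$ in the $\delta$-hyperbolic setting (Theorem \ref{hyp.tri} of the introduction): as soon as some $L(S^m)$ exceeds the Margulis threshold $\epsilon\delta$, there is a controlled exponent $n = n(P)$ with $L(S^n) > K\delta$.

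With $L(S^n) > K\delta$ in hand, Theorem \ref{bochi-hyp} applied to $S^n$ yields an element $g \in S^n \cup S^{2n}$ with $\ell(g) \geq L(S^n) - K\delta > 0$. Such a $g$ is hyperbolic, with a pair of attracting/repelling boundary fixed points $g^\pm \in \partial X$. If every generator $s \in S$ preserves $\{g^+, g^-\}$ setwise, then $\Gamma$ sits inside the stabilizer of this pair, which by the standard classification of isometries of $\delta$-hyperbolic spaces is virtually cyclic, hence virtually nilpotent. Otherwise, pick $s \in S$ with $s \cdot g^+ \notin \{g^+, g^-\}$; then by a standard ping-pong at the boundary there is a power $k = k(P)$ such that $g^k$ and $s g^k s^{-1}$ generate a free semigroup. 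Both elements lie in $S^N$ for some $N = N(P)$ depending only on $P$, yielding $h(S) \geq \frac{1}{N}\log 2$.

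The main obstacle is the first step: ensuring that either $L(S^n)$ crosses the Bochi threshold $K\delta$ within a bounded number of powers, or $\Gamma$ is already virtually nilpotent. The doubling hypothesis is essential here, since in a general $\delta$-hyperbolic space one could have $L(S)$ arbitrarily small with $\Gamma$ far from virtually nilpotent (consider actions on trees with small edge-displacement). Matching the Margulis constant $\epsilon(P)$ from \cite{BGT} to the Bochi threshold $K\delta$ via the quantitative growth statement Theorem \ref{hyp.tri} is the technical heart of the argument. The boundary ping-pong, by contrast, is fairly standard, with the required exponent $k$ controlled by $\ell(g)$ and $\delta$ through the usual contraction estimates on shadows at infinity.
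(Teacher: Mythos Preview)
Your overall architecture is right and matches the paper's: Margulis-type input from \cite{BGT}, then the Bochi inequality (Theorem \ref{bochi-hyp}) to produce a hyperbolic element, then ping-pong (Proposition \ref{semi.group}). The gap is precisely the step you flag as the ``technical heart'': passing from $L(S^m) > \epsilon\delta$ to $L(S^n) > K\delta$ for a controlled $n$. Theorem \ref{hyp.tri} is the trichotomy itself, not a growth estimate; and the only growth bounds available in a bare $\delta$-hyperbolic space (Proposition \ref{disp2}, Corollary \ref{growth.S.hyp}) kick in only once $L(S)$ already exceeds a fixed multiple of $\delta$, so they give nothing when $\epsilon < K$. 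The $\sqrt{n}$ lower bound you may have in mind (Lemma \ref{cat0-growth}) requires CAT$(0)$, which is present in the Besson--Courtois--Gallot setting but not in the hypotheses of Theorem \ref{bgt-bcg}. So as written there is no mechanism to bridge the Margulis threshold $\epsilon\delta$ up to the Bochi threshold $K\delta$.

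The paper avoids this bridge entirely. Rather than invoking the Margulis lemma at a small scale $\epsilon$, it uses the approximate-groups form of the BGT result (Theorem \ref{bgt.nilpotent}), which yields: for \emph{any} threshold $J$ there is $k=k(P,J)$ such that $L(S^k)<J$ already forces $\langle S\rangle$ to be virtually nilpotent (this is Corollary \ref{bgt.packing}, using that discreteness and $P$-packing give $|S_J(x)^2|\le P^{2J}|S_J(x)|$). Taking $J=K+1$ (after rescaling to $\delta=1$), one simply checks $L(S^k)$ for this single $k$: if it is below $K+1$ the group is virtually nilpotent, and if not the trichotomy Theorem \ref{hyp.tri} applies directly to $S^k$, with no growth step needed. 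Note that discreteness of $\Gamma$ (implicit here, explicit in Theorem \ref{ueg.new}) is used both for this packing-to-doubling step and to conclude that the stabilizer of a pair of boundary points is virtually cyclic; your appeal to ``standard classification'' for the latter also needs it.
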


We stress that the constant $N$ depends only on $P$ and not on $\delta$.

\subsection{Hyperbolic groups}

It is known that a non-elementary hyperbolic group
has uniform exponential growth and even uniform  uniform exponential growth, that is subgroups have uniform exponential growth. See \cite{delzant,koubi, guirardel-champetier}. Uniform exponential growth
follows immediately from our general result on actions on hyperbolic spaces (see Theorem \ref{hyp.tri}). The point is that when $X$ is the Cayley graph of a hyperbolic group, then it is straightforward that $L(S^n) \geq n$
if $S$ generates the group so the Margulis lemma used above in Theorem \ref{bgt-bcg} is irrelevant.

\begin{theorem}[Growth of  hyperbolic groups]\label{ueg-hyp-intro} There is an absolute constant $C_1>0$ such that if $G$ is a group with finite generating set $S$ whose Cayley graph $Cay(G,S)$ is $\delta$-hyperbolic, then either $G$ is finite or virtually cyclic, 
or $S^{M}$ contains two hyperbolic elements that are generators of a free semi-group, where $M$ is the least integer larger than $C_1\delta$.  In particular, $h(S) \ge (\log 2)/M$.
\end{theorem}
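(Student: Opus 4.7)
The plan is to combine two ingredients: a trivial linear lower bound on $L(S^n)$ coming from the Cayley graph structure, and the Bochi-type inequality of Theorem \ref{bochi-hyp}. Since $X=\Cay(G,S)$ is $\delta$-hyperbolic and $G$ acts on it by left translation, every vertex is moved by the longest element of $S^n$ by exactly distance $n$ (this uses only that $G$ is infinite, so that $S^n$ contains an element of word-length $n$); passing to non-vertex points loses at most a universal additive constant, so $L(S^n)\geq n-1$. This linear growth is $\delta$-independent, and is the key simplification compared with the proof of Theorem \ref{bgt-bcg}: the Margulis-type step that rules out small $L(S)$ is automatic here, which is why the constant $C_1$ below is absolute.

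Assuming $G$ is neither finite nor virtually cyclic, I would then pick $n=\lceil (K+K_0)\delta\rceil$, where $K$ is the constant of Theorem \ref{bochi-hyp} and $K_0$ is a sufficiently large universal constant to be chosen in the ping-pong step below. Applying Theorem \ref{bochi-hyp} to the set $S^n$ yields
\[
\lambda_2(S^n)\ \geq\ L(S^n)-K\delta\ \geq\ K_0\delta - 1,
\]
and hence produces an element $g\in S^n\cup S^{2n}$ that is hyperbolic (loxodromic) with $\ell(g)$ of order $K_0\delta$. Because $G$ is non-elementary, the stabilizer in $G$ of the pair $\{g^+,g^-\}\subset\partial X$ is a proper subgroup of $G$, so there exists $s\in S$ that fails to preserve this pair, and $h:=sgs^{-1}$ is then a second hyperbolic element with the same translation length as $g$ and with fixed-point set on $\partial X$ disjoint from $\{g^+,g^-\}$. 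Both $g$ and $h$ have word-length at most $2n+2=O(\delta)$.

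The last step is the quantitative ping-pong for isometries of a $\delta$-hyperbolic space recalled in Section \ref{ping-pong}. By choosing $K_0$ above the explicit ping-pong threshold (a universal multiple of $\delta$), the two elements $g$ and $h$ themselves freely generate a sub-semigroup of $G$, without having to take any further powers. Both elements then lie in $S^M$ for $M=\lceil C_1\delta\rceil$ with $C_1$ absolute, and the entropy estimate $h(S)\geq (\log 2)/M$ follows immediately from counting distinct words in $g$ and $h$. The main technical point, and the reason I would choose $n$ linear in $\delta$ at the very start, is to make the ping-pong quantitative with constants independent of $\delta$ and of the particular group: one needs $\ell(g)$ and $\ell(h)$ to exceed a fixed universal multiple of $\delta$, rather than some a priori $\delta$-dependent threshold that would force us to pass to $g^A,h^A$ with $A=A(\delta)$ and thus destroy the linear-in-$\delta$ bound on $M$.
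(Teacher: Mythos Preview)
Your overall strategy matches the paper's: the paper packages Bochi plus ping-pong into a trichotomy (Theorem~\ref{hyp.tri}) and then applies it to $S^n$ with $n$ of order $\delta$, using Lemma~\ref{ball2} to guarantee $L(S^n)\geq n$. You are essentially unpacking that trichotomy and applying its ingredients directly, which is fine.

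There is, however, a genuine error in your justification of the linear lower bound on $L(S^n)$. You assert that ``every vertex is moved by the longest element of $S^n$ by exactly distance $n$'', but this is false. For the left action of $G$ on $\Cay(G,S)$ one has $d(x,sx)=|x^{-1}sx|_S$, the word-length of the \emph{conjugate} of $s$ by $x$, and conjugation does not preserve word-length. For a concrete failure take $G=F_2=\langle a,b\rangle$, $s=aba^{-1}$ (word-length $3$), $x=a$: then $d(x,sx)=|b|_S=1$. So a single ``longest element'' cannot witness $L(S^n,x)\geq n$ uniformly in $x$. The correct argument is a pigeonhole count (the paper's Lemma~\ref{ball2}): if $S^n x\subset xS^{n-1}$ for some vertex $x$, then $|S^n|\leq|S^{n-1}|$, forcing $S^n=S^{n-1}$ and hence $G$ finite. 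This is a one-line fix, but it is a different idea from yours and it is what actually makes the step go through.

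Two smaller points. First, from $s\{g^+,g^-\}\neq\{g^+,g^-\}$ you only get $Fix(h)\neq Fix(g)$, not that the two pairs are disjoint; fortunately Proposition~\ref{semi.group} only requires inequality, so your ping-pong step still applies. Second, that proposition produces a free semigroup only after possibly replacing $g$ or $h$ by its inverse; this does not affect the word-length bound but should be said.
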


We stress that $M$ depends only on $\delta$. This special feature was not explicit
for example in \cite{koubi}. Stated as such the theorem is sharp inasmuch as  $M$ must depend on $\delta$ (it tends to infinity as $\delta$ goes to infinity) : indeed Olshanski \cite{olshanski} gave for sufficiently large primes $p$ and radii $R$ examples of non-elementary $2$-generated Gromov hyperbolic groups (with large $\delta=\delta(R,p)$) whose ball of radius $R$ is made of $p$-torsion elements (and hence there is no element of infinite order in a ball of small radius). 

The same conclusion about $h(S)$ has recently been obtained independently by Besson, Courtois, Gallot and Sambusetti in their recent preprint \cite{BCGS-preprint} without exhibiting a free semi-group, but with the following explicit lower bound $h(S) \ge \log 2/(26\delta+16)$. Note that by contrast sharpness of such a lower bound is not known as it is still an open problem whether or not there is an absolute constant $c>0$ such that $h(S)>c$ for every generating set $S$ of an arbitrary hyperbolic group (independently of $\delta$). 

Uniform uniform exponential growth also 
immediately follows from Theorem \ref{hyp.tri}.
In this case,  $S$ may not generate the whole
group, and the constant $M$ must depend not only on $\delta$, but also on the size of $S$. See Corollary \ref{hyp.uueg} and Remarks \ref{other-ref} and \ref{olshanski-exples}.

\subsection{Uniform Tits alternative for groups acting on trees}
In \cite{breuillard-gelander} Gelander and the first author proved a uniform Tits alternative for linear groups. Namely given a non virtually solvable finitely generated group $\Gamma$ contained in $\GL_n(k)$ for some field $k$, there is an integer $N=N(\Gamma)$ such that for every symmetric generating set $S$ of $\Gamma$ the ball $S^N$ contains a pair of free generators of a free subgroup. The number $N$ was later shown in \cite{breuillard-heightgap, breuillard-tits} to be a constant depending only on the dimension $n$ and in particular independent of the group $\Gamma$. A natural question arises as to whether or not a similar phenomenon occurs for subgroups of $Isom(X)$, where $X$ is a tree.  We will give the following counter-example:

\begin{proposition}\label{counter-tree} Given $N \in \N$ there are two isometries $a,b$ of a trivalent tree without common fixed point on the tree nor on its boundary, such that for every two words $w_1,w_2$ of length at most $N$ in $a, b$ and their inverses, the subgroup $\langle w_1,w_2\rangle$ is not free non-abelian.
\end{proposition}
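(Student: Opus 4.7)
The plan is, for each $N$, to produce specific isometries $a, b$ of the trivalent tree $T$ which jointly act non-elementarily (no common fixed point in $T \cup \partial T$) but such that every length-$\le N$ pair of words in $a^{\pm 1}, b^{\pm 1}$ generates a subgroup that is not free non-abelian. By the Tits alternative for tree isometries, ``not free non-abelian'' is equivalent to ``elementary'' -- i.e.\ fixing a point in $T$, a point in $\partial T$, or a pair of boundary points -- so it suffices to arrange every short-word pair to share a common invariant point or invariant pair in $T \cup \partial T$ while the whole $\langle a, b\rangle$ does not.

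My strategy is to realize $T$ as the Bass-Serre tree of $\Gamma = \mathbb{Z}/3 \ast \mathbb{Z}/3 = \langle \alpha, \beta \mid \alpha^3 = \beta^3 = 1\rangle$, on which $\alpha, \beta$ act as rotations fixing adjacent vertices and whose Bass-Serre tree is genuinely trivalent. The naive choice $a = \alpha, b = \beta$ already fails for $N \ge 2$: the length-two words $\alpha\beta$ and $\alpha\beta^2$ are hyperbolics with disjoint pairs of endpoints in $\partial T$, so they freely generate a free subgroup. The idea is then to replace $\Gamma$ by a $2$-generated quotient or deformation $\Gamma_N$ that still acts on a trivalent tree, in which systematically every offending length-$\le N$ ping-pong pair is destroyed by forcing the two candidate hyperbolics onto a common axis (or forcing one to become elliptic), while a single witness word of length strictly greater than $N$ is preserved as a hyperbolic whose axis remains transverse to those of the short words, guaranteeing non-elementarity of $\Gamma_N$. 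Since only finitely many length-$\le N$ word pairs exist, only finitely many such modifications are needed, and they can in principle be imposed through relations in a carefully designed graph-of-groups extension.

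The main obstacle will be carrying out these modifications without accidentally collapsing the global action: each imposed relation must not create new short ping-pong pairs, must not turn $\Gamma_N$ elementary, and must be compatible with a faithful trivalent-tree action. This requires a small-cancellation or graph-of-groups Dehn-filling type control on the inserted relators, in the combinatorial spirit of the Olshanski-type examples of hyperbolic groups with large torsion balls referenced after Theorem~\ref{ueg-hyp-intro} and in analogy with the algebraic word-map / Borel--Larsen argument used in Lemma~\ref{so41}. Once a compatible $\Gamma_N$ has been produced, the verification splits into (i) exhibiting the long witness word to confirm non-elementarity, and (ii) a direct normal-form analysis of each length-$\le N$ word to locate its (now common) invariant point in $T \cup \partial T$.
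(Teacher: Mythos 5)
There is a genuine gap: your proposal never actually produces the isometries $a,b$. Everything is deferred to an unspecified ``small-cancellation or graph-of-groups Dehn-filling'' step, and that step is precisely where the whole difficulty of the proposition lives. Worse, the quotient strategy is structurally problematic for this statement: the conclusion requires honest isometries of the trivalent tree $T$, but a proper quotient $\Gamma_N$ of $\Z/3 * \Z/3$ cannot act via the original Bass--Serre action (that action is faithful), and a small-cancellation quotient in which you have imposed relations mixing the two factors is typically freely indecomposable, so there is no reason it acts faithfully and non-elementarily on \emph{any} trivalent tree, let alone one in which you control all length-$\le N$ subgroups. You acknowledge this obstacle but offer no mechanism to overcome it, so the argument does not go through as written. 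A secondary but real error: the reduction ``not free non-abelian $\iff$ elementary'' is false in both directions. A non-elementary subgroup of $Isom(T)$ merely \emph{contains} a free non-abelian subgroup (e.g.\ $\Z/3 * \Z/3$ itself is non-elementary and not free), and conversely vertex stabilizers in $\Aut(T)$ of the trivalent tree contain non-abelian free subgroups (free groups generated by finite automata act faithfully fixing the root), so ``$\langle w_1,w_2\rangle$ fixes a point'' does not by itself rule out freeness. You need a concrete obstruction to freeness, such as torsion or commutation.

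For comparison, the paper's proof is a direct explicit construction, with no quotients: take two hyperbolic isometries $a,b$ of translation length $1$ whose axes overlap in a segment $[p,q]$ of length $L\gg N$ and translate in the same direction there; then $a^{L+1},b^{L+1}$ play ping-pong, so $\langle a,b\rangle$ fixes no point of $T\cup\partial T$. The key extra ingredient is rigidity from a planar structure: fix a labelling of edges by $\{1,2,3\}$ and choose $a,b$ to preserve the cyclic ordering of edges at every vertex except possibly at $p$ and $q$. Then for any words $c,d$ of length $\le L/2$ the commutator $[c,d]$ fixes $p$ and preserves the cyclic order at all vertices far from $p,q$; once a power of $[c,d]$ fixes a large ball around $p$ pointwise, the order-preservation forces that power to be trivial, so $[c,d]$ has finite order and $\langle c,d\rangle$ cannot be free non-abelian. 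If you want to salvage your approach, the lesson is that the short-word obstruction should be an explicit finiteness/commutation statement verified inside a fixed faithful action, not an elementarity statement engineered by passing to quotients.
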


In other words the uniform uniform Tits alternative does not hold for non-elementary subgroups of isometries of a tree. Note that the assumptions imply that $\langle a,b\rangle$ contains a non-abelian free subgroup. This is to be contrasted with the fact (see Proposition \ref{semi-tree}) that under the same assumptions there always is a pair of words of length at most $3$, which generates a free semi-group. On the other hand it is not clear whether or not the uniform Tits alternative holds for a fixed subgroup $\Gamma$, (see Question 2. in Section \ref{questions-sec}).

\bigskip

\noindent \emph{Acknowledgment.} We are grateful to Martin Bays, Mohammad Bardestani, Marc Burger, Yves de Cornulier,  Gilles Courtois, Thomas Delzant, Nicolas Monod, Andrea Sambusetti and Cagri Sert for useful conversations and references.  The first author acknowledges support from ERC grant no. 617129 GeTeMo.
The second author is supported by 
 Grant-in-Aid for Scientific Research (15H05739). The project was started in Hawa\"i in 2008 and completed in Cambridge in 2017. The authors would also  like to thank the Isaac Newton Institute for Mathematical Sciences, Cambridge, for support (through  EPSRC grant no EP/K032208/1) and hospitality during the programme on Non-Positive Curvature, where work on this paper was undertaken.

\setcounter{tocdepth}{1}

\tableofcontents

\section{The joint minimal displacement}\label{gen}

In this section we recall the geometric quantities introduced in the introduction and prove the general inequalities they satisfy, i.e. Lemma \ref{gen-ineq}. In this section $(X,d)$ is an arbitrary metric space. 

 Let $S$ be a finite set of isometries and $x$ a point in $X$. Recall that we have defined $L(S,x)$, $L(S)$, $\ell(S)$ and $\lambda_k(S)$ for $k\in \N_{\geq 1} \cup {\infty}$ at the beginning of the introduction.

\subsection{Proof of Lemma \ref{gen-ineq}}

First we make the following simple observation:

\bigskip
\noindent \emph{Claim 1:} If $U,V$ are finite subsets of isometries of $X$ and $x \in X$, then:
\begin{equation} \label{subadd} L(UV,x)  \leq L(V,x) + L(U,x)\end{equation}

Indeed by the triangle inequality:
\begin{eqnarray*}L(UV,x)  = \max_{u \in U, v \in V} L(uv,x) &\leq&  \max_{u \in U, v \in V} L(uv,ux) + L(u,x)  = \max_{u \in U, v \in V} L(v,x) + L(u,x) \\ &\leq& 
L(V,x) + L(U,x).\end{eqnarray*}

From this we get that $n \mapsto L(S^n,x)$ is subadditive, and therefore by the subadditive lemma:

\bigskip
\noindent \emph{Claim 2:} the following limit exists and is independent of $x$ 
\begin{equation}\label{elldef}\ell(S):=\lim_n \frac{1}{n} L(S^n,x) = \inf_{n \geq 1}  \frac{1}{n} L(S^n,x).
\end{equation}
We take the above as a definition for $\ell(S)$. To see that this limit does not depend on the point $x$ simply note that for every finite set $S$ and every pair of points $x,y$
$$ L(S,x) = \max_{s \in S} d(sx,x) \leq  \max_{s \in S} \{d(sx,sy + d(sy,y) + d(y,x)\} \leq 2d(x,y) + L(S,y)$$
and hence exchanging the roles of $x$  and $y$
\begin{equation}|L(S,x)-L(S,y)| \leq 2d(x,y).\end{equation}

\bigskip
\noindent \emph{Claim 3:} For every $n \in \N$ we have
\begin{equation} \frac{1}{n} L(S^n) \leq L(S)
\end{equation}

Indeed applying $(\ref{subadd})$ iteratively we have $L(S^n,x) \leq n L(S,x)$ for all $x$.

\bigskip

\noindent \emph{Claim 4:} The sequence $L(S^n)/n$ converges to $\ell(S)$ and 
\begin{equation} \ell(S) = \lim \frac{1}{n} L(S^n) = \inf_{n \geq 1} \frac{1}{n} L(S^n)
\end{equation}

Indeed, since $L(S^n,x) \geq L(S^n)$ we get immediately from $(\ref{elldef})$ that $\ell(S) \geq \limsup \frac{1}{n}L(S^n).$ On the other hand $\ell(S) \leq \frac{1}{n}L(S^n,x)$ for all $x$ and all $n$. Minimizing in $x$ we get $\ell(S) \leq \frac{1}{n}L(S^n)$ and hence $\ell(S) \leq \inf_{n \geq 1} \frac{1}{n} L(S^n) \leq \liminf  \frac{1}{n} L(S^n)$.

We conclude immediately that:

\bigskip
\noindent \emph{Claim 5:} $\ell(S^n)= n\ell(S)$ for every $n\in \N$.

\bigskip

We now turn to $\lambda(S)$ and $\lambda_k(S)$, which we have defined in the introduction as:

$$\lambda(S) := \max_{s \in S} \ell(s)$$
and 
$$\lambda_k(S) := \max_{1 \leq j \leq k} \frac{1}{j} \lambda(S^j).$$

We can now complete the proof of Lemma \ref{gen-ineq}.

\bigskip
\noindent \emph{Claim 6:} $\lambda(S) \leq \ell(S).$
\bigskip

Indeed $(\ref{elldef})$ implies that $\ell(s) \leq \ell(S)$ for every $s \in S$. It follows that $\lambda(S) \leq \ell(S)$ and thus that $\lambda_k(S) \leq  \ell(S)$ for every $k \in \N$. Finally we have:

\bigskip
\noindent \emph{Claim 7:} For every $k,n \in \N$ we have \begin{equation}\label{claim7}\lambda_n(S) \leq \frac{1}{k} \lambda_{n}(S^k) \leq \lambda_{kn}(S).\end{equation}
\bigskip

To see this note that  $\ell(s^k) = k \ell(s)$ for every isometry $s$ and every $k \in \N$. In particular 
$$\lambda(S^k) \geq k \lambda(S).$$
From this the left hand side of Claim 7 follows easily, while the right hand side is formal.

Now, given $j \in \N$ and applying the left hand side of $(\ref{claim7})$ to $S^j$ in place of $S$, with $n=1$ and letting $k$ tend to infinity we see that:
$$\frac{1}{j}\lambda(S^j) \leq \limsup_{k \to +\infty} \frac{1}{k} \lambda(S^k)$$
and hence 
$$\lambda_\infty(S) = \max_{j \geq 1} \frac{1}{j}\lambda(S^j) = \limsup_{k \to +\infty} \frac{1}{k} \lambda(S^k).$$

To complete the proof of Lemma \ref{gen-ineq} it only remains to verify that  $\lambda_\infty(S^k)=k\lambda_\infty(S)$ for every $k \in \N$. This is clear by letting $n$ tend to infinity in Claim 7.

\subsection{Joint displacement and circumradius}

We define the minimal circumradius $r(S)$ of $S$ to  be the lower bound of all positive $r>0$ such that there exists some $x \in X$ and a ball of radius $r$ which contains $Sx=\{sx; s \in S \}$. This quantity is closely related to $L(S)$ as the following lemma shows:

\begin{lemma}\label{randl} Suppose $S \subset Isom(X)$ is a finite set.
Then $$r(S) \leq L(S) \leq 2r(S).$$ Moreover $r(gSg^{-1})=r(S)$ for every $g \in Isom(X)$.
\end{lemma}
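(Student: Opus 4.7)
The plan is to treat the three assertions of the lemma separately, each via a direct application of the triangle inequality. The conjugation invariance is formal, and the two-sided comparison between $L(S)$ and $r(S)$ amounts to the standard "diameter vs radius" factor of $2$ for the displacement of a single orbit.

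For conjugation invariance I would observe that $(gSg^{-1})x = g\cdot S(g^{-1}x)$ for every $x\in X$. Since $g$ is an isometry, it sends the ball of radius $r$ about $y$ to the ball of radius $r$ about $gy$, so $S(g^{-1}x)\subset B(y,r)$ if and only if $(gSg^{-1})x\subset B(gy,r)$. As $x$ ranges over $X$, so does $g^{-1}x$, so the infima in the two definitions coincide.

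For $r(S)\leq L(S)$, the key observation is that for any $x\in X$ the closed ball $\bar{B}(x,L(S,x))$ automatically contains $Sx$, directly from the definition $L(S,x)=\max_{s\in S}d(x,sx)$. Hence $r(S)\leq L(S,x)$ for every $x$, and taking the infimum over $x$ gives $r(S)\leq L(S)$.

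For $L(S)\leq 2r(S)$, I would fix $r>r(S)$ and choose $x,y\in X$ with $Sx\subset B(y,r)$. Provided $x$ itself lies in $B(y,r)$, which holds as soon as $1\in S$ (and one may always assume this by enlarging $S$ to $S\cup\{1\}$ without changing either $L(S)$ or the geometrically relevant value of $r(S)$), the triangle inequality gives $d(x,sx)\leq d(x,y)+d(y,sx)\leq 2r$ for every $s\in S$, so $L(S,x)\leq 2r$ and hence $L(S)\leq 2r$. Letting $r\downarrow r(S)$ finishes the argument. The only point worth being careful about is this role of the identity in the definition of $r(S)$; beyond that, the lemma packages elementary metric inequalities and I anticipate no real obstacle.
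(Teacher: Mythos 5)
Your treatment of $r(gSg^{-1})=r(S)$ and of $r(S)\leq L(S)$ is correct and is exactly the paper's argument (the paper phrases the second with an $\epsilon$ rather than a closed ball). The problem is in the step $L(S)\leq 2r(S)$. You correctly identified the delicate point, namely that a ball of radius $r$ containing $Sx$ need not contain $x$ itself, but your repair does not work as stated: enlarging $S$ to $S\cup\{1\}$ leaves $L$ unchanged but can strictly \emph{increase} $r$, since requiring the ball to contain $x$ as well shrinks the set of admissible radii. Concretely, if $X=\R$ and $S=\{s\}$ with $s$ the translation by $t>0$, then $Sx$ is a single point, so $r(S)=0$, while $r(S\cup\{1\})=t/2$ and $L(S)=t$. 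Your reduction therefore only yields $L(S)\leq 2r(S\cup\{1\})$, and since $r(S)\leq r(S\cup\{1\})$ this does not give $L(S)\leq 2r(S)$; indeed the same example shows that $L(S)\leq 2r(S)$ is simply false for an arbitrary finite $S$ under the literal definition of $r(S)$, so the claim that adding the identity does not change ``the geometrically relevant value of $r(S)$'' is precisely where the argument breaks.

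To be fair, the paper's own proof makes the same silent leap: from $Sx\subset B(y,r)$ it asserts $d(sx,x)\leq 2r$, which presupposes $x\in B(y,r)$, i.e.\ in effect the convention $1\in S$ (a convention in force in most statements of the paper; what follows unconditionally from $Sx\subset B(y,r)$ is only $d(sx,s'x)\leq 2r$ for $s,s'\in S$, which is why the later applications work with $SS^{-1}\ni 1$ and otherwise use only the direction $r(S)\leq L(S)$). So the honest conclusion is: under the standing hypothesis $1\in S$, or with $r(S)$ defined by balls required to contain $x$, your argument is complete and coincides with the paper's; without such a hypothesis the second inequality must be restated, and no enlargement trick can recover it, because passing from $S$ to $S\cup\{1\}$ changes $r$ exactly in the problematic cases.
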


\begin{proof}  If $r>r(S)$, there exists $x \in X$ such that $Sx$ is contained in a ball of radius $r$. In particular, $d(sx,x) \leq 2r$ for every $s \in S$. This means that $L(S) \leq L(S,x) \leq 2r$. Hence $L(S) \leq 2r(S)$.

Conversely, if $x$ is such that $L(S) \geq L(S,x) - \epsilon$, then $d(sx,x) \leq L(S) + \epsilon$ for all $s \in S$, and in particular $Sx$ belongs to the ball of radius $L(S) + \epsilon$ centered at $x$. Hence $r(S) \leq L(S) + \epsilon$. Since $\epsilon>0$ is arbitrary, we get $r(S) \leq L(S)$.

That $r(gSg^{-1})=r(S)$ for every $g \in Isom(X)$ follows immediately from the definition of $r(S)$.
\end{proof}

\section{General CAT(0) spaces}\label{genCat0}

The goal of this section is to recall some definitions and basic properties of groups acting by isometries on CAT($0$) spaces and describe some basic examples, such as Euclidean and Hilbert spaces, where the Berger-Wang $(\ref{bw})$ identity fails. We also relate the vanishing of $L(S)$ to the presence of fixed points.

We first recall the notion of CAT($0$) space. A good reference book is \cite{BH}.

For a geodesic segment $\sigma$, we denote by $|\sigma|$ its length. 

If $\Delta=(\sigma_1, \sigma_2, \sigma_3)$ is a triangle  in a metric space $X$ with 
$\sigma_i$ a geodesic segment, a triangle $\overline \Delta=(\overline \sigma_1,
\overline \sigma_2, \overline \sigma_3)$ in Euclidean $\R^2$
is called a {\it comparison triangle} if $|\sigma_i|=|\overline \sigma_i|$ for $i=1,2,3$.
A comparison triangle exists if the side-lenghts satisfy 
the triangle inequality. 

We say that a triangle $\Delta$ is CAT($0$) if 
$$d(x,y) \le d(\bar x, \bar y)$$
for all points $x,y$ on the edges of $\Delta$ and the corresponding points
$\bar x, \bar y$ on the edges of the comparison triangle $\overline \Delta$
in $\R^2$. A geodesic space is a CAT($0$) space if all triangles are CAT($0$). Complete CAT($0$) spaces are often called {\it Hadamard spaces}.

A geodesic metric space is CAT($0$) if and only if every geodesic trangle with vertices $a,b,c$ satisfies the following inequality:
\begin{equation}\label{cat0-ineq}2d(a,m)^2 \leq d(a,b)^2 + d(a,c)^2 - \frac{1}{2}d(b,c)^2.\end{equation}

If $X$ is a simply connected, Riemannian manifold whose sectional 
curvature is non-positive, then it is a CAT($0$) space. In particular symmetric spaces of non-compact type are CAT($0$) spaces. So are Euclidean and Hilbert spaces.

\subsection{Minimal displacement of a single isometry}

This section is devoted to the proof of Proposition \ref{prop.monod}  below. This fact is likely to be well-known to experts, but in lack of reference, we decided to include a proof.

 Recal that an isometry $g$ is said to be \emph{semisimple} if the infimum is attained in the definition of 
$$L(g):=\inf_{x \in X} d(gx,x).$$
 When $X$ is a $CAT(0)$ metric space, isometries are classified into three classes (see \cite{BH}): $g$ is said to be

\begin{itemize}
\item elliptic, if $g$ is semisimple and $L(g)=0$ ($\iff$ fixes a point in $X$),
\item hyperbolic, if $g$ is semisimple and $L(g)>0$,
\item parabolic otherwise.
\end{itemize}

It is known that $g$ is elliptic (hyperbolic, parabolic)
if and only if $g^n$ is elliptic (hyperbolic, parabolic, resp.)
for some $n\not=0$, \cite[II.6.7, II.6.8]{BH}.

For $CAT(0)$ spaces, it turns out that the minimal displacement coincides with the rate of linear growth of an arbitrary orbit, namely:

\begin{proposition}\label{prop.monod}
Let $X$ be a CAT(0) metric space, and $g$
be an isometry. Then we have:
$$L(g)=\ell(g).$$
In particular $L(g^n)=nL(g)$ for each $n >0$ and for any point $x \in X$, we have
$$L(g)=  \lim_{n \to \infty} \frac{1}{n}d(g^nx,x).$$
\end{proposition}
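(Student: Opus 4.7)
The plan is to establish the single identity $L(g) = \ell(g)$; once this is proved, the remaining statements follow by applying it to $g^n$ in place of $g$ (giving $L(g^n) = \ell(g^n) = n\ell(g) = nL(g)$ by Lemma~\ref{gen-ineq}) together with the definition of $\ell(g)$ as $\lim_n d(g^n x, x)/n$.

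The inequality $\ell(g) \leq L(g)$ is already part of Lemma~\ref{gen-ineq}. For the reverse, I plan to use the CAT(0) inequality \eqref{cat0-ineq} applied to the ``parallelogram'' formed by four consecutive orbit points. Fix $x \in X$, write $M_k := d(g^k x, x)$, and let $m_n$ be the midpoint of $x$ and $g^n x$. The essential geometric observation is that since $g$ is an isometry, $g m_n$ is the midpoint of $gx$ and $g^{n+1} x$. Bounding $d(gx, m_n)^2$ and $d(g^{n+1}x, m_n)^2$ by applying the CAT(0) inequality to the triangles $(gx, x, g^n x)$ and $(g^{n+1} x, x, g^n x)$, and then bounding $d(m_n, g m_n)^2$ by applying it once more in the triangle $(m_n, gx, g^{n+1}x)$, a direct calculation yields
$$d(m_n, g m_n)^2 \;\leq\; \tfrac{1}{2} M_1^2 \;+\; \tfrac{1}{4} \bigl(M_{n+1}^2 - 2M_n^2 + M_{n-1}^2\bigr).$$
Since $L(g)^2 \leq d(m_n, g m_n)^2$, this is precisely a lower bound on the discrete second difference of the sequence $\alpha_n := M_n^2$, namely $\alpha_{n+1} - 2\alpha_n + \alpha_{n-1} \geq 4 L(g)^2 - 2 M_1^2$.

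Now it remains to iterate. Summing the inequality from $n=1$ to $N-1$ yields a lower bound on the forward difference $\alpha_N - \alpha_{N-1}$; summing that once more from $N = 1$ to $k$, and using $\alpha_0 = 0$, gives the quadratic-in-$k$ estimate
$$M_k^2 \;\geq\; k \bigl( 2(k-1) L(g)^2 - (k-2) M_1^2\bigr).$$
For any $\epsilon > 0$, choose $x$ with $M_1 = d(gx, x) \leq L(g) + \epsilon$; substituting makes the right-hand side at least $k^2 L(g)^2 - O(k \epsilon)$. Dividing by $k^2$ and letting $k \to \infty$ gives $\ell(g)^2 \geq L(g)^2 - 2 L(g) \epsilon - \epsilon^2$, and then $\epsilon \to 0$ yields $\ell(g) \geq L(g)$, as required.

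The main difficulty is identifying the correct four-point configuration: the key trick is that the midpoint construction $m_n \mapsto g m_n$ pairs midpoints of ``opposite diagonals'' of the rhombus $(x, gx, g^{n+1} x, g^n x)$, so that $d(m_n, g m_n)$ can be directly controlled by the four orbit distances $M_1, M_{n-1}, M_n, M_{n+1}$ through the CAT(0) inequality. Once the second-difference estimate is in hand, the rest is a scalar telescoping computation, and the proof proceeds uniformly across elliptic, hyperbolic, and parabolic isometries with no case distinction.
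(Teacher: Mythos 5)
Your argument is correct, and it reaches the conclusion by a different route than the paper. The paper's proof is shorter: it only shows $L(g^2)\geq 2L(g)$, by observing that the midpoint $y$ of $[x,gx]$ is sent by $g$ to the midpoint of $[gx,g^2x]$ and using a single Euclidean comparison triangle for $(x,gx,g^2x)$ to get $d(y,gy)\leq \tfrac12 d(x,g^2x)$; then it iterates this identity along powers of $2$ and uses $\ell(g)=\lim L(g^{2^n})/2^n$. You instead exploit the same midpoint mechanism on the four-point configuration $(x,gx,g^nx,g^{n+1}x)$, but push it quantitatively through the inequality $(\ref{cat0-ineq})$ to obtain the uniform second-difference bound $M_{n+1}^2-2M_n^2+M_{n-1}^2\geq 4L(g)^2-2M_1^2$ for $M_n=d(g^nx,x)$, and then sum twice to get quadratic growth of $M_k^2$ at a near-minimizing base point. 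I checked the computation: the three applications of $(\ref{cat0-ineq})$ (to the triangles $(gx,x,g^nx)$, $(g^{n+1}x,x,g^nx)$ and $(m_n,gx,g^{n+1}x)$, using that $g$ carries the midpoint $m_n$ of $[x,g^nx]$ to the midpoint of $[gx,g^{n+1}x]$ and that $d(gx,g^{n+1}x)=M_n$) do give exactly your displayed bound, the case $n=1$ (where $M_0=0$) is covered, and the double summation yields $M_k^2\geq k\bigl(2(k-1)L(g)^2-(k-2)M_1^2\bigr)$ as you state; with $M_1\leq L(g)+\epsilon$ the error term is in fact $O(k^2\epsilon)$ rather than $O(k\epsilon)$ as written, but this is a harmless slip since the limit you then record, $\ell(g)^2\geq L(g)^2-2L(g)\epsilon-\epsilon^2$, is the correct consequence. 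What your approach buys is a direct, explicit lower bound on orbit growth $d(g^kx,x)$ at a near-optimal point without any doubling induction, and it makes transparent that $n\mapsto d(g^nx,x)^2$ is coarsely convex along the orbit; what the paper's approach buys is brevity, needing only the comparison-triangle midpoint inequality rather than the full quantitative form $(\ref{cat0-ineq})$. The reduction of the remaining statements to $L(g)=\ell(g)$ via $\ell(g^n)=n\ell(g)$ from Lemma \ref{gen-ineq} is exactly as in the paper.
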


\begin{proof}
 It is enough to prove that 
$$ L(g^2) = 2L(g),$$
because iterating we will find that $\ell(g)=\lim L(g^{2^n})/2^n = L(g).$ Since $L(g^2) \leq 2L(g)$ always (see Lemma \ref{gen-ineq}) we only need to verify that $L(g) \leq L(g^2)/2$. For this we have the following simple argument, which we learned from Nicolas Monod. Given $x \in X$ consider the geodesic triangle $x,gx,g^2x$. The mid-point $y$ between $x$ and $gx$ is mapped under $g$ to the mid-point $gy$ between $gx$ and $g^2x$. Using a Euclidean comparison triangle, we see from the CAT(0) assumption that
$$d(y,gy) \leq \frac{1}{2}d(x,g^2x).$$
In particular $L(g) \leq \frac{1}{2}L(g^2,x)$ and minimizing in $x$ we obtain what we wanted.
\end{proof}

\subsection{Hilbert spaces and affine isometric actions}\label{hilbert-sec}
Affine isometric actions on Hilbert spaces have been studied by many authors, in particular in connection to Kazhdan's property $(T)$ and the Haagerup property. We refer the reader to the work  Cornulier-Tessera-Valette \cite{Cornulier-Tessera-Valette} for background ; see also the work Korevaar-Schoen \cite{korevaar-schoen} and \cite{kleiner} for two interesting geometric applications.

When $X$ is a Hilbert space, one can easily relate the vanishing of the quantities $\ell(S)$ and $L(S)$ to the cohomological properties of the affine isometric action associated to the finitely generated group $\Gamma:=\langle S \rangle$. Let us describe briefly here this connection. The linear part of the $\Gamma$ action by isometries on $X$ is a unitary representation $\pi$ of $\Gamma$ on $X$. The translation part is a cocycle, i.e. a map $b:\Gamma \to X$ such that $b(gh) = \pi(g)b(h) + b(g)$ for all $g,h \in \Gamma$ ; we denote the linear space of cocycles from $\Gamma$ to $X$ by $Z$. A cocycle is a called a co-boundary if there is $x \in X$ such that $b(g)=\pi(g)x - x$. The closure $\overline{B}$ of the space of $B$ co-boundaries gives rise to the reduced first cohomology group of the action $\overline{H}^1(\pi,X)= Z/\overline{B}$. 

It is also natural to study the growth of cocycles. In \cite{Cornulier-Tessera-Valette} a cocycle $b:\Gamma \to X$ is called \emph{sublinear} if $||b(g)|| = o (|g|_S)$, when the word length $|g|_S$ with respect to the generating set $S$ goes to infinity. It is straightforward to check that $b$ is sublinear when $b \in \overline{B}$ \cite[Cor. 3.3]{Cornulier-Tessera-Valette}.

The relationship with our quantities $\ell(S)$ and $L(S)$ is a follows:

\begin{enumerate}
\item $L(S)=0$ if and only if $b$ vanishes in $\overline{H}^1$, i.e. $b \in \overline{B}$,
\item $\ell(S)=0$ if and only if $b$ is sublinear.
\end{enumerate}

We note that there are classes of discrete groups (e.g. polycyclic groups as shown in \cite[Thm 1.1]{Cornulier-Tessera-Valette}) for which $\ell(S)=0$ if and only if $L(S)=0$. While there are others \cite[Prop. 3.9]{Cornulier-Tessera-Valette},  where we may have $L(S)>0$ and $\ell(S)=0$. We will show below (in Proposition \ref{linearescape}) that the latter examples can only happen in infinite dimension.

\subsection{Euclidean spaces}\label{euclid-sec}
In this subsection we briefly describe the case when $X=\R^d$ is a Euclidean space. Proofs will be given in Section \ref{euclidean}.

\begin{proposition}\label{dim23} A subgroup $G$ of $Isom(\R^d)$ with $d=2,3$ all of whose elements have a fixed point must have a global fixed point. In particular $\lambda_\infty(S)=0$ implies $L(S)=\ell(S)=0$.
\end{proposition}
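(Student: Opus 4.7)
Plan: The reverse implications among (1), (2), (3) are immediate from Lemma \ref{gen-ineq} and the definition of $L(S)$, so the content is $(3)\Rightarrow(1)$, i.e., $\lambda_\infty(S)=0$ implies that $\langle S\rangle$ has a common fixed point. Since $\R^d$ is a complete CAT(0) space, Proposition \ref{prop.monod} gives $\ell(g)=L(g)$ for every isometry $g$, and for $g=(A,b)\in O(d)\ltimes\R^d$ one has $L(g)=0$ iff $g$ fixes a point iff $b\in\operatorname{Im}(I-A)$. Thus the hypothesis $\lambda_\infty(S)=0$ says that every element of $G:=\langle S\rangle$ has a fixed point in $\R^d$; I must show $G$ itself has a common fixed point. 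In particular $G$ contains no nontrivial pure translation (take $A=I$), so the linear part map $L:G\to O(d)$ is injective.

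For $d=2$ I would put $G^+=L^{-1}(SO(2))$, an index-at-most-$2$ subgroup. Since $SO(2)$ is abelian, any commutator of elements of $G^+$ has trivial linear part and is therefore a translation; having a fixed point, it must be trivial. Hence $G^+$ is abelian, and every nontrivial element of $G^+$ is a rotation with a unique center; commutativity forces all these centers to coincide at a single point $p$. For an orientation-reversing $g\in G\setminus G^+$ and a nontrivial $h\in G^+$ centered at $p$, the element $ghg^{-1}\in G^+$ has unique center $g(p)$, which must therefore equal $p$, so $g(p)=p$. The degenerate cases ($G^+=\{e\}$, or $G=G^+$ trivial) are handled directly using that two distinct reflections in $\R^2$ compose to a nontrivial translation or rotation.

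For $d=3$ I would run the same strategy with $G_0:=L^{-1}(SO(3))$: every nontrivial element of $G_0$ is an affine rotation whose fix set is a line (axis) in $\R^3$, and the heart of the proof is to show all such axes concur at a single point $p$. Two observations do most of the work: \textbf{(a)} no two axes $\ell_1,\ell_2$ of elements $g_1,g_2\in G_0$ can be skew, since the product of rotations about skew axes is a screw motion, which has no fixed point; \textbf{(b)} no two axes can be parallel and distinct, because then $L(g_1)$ and $L(g_2)$ commute in $SO(3)$ (same axis direction), so $[g_1,g_2]$ has trivial linear part and is a translation, hence trivial, and then $g_2$ would have to preserve $\operatorname{Fix}(g_1)=\ell_1$ --- impossible for a nontrivial rotation about a distinct parallel line. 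Hence any two axes in $G_0$ meet at a point. A short additional step rules out the coplanar-non-concurrent configuration by applying some $g_1\in G_0$ to another axis $\ell_2$: the conjugate axis $g_1(\ell_2)=\operatorname{axis}(g_1g_2g_1^{-1})$ leaves the plane unless the rotation is a half-turn, and iterating produces either concurrency or a forbidden skew/parallel pair. Once $G_0$ fixes a common $p$, extension to orientation-reversing $g\in G\setminus G_0$ proceeds exactly as in $d=2$: $g$ permutes the axes of $G_0$ through $p$, and since enough distinct axes meet only at $p$, one gets $g(p)=p$.

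The main obstacle is the concurrency step at $d=3$: pairwise intersection of a family of lines in $\R^3$ does not by itself imply concurrency, so one really must exploit the $G_0$-invariance of the set of rotation axes together with the commutation structure of $SO(3)$. This ``low-dimensional miracle'' at $d\leq 3$ is essential and cannot be dispensed with: for $d\geq 4$ the statement fails, as shown by the Bass-type construction (cf.\ Example \ref{exBass}) of a group of elliptic isometries of $\R^4$ with no common fixed point.
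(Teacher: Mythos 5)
Your proposal is correct in outline, and for $d=3$ it takes a genuinely different route from the paper. For $d=2$ the two arguments are essentially the same (commutators in the orientation-preserving part are translations, hence trivial, so the rotation centers coincide, and conjugation handles the reflections). For $d=3$ the paper passes to the closure $H$ of the orientation-preserving subgroup inside the Lie group $Isom(\R^3)^0$ and runs a case analysis on the linear part of $H$ -- finite (Burnside), conjugate to $\SO(2,\R)$ (reduce to the planar case), or all of $\SO(3,\R)$ (a geometric argument with half-turn axes, using that two half-turns about disjoint axes compose to a screw motion). You instead stay inside the abstract group and analyze the configuration of rotation axes directly: skew pairs are excluded because the product of the two rotations is a fixed-point-free screw, parallel distinct pairs by the commutator argument, and then pairwise intersection together with invariance of the axis set under conjugation forces concurrency. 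Your route is more elementary and self-contained (no closures, no Burnside, no classification of closed subgroups), at the price of a more delicate incidence argument; the paper's route buys shorter case bookkeeping because a closed subgroup has very few possible linear parts.

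Three points in your sketch still need to be written out, though none of them is a fatal gap. (i) Claim (a) must be proved for arbitrary nonzero angles, not just half-turns: the paper only records the half-turn case, and your group need not contain half-turns. It is true in general: placing the common perpendicular along $e_3$, with $\ell_1$ through the origin, $\ell_2$ through $he_3$, directions at angle $\gamma$, a short computation (the product $g_1g_2$ has a fixed point iff its translation part is orthogonal to the axis of its linear part) shows the obstruction equals $-2h\sin(\alpha/2)\sin(\beta/2)\sin\gamma\neq 0$. (ii) The coplanar non-concurrent case: your sketch does close up, but say it precisely -- if some nontrivial $g_1$ is not a half-turn it moves a second coplanar axis out of the plane, contradicting that conjugate axes are axes; if every nontrivial element of $G_0$ is a half-turn, the product of two of them about distinct intersecting lines of the plane is a nontrivial rotation about the line through their intersection point perpendicular to the plane, and that line is skew to any third coplanar axis avoiding the intersection point, a forbidden pair. (iii) The degenerate case where $G_0$ is trivial or all its nontrivial elements share a single axis $\ell$: your final step ``enough distinct axes meet only at $p$'' does not apply. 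Here $G$ preserves $\ell$, every element of $G$ restricts to an isometry of $\ell\cong\R$ with a fixed point (a fixed point in $\R^3$ projects to one on $\ell$), and a group of isometries of $\R$ all of whose elements are elliptic fixes a point, since two distinct point-reflections compose to a nontrivial translation. With these additions your argument is complete.
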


This is no longer true in dimension $4$ and higher. Indeed we have:

\begin{proposition} When $d \geq 4$, one can find a finite set $S$ in $Isom(\R^d)$ such that $\ell(S)>0$ but $\lambda_\infty(S)=0$.
\end{proposition}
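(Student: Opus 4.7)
By the preceding Proposition, which equates vanishing of $L(S)$, $\ell(S)$, and the existence of a common fixed point, it suffices to exhibit a finite $S \subset Isom(\R^d)$ such that $\Gamma := \langle S\rangle$ has no common fixed point in $\R^d$ (giving $L(S)>0$ and hence $\ell(S)>0$) yet every element of $\Gamma$ fixes some point of $\R^d$ (giving $\lambda_\infty(S) = 0$, since in Euclidean space $\ell(g) = L(g) = 0$ is equivalent to $g$ being elliptic). I will construct such a pair $\{a,b\}$ directly in $\R^4$ and then extend trivially to $\R^d$ for $d \geq 5$.

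Writing each affine isometry as $(A,v) : x \mapsto Ax + v$ with $A \in O(4)$ and $v \in \R^4$, it is elliptic exactly when $v \in \operatorname{im}(I-A)$, so if $1$ is not an eigenvalue of $A$ then ellipticity is automatic. \textbf{Step 1 (linear parts).} I would pick $A_1, A_2 \in SO(4)$ so that (i) $\langle A_1, A_2\rangle$ is free of rank $2$, and (ii) for every nontrivial reduced word $w$ the rotation $w(A_1,A_2)$ has no nonzero fixed vector in $\R^4$, equivalently $\det(I - w(A_1,A_2)) \neq 0$. Both conditions hold generically in $SO(4)^2$: for each $w$ the failure locus is a proper real algebraic subvariety, and the union over the countable set of nontrivial $w \in F_2$ still leaves a dense $G_\delta$. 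A concrete realisation uses two generic unit quaternions $q_1, q_2 \in S^3 \subset \HH$ generating a free subgroup of $SU(2)$ and takes $A_i$ to be the corresponding left-isoclinic rotations; each nontrivial $w(A_1,A_2)$ is then of the form $L_q$ for some $q \neq \pm 1$, with eigenvalues $e^{\pm i\theta}$ (each with multiplicity $2$) and $\theta \not\equiv 0 \pmod{2\pi}$.

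\textbf{Step 2 (translation parts and ellipticity).} Since $F_2$ is free, any $v_1, v_2 \in \R^4$ define an affine action via $a := (A_1, v_1)$ and $b := (A_2, v_2)$. By (ii), for every nontrivial $w \in F_2$ the operator $I - w(A_1,A_2)$ is invertible, so the equation $(I - w(A_1,A_2))\, x = v_w$ (where $v_w$ is the translation part of $w(a,b)$) has a unique solution $x_w$, and hence $w(a,b)$ is elliptic. A common fixed point of $\{a,b\}$ must equal both $(I-A_1)^{-1} v_1$ and $(I-A_2)^{-1} v_2$; choosing for instance $v_1 = 0$ and $v_2 \neq 0$ (or any pair whose two candidate fixed points disagree) makes these unequal, and so $\Gamma$ has no common fixed point.

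\textbf{Step 3 (extension to $d \geq 5$ and main obstacle).} For $d > 4$ I embed $\R^4 \hookrightarrow \R^d$ as the first four coordinates and extend $a,b$ by the identity on the orthogonal $\R^{d-4}$: a common fixed point in $\R^d$ would project to a non-existent common fixed point in $\R^4$, whereas every element $w(a,b)$ still fixes $\{x_w\} \times \R^{d-4}$. Taking $S = \{a^{\pm 1}, b^{\pm 1}\}$ completes the construction. The only non-formal step is the Baire-category assertion of Step 1; its one substantive ingredient is that the failure variety $\{(A_1, A_2) : \det(I - w(A_1,A_2)) = 0\}$ is proper in $SO(4)^2$ for each nontrivial $w$, which holds because a generic element of $SO(4)$ is of the form $R_{\theta_1} \oplus R_{\theta_2}$ with both $\theta_i \notin 2\pi\Z$ and so has no $1$-eigenvalue.
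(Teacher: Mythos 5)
Your construction is correct and is essentially the paper's own: Example \ref{exBass} likewise takes two rotations about distinct centres whose linear parts generate a free subgroup of $\SO(2n)$ none of whose non-trivial elements has $1$ as an eigenvalue, concludes that every element of the generated group is elliptic (hence $\lambda_\infty(S)=0$), and obtains $\ell(S)>0$ from the absence of a global fixed point via Proposition \ref{linearescape}; your quaternionic realisation is precisely the Bass example \cite{bass} cited there, and padding by the identity for $d>4$ is the same routine step. The one point to tighten is your justification of the genericity claim in Step 1: properness in $\SO(4)^2$ of the variety $\{(A_1,A_2):\det(I-w(A_1,A_2))=0\}$ does not follow from the fact that a generic single element of $\SO(4)$ has no eigenvalue $1$; one must also know that the image of the word map $w:\SO(4)^2\to\SO(4)$ is not contained in the eigenvalue-$1$ locus. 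The paper gets this from Borel's theorem on the dominance of word maps \cite{borel}; alternatively, your own left-isoclinic pair $(L_{q_1},L_{q_2})$, with $q_1,q_2$ generating a free subgroup of $\SU_2(\C)$, serves as a witness, since $w(L_{q_1},L_{q_2})=L_{w(q_1,q_2)}$ fixes no non-zero vector whenever $w(q_1,q_2)\neq 1$. Since that concrete realisation is self-contained (all it needs is a free pair of unit quaternions, which is classical), your proof stands; just do not rest the argument on the genericity of eigenvalue-$1$-free rotations alone, as your closing paragraph suggests.
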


The example is given by a subgroup of $Isom (\R^4)$ generated by two rotations within distinct centers and whose rotation parts generate a free subgroup of $\SO(4,\R)$  whose non trivial elements never have $1$ as an eigenvalue. See Example \ref{exBass}.

Consequently:

\begin{corollary} The Berger-Wang identity $(\ref{bw})$ fails on Euclidean spaces of dimension $d \geq 4$.
\end{corollary}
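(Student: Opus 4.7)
The plan is to deduce this corollary immediately from the preceding Proposition. Fix $d \geq 4$. By that Proposition there is a finite set $S \subset Isom(\R^d)$ with $\ell(S) > 0$ and $\lambda_\infty(S) = 0$; for such $S$, the Berger--Wang equality $\lambda_\infty(S) = \ell(S)$ fails, which is precisely the corollary.

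The substantive content lies in the Proposition itself and Example \ref{exBass}. In dimension $4$, I would take two affine rotations $g_i(x) = A_i x + v_i$ with $A_i \in \SO(4)$ satisfying: (a) $A_1, A_2$ generate a free subgroup $F$ of $\SO(4)$; (b) every nontrivial $A \in F$ has no eigenvalue equal to $1$; and (c) $g_1, g_2$ have no common fixed point in $\R^4$. Conditions (a) and (b) imply that every nontrivial word $g$ in $g_1^{\pm 1}, g_2^{\pm 1}$ has linear part $A - I$ invertible, so $g$ has a unique fixed point in $\R^4$ and is therefore elliptic; Proposition \ref{prop.monod} then gives $\ell(g) = L(g) = 0$, hence $\lambda_\infty(S) = 0$. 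Condition (c) yields $L(S) > 0$, and a direct argument exploiting the semidirect-product structure $\R^4 \rtimes F$ shows that some orbit grows linearly, whence $\ell(S) > 0$. Extending from $d = 4$ to $d > 4$ is immediate: let $g_i$ act by the identity on an orthogonal $\R^{d-4}$-summand, which preserves all three quantities $L, \ell, \lambda_\infty$.

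The main obstacle is condition (b). The locus $Z := \{A \in \SO(4) : 1 \in \mathrm{Spec}(A)\}$ is a proper real algebraic subvariety of $\SO(4)$, and avoiding $Z$ on every nontrivial word in $F$ amounts to a countable intersection of Zariski-open dense conditions on $(A_1, A_2) \in \SO(4)^2$; a Baire-category argument therefore produces admissible pairs in abundance. Alternatively, the double cover $\SU(2) \times \SU(2) \to \SO(4)$, realized as $A \cdot x = p\, x\, q^{-1}$ on $\R^4 = \HH$, turns the eigenvalue-$1$ condition into a clean trace relation between $p$ and $q$ and permits an explicit construction.
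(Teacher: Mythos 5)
Your first paragraph is exactly the paper's proof: the corollary is an immediate consequence of the preceding proposition, which asserts the existence of a finite $S \subset Isom(\R^d)$, $d \geq 4$, with $\ell(S)>0$ but $\lambda_\infty(S)=0$, so the identity $\lambda_\infty(S)=\ell(S)$ fails. Your further sketch also follows the paper's route (Example \ref{exBass}: Bass-type pairs of rotations whose linear parts generate a free subgroup of $\SO(4)$ with no eigenvalue $1$, the eigenvalue condition obtained via word-map dominance plus Baire, and $\ell(S)>0$ supplied by Proposition \ref{linearescape}), so this is essentially the same argument.
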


However we will show that if a finite set of isometries does not admit a global fixed point, then it always has a positive rate of escape. Namely:

\begin{proposition} If $S$ is a finite set in $Isom(\R^d)$, then the following are equivalent:
\begin{enumerate}
\item $L(S)=0$,
\item $\ell(S)=0$, 
\item $S$ has a common fixed point.
\end{enumerate} 
\end{proposition}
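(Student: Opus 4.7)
The plan is to prove the cyclic implications $(3)\Rightarrow(1)\Rightarrow(2)\Rightarrow(3)$. The first two are immediate: a common fixed point $x_0$ satisfies $L(S,x_0)=0$ and hence $L(S)=0$, while $\ell(S)\le L(S)$ holds by Lemma~\ref{gen-ineq}. All substance lies in $(2)\Rightarrow(3)$.

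Write each isometry as $s(x)=\rho(s)x+b(s)$, so that $b:\langle S\rangle\to\R^d$ is a cocycle for the orthogonal representation $\rho$. The group $\rho(\langle S\rangle)$ has compact closure $K\subset O(d)$, and I split $\R^d=V\oplus V^\perp$ with $V$ the subspace of $K$-fixed vectors. On $V$ the action of $\rho$ is trivial, so the $V$-component $b_V:\langle S\rangle\to V$ is a genuine group homomorphism. In particular $b_V(s^n)=n\,b_V(s)$ for every $s\in S$, which gives $\ell(s)\ge|b_V(s)|$ and hence $\ell(S)\ge\max_{s\in S}|b_V(s)|$. Under the hypothesis $\ell(S)=0$ this forces $b_V=0$ on generators, and homomorphicity then propagates $b_V\equiv 0$ to all of $\langle S\rangle$. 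So the cocycle actually takes values in $V^\perp$, which is a representation of $K$ with no nonzero fixed vector.

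The task is thus to find $y\in V^\perp$ with $b(g)=(\rho(g)-I)y$ for all $g$, since then $-y$ is the sought common fixed point. Consider $\Phi(y):=L(S,y)=\max_{s\in S}|(\rho(s)-I)y+b(s)|$ as a function on $V^\perp$. It is convex and continuous, and coercive because $\bigcap_{s\in S}\ker(\rho(s)-I)\cap V^\perp=V\cap V^\perp=0$, making the map $y\mapsto((\rho(s)-I)y)_{s\in S}$ injective on the finite-dimensional space $V^\perp$. Hence $\Phi$ attains its minimum at some $y^\ast\in V^\perp$; translating so that $y^\ast=0$, we have $L(S)=\max_{s\in S}|b(s)|$. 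It remains to show that this minimum is zero, which by the standard Euclidean circumcenter (Bruhat--Tits) argument follows once we know that the affine orbit $\{b(g):g\in\langle S\rangle\}$ is bounded in $V^\perp$.

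The main obstacle is precisely this boundedness step, and finite-dimensionality is essential---the analogous statement fails in infinite dimensions, via the Cornulier--Tessera--Valette examples recalled in Section~\ref{hilbert-sec}. The plan is to exploit compactness of $K$ together with the absence of $K$-fixed vectors on $V^\perp$ via a pigeonhole/returns argument: covering $K$ by finitely many small balls, one can pair group elements whose $\rho$-images fall in a common ball to produce ``return'' words $h$ of controlled length with $\rho(h)$ close to the identity. If $b$ were unbounded on $\langle S\rangle$, such returns would exhibit words $h$ of moderate length with $|b(h)|$ arbitrarily large, contradicting the sublinear growth $\max_{w\in S^n}|b(w)|=o(n)$ built into $\ell(S)=0$. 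The absence of fixed vectors on $V^\perp$ prevents a slow invariant drift and is what upgrades sublinear growth to genuine boundedness.
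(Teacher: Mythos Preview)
Your reduction to the case $V=\{0\}$ is clean and correct: projecting the cocycle to the fixed-vector subspace gives a homomorphism, and $\ell(S)=0$ kills it. The coercivity of $y\mapsto L(S,y)$ on $V^\perp$ is also fine. But the proof stops at the crucial point. You say the remaining task is to show that the orbit $\{b(g):g\in\langle S\rangle\}$ is bounded, and you offer only a heuristic: pigeonhole in $K$ to produce returns $h$ with $\rho(h)$ near the identity, then claim that if $b$ were unbounded these returns would have $|b(h)|$ large while having ``moderate length'', contradicting sublinearity. This does not work as stated. If $g_i,g_j$ are long words with $\rho(g_i)$ and $\rho(g_j)$ in the same small ball, then $h=g_i^{-1}g_j$ has $\rho(h)$ near the identity, but its word length is $|g_i|+|g_j|$, which is not moderate, and there is no reason $|b(h)|$ should be large---indeed $|b(h)|\le L(S^{|g_i|+|g_j|},0)$, which is exactly the sublinear quantity you are assuming is small. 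The pigeonhole produces short-rotation elements with \emph{small} $|b|$, the opposite of what you need. You have correctly identified that ``sublinear $\Rightarrow$ bounded'' is the heart of the matter in the no-fixed-vector case, but you have not proved it.

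The paper's argument (Proposition~\ref{linearescape}) goes in the contrapositive direction and is constructive rather than by contradiction. One passes to the closure $H$ of $\Gamma$ in $\operatorname{Isom}(\R^d)$, which has the form $R\ltimes(\Delta\oplus V)$; after quotienting by the discrete part $\Delta$ one reduces to $\Gamma$ dense in $R\ltimes V$ with $V\neq 0$. Density is then used positively: covering the unit sphere of $V$ by finitely many $\epsilon$-balls, one picks for each ball an element $\gamma_i\in\Gamma$ whose translation part lies in that ball and whose rotation part is $\epsilon$-close to the identity. These finitely many $\gamma_i$ all lie in some $S^k$, and a direct computation shows that for any point $x$ one of them increases $\|x\|$ by at least $\tfrac12$. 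Iterating gives $L(S^{nk},x_0)\ge n/2$, hence $\ell(S)>0$. The key move you are missing is to exploit density in the closure to manufacture elements with \emph{prescribed} translation direction and nearly trivial rotation, rather than trying to extract them by pigeonhole from an unbounded orbit.
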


In other words: if $\Gamma$ is a group and $\pi$ a finite dimensional unitary representation, then for any cocycle $b:\Gamma \to \mathcal{H}_\pi$ the following are equivalent:
\begin{enumerate}
\item $b$ is in the closure of coboundaries,
\item $b$ is sublinear, 
\item $b$ is a coboundary.
\end{enumerate} 

This conveniently complements \cite[Cor 3.7]{Cornulier-Tessera-Valette}.

\subsection{A diffusive lower bound on joint displacement}

With just the CAT($0$) property one always gets $\sqrt{n}$ growth for the joint displacement $L(S^n)$. This may not seem very surprising as already any random walk is expected to have at least a diffusive behaviour (see Remark \ref{gromov-rem} below). However one interesting feature of the following lower bound is the absence of any additive constant and the linear dependence in terms of $L(SS^{-1})$.

\begin{proposition}\label{disp} Let $X$ be a $CAT(0)$  geodesic metric space and $S$ a finite subset of $Isom(X)$. For every $n\in \N$ we have:
$$L(S^n) \geq \frac{\sqrt{n}}{2}L(SS^{-1}).$$
\end{proposition}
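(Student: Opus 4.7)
I plan to fix an arbitrary $x \in X$, set $D := L(SS^{-1})$, and prove $L(S^n, x)^2 \geq nD^2/4$ by an inductive construction: at each stage $k = 1, \ldots, n$ I produce a word $w_k \in S^k$ such that
\[
d(w_k x, x)^2 \;\geq\; \tfrac{k}{4}D^2.
\]
The core analytic tool is the CAT($0$) (Bruhat--Tits/CN) inequality: for any $p, a, b \in X$ with $m$ the midpoint of $a,b$,
\[
\max\{d(p,a)^2, d(p,b)^2\} \;\geq\; d(p,m)^2 + \tfrac14 d(a,b)^2,
\]
which is a direct rearrangement of (\ref{cat0-ineq}).

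For the inductive step, having defined $w_{k-1}$, I set $y := w_{k-1}^{-1}x$, so that $d(y,x) = d(w_{k-1}x, x)$. By the very definition of $L(SS^{-1})$ we have $\operatorname{diam}(S^{-1}y) = L(SS^{-1}, y) \geq D$, hence there exist $s, t \in S$ with $d(s^{-1}y, t^{-1}y) \geq D$. Applying the CAT($0$) inequality with $p = x$ and $(a,b) = (s^{-1}y, t^{-1}y)$, and using that $d(x, s^{-1}y) = d((w_{k-1}s)\,x, x)$ by isometry of $w_{k-1}$ (and similarly for $t$), the larger of the two extensions $w_{k-1}s, w_{k-1}t \in S^k$ gives a candidate $w_k$ satisfying
\[
d(w_k x, x)^2 \;\geq\; d(x, m_k)^2 + \tfrac{D^2}{4},
\]
where $m_k$ is the midpoint of $\{s^{-1}y, t^{-1}y\}$.

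The crux of the argument — and the place where I expect the main obstacle — is to guarantee the inequality $d(x, m_k) \geq d(x, y) = d(w_{k-1}x, x)$, which closes the induction to $d(w_k x, x)^2 \geq d(w_{k-1}x, x)^2 + D^2/4$. A priori $m_k$ lies within distance $L(S, y)$ of $y$ and may drift closer to $x$. To enforce the required bound one has to choose the pair $(s,t)$ variationally among all pairs realizing $\operatorname{diam}(S^{-1}y) \geq D$: one wants the connecting geodesic $[s^{-1}y, t^{-1}y]$ to be essentially transverse to the geodesic $[x, y]$, so that $m_k$ projects close to $y$ rather than toward $x$. In CAT($0$) this transversality can be extracted from convexity of the squared distance along geodesics combined with a first-order extremality condition at the optimal pair.

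If this direct geometric extraction proves to be too rigid, a robust alternative is to pass to an averaged formulation. Namely, one applies Sturm's variance inequality in CAT($0$)
\[
\int d(p,z)^2\,d\mu(z) \;\geq\; d(p, \operatorname{bar}(\mu))^2 + \operatorname{Var}(\mu)
\]
to the uniform probability measure on $S^{-1}y_{k-1}$, obtaining the recursion $\mathbb{E}\,d(w_k x, x)^2 \geq \mathbb{E}\,d(w_{k-1}x,x)^2 + D^2/4$ along a random walk, provided the base point $x$ can be taken to approximately satisfy $x = \operatorname{bar}(S^{-1}x)$ (such a quasi-fixed point of the $1$-Lipschitz barycenter map exists by an iteration argument in the CAT($0$) setting). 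Iterating either form of the recursion $n$ times yields $L(S^n, x)^2 \geq L(S^n)^2 \geq nD^2/4$ as desired, after taking the infimum over $x \in X$.
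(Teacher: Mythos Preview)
Your proposal has a genuine gap precisely at the step you flag as the crux: the inequality $d(x,m_k)\ge d(x,y)$ with $y=w_{k-1}^{-1}x$. Neither of your two suggested repairs closes it. For the variational choice of $(s,t)$, nothing in the CAT($0$) geometry forces the midpoint of a diameter-realising pair in $S^{-1}y$ to lie no closer to $x$ than $y$ does; for a trivial counterexample already in $\R$, take $S^{-1}y$ lying entirely on the $x$-side of $y$. For the averaged version, Sturm's variance inequality applied to the two-point measure on $\{s^{-1}y,t^{-1}y\}$ gives exactly $d(x,m_k)^2+\tfrac14 d(s^{-1}y,t^{-1}y)^2$, so you are back to the same midpoint obstruction; and the fixed-point condition $x=\operatorname{bar}(S^{-1}x)$ helps only at the base point $x$, whereas the recursion lives at the moving point $y=w_{k-1}^{-1}x$, where there is no reason for $\operatorname{bar}(S^{-1}y)=y$. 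If instead you average over all of $S$, the variance term acquires a factor depending on $|S|$ and you no longer get the stated constant.

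The paper's argument avoids this difficulty by running the recursion in the opposite direction and on a different quantity: the circumradius $r(S^k)$ rather than the displacement of a single word. If $S^n x\subset B(y,r)$, then $S^{n-1}x\subset\bigcap_{s\in S}B(s^{-1}y,r)$; the CAT($0$) inequality (applied as in your displayed bound, but now with $p$ a point of this intersection) shows that for any $s_1,s_2\in S$ the intersection $B(s_1^{-1}y,r)\cap B(s_2^{-1}y,r)$ is contained in a ball of radius $(r^2-\tfrac14 d(s_1^{-1}y,s_2^{-1}y)^2)^{1/2}$ about the midpoint. Hence $r(S^{n-1})^2\le r(S^n)^2-\tfrac14 L(SS^{-1})^2$, and summing gives $n\,L(SS^{-1})^2\le 4\,r(S^n)^2\le 4\,L(S^n)^2$. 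The point is that by tracking the whole orbit $S^k x$ via its circumradius, the ``reference point'' at each step is the ball centre, which is automatically well placed; your forward scheme tracks a single word and therefore has to control a midpoint that bears no a priori relation to $x$.
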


\begin{proof} Recall that $r(S)$ denotes the infinimum over all points $x,y$ of the radius $r$ of the balls centered at $y$ which contain $Sx$ and that   (Lemma \ref{randl}) $r(S) \leq L(S)$. By definition of $r(S^n)$ if $r>r(S^n)$, then there exists some $x,y \in X$ such that $S^{n}x \subset B(y,r)$. This means that $sS^{n-1}x$ lies in $B(y,r)$ for every $s \in S$ and hence $S^{n-1}x$ lies in the intersection of all balls  $B(s^{-1}y,r)$ for $s \in S$.

\begin{lemma}\label{intersection-cat} Let $X$ be a  CAT($0$)  geodesic metric space and $B(y,r)$ and $B(z,r)$ two balls of radius $r>0$. Let $m$ be a mid-point of a geodesic between $y$ and $z$. Then $B(y,r) \cap B(z,r)$ is contained in the ball centered at $m$ with radius $(r^2 - \frac{1}{4}d(y,z)^2)^{1/2}$.

\end{lemma}
\begin{proof} This is straightforward from the CAT($0$) inequality $(\ref{cat0-ineq})$.
\end{proof}

Now from this lemma $s_1^{-1}B \cap s_2^{-1}B$ is contained in the ball centered at the mid-point between $s_1^{-1}y$ and $s_2^{-1}y$ and with radius $(r^2 - \frac{1}{4}d(s_1^{-1}y,s_2^{-1}y)^2)^{1/2}$. It follows that $S^{n-1}x$ lies in a ball of that radius.

By definition of $r(S^{n-1})$, it follows that $r(S^{n-1})^2 \leq r^2 - \frac{1}{4}d(s_2s_1^{-1}y,y)^2$. This holds for all $s_1,s_2 \in S$ and all $r>r(S^n)$, thus: $\frac{1}{4}L(SS^{-1},y)^2 \leq r(S^n)^2 - r(S^{n-1})^2$. We conclude that $L(SS^{-1})^2 \leq 4(r(S^n)^2-r(S^{n-1})^2)$.
Finally, summing over $n$, we obtain the desired result.
\end{proof}

The above lower bound is useful to show that even if $L(SS^{-1})$ is very small, there will be some controlled $n$ for which $L(S^n)$ has macroscopic size.

\begin{remark}\label{rkCTV} The behaviour in $\sqrt{n}$ of the lower bound is sharp for general CAT($0$) spaces. Indeed Cornulier, Tessera and Valette exhibited an affine isometric action of the free group on a Hilbert space with a $\sqrt{n}$ upper bound on cocycle growth, see \cite[Prop. 3.9]{Cornulier-Tessera-Valette}. When $X$ is a product of symmetric spaces and Euclidean spaces however, then $L(S^n)$ grows linearly provided $L(S)>0$. This follows from the combination of Propositions \ref{linearescape} and  \ref{ell-L-sym}.
\end{remark}

\begin{remark} In \cite{lafforgue} V. Lafforgue gave another proof of Shalom's theorem \cite{shalom-inventiones} that a group without property $(T)$ has non-trivial first reduced cohomology. His main lemma is essentially a version of Proposition \ref{disp} in the case when $X$ is a Hilbert space. 
\end{remark}

\begin{remark}\label{gromov-rem}
 In \cite{gromov-random} Gromov investigates the growth of the energy of a random walk on a general CAT($0$) space. In particular from his \emph{harmonic growth inequalities} in \cite[3.4]{gromov-random} one gets that the average displacement of a random walk grows like $\sqrt{n}$, which is another way to recover the $\sqrt{n}$ growth in Proposition \ref{disp}.
\end{remark}

\subsection{Fixed points on the boundary}
Let $X$ be a complete CAT($0$) geodesic space. Note that the sublevel sets of the function $x \mapsto L(S,x)$ are convex subsets. Note further that any nested sequence of bounded closed convex non-empty sets has non-empty intersection (see e.g. \cite{monod}*{Theorem 14}). In particular we have:

\begin{proposition} If $x \mapsto L(S,x)$ tends to infinity when $x$ leaves every bounded subset of $X$, then $L(S,x)$ achieves its minimum $L(S)$ at some point $x_0 \in X$.
\end{proposition}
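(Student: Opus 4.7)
The plan is to use the ingredients spelled out just before the statement: that the sublevel sets of $L(S,\cdot)$ are convex, and that nested bounded closed convex non-empty subsets of a complete CAT($0$) space have non-empty intersection (Monod's theorem referenced in the text). The growth hypothesis is exactly what will guarantee the boundedness required to apply that intersection lemma.

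In detail, I would first record that $L(S,\cdot)$ is continuous: the inequality $|L(S,x)-L(S,y)|\leq 2d(x,y)$ established in Section \ref{gen} shows that $L(S,\cdot)$ is $2$-Lipschitz. Next, I would confirm convexity of the sublevel sets $C_t := \{x \in X : L(S,x) \leq t\}$. Since $L(S,x)=\max_{s \in S} d(x,sx)$, and the maximum of convex functions is convex, it suffices to check that for each isometry $s$ the map $x \mapsto d(x,sx)$ is convex along geodesics. This is standard in CAT($0$): if $\gamma$ is a geodesic, then $s\gamma$ is another geodesic, and the distance between corresponding points on two geodesics is a convex function of the parameter in any CAT($0$) space.

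Now set $t_n := L(S) + 1/n$ and $C_n := C_{t_n}$ for $n$ large enough that $C_n$ is non-empty (possible by the definition of $L(S)$ as the infimum of $L(S,\cdot)$). Each $C_n$ is closed by continuity of $L(S,\cdot)$, convex by the preceding step, and $(C_n)_{n\geq 1}$ is a nested decreasing sequence. The growth hypothesis provides the crucial boundedness: since $L(S,x) \to \infty$ as $x$ leaves every bounded set, there is a bounded subset $B \subset X$ containing $C_1$, and hence each $C_n \subset B$.

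Applying the nested intersection result for bounded closed convex non-empty subsets of the complete CAT($0$) space $X$, we conclude that $\bigcap_{n \geq 1} C_n \neq \emptyset$. Any $x_0$ in this intersection satisfies $L(S,x_0) \leq L(S) + 1/n$ for every $n$, so $L(S,x_0) \leq L(S)$, and equality holds by definition of $L(S)$ as the infimum. The main (and only nontrivial) ingredient is the nested intersection theorem for complete CAT($0$) spaces; the growth hypothesis reduces the problem to a situation where it applies directly, so no extra obstacle is expected.
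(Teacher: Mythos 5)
Your proof is correct and follows essentially the same route as the paper, which proves the proposition precisely by combining the convexity of the sublevel sets of $x \mapsto L(S,x)$ with the nested-intersection property of bounded closed convex non-empty subsets of a complete CAT($0$) space, the growth hypothesis supplying the boundedness. Your write-up simply makes explicit the details (Lipschitz continuity, convexity of displacement functions, the choice of $C_{L(S)+1/n}$) that the paper leaves implicit.
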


If $X$ is locally compact one has:

\begin{proposition} Suppose $X$ is a CAT($0$) locally compact geodesic space such that no sublevel sets of $x \mapsto L(S,x)$ is bounded. Then $S$ has a global fixed point on the visual boundary $\partial X$.
\end{proposition}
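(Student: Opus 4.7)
The plan is to extract a boundary point as the limit of an unbounded sequence along which $L(S,\cdot)$ stays bounded, and then to check that every $s \in S$ must fix this point.

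First, I would upgrade the hypotheses to properness. Assuming $X$ is also complete (the standing assumption earlier in this subsection), the Hopf--Rinow theorem in non-positive curvature (\cite{BH}, II.4.1) gives that $X$ is proper, so the visual boundary $\partial X$ with the cone topology based at a chosen $x_0$ compactifies $X$ into $\overline X = X \cup \partial X$, and every isometry of $X$ extends to a homeomorphism of $\overline X$.

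Next, since by assumption no sublevel set $\{x : L(S,x) \le c\}$ is bounded, I would fix any $c > L(S)$ and pick a sequence $(x_n)$ in this (non-empty, unbounded) sublevel set with $d(x_0, x_n) \to \infty$. Extracting a subsequence via properness, $x_n \to \xi$ for some $\xi \in \partial X$. For every $s \in S$ the inequality $d(x_n, s x_n) \le L(S, x_n) \le c$ shows that the perturbed sequence $(s x_n)$ stays at uniformly bounded distance from $(x_n)$.

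The concluding step is to combine two pieces: (i) continuity of the extension $s: \overline X \to \overline X$ forces $s x_n \to s\xi$; and (ii) in a proper $\mathrm{CAT}(0)$ space, two sequences with divergent basepoint distance staying at bounded pairwise distance share the same cone-topology limit, so $s x_n \to \xi$ as well. Together these give $s\xi = \xi$ for every $s \in S$, which is a global fixed point on $\partial X$. Step (ii) is the only delicate point and is where the $\mathrm{CAT}(0)$ hypothesis is really used: it follows from convexity of the distance function, which makes the geodesic segments $[x_0, x_n]$ and $[x_0, s x_n]$ converge uniformly on compact parameter intervals to a common geodesic ray representing $\xi$. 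Everything else is a direct unpacking of definitions.
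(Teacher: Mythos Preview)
Your argument is correct. The paper does not actually supply a proof of this proposition; it simply refers the reader to \cite{korevaar-schoen}*{2.2.1}. Your direct argument---extracting a boundary limit of an unbounded sequence in a sublevel set and using convexity of the distance function to show that any isometry displacing each $x_n$ by a bounded amount must fix that limit---is the standard one and is precisely what underlies that reference.
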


For this see for example \cite{korevaar-schoen}*{2.2.1}. Recall that the visual boundary $\partial X$ is the set of equivalence classes of infinite geodesic rays $[0,+\infty) \to X$, where two rays $(x_t)_t$ and $(y_t)_t$ are equivalent if $d(x_t,y_t)$ is uniformly bounded. We get:

\begin{corollary}\label{bdy-fixed} Suppose $X$ is a CAT($0$) locally compact geodesic space. And $S \subset Isom(X)$ a finite set of isometries such that $L(S)=0$, then $S$ has a global fixed point in $X \cup \partial X$.
\end{corollary}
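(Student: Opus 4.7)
The plan is to invoke the two propositions immediately preceding the corollary via a simple dichotomy on whether the continuous function $f(x) := L(S,x)$ is proper, i.e.\ whether $f(x) \to \infty$ as $x$ leaves every bounded subset of $X$. Both propositions are already available: the first gives a minimizer in $X$ when $f$ is proper, and the second gives a boundary fixed point when some sublevel set of $f$ is unbounded.

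Case 1 ($f$ proper). The first proposition applies and $f$ attains its infimum at some $x_0 \in X$. Since $L(S)=0$ by hypothesis, we have $\max_{s\in S} d(sx_0,x_0) = f(x_0) = L(S) = 0$, so every $s\in S$ fixes $x_0$, producing the desired fixed point in $X$.

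Case 2 ($f$ not proper). Then some sublevel set $\{f \leq c\}$ is unbounded, which matches the hypothesis of the second proposition (interpreted as asserting the non-properness of $f$ above its infimum), and we obtain a common fixed point of $S$ on the visual boundary $\partial X$ directly.

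I expect no serious obstacle, since the corollary is essentially a formal consequence of the preceding two propositions. The only point requiring care is matching the precise hypothesis of the second proposition to the condition produced in Case 2: concretely, one uses local compactness (and the fact that by Hopf--Rinow a complete locally compact geodesic CAT$(0)$ space is proper) to extract from a sequence in an unbounded sublevel set a subsequence converging to a point $\xi \in \partial X$, and then the estimate $d(sx_n,x_n) \le L(S,x_n) \to 0$ together with the isometry property forces $s x_n \to \xi$ as well, yielding $s\xi = \xi$ for every $s \in S$.
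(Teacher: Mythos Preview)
Your approach is the intended one and is essentially correct, but there is a slip in the Case~2 direct argument: for a sequence $x_n$ taken from an unbounded sublevel set $\{L(S,\cdot)\le c\}$ you only have $L(S,x_n)\le c$, not $L(S,x_n)\to 0$ as you wrote. This does not damage the conclusion---if $x_n\to\xi\in\partial X$ and $d(sx_n,x_n)$ stays bounded, then $sx_n\to\xi$ as well, hence $s\xi=\xi$---so the argument goes through once you replace ``$\to 0$'' by ``$\le c$''. Note also that your dichotomy (``$f$ proper'' versus ``$f$ not proper'') does not line up exactly with the hypothesis of the second proposition, which requires that \emph{no} sublevel set be bounded, a strictly stronger condition than ``some sublevel set is unbounded''. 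Your direct boundary argument sidesteps this mismatch, but if you want to invoke the proposition verbatim you should add the observation that if some sublevel set $\{f\le c_0\}$ with $c_0>0$ were bounded, then the nested family $\{f\le 1/n\}_{n\ge 1}$ of nonempty closed bounded convex sets would already produce a fixed point in $X$; so in the remaining case every sublevel set is unbounded and the second proposition applies as stated.
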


\subsection{CAT($0$) and Gromov hyperbolic spaces}

If $X$ is only assumed to be $CAT(0)$, then there may be parabolic isometries $g$ with $L(g)>0$. For example, the product of a parabolic isometry of the hyperbolic plane with a non trivial translation of $\R$. A less obvious example is given by the warped product $(\exp(-y)+C)dx^2+dy^2$ on $\R^2$, with $C>0$. This space is $CAT(0)$ and translation $(x,y) \mapsto (x+1,y)$ is a parabolic isometry with positive displacement $C$ (see Remark 2.4. in \cite{karlsson-margulis}).

However if we assume additionally that $X$ is Gromov hyperbolic, then parabolic isometries must have zero displacement. More precisely, we have:

\begin{proposition}\label{axis}
Let $X$ be a complete CAT(0) space which is
Gromov hyperbolic.
Let $g$ be an isometry of $X$ with $L(g)>0$.
Then $g$ is hyperbolic in the sense that $L(g)$
is achieved on a geodesic, which is the unique
$g$-invariant geodesic.
\end{proposition}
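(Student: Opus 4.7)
The plan is to combine Gromov hyperbolicity of $X$ (which supplies $g$ with two fixed points on $\partial X$) with the CAT(0) flat-strip theorem (which turns those boundary data into an honest $g$-invariant geodesic axis).

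First I would pick a basepoint $x_0 \in X$ and study the orbit $n \mapsto g^n x_0$. By Proposition~\ref{prop.monod}, $\ell(g) = L(g) > 0$, so the subadditive sequence $a_n := d(g^n x_0, x_0)$ satisfies $nL(g) \leq a_n \leq nL(g,x_0)$ (Fekete's lemma on the left, iterated triangle inequality on the right). Hence $n \mapsto g^n x_0$ is a $(\lambda,0)$-quasi-isometric embedding of $\Z$ into $X$ with $\lambda = L(g,x_0)/L(g)$. The Morse lemma for $\delta$-hyperbolic geodesic spaces then produces a bi-infinite geodesic $\gamma \subset X$ lying within bounded Hausdorff distance of this orbit, with two distinct endpoints $\xi_\pm \in \partial X$. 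Since $g$ simply shifts the orbit, $g\gamma$ is at bounded Hausdorff distance from $\gamma$ and therefore shares the same endpoints at infinity; thus $g$ fixes $\xi_+$ and $\xi_-$ individually.

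Next I would use the CAT(0) structure to promote $\gamma$ to a genuine axis. Let $P = P(\gamma)$ be the parallel set consisting of all bi-infinite geodesics of $X$ asymptotic to $\xi_\pm$; by the flat-strip/sandwich theorem (\cite{BH}, II.2.14), $P$ is closed, convex, and splits isometrically as $Y \times \R$ for some complete CAT(0) space $Y$, with the $\R$-factor oriented towards $\xi_+$. The isometry $g$ preserves $P$ and acts as $(y,t) \mapsto (\phi y,\, t + L(g))$ for some isometry $\phi$ of $Y$. Crucially, Gromov hyperbolicity forces $Y$ to be bounded, since any two bi-infinite geodesics with common endpoints in a $\delta$-hyperbolic space lie within uniform Hausdorff distance. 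A Bruhat--Tits circumcenter argument on the bounded complete CAT(0) space $Y$ then yields a $\phi$-fixed point $y_0 \in Y$, and $\{y_0\} \times \R \subset X$ is a $g$-invariant bi-infinite geodesic on which $g$ acts as pure translation of length $L(g)$, realising the infimum defining $L(g)$.

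For the uniqueness statement, any $g$-invariant bi-infinite geodesic must be asymptotic to $g$-fixed boundary points, and the north-south dynamics of $g$ on $\partial X$ visible from the first step pin down the boundary fixed-point set of $g$ as exactly $\{\xi_-, \xi_+\}$. So any $g$-invariant geodesic lies in $P$ and corresponds to a $\phi$-fixed point in $Y$, reducing uniqueness to uniqueness of the $\phi$-fixed point in $Y$. The main obstacle will be the parallel-set step--passing from the coarse Morse-lemma quasi-axis to a genuine CAT(0) axis--where one must carefully combine the flat-strip decomposition with Gromov hyperbolicity to bound the transverse factor $Y$ before extracting a fixed point of $\phi$.
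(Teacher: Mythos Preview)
Your outline is the standard ``visibility'' argument and would work verbatim if $X$ were assumed proper. The genuine gap is earlier than where you locate it: the Morse lemma does \emph{not} produce a bi-infinite geodesic. It says that a quasi-geodesic stays close to any geodesic between the same endpoints; to obtain a geodesic line tracking the orbit $\{g^nx_0\}$ one takes a limit of the segments $[g^{-n}x_0,g^nx_0]$, and that limit is an Arzel\`a--Ascoli step requiring local compactness. The proposition is stated for complete CAT(0) spaces with no properness hypothesis, so you cannot assume that two distinct points of $\partial X$ are joined by a geodesic line, even when a quasi-geodesic between them is available. Everything you write after the existence of $\gamma$ (the splitting $P\cong Y\times\R$, the bound on $\operatorname{diam} Y$ from $\delta$-hyperbolicity, the circumcenter fixed point) is fine; what is missing is precisely the existence of $\gamma$.

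The paper sidesteps this entirely. Instead of building a geodesic line first, it shows directly that $Fix_{L(g)}(g)=\{x:d(gx,x)\le L(g)\}$ is nonempty. After passing to a power so that $L(g)\gg\delta$, Theorem~\ref{delta.axis} yields, for each $C>L(g)$, a $g$-invariant piecewise-geodesic quasi-axis $\gamma_C$ lying inside the convex set $Fix_C(g)$, with quasi-geodesic constants independent of $C$. Hence all the $\gamma_C$ lie within a uniform Hausdorff distance $M'$ of one another, so a single metric ball $B$ of radius $M'$ meets every $Fix_C(g)$. Now $\{B\cap Fix_C(g)\}_{C>L(g)}$ is a nested family of \emph{bounded} closed convex nonempty sets in a complete CAT(0) space, hence has nonempty intersection, and any point there realises $L(g)$. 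Once $Min(g)\neq\emptyset$, the axis comes from standard CAT(0) theory (\cite{BH}, II.6.8), which is essentially your parallel-set step. So the paper's argument replaces your unavailable compactness with a nested-convex-set argument that only uses completeness.
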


 For the definition of Gromov hyperbolic spaces we refer to Section \ref{serre-hyp} below as well as the books \cite{BH, Bowditch,coornaert-delzant}. 

Before we start the proof, we quote some results. Let $X$ be a complete CAT(0) space and $\partial X$
its visual boundary at infinity. Set $\overline{X}=X \cup \partial X$. For a closed convex subset $C \subset X$, let $\overline{C} \subset \overline{X}$ be its closure.

\begin{theorem}[\cite{monod}*{Prop. 23}]\label{monod.boundary}
Assume that a complete CAT(0) space $X$ is Gromov hyperbolic. 
Then for any nested family $\mathcal F$ of non-empty closed convex subsets $C \subset X$, 
the intersection $\cap_{C \in \mathcal F} \overline C$ is non-empty.
\end{theorem}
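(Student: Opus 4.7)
The plan is to use nearest-point projections onto the convex sets and show that the resulting net either converges in $X$ (bounded case) or converges to a point of $\partial X$ thanks to Gromov hyperbolicity (unbounded case). Fix a basepoint $x_0 \in X$. For each $C \in \mathcal F$ let $p_C := \pi_C(x_0)$ be the nearest-point projection of $x_0$ on $C$, which exists and is unique because $X$ is a complete CAT($0$) space. The key geometric input is the familiar fact that the CAT($0$) projection makes an angle $\geq \pi/2$: for any $z\in C$, $d(x_0,z)^2 \geq d(x_0,p_C)^2 + d(p_C,z)^2$. Applying this with $C=C_1$ and $z=p_{C_2}\in C_2\subseteq C_1$ gives
\[
d(x_0,p_{C_2})^2 \;\geq\; d(x_0,p_{C_1})^2 + d(p_{C_1},p_{C_2})^2,
\]
so $C\mapsto d(x_0,p_C)$ is monotone non-decreasing as $C$ shrinks along the nested family.

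Case 1: $D:=\sup_{C\in\mathcal F} d(x_0,p_C)<\infty$. Given $\varepsilon>0$, pick $C_0$ with $d(x_0,p_{C_0})^2 > D^2-\varepsilon^2$. For every $C\subseteq C_0$ the inequality above gives $d(p_C,p_{C_0})^2 \leq d(x_0,p_C)^2 - d(x_0,p_{C_0})^2 < \varepsilon^2$. Hence $(p_C)_{C\in\mathcal F}$ is a Cauchy net in $X$, and by completeness converges to some $p_\infty\in X$. Since for every $C\in\mathcal F$ the tail $\{p_{C'}:C'\subseteq C\}$ lies in the closed set $C$, we conclude $p_\infty\in\bigcap_{C\in\mathcal F} C \subseteq \bigcap_{C\in\mathcal F}\overline C$.

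Case 2: $\sup_{C\in\mathcal F} d(x_0,p_C)=\infty$. If $C_2\subseteq C_1$ with $a:=d(x_0,p_{C_1})\leq b:=d(x_0,p_{C_2})$, the orthogonality estimate gives $d(p_{C_1},p_{C_2})\leq\sqrt{b^2-a^2}$, so the Gromov product satisfies
\[
(p_{C_1}\,|\,p_{C_2})_{x_0} \;=\; \tfrac12\bigl(a+b-d(p_{C_1},p_{C_2})\bigr) \;\geq\; \tfrac12\bigl(a+b-\sqrt{b^2-a^2}\,\bigr) \;\geq\; \tfrac{a}{2}.
\]
Since $a\to\infty$ along the directed set $\mathcal F$ (ordered by reverse inclusion), the Gromov products $(p_{C_1}\,|\,p_{C_2})_{x_0}$ diverge to $+\infty$ cofinally. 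By Gromov hyperbolicity of $X$, this forces the net $(p_C)$ to converge in the cone topology on $\overline X$ to a unique point $\xi\in\partial X$. For any fixed $C\in\mathcal F$, the tail $\{p_{C'}:C'\subseteq C\}$ lies in $C$, so its limit $\xi$ lies in the closure $\overline C\subset\overline X$; thus $\xi\in\bigcap_{C\in\mathcal F}\overline C$.

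The main obstacle is Case 2: one must ensure that divergence of Gromov products along an arbitrary net (not a sequence) really yields a single limit point in $\partial X$. This is where Gromov hyperbolicity is essential — without it the $p_C$ could drift along different boundary directions — and where one must choose the right definition of the topology on $\overline X$ (the cone topology, for which convergence is characterized by Gromov products via $\delta$-thinness). Once this is set up properly, the rest is a routine application of completeness and the CAT($0$) projection inequality.
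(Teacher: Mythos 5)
The paper itself gives no proof of this statement -- it is quoted directly from Monod's paper (Prop.\ 23) -- so there is no internal argument to compare yours with; I can only assess your proof on its own terms. Your overall scheme is sound. The projection inequality $d(x_0,z)^2\ge d(x_0,p_C)^2+d(p_C,z)^2$ is correct in complete CAT(0) spaces, the monotonicity of $C\mapsto d(x_0,p_C)$ along the nested family follows, and Case 1 (bounded projections) is complete: the Cauchy-net argument and the use of closedness of each $C$ are fine, and in fact give a common point in $X$ itself.

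The only real issue is the step you yourself flag in Case 2: ``by Gromov hyperbolicity the net $(p_C)$ converges in the cone topology to a point of $\partial X$.'' As stated this is an assertion, not a proof, and it is exactly the point where work is needed, because $X$ is not assumed proper: hyperbolicity alone only yields convergence to a point of the \emph{Gromov} boundary, whereas you need convergence in $\overline{X}=X\cup\partial X$ with the cone topology in order to conclude $\xi\in\overline{C}$ from the fact that the tail of the net lies in $C$. The gap is genuinely fillable, and here is one way: let $\gamma_C$ be the unit-speed geodesic from $x_0$ to $p_C$. If $T=(p_C\,|\,p_{C'})_{x_0}$, thinness of the triangle $(x_0,p_C,p_{C'})$ gives $d(\gamma_C(T),\gamma_{C'}(T))\le K\delta$ for a universal $K$, and CAT(0) convexity of $t\mapsto d(\gamma_C(t),\gamma_{C'}(t))$ (both geodesics issue from $x_0$, so the function vanishes at $0$) gives $d(\gamma_C(t),\gamma_{C'}(t))\le \frac{t}{T}K\delta$ for $t\le T$. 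Since your Gromov-product estimate makes $T\to\infty$ cofinally along $\mathcal F$, for each fixed $t$ the net $(\gamma_C(t))_C$ is Cauchy; its limit defines a geodesic ray $\gamma$ from $x_0$, and $d(\gamma_C(t),\gamma(t))\to 0$ uniformly on compact $t$-intervals while $d(x_0,p_C)\to\infty$. This is precisely cone-topology convergence $p_C\to\xi:=\gamma(\infty)\in\partial X$, after which your final sentence ($\xi\in\overline{C}$ for every $C$, hence $\xi\in\bigcap_{C\in\mathcal F}\overline{C}$) goes through. With that lemma supplied, your proof is correct and is a clean, self-contained alternative to citing Monod.
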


In particular we have the following.
\begin{corollary}\label{parabolic.fix}
Let $X$ be a Gromov hyperbolic complete CAT(0) space. Then a parabolic isometry $g$ has a fixed point in $\bar X \backslash X$.
\end{corollary}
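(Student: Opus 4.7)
My plan is to apply Theorem~\ref{monod.boundary} to the sublevel sets of the displacement function and extract a fixed point on the boundary by a continuity argument.

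First, I would verify that $L(g)=0$ and that the infimum is not attained. Since $g$ is parabolic in a Gromov hyperbolic CAT(0) space, Proposition~\ref{axis} forces $L(g)=0$ (otherwise $g$ would be hyperbolic). By the definition of parabolic the infimum is not achieved, so $g$ has no fixed point in $X$.

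Next, consider the displacement function $\phi(x) := d(gx,x)$. On a CAT(0) space this function is convex (a standard consequence of the quadrilateral/convexity inequality applied to the pairs $x, gx$ and $y,gy$, since $g$ is an isometry). Set $C_t := \{x \in X : \phi(x) \le t\}$ for $t>0$. Each $C_t$ is non-empty (because $\inf \phi = 0$), closed, convex, and $g$-invariant (since $\phi \circ g = \phi$). The family $\{C_t\}_{t>0}$ is nested as $t$ decreases, so by Theorem~\ref{monod.boundary} the intersection
\[
K := \bigcap_{t>0} \overline{C_t} \subset \overline{X}
\]
is non-empty. Since $\bigcap_{t>0} C_t = \emptyset$ (the infimum is not attained in $X$), we have $K \subset \partial X$. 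Because each $\overline{C_t}$ is $g$-invariant (the isometry $g$ extends continuously to $\overline{X}$ and preserves $C_t$), so is $K$.

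Finally, I would exhibit a fixed point in $K$. Pick any $\xi \in K$. Because $\xi \in \overline{C_{1/n}}$ for every $n$, there exist $x_n \in C_{1/n}$ with $x_n \to \xi$ in the cone topology on $\overline{X}$. Since $g$ extends continuously to $\overline{X}$, we have $g x_n \to g\xi$. On the other hand $d(g x_n, x_n) \le 1/n \to 0$, and the CAT(0) geometry propagates this into convergence in the cone topology: for a basepoint $x_0$ and any radius $R$, the nearest-point projections onto the closed ball $\overline{B(x_0,R)}$ are $1$-Lipschitz, so $\pi_R(g x_n)$ and $\pi_R(x_n)$ differ by at most $d(g x_n, x_n) \to 0$, whence $g x_n$ and $x_n$ have the same limit in $\overline{X}$. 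Therefore $g \xi = \xi$.

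The main obstacle is the last step: verifying carefully that in a (possibly non-locally-compact) complete CAT(0) space the relation ``$d(g x_n, x_n) \to 0$ and $x_n \to \xi \in \partial X$ in the cone topology'' forces $g x_n \to \xi$. This comes down to checking, via the nonexpansive projection onto closed balls together with the CAT(0) inequality, that convergence in the cone topology is insensitive to bounded (in fact vanishing) perturbations, which is where one uses the complete CAT(0) hypothesis in an essential way. Gromov hyperbolicity itself is only used through Proposition~\ref{axis} and Theorem~\ref{monod.boundary}.
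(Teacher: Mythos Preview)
Your proof is correct and follows the same approach as the paper's: apply Theorem~\ref{monod.boundary} to the sublevel sets of the displacement function and then argue that any boundary point in the resulting intersection is fixed by $g$. Two simplifications are available: the appeal to Proposition~\ref{axis} is unnecessary (and its proof in the paper actually comes after this corollary) --- simply take the family $\{C_t\}_{t>L(g)}$, whose intersection in $X$ is empty because $g$ is parabolic; and for the final step you need only pick $x_n\in C_{t_0}$ for a single fixed $t_0$ with $x_n\to\xi$, since $d(gx_n,x_n)\le t_0$ being \emph{bounded} already forces $gx_n\to\xi$ in the cone topology (this is exactly what the paper's one-line ``each point in the intersection is fixed by $g$'' is invoking).
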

\proof
The family of sublevel sets of $x \mapsto L(g,x)$ has empty intersection in $X$, for otherwise $g$ would have a fixed point in $X$. By the previous theorem, the closures in $\bar X$ of these sublevel sets has non-empty intersection in $\bar X \backslash X$.
Each point in the intersection is fixed by $g$.
\qed

We quote a standard fact on $\delta$-hyperbolic space.
\begin{theorem}\label{delta.axis}
Let $X$ be a $\delta$-hyperbolic space and $g$ an isometry of $X$.
If $L(g)$ is sufficiently large, compared to $\delta$, then 
$g$ is ``hyperbolic" in the sense that there is an infinite quasi-geodesic
$\gamma$ in $X$ which is $g$-invariant. 

Moreover, let $x \in X$ be any point and $m$ the midpoint of 
a geodesic segment from $x$ to $g(x)$. Form a $g$-invariant piecewise
geodesic, $\gamma$, joining the points $\{g^n(m)\}_{n \in \Z}$ in this order
by geodesics. 
Then, there exists a constant $M$, which depends only on $\delta$, such that 
for any points $p,q \in \gamma$, the Hausdorff distance between the part
in $\gamma$ from $p$ to $q$ and a geodesic from $p$ to $q$ is at most $M$.
\end{theorem}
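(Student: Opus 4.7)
The plan is to show that the piecewise geodesic $\gamma$ through the orbit $\{g^n m\}_{n \in \Z}$ is a global quasi-geodesic; the uniform Hausdorff bound $M=M(\delta)$ between sub-arcs of $\gamma$ and the corresponding geodesic segments is then a direct application of the Morse stability lemma for quasi-geodesics in $\delta$-hyperbolic spaces.

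First I would bound the edge lengths of $\gamma$ from below. The triangle inequality at the midpoint $m$ gives
$$L(g) \leq d(m, gm) \leq d(m, gx) + d(gx, gm) = d(x, gx),$$
so each edge has length at least $L(g)$, which is large relative to $\delta$ by hypothesis.

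The main step, and the main obstacle, is to control the Gromov product at each vertex: the goal is
$$(g^{n-1} m \mid g^{n+1} m)_{g^n m} \leq K_1 \delta$$
for an absolute constant $K_1$, provided $L(g) \geq K_2 \delta$ for a suitable $K_2$. By $g$-equivariance it suffices to verify this at $n=0$, and then the midpoint property (which positions $m$ within $O(\delta)$ of $[g^{-1}x, gx]$) reduces the question to an upper bound on $(x \mid g^2 x)_{gx}$. Here one uses the hypothesis $L(g) \gg \delta$ crucially: if $(x \mid g^2 x)_{gx}$ were much larger than $\delta$, the $\delta$-thin triangle property applied to $\{x, gx, g^2 x\}$ would furnish a point $y$ close to $gx$ on the geodesic $[x, g^2 x]$ whose $g$-displacement is strictly less than $L(g)$, contradicting the definition of $L(g)$. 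Intuitively, a large Gromov product signifies that the orbit folds back at $gx$, and the fold point necessarily has small displacement.

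Combining the edge-length lower bound with the Gromov-product upper bound, the concatenation of two consecutive edges of $\gamma$ is a $(1, O(\delta))$-quasi-geodesic of controlled length, so $\gamma$ is a local quasi-geodesic with constants depending only on $\delta$. The local-to-global theorem for quasi-geodesics in $\delta$-hyperbolic spaces (see \cite{BH} or \cite{coornaert-delzant}) then upgrades $\gamma$ to a global $(\lambda, \mu)$-quasi-geodesic with $\lambda, \mu$ depending only on $\delta$, and the Morse stability lemma supplies the constant $M = M(\delta)$. The $g$-invariance of $\gamma$ is built into the construction, and its two ends provide the attracting and repelling fixed points of $g$ on $\partial X$.
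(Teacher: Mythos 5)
Your overall outline (bound the vertex Gromov products of $\gamma$, conclude it is a local and hence global quasi-geodesic, then invoke the Morse lemma) is the same shape as the paper's sketch, but the step you call the main one fails. The quantity you propose to bound, $(x \mid g^2x)_{gx}$, is \emph{not} $O(\delta)$ in general: in a tree ($\delta=0$), if $g$ translates by $\ell$ along an axis $A$ and $d(x,A)=R$, then $d(x,gx)=d(gx,g^2x)=2R+\ell$ while $d(x,g^2x)=2R+2\ell$, so $(x\mid g^2x)_{gx}=R$, which is unbounded. This is exactly why the theorem passes from $x$ to the midpoint $m$: the orbit $\{g^nx\}$ of an arbitrary point is not a uniform quasi-geodesic, while $\{g^nm\}$ is. Your proposed contradiction also runs the wrong way: a large Gromov product $(x\mid g^2x)_{gx}$ means $gx$ is \emph{far} from the geodesic $[x,g^2x]$ (the two sides issuing from $gx$ fellow-travel), so thinness does not furnish any point of $[x,g^2x]$ near $gx$; and in the tree example above every point of the space has displacement $\ge L(g)$ even though the product is huge, so no point of displacement $<L(g)$ can exist. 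Finally, the parenthetical ``midpoint property'' you invoke ($m$ lies within $O(\delta)$ of $[g^{-1}x,gx]$) is not proved and is essentially of the same difficulty as the statement you are trying to establish.

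What the argument actually needs, and what the paper uses, is an alignment estimate for the \emph{midpoints themselves}: $2d(m,gm)-d(m,g^2m)\le 4\delta$, i.e.\ $(m\mid g^2m)_{gm}\le 2\delta$ (see the Remark after the statement, and the proof of Lemma \ref{quasi.axis}, where the corresponding fact is the induction claim that $g^i(m)\in N_{2\delta}([m,g^n(m)])$). This is proved by a thin-triangle argument on $x,gx,g^2x$ using crucially that $m$ and $gm$ are the midpoints of the sides $[x,gx]$ and $[gx,g^2x]$ and that $d(m,gm)\ge L(g)\gg\delta$; it cannot be deduced from any alignment of the orbit of $x$, which generally fails. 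Once this midpoint alignment is in place, your remaining steps (local quasi-geodesic with constants depending only on $\delta$, local-to-global, Morse stability) do complete the proof, and indeed this is how the paper proceeds; note also that the paper's detailed argument in Lemma \ref{quasi.axis} in fact derives the Hausdorff-distance bound first and the quasi-geodesic constants from it, rather than the other way around.
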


\begin{remark}
The first claim appears in \cite[\S8, Prop 24]{ghys}
with $L(g) > 26 \delta$.
The advantage to retake $x$ to $m$ is that we have uniform bounds on the quasi-geodesic
constants. In fact, we have 
$2|m-g(m)|-|m-g^2(m)| \le 4 \delta$, namely, the three points $m, g(m), g^2(m)$
are nearly on a geodesic. It follows that the path $\gamma$ is a $(K,L)$-quasi geodesic
with constants $K, L$ depending only on $\epsilon$. Once we have that, the 
existence of $M$ is by the Morse lemma.
A detailed argument is, for example, in  \cite{fujiwara} for $L(g) > 1000 \delta$.
\end{remark}

For an isometry $a$ of $X$ and a constant $C$,
define 
$$Fix_C(a)=\{x\in X| d(x,a(x)) \le C \}.$$
This is a closed, possibly empty,  set. It is convex if $X$ is CAT(0).

We start the proof of the proposition \ref{axis}.
\proof
Since it suffices to show that $g^n$ is hyperbolic for some $n>0$, 
by Proposition \ref{prop.monod},
we may assume that $L(g)$ is as large as we want 
by replacing $g$ by a high power. 
So, we assume that $L(g)$ is large enough compared to the hyperbolicity constant $\delta$, so that Theorem \ref{delta.axis} applies to $g$.

Set $C_0=L(g)$.  We will show $Fix_{C_0}(g)$ is not empty. 
For each $C > C_0$, the set $Fix_C(g)$
is non-empty convex set which is invariant by $g$, 
and is $\delta$-hyperbolic. 

By Theorem \ref{delta.axis}, there is a $g$-invariant quasi-geodesic, 
but we may assume that this path is contained in $Fix_C(g)$.
Indeed, if we start with a point $x \in Fig_C(g)$, the point $m$
is also in $Fig_C(g)$ since it is convex, and so are all points $g^n(m)$. Therefore $\gamma$ in the theorem is contained in $Fic_C(g)$. Let us denote this $\gamma$ by $\gamma_C$.

Now, the Hausdorff distance of any two of those quasi-geodesics
$\gamma_C, C >C_0$, is at most $2M+2\delta=M'$.
To see that, observe that the Hausdorff distance of the two quasi-geodesics
is finite since they are both $g$-invariant. 
Since they are both bi-infinite, in fact the bound
is $2M+2\delta$.

In particular there is a metric ball, $B \subset X$,  of radius $M'$ 
such that  for any $C > C_0$, $B \cap \gamma_C \not= \emptyset$,  therefore
$B \cap Fix_C(g)$ is not empty.

Set $\mathcal F = \{B \cap Fix_C(g)| C >C_0\}$. It is a nested family of bounded convex closed subsets of $X$. Hence the family has a non-empty intersection, in particular
$\cap_{C >C_0} Fix_C(g)$ is not empty, and therefore
$Fix_{C_0}(g)$ is not empty. 
\qed

\if0

{\bf this proof is for CAT(-1). It will be erased}
We argue by contradiction. Assume $g$ is not hyperbolic.
Let $L(g)=C_0>0$.
For $C>C_0$, $Fix_C(g)$ is non-empty.
We claim that this is a bounded set. We argue only for
the case $2C_0 \ge C$, which is sufficient for our purpose.
To argue by contradiction, assume that this is not bounded
for some $C$.
Let $x,y \in Fix_C(g)$ such that $|x-y|=L$ is very large.
We will choose $L$ later.
A CAT($-1$) space is $\delta$-hyperbolic for uniform $\delta$
(in fact one can take $\delta=1$).
Fix a large integer $N$ with $NC_0 >> \delta$.
The geodesic segment $[x,y]$ is contained in $Fix_C(g)$.
We apply $g,g^2,\cdots, g^N$ to $[x,y]$.
Notice that for any pair, $g^i[x,y],g^j[x,y]$, in those segments, the corresponding
end points are apart at most by $NC$.
Since $X$ is CAT($-1$), for any
$\epsilon >0$, if we take $L$ sufficiently large,
$g^i[x,y]$ is contained in the $\epsilon$-neighborhood
of $g^j[x,y]$ except for the $M$-neighborhood of the endpoints
such that $M$ depends only on $\epsilon, N, C_0$
(here we use that $C\le 2C_0)$), but not on $L$.
Such $L$ exists for the hyperbolic plane, therefore
the same constant works for a CAT($-1$) space.
Fix $\epsilon$ such that $\epsilon N << C_0, \delta $ (here we are
using $C_0 >0$).
Now, following the previous discussion, according to
$\epsilon$, we choose $L$ such that $L >> M, C_0, \delta$.
(Roughly speaking, $g^i [x,y],g^j[x,y]$ are very close
to each other for $1 \le i,j \le N$ except near the endpoints.)

We will show $g$ is hyperbolic.
Let $p \in [x,y]$ be the mid point.
We apply $g,g^2,\cdots, g^N$ to $p$ and
obtain $g^i p \in g^i[x,y]$.
We know $|p-gp|,|gp-g^2p|,\cdots, |g^{N-1}p,g^Np| \ge C_0$.
Moreover, those $(N+1)$ points are nearly on the geodesic
$[p,g^Np]$
since the geodesic segments $g^i[x,y] (0\le i \le N)$ are
very close (w.r.t. the Hausdorff distance) to each other around those points.
(For example, for the three points $p,gp,g^2p$,
one may worry that the points $p$ and $g^2p$
are very close to each other, but this does not
happen. To see that, take a point $q \in [x,y]$ on $[x,y]$ which is
closest to $g(p)$. Then $|g(q)-p|$
is approximately equal to $2|p-gp|$. But $g(q)$ is very
close to $g^2(p)$.)
We conclude that $|p-g^Np|$
is approximately equal to $\sum_{1\le i \le N} |g^{i-1}p -g^i p|$,
therefore at least, approximately, $NC_0$.

By the same reason, for three points $p, g^N p, g^{2N}p$,
we find that $|p-g^{2N}p|$ is approximately
equal to $|p-g^Np|+|g^N-g^{2N}p|$.
To be precise, the difference is comparable to $N\epsilon$
and $<< \delta, NC_0$.

Now, we have a sequence of points $\{g^{Ni}p \}(i \in {\Bbb Z})$
in $X$, which is $\delta$-hyperbolic.
It follows from  a well known fact (\cite[7.2C]{gromov.hyp}. See below) that
those points are on some quasi-geodesic $\gamma$.
One can easily arrange that $\gamma$ is invariant by $g^N$, and
moreover by $g$. This is a contradiction, and
we have showed that $Fix_C(g)$ is a non-empty, bounded,
convex set for any $C>C_0$.

Now, by \cite{monod}, $\cup_{C>C_0} Fix_C(g)$
is non-empty, and on this set $L(g)$ is
achieved. This implies that $g$ has an invariant
geodesic (by a fact on complete CAT(0) space, see \cite{BH}),
which is a contradiction.

{\bf the actual proof starts here}
We only need to show $Fix_{C_0}(g)$ is non-empty.
Then we have an invariant geodesic for $g$ by CAT(0) \cite{BH}.
So, without loss of generality, we can replace $g$ with
$g^n$ to show this claim (since $g^n$ has an invariant
geodesic if and only if $g$ does \cite{BH}).

We assume that $L(g)=C_0>>\delta$, by replacing $g$
with its power if necessary (Prop \ref{prop.monod}).
We may assume $\delta >0$.
Fix $C'>C_0$ such that $C'-C_0 << \delta$.
Let $C$ be such that $C' \ge C >C_0$. Then $Fix_C(g)$ is non-empty,
and $g$-invariant.
Take $x \in Fix_C(g)$, and join $\{g^nx\}_n$
one after another in this order by a geodesic. We obtain a path $\alpha_C$
which is $g$-invariant and contained in $Fix_C(g)$ since
the displacement function $|x-gx|$ is convex.
Also, this is a $(\lambda,\lambda)$-quasi-geodesic, and
moreover, $\lambda$ is  for all $C$
with $C' \ge C >C_0$.
(here we use $C_0$ is large and $C'-C_0$ is small).
This is a straightforward application of a well-known
result in $\delta$-hyperbolic geometry (see Theorem \ref{lqd}).
Therefore, by Morse lemma, there exists $K>0$ such that
$\alpha_{C'}$ is contained in each $(\alpha_{C})_K$.
It follows that a metric ball $B$ centered at a point
on $\alpha_{C'}$ with radius $2K$ intersects
each $\alpha_C$ for all $C' \ge C >C_0$.
Now apply Monod,
$\cap_{C'\ge C >C_0}Fix_C(g)$ is non-empty,
and this is equal to $Fix_{C_0}(g)$.
\qed

For readers' convenience, we quote the fact we
used. The assumption is satisfied
since in our case, $|x_i-x_{i-1}|$ is constant
and $|x_i-x_{i21}|$ is almost two times longer.
\begin{theorem}\label{lqd}\cite[7.2C]{gromov.hyp}
Let $X$ be a $\delta$-hyperbolic space.
Let $\epsilon >100\delta$ be a constant.
Let $x_1, \cdots, x_n$ be a sequence of points
such that for any $3 \le i \le n$
$$|x_i-x_{i-2}| - \epsilon \ge max\{|x_i-x_{i-1}|, |x_{i-1}-x_{i-2}|\}.$$
Then those points are on some $(\lambda,\lambda)$-quasi-geodesic
such that $\lambda$ depends only on $\delta, \epsilon$.

\end{theorem}

\fi

%

\bigskip

In view of the above propositions, in the case of a complete CAT(0) and $\delta$-hyperbolic space, the classification of isometries becomes:

\begin{itemize}
\item elliptic when $g$ fixes a point in $X$,
\item hyperbolic when $L(g)>0$ (in this case $g$ fixes a unique geodesic and acts by translation by $L(g)$ on it),
\item parabolic when $L(g)=0$ (in this case $g$ does not fix a point in $X$ but fixes a point in $\partial X$).
\end{itemize}

\section{Quantitative Serre's lemma for groups acting on trees}\label{trees}

The goal of this section is to prove Proposition \ref{formula.S} below, which is an extension of the following well-known lemma due to Serre (\cite[I. Prop. 26]{serre}).  In the next section we will generalize everything to $\delta$-hyperbolic spaces.

\begin{lemma}[Serre's lemma] \label{serre-lemma} If $a,b$ are isometries of a simplicial tree, such that $a$, $b$ and $ab$ each have some fixed point, then $a$ and $b$ have a common fixed point. 
\end{lemma}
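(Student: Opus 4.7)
The plan is to argue by contradiction using the standard geometry of elliptic isometries of trees, in particular the ``midpoint" description of their action.

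The first step is to record the following elementary fact: if $g$ is an isometry of a simplicial tree with a non-empty fixed-point set $F_g$ (which is automatically a convex subtree), then for every point $p \in X$ the midpoint of the geodesic $[p, g(p)]$ is precisely the nearest point of $F_g$ to $p$, so in particular $d(p, g(p)) = 2 d(p, F_g)$. I would prove this by letting $q \in F_g$ be a nearest point to $p$ and observing that the two geodesics $[q, p]$ and $[q, g(p)] = g([q, p])$ have the same length and emanate from $q$; if they shared any non-trivial common initial segment, the isometry property of $g$ together with $g(q)=q$ would force $g$ to fix points on that segment, contradicting the minimality of $d(p, F_g)$. Hence $[p, g(p)] = [p, q] \cup [q, g(p)]$, and $q$ is the midpoint.

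The second step is the contradiction. Suppose $F_a \cap F_b = \emptyset$. Since $F_a$ and $F_b$ are disjoint non-empty convex subtrees, there is a unique bridge $[x, y]$ with $x \in F_a$, $y \in F_b$, and $d := d(x, y) > 0$. Applying the lemma to $b$ at the point $x$ gives $d(x, b(x)) = 2d$ with $y$ as midpoint of $[x, b(x)]$. Since $a$ is an isometry fixing $x$, it maps the geodesic $[x, b(x)]$ isometrically to the geodesic $[x, ab(x)]$, which therefore has length $2d$ and midpoint $a(y)$.

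The third step is to exploit the assumption that $ab$ is elliptic. Applying the lemma now to $ab$ and the point $x$, the midpoint of $[x, ab(x)]$ must lie in $F_{ab}$, so $a(y) \in F_{ab}$. Hence $ab(a(y)) = a(y)$, which rearranges to $b(a(y)) = y$. Since also $b(y) = y$ and $b$ is injective, we conclude $a(y) = y$, i.e.\ $y \in F_a$. This contradicts $y \in F_b$ together with $F_a \cap F_b = \emptyset$, completing the proof.

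The only non-formal step is the midpoint characterization in Step 1; I expect this to be the main ``obstacle", though it is really just a careful application of the defining feature of trees (unique geodesics and the fact that two arcs sharing an initial segment must agree on it). Everything else is a clean one-line manipulation once that fact is in place; notably the argument avoids any appeal to Helly's theorem for trees or to translation-length formulas.
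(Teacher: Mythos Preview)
Your proof is correct. The midpoint fact in Step~1 is exactly the paper's observation~(a), $d(gx,x)=2d(x,\Delta_g)+L(g)$, specialized to the elliptic case $L(g)=0$, and your deduction that the midpoint lies in $F_g$ is observation~(b). The remaining manipulation is clean; in particular the key step $ab(a(y))=a(y)\Rightarrow b(a(y))=y\Rightarrow a(y)=y$ is valid.

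The paper, however, does not prove Serre's lemma directly: it cites Serre and instead proves the quantitative formula $L(\{a,b\})=\max\{L(a),L(b),L(ab)/2\}$ (Proposition~\ref{formula.pair}), of which Serre's lemma is the special case where all three quantities vanish. That proof works with a point $x$ minimizing $L(\{a,b\},\cdot)$ and shows, via the same displacement formula, that $x$ is forced to be the midpoint of $[ax,bx]$ and of $[b^{-1}x,ax]$, whence $2L(\{a,b\})=L(ab)$. Your bridge argument is more direct and arguably more transparent for the bare qualitative statement; the paper's approach costs a little more but yields the full displacement formula, which is what is actually used later (and which then gets quasified to $\delta$-hyperbolic spaces in Proposition~\ref{formula.pair.hyp}). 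Both arguments rest on the same two elementary facts~(a) and~(b), so the difference is in packaging rather than in ingredients.
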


We first note the following simple fact:

\begin{lemma} If $a$ is an isometry of a simplicial tree, then $L(a)=\ell(a)=\lambda(a)$.
\end{lemma}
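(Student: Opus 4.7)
The plan is to observe that this lemma is essentially a special case of Proposition \ref{prop.monod}. A simplicial tree, equipped with its natural path metric (each edge isometric to a unit interval), is a complete CAT($0$) space: its geodesic triangles are degenerate tripods, so the CAT($0$) inequality is trivially satisfied. Therefore Proposition \ref{prop.monod} applies and gives $L(a)=\ell(a)$ for every isometry $a$. The equality $\lambda(a)=\ell(a)$ is immediate from the definition $\lambda(S):=\max_{s\in S}\ell(s)$ applied to the singleton $S=\{a\}$.

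For a self-contained argument one may alternatively invoke the well-known classification of isometries of a simplicial tree (possibly after passing to the first barycentric subdivision so that no edge is inverted). The isometry $a$ is of exactly one of two types:

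\begin{itemize}
\item Elliptic: $a$ fixes a point $x_0$ of the tree. Then $L(a)=d(x_0,ax_0)=0$, and $d(x_0,a^n x_0)=0$ for every $n$, so $\ell(a)=0$ as well.
\item Hyperbolic: there exists a unique $a$-invariant bi-infinite geodesic $\gamma_a \subset X$, the axis of $a$, on which $a$ acts by translation of some amplitude $\tau>0$; moreover $\tau=L(a)=\min_{x\in X}d(x,ax)$. For any $x\in\gamma_a$ one has $d(x,a^n x)=n\tau$, hence $\ell(a)=\tau=L(a)$.
\end{itemize}

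In either case $L(a)=\ell(a)$. Combined with $\lambda(a)=\ell(a)$ by definition, the lemma follows. There is no real obstacle here; the only nontrivial input is the classification of tree isometries (or equivalently the CAT($0$) argument of Proposition \ref{prop.monod}), both of which are entirely standard.
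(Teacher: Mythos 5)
Your proposal is correct, and your self-contained alternative (the dichotomy elliptic/hyperbolic for isometries of a simplicial tree, with $L(a^n)=nL(a)$ along the axis in the hyperbolic case) is exactly the argument the paper gives. Your primary route via Proposition \ref{prop.monod} is also valid — a simplicial tree with its path metric is a geodesic CAT($0$) space, so $L(a)=\ell(a)$ follows, and $\lambda(a)=\ell(a)$ is definitional — but it is slight overkill here: the tree classification gives the exact identity $L(a^n)=nL(a)$ with no comparison-triangle argument, which is the spirit of the paper's one-line proof. Either way the lemma stands.
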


Indeed either $a$ fixed a point on the tree, and all quantities vanish, or $a$ is a hyperbolic isometry translating along an axis $\Delta_a=\{x ; d(ax,x)=L(a)\}$, so that in particular $L(a^n)=nL(a)$ and thus $\lambda(a)=L(a)$.





\begin{proposition}[A formula for the joint minimal displacement of a pair]\label{formula.pair} Let $a,b$ be isometries of a simplicial tree. Then 
$$L(\{a,b\}) = \max \{ L(a),L(b) , \frac{L(ab)}{2}\} = \lambda_2(\{a,b\}).$$
\end{proposition}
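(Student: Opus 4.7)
The plan is to prove both equalities in the statement. One direction, $\lambda_2(\{a,b\}) \leq L(\{a,b\})$, is Lemma \ref{gen-ineq}. For the closed-form computation $\lambda_2(\{a,b\}) = \max\{L(a),L(b),L(ab)/2\}$, I would use the preceding lemma ($\ell(g)=L(g)$ for any isometry of a tree) together with the fact that $ba = a^{-1}(ab)a$ is conjugate to $ab$, so that $L(ba)=L(ab)$. Then $\lambda(S) = \max(L(a),L(b))$ and $\lambda(S^2) = \max(2L(a), 2L(b), L(ab))$, which gives $\lambda_2(S) = \max(\lambda(S), \tfrac{1}{2}\lambda(S^2)) = \max(L(a), L(b), L(ab)/2)$. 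What remains is the nontrivial inequality $L(\{a,b\}) \leq M := \max(L(a), L(b), L(ab)/2)$, to be established by exhibiting a concrete point with small joint displacement.

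For this, let $F_a$ and $F_b$ denote the min-sets of $a$ and $b$ (axis if hyperbolic, fixed subtree if elliptic); these are non-empty convex subtrees satisfying the standard identity $d(x,gx) = L(g) + 2\, d(x, F_g)$ for tree isometries. If $F_a \cap F_b \neq \emptyset$, any $x$ in the intersection gives $L(S,x) = \max(L(a), L(b)) \leq M$ and we are done. Otherwise, let $[p,q]$ be the (unique) bridge between the two subtrees, with $p \in F_a$, $q \in F_b$, and length $D := d(p,q) > 0$. For the point $x_t$ on $[p,q]$ with $d(x_t, p) = tD$, the identity gives $d(x_t, ax_t) = L(a) + 2tD$ and $d(x_t, bx_t) = L(b) + 2(1-t)D$, and a one-variable minimization yields
$$\min_{t \in [0,1]} \max\bigl(d(x_t, ax_t),\ d(x_t, bx_t)\bigr) = \max\bigl(L(a),\ L(b),\ D + (L(a)+L(b))/2\bigr).$$

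The main obstacle is to identify $D + (L(a)+L(b))/2$ with $L(ab)/2$, i.e.\ to establish the classical tree-geometric formula $L(ab) = L(a) + L(b) + 2D$ when $F_a \cap F_b = \emptyset$. For this I would trace the geodesic $[q, abq]$: since $p$ is the nearest point in $F_a$ to $q$, and $b$ preserves $F_b$ (so $[p, bq] = [p,q] \cup [q, bq]$ and $d(p, bq) = D + L(b)$), applying $a$, which translates $F_a$ by exactly $L(a)$, forces $[q, abq]$ to traverse $[q,p]$, then $[p, ap] \subset F_a$, then $[ap, abq]$, with total length $2D + L(a) + L(b)$ and no backtracking at the junctions. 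The same pattern iterates, showing that the orbit $\{(ab)^n q\}_{n \in \Z}$ lies on a bi-infinite geodesic along which $ab$ acts by translation by $2D + L(a) + L(b)$ --- namely the axis of $ab$. By Proposition \ref{prop.monod} we then conclude $L(ab) = \ell(ab) = L(a) + L(b) + 2D$. Combining, the previous minimum equals $\max(L(a), L(b), L(ab)/2) = M$, so $L(S) \leq M$, which finishes the proof.
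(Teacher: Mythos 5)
Your proposal is correct, but it follows a genuinely different route from the paper. The paper's proof works directly at a point $x$ minimizing $y \mapsto L(\{a,b\},y)$: assuming $L(\{a,b\})>\max\{L(a),L(b)\}$, a local perturbation argument (using the identity $d(y,gy)=L(g)+2d(y,\Delta_g)$ and the fact that the midpoint of $[y,gy]$ realizes $L(g)$) shows the triangle $x,ax,bx$ is degenerate with $x$ the midpoint of $[ax,bx]$, and likewise of $[b^{-1}x,ax]$; since $ax=(ab)b^{-1}x$, this yields $d(abx,x)=L(ab)=2L(\{a,b\})$ in one stroke, with no need for any product formula. You instead reduce the nontrivial inequality $L(\{a,b\})\le\max\{L(a),L(b),L(ab)/2\}$ to the classical Culler--Morgan-type identity $L(ab)=L(a)+L(b)+2d(F_a,F_b)$ when the min-sets are disjoint, which you prove by the no-backtracking/axis-tracing argument, and then exhibit an explicit near-optimal point on the bridge via a one-variable minimization; your identification $\lambda_2(\{a,b\})=\max\{L(a),L(b),L(ab)/2\}$ (using $L=\ell=\lambda$ for single tree isometries and $L(ba)=L(ab)$ by conjugacy) is also fine and is only implicit in the paper. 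Your details check out: the displacement identity on the bridge, the minimization $\min_t\max(L(a)+2tD,\,L(b)+2(1-t)D)=\max\bigl(L(a),L(b),D+\tfrac{L(a)+L(b)}{2}\bigr)$, and the fact that the concatenated path through $q,p,ap,abq$ and its $ab$-translates is a local geodesic, hence a geodesic axis, so $\ell(ab)=L(ab)=L(a)+L(b)+2D$ by the tree version of Proposition \ref{prop.monod}. What each approach buys: yours is constructive (it locates the minimizing point on the bridge) and recycles a standard formula from the tree literature (cf.\ \cite{chiswel}), whereas the paper's variational midpoint argument is leaner and, more importantly, is exactly the argument that gets quasified to $\delta$-hyperbolic spaces in Proposition \ref{formula.pair.hyp}, which is why the authors chose it.
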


More generally this formula extends to an arbitrary finite subset of isometries:

\begin{proposition}[A formula for the joint minimal displacement] \label{formula.S} Let $S$ be a finite set of isometries of a simplicial tree. Then 
$$L(S) = \max_{a,b \in S} \{ L(a) , \frac{L(ab)}{2}\} = \lambda_2(S).$$
\end{proposition}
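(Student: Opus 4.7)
The plan is to prove the two inequalities separately and then identify the right-hand side with $\lambda_2(S)$. Write $M := \max_{a,b \in S}\{L(a), L(ab)/2\}$, so the claim is $L(S) = M = \lambda_2(S)$. The easy direction $L(S) \geq M$ is immediate: for $a \in S$, the relation $d(ax,x) \leq L(S,x)$ valid at every $x$ gives $L(a) \leq L(S)$; for any $a,b \in S$, since $ab \in S^2$, one has $L(ab) \leq L(S^2) \leq 2L(S)$, using the subadditivity $L(S^2) \leq 2L(S)$ already recorded in Section \ref{gen}.

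For the reverse inequality $L(S) \leq M$, I would introduce, for each $s \in S$, the sublevel set $F_s := \{x \in X : d(sx,x) \leq M\}$. Since $x \mapsto d(sx,x)$ is a convex function on the tree, $F_s$ is a subtree; moreover it is non-empty, because $L(s) \leq M$ and a tree isometry always attains its translation length (either on its axis or at a fixed point). The pair case, already proved in Proposition \ref{formula.pair}, gives, for every $a,b \in S$,
$$L(\{a,b\}) = \max\{L(a), L(b), L(ab)/2\} \leq M,$$
so some point simultaneously satisfies $d(ax,x) \leq M$ and $d(bx,x) \leq M$, i.e.\ $F_a \cap F_b \neq \emptyset$. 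The finite family $(F_s)_{s \in S}$ therefore pairwise intersects, and I would appeal to the Helly property for subtrees of a tree to produce a common point $x_0 \in \bigcap_{s \in S} F_s$. Such a point witnesses $L(S) \leq L(S,x_0) \leq M$.

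Finally, the identification $M = \lambda_2(S)$ is bookkeeping: since $\ell(g) = L(g)$ for every isometry of a tree (recorded just before Proposition \ref{formula.pair}), we have $\lambda(S) = \max_{s \in S} L(s)$ and $\lambda(S^2) = \max_{a,b \in S} L(ab)$, so $\lambda_2(S) = \max\{\lambda(S), \lambda(S^2)/2\} = M$. The only genuinely non-trivial ingredient is the Helly step: a finite collection of pairwise intersecting subtrees of a tree has non-empty total intersection. This is classical and reduces by induction to the case of three subtrees, where, given $x_{ij} \in T_i \cap T_j$, the median of $x_{12},x_{13},x_{23}$ lies in each of $T_1,T_2,T_3$ because each $T_i$ contains two of the three points and is connected. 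Once Helly is granted, the rest of the argument is mechanical.
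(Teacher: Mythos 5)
Your proof is correct and follows essentially the same route as the paper's: deduce pairwise intersection of the sublevel subtrees $Fix_M(s)$ from the pair formula (Proposition \ref{formula.pair}) and conclude by the Helly property for subtrees, the other inequality being immediate. The only difference is that you also spell out the median proof of Helly and the bookkeeping identification $M=\lambda_2(S)$, which the paper leaves implicit.
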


Recall that by definition $L(S)= \inf_{x \in X} \max_{s \in S} d(x,sx)$ and $L(g)=L(\{g\})$ for an isometry $g$. And $\ell(S)= \lim L(S^n)/n$.

\begin{corollary}[Growth of joint minimal displacement] \label{growth.S} Let $S$ be a finite set of isometries of a simplicial tree. Then $L(S)=\ell(S)=\lambda_2(S)$ and indeed for every $n \in \N$,
$$L(S^n) = nL(S).$$
\end{corollary}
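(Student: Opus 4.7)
The plan is to read off both claims directly from Proposition \ref{formula.S} together with the chain of a priori inequalities recorded in Lemma \ref{gen-ineq}. There is essentially no new work to do: the content of the corollary is a bookkeeping consequence of these two results.

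First I would address the equality $L(S) = \ell(S) = \lambda_2(S)$. Lemma \ref{gen-ineq} supplies, unconditionally, the chain
$$\lambda_2(S) \;\leq\; \lambda_\infty(S) \;\leq\; \ell(S) \;\leq\; L(S).$$
Proposition \ref{formula.S} provides the reverse inequality $L(S) = \lambda_2(S)$ in the tree case. Inserting this forces all intermediate quantities to agree, which gives the first assertion. (As a byproduct one also obtains $\lambda_\infty(S) = \ell(S)$, i.e.\ the geometric Berger--Wang identity for tree isometries, consistent with Theorem \ref{bw-thm}.)

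Next I would handle the formula $L(S^n) = nL(S)$. Again Lemma \ref{gen-ineq} provides, for every $n \in \N$,
$$\ell(S) \;\leq\; \frac{1}{n}L(S^n) \;\leq\; L(S).$$
Having just shown $\ell(S) = L(S)$, the two outer quantities coincide, so the middle term is pinned down to their common value, yielding $L(S^n) = n L(S)$.

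There is no genuine obstacle: the only nontrivial input is Proposition \ref{formula.S} itself, which has already been established. One might optionally remark that the same argument recovers, in the tree setting, the well-known fact that $\ell$ and $L$ agree for an arbitrary finite generating set, a feature that fails in general CAT(0) spaces (cf.\ Section \ref{euclid-sec}), and which here is ultimately a consequence of Serre's lemma underlying the proof of Proposition \ref{formula.S}.
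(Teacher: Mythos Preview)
Your proof is correct and follows essentially the same approach as the paper: combine Proposition \ref{formula.S} (which gives $L(S)=\lambda_2(S)$) with the a priori inequalities to squeeze all quantities together. The paper's one-line proof cites ``Lemma \ref{disp}'' alongside Proposition \ref{formula.S}, whereas you invoke Lemma \ref{gen-ineq} directly; your citation is in fact the cleaner one, since the chain $\lambda_2(S)\le \ell(S)\le \frac{1}{n}L(S^n)\le L(S)$ from Lemma \ref{gen-ineq} is exactly what is needed and no CAT(0) input is required.
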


We now pass to the proofs of the above statements. The proofs are quite simple and we do not claim much originality here. Even though we were not able to find the above statements in the existing literature, all ingredients in their proofs are well-known and can be found, for example in \cite[3.3]{chiswel}. 

First we make the following simple observations:  Let  $g$ is an isometry of a simplicial tree $X$, let $\Delta_g$ be the ``axis'' of $g$, $\Delta_g:= \{x \in X ; d(gx,x)=L(g)\}$ and let $x$ be any point in $X$.

\begin{enumerate}[label=(\alph*)]
\item we have $d(gx,x) = 2d(x,\Delta_g) + L(g)$,
\item $L(g)=d(gm,m)$, where $m$ is the midpoint of the geodesic segment $[x,gx]$. 
\end{enumerate}

To see this, consider a point $y$ on $\Delta_g$, which minimizes the distance between $x$ and $\Delta_g$ and note that the geodesic between $y$ and $gy$ lies entirely in $\Delta_g$, so that in particular the concatenation of the geodesic segments from $x$ to $y$ and from $y$ to $gy$ remains a geodesic. Furthermore the midpoint between $x$ and $gx$ will coincide with the midpoint between $y$ and $gy$.

\begin{proof}[Proof of Proposition \ref{formula.pair}] Let the point $x$ realize the infimum of $ \max\{d(x,ax), d(x,bx)\}$. Assume that $L(\{a,b\}) > \max\{L(a),L(b)\}$. This implies that the axes $\Delta_a$ and $\Delta_b$ do not intersect. We claim that $x$ must be the midpoint of the geodesic segment  $[ax,bx]$ and that $d(x,ax)=d(x,bx)$. 

To see this look at the triangle with vertices $x,ax$ and $bx$. If it is not flat with $x$ the midpoint of  $[ax,bx]$, then the geodesics $[x,ax]$ and $[x,bx]$ intersect on some segment near $x$. But $y \mapsto d(y,ay)$ decreases as the point  $y$ moves away from $x$ on this segment. This is a consequence of item $(a)$ above: it decreases strictly unless $x$ is on the axis $\Delta_a$ of $a$, and it decreases until it reaches the midpoint $m$ of the segment $[x,ax]$, where we have $L(a)=d(m,am)$ by item (b) above. The same holds for $b$. Since $L(\{a,b\},x) > \max\{L(a),L(b)\}$, we conclude that unless $x$ is on both axes $\Delta_a$ and $\Delta_b$, this contradicts the minimality of $x$.

For the same reason $x$ is also the midpoint of the geodesic segments $[b^{-1}x,ax]$ and  $[bx,a^{-1}x]$. By item $(b)$ above applied to $x$ viewed as a midpoint of $[b^{-1}x, ax]$ (noting that $ax=(ab)b^{-1}x$) we get:
$$d(ab x, x)=L(ab).$$
But
\begin{equation}\label{ab} d(abx,x)= d(bx,a^{-1}x) = 2d(ax,x) = 2d(bx,x)=2L({a,b}).\end{equation}
So this shows that $L({a,b})=L(ab)/2$  and this ends the proof of the proposition.
\end{proof}

We may now extend our formula to an arbitrary finite set of isometries. Recall that for an isometry $a$ of a metric space $(X,d)$ and a number $A\geq 0$ we denote $Fix_A(a):=\{ x \in X ; d(ax,x) \leq A\}$.

\begin{proof}[Proof of Proposition \ref{formula.S}] This follows easily from Proposition \ref{formula.pair}. Indeed set $M=\max_{a,b \in S} \{L(a), L(ab)/2\}$. By  Proposition \ref{formula.pair} we have $M\geq L(\{a,b\})$ for every pair $a,b \in S$. Hence $Fix_M(a)$ and $Fix_M(b)$ intersect for every pair $a,b \in S$. However recall that each $Fix_M(a)$ is a convex subset (it is a subtree), and in any tree any collection of subtrees which pairwise intersect non trivially must have a non-empty intersection. Therefore there is a point $z$ such that $d(z,az) \leq M$ for all $a \in S$ and we have established $L(S)\leq M$. The opposite inequality is obvious. 
\end{proof}

\begin{proof}[Proof of Corollary \ref{growth.S}] This is obvious combining  Lemma \ref{disp} and Proposition \ref{formula.S}.
\end{proof}


\section{The joint minimal displacement in hyperbolic spaces}\label{serre-hyp}

In this section we extend the results of Section \ref{trees} to isometries of $\delta$-hyperbolic spaces and prove Theorem \ref{bochi-hyp} from the introduction, which implies Theorem \ref{bw-thm} in the case of hyperbolic spaces.

First we recall the definition of $\delta$-hyperbolicity and prove a general lower bound on $L(S^n)$ using the same circumcenter argument as in Proposition \ref{disp}.

Gromov introduced the notion of a $\delta$-hyperbolic metric space in  \cite{gromov.hyp}.  Recall that a metric space $(X,d)$ is said to be geodesic if any two points can be joined by a geodesic (i.e. length minimizing) continuous path. A geodesic triangle $\Delta=(\sigma_1, \sigma_2, \sigma_3)$ in a geodesic space $X$ is said to be $\delta$-{\it thin} if $\sigma_i$ is contained in the $\delta$-neighborhood
of $\sigma_j \cup \sigma_k$ for any permutation of $(i,j,k)$ of $(1,2,3)$.

There is another notion that is closely related to 
$\delta$-thinness. A point $c$ is a $\delta$-{\it center} of 
a  geodesic triangle $\Delta=(\sigma_1, \sigma_2, \sigma_3)$
if the distance from $c$ to every $\sigma_i$ is $\le \delta$,
\cite{Bowditch}.
It is easy to see that if a (geodesic) triangle is 
$\delta$-thin, then it has a $\delta$-center.
Conversely, if a geodesic triangle has a $\delta$-center, then 
it is $6 \delta$-thin (see \cite[Lemma 6.5]{Bowditch}.

The metric space $(X,d)$ is said to be $\delta$-{\it hyperbolic} if any geodesic triangle is $\delta$-thin. 
Good references on the geometry of $\delta$-hyperbolic spaces include \cite{BH, Bowditch, coornaert-delzant}.

Note that we can also define the $\delta$-hyperbolicity of a geodesic 
space if every geodesic triangle has a $\delta$-center
(for example \cite{Bowditch}).
As we said, those two definitions are equivalent, but 
the constant $\delta$ may differ, so if we want to 
stress the difference, we call it $\delta$-{\it center}-hyperbolicity.

 We note that there is also a notion of $\delta$-hyperbolic space where the space is not required to be geodesic (via the so-called $4$-points condition, see \cite{Bowditch}). However the geodesic assumption is necessary in our theorems. For example it can be seen easily that results such as Lemma \ref{translation.length} or Theorem  \ref{formula.S.hyp} fail when the space is not assumed geodesic. As pointed out in \cite[Remark 4.4]{oregon-reyes} one can take for instance the usual hyperbolic disc with a large ball (centered say at the origin) removed. This is still a $\delta$-hyperbolic space in the sense of the $4$-points condition with the same $\delta$, but an elliptic isometry centered at the origin will have $\ell(g)=0$ while $L(g)$ will be large.

\subsection{Using the circumradius to relate joint minimal and joint asymptotic displacements}

Here we show:

\begin{proposition}\label{disp2} Let $X$ be a $\delta$-hyperbolic geodesic metric space and $S$ a finite subset of $Isom(X)$. For every $n\in \N$ we have:
$$\frac{1}{n}L(S^n) \geq \frac{L(SS^{-1})}{2} - 2 \delta $$
\end{proposition}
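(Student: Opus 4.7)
The plan is to mimic the circumradius-based proof of Proposition \ref{disp}, but to replace the CAT(0) intersection-of-balls bound (Lemma \ref{intersection-cat}) by a $\delta$-hyperbolic substitute in which the gain passes from quadratic to linear in $d(y,z)$, at the cost of an additive error of order $\delta$. The point is that this linear gain telescopes into the linear growth $\frac{1}{n}L(S^n) \geq L(SS^{-1})/2 - 2\delta$, rather than the $\sqrt{n}$ growth obtained for general CAT(0) spaces.

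First I would establish the following \emph{hyperbolic intersection lemma}: if $y,z \in X$ with $d(y,z) \leq 2r$ and $m$ is a midpoint of a geodesic from $y$ to $z$, then
$$B(y,r) \cap B(z,r) \; \subset \; B\!\left(m,\; r - \tfrac{1}{2}d(y,z) + 2\delta\right).$$
To prove this, pick $x$ in the intersection and apply $\delta$-thinness to the geodesic triangle $x,y,z$: the midpoint $m$ lies within $\delta$ of some point $m'$ on one of the other two sides, say $[x,y]$. Then $d(y,m') \geq d(y,m) - \delta = d(y,z)/2 - \delta$, so $d(x,m') = d(x,y) - d(y,m') \leq r - d(y,z)/2 + \delta$, and the triangle inequality gives $d(x,m) \leq d(x,m') + \delta \leq r - d(y,z)/2 + 2\delta$.

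Next, for each $k \geq 1$ and each $r > r(S^k)$, pick $x,y \in X$ with $S^k x \subset B(y,r)$. Then $sS^{k-1}x \subset B(y,r)$ for every $s \in S$, hence $S^{k-1}x \subset \bigcap_{s \in S} B(s^{-1}y, r)$. Choosing $s_1,s_2 \in S$ maximizing $d(s_1^{-1}y, s_2^{-1}y) = d(y, s_1s_2^{-1}y)$ and applying the hyperbolic intersection lemma to $B(s_1^{-1}y,r) \cap B(s_2^{-1}y, r)$ yields
$$r(S^{k-1}) \; \leq \; r - \tfrac{1}{2}L(SS^{-1},y) + 2\delta \; \leq \; r - \tfrac{1}{2}L(SS^{-1}) + 2\delta,$$
using that $L(SS^{-1},y) \geq L(SS^{-1})$ by definition of the joint minimal displacement. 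Letting $r \to r(S^k)$ gives the recursion $r(S^{k-1}) \leq r(S^k) - L(SS^{-1})/2 + 2\delta$.

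Finally, telescoping this recursion from $k=1$ to $k=n$ and using $r(S^0) = 0$ gives $r(S^n) \geq n\bigl(L(SS^{-1})/2 - 2\delta\bigr)$. Combined with $L(S^n) \geq r(S^n)$ from Lemma \ref{randl} and division by $n$, this yields the desired inequality. The main obstacle is the hyperbolic intersection lemma: one must extract a linear (not quadratic) gain in $d(y,z)$ with only an additive $O(\delta)$ loss, since anything weaker would give only $\sqrt{n}$-growth as in the CAT(0) case and miss the linear lower bound that drives the subsequent Bochi-type inequality.
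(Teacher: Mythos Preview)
Your proof is correct and follows essentially the same approach as the paper's own proof: the paper states exactly your hyperbolic intersection lemma (as Lemma \ref{intersection-hyp}, with the same constant $r - \tfrac{1}{2}d(y,z) + 2\delta$), runs the same circumradius recursion $r(S^{k-1}) \leq r(S^k) - \tfrac{1}{2}L(SS^{-1}) + 2\delta$, and telescopes. In fact you give more detail on the intersection lemma than the paper, which simply declares it ``straightforward from the definition of $\delta$-hyperbolicity.''
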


\begin{proof} The argument is analogous to that of Proposition \ref{disp} in the CAT(0) setting.  Recall that $r(S)$ denotes the infinimum over all points $x,y$ of the radius $r$ of the balls centered at $y$ which contain $Sx$. Also recall  (Lemma \ref{randl}) that $$r(S) \leq L(S).$$ Let $n \geq 1$ and $r>r(S^{n})$. By definition of $r(S^n)$, there exists some $x,y \in X$ such that $S^{n}x$ is contained in the ball $B$ of radius $r$ centered at $y$. This means that $sS^{n-1}x$ lies in $B$ for every $s \in S$ and hence $S^{n-1}x$ lies in the intersection of all balls  $s^{-1}B$ for $s \in S$. We require the following

\begin{lemma}\label{intersection-hyp} Let $X$ be a $\delta$-hyperbolic geodesic metric space and $B(y,r)$ and $B(z,r)$ two balls of radius $r>0$. Let $m$ be a mid-point of a geodesic between $y$ and $z$. Then $B(y,r) \cap B(z,r)$ is contained in the ball centered at $m$ with radius $r - \frac{1}{2}d(y,z) + 2\delta$.

\end{lemma}
\begin{proof} This is straightforward from the definition of $\delta$-hyperbolicity.
\end{proof}

Now from this lemma $s_1^{-1}B \cap s_2^{-1}B$ is contained in the ball centered at the mid-point between $s_1^{-1}y$ and $s_2^{-1}y$ and with radius $r- \frac{1}{2}d(s_1^{-1}y,s_2^{-1}y) + 2\delta$. It follows that $S^{n-1}x$ lies in a ball of that radius.

By definition of $r(S^{n-1})$, it follows that $r(S^{n-1}) \leq r - \frac{1}{2}d(s_2s_1^{-1}y,y) + 2\delta$. This holds for all $s_1,s_2 \in S$ and all $r>r(S^n)$, thus: $\frac{1}{2}L(SS^{-1},y) \leq r(S^n) - r(S^{n-1}) +2 \delta$. We conclude that $\frac{1}{2}L(SS^{-1}) \leq r(S^n)-r(S^{n-1}) +2\delta$.
Finally, summing over $n$, we obtain the desired result.
\end{proof}

\bigskip

\begin{corollary}\label{cheap-growth}Let $X$ be a $\delta$-hyperbolic geodesic metric space and $S$ a finite symmetric subset of $Isom(X)$. We have: 
$$\ell(S) \leq \frac{L(S^2)}{2} \leq \ell(S) + 2 \delta.$$
\end{corollary}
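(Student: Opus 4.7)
The statement splits into two inequalities, one trivial and one substantive.

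For the left inequality $\ell(S) \leq L(S^2)/2$, this is an immediate instance of the general nonsense inequality $\ell(S) \leq \frac{1}{k}L(S^k)$ from Lemma \ref{gen-ineq}, taken at $k=2$. No hyperbolicity is needed here; it comes from the subadditivity of $n \mapsto L(S^n,x)$ and Fekete's lemma.

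The substantive direction is $L(S^2)/2 \leq \ell(S) + 2\delta$, and the plan is to deduce it directly from Proposition \ref{disp2}. Since $S$ is assumed symmetric, $S^{-1}=S$ and hence $SS^{-1}=S^2$. Plugging this into the inequality
\[
\frac{1}{n}L(S^n) \;\geq\; \frac{L(SS^{-1})}{2} - 2\delta
\]
of Proposition \ref{disp2} yields, for every $n \in \N$,
\[
\frac{1}{n}L(S^n) \;\geq\; \frac{L(S^2)}{2} - 2\delta.
\]
Now let $n \to \infty$. The left-hand side converges to $\ell(S)$ by definition (cf.\ Claim 4 in Section \ref{gen}), giving $\ell(S) \geq L(S^2)/2 - 2\delta$, which rearranges to the desired bound.

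There is essentially no obstacle here; the whole content of the corollary is that the circumradius/$\delta$-hyperbolicity argument already carried out in Proposition \ref{disp2} specializes, for symmetric generating sets, to a two-sided comparison between $L(S^2)/2$ and $\ell(S)$ with additive defect $2\delta$. The only point to watch is the identification $SS^{-1}=S^2$ under symmetry of $S$, which is what turns the $L(SS^{-1})$ appearing in Proposition \ref{disp2} into the quantity $L(S^2)$ appearing in the corollary.
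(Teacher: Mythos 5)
Your argument is correct and is exactly the paper's proof: the authors derive Corollary \ref{cheap-growth} by combining Proposition \ref{disp2} (with $SS^{-1}=S^2$ for symmetric $S$, letting $n\to\infty$) with the general inequalities of Lemma \ref{gen-ineq}, precisely as you do.
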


\begin{proof} Clear from the combination of Proposition \ref{disp2} and Lemma \ref{gen-ineq}.
\end{proof}

This result was obtained quite cheaply using the circumradius. Using a more delicate analysis, based on a refinement of Serre's lemma, we will prove below in Proposition \ref{formula.S.hyp} a much stronger result, a Bochi-type inequality, which yields a hyperbolic element of large translation length.

\subsection{Powers of a single element}

We begin by showing that the asymptotic translation length of a single element is controled by its translation length, provided the latter is large enough.

\begin{lemma}\label{translation.length} There is  a universal constant $C>0$ such that the following holds. Let $g$ be an isometry in a $\delta$-hyperbolic space. Then for all $m >0$, $L(g^m) \geq  m(L(g) - C\delta)$. In particular:
$$L(g) - C \delta \leq \ell(g) \leq L(g).$$
\end{lemma}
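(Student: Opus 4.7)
The upper bound $\ell(g) \leq L(g)$ is immediate from Lemma \ref{gen-ineq}, so I focus on the additive lower bound $L(g^m) \geq m(L(g) - C\delta)$. The first reduction is to observe that if $L(g) \leq C\delta$ for the universal constant $C$ to be fixed, the inequality is trivially satisfied as the right-hand side is non-positive while $L(g^m) \geq 0$. Hence I may assume $L(g) > C\delta$ with $C$ chosen large enough for Theorem \ref{delta.axis} to apply to $g$.

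Under this assumption, choose $y \in X$ with $d(y, gy) \leq L(g) + \epsilon$ and let $m$ be the midpoint of the geodesic $[y, gy]$. Theorem \ref{delta.axis} produces a $g$-invariant piecewise geodesic $\gamma$ through $\{g^k m\}_{k \in \mathbb{Z}}$ which is a $(K, L_0)$-quasi-geodesic with constants depending only on $\delta$, staying within Hausdorff distance $M = M(\delta)$ of any geodesic segment joining two of its points. Note that the crude quasi-geodesic estimate $d(m, g^n m) \geq n d_0/K - L_0$, where $d_0 = d(m, gm) \geq L(g)$, would only give a multiplicative bound $\ell(g) \geq L(g)/K$, which is insufficient; so a sharper argument using the additive structure is needed.

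To get the additive bound, the plan is to pass to a bi-infinite geodesic $\gamma^*$ in $X$ within Hausdorff distance $M$ of $\gamma$ (guaranteed by the Morse lemma together with the fact that, since $L(g)>C\delta$, the endpoints of $\gamma$ on the visual boundary are the two fixed points of $g$, so $\gamma^*$ is $g$-coarsely invariant). Let $q_k$ be a nearest-point projection of $g^k m$ to $\gamma^*$. By $g$-equivariance and the quasi-geodesic property, the $q_k$ are ordered monotonically along $\gamma^*$, and the triangle inequality gives $d(q_{k-1}, q_k) \geq d_0 - 2M$. Summing yields $d(q_0, q_n) \geq n(L(g) - 2M)$, and so $d(m, g^n m) \geq d(q_0, q_n) - 2M \geq n L(g) - 2(n+1)M$. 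Combining with $d(y, g^n y) \geq d(m, g^n m) - 2 d(y, m) = d(m, g^n m) - d(y, gy)$ and dividing by $n$, then letting $n \to \infty$ and $\epsilon \to 0$, yields $\ell(g) \geq L(g) - C'\delta$ for some absolute constant $C'$ (which absorbs the $O(\delta)$-sized $M$). The inequality $L(g^m) \geq m\ell(g)$ from Lemma \ref{gen-ineq} then finishes the argument.

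The main obstacle is making the projection step fully rigorous, specifically verifying the monotonicity of $(q_k)$ along $\gamma^*$ and the uniform spacing estimate $d(q_{k-1}, q_k) \geq d_0 - 2M$. An equivalent route that avoids projections entirely is to exploit the near-collinearity $2 d(m, gm) - d(m, g^2 m) \leq 4\delta$ recorded in the remark after Theorem \ref{delta.axis}, which by $g$-invariance gives the bound $(g^{k-1}m \cdot g^{k+1}m)_{g^k m} \leq 2\delta$ at every vertex, and then iterate via the $\delta$-hyperbolic four-point condition to obtain $d(m, g^n m) \geq n d_0 - O(n\delta)$ directly. Either way, one extracts the desired linear-in-$n$ lower bound with an error term controlled linearly in $n\delta$, as required.
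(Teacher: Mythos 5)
Your skeleton — reduce to $L(g)>C\delta$, prove $\ell(g)\ge L(g)-O(\delta)$ by tracking the orbit of the midpoint $m$ of $[y,gy]$, then finish with $\ell(g)\le \frac1m L(g^m)$ from Lemma \ref{gen-ineq} — is sound and genuinely different from the paper's argument, but the proof is incomplete exactly at its quantitative core. Two concrete gaps in your main route: (i) in a general geodesic $\delta$-hyperbolic space no properness is assumed, so a bi-infinite geodesic $\gamma^*$ joining the two boundary fixed points of $g$ need not exist; the Morse lemma only compares the quasi-geodesic with geodesics between pairs of its points, which is precisely why Lemma \ref{quasi.axis}(2) is stated for finite subsegments only. (ii) Even granting $\gamma^*$, the summation $d(q_0,q_n)=\sum_k d(q_{k-1},q_k)$ requires the projections $q_k$ to occur in monotone order along $\gamma^*$, and the spacing bound $d(q_{k-1},q_k)\ge d_0-2M$ does not by itself rule out backtracking; you name this as ``the main obstacle'' but do not resolve it. Your alternative route is likewise only a sketch: the near-collinearity $2d(m,gm)-d(m,g^2m)\le 4\delta$ is quoted from a remark in the paper that gives no proof, and the local-to-global chain lemma converting per-vertex Gromov-product bounds into $d(m,g^nm)\ge nd_0-O(n\delta)$ is exactly the estimate you would have to supply. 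So as written, the central additive lower bound is asserted rather than proved.

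For comparison, the paper's proof needs none of the axis machinery: it takes $y$ nearly minimizing $L(g^2,\cdot)$ (not $L(g,\cdot)$), uses a $\delta$-center of the triangle $y,gy,g^2y$ to show that the midpoint $r$ of $[y,gy]$ satisfies $2d(r,gr)\le d(y,g^2y)+O(\delta)$ unless $L(g)=O(\delta)$, hence $2L(g)\le L(g^2)+O(\delta)$; it then iterates along powers of $2$ and invokes subadditivity of $m\mapsto L(g^m,x)$ to get $L(g^m)\ge m(L(g)-O(\delta))$ for all $m$, with no largeness threshold on $L(g)$ and no boundary considerations. If you want to complete your approach, drop $\gamma^*$ and work with the finite geodesic $[m,g^nm]$: the proof of Lemma \ref{quasi.axis} (under $L(g)>1000\delta$, which your reduction permits) already contains the additive chain estimate $d(m,g^nm)\ge \sum_{i}d(g^{i-1}m,g^{i}m)-24n\delta$ before it is weakened to the multiplicative $(\tfrac{9}{10},24\delta)$ statement; what you must still justify is the monotone ordering of the $12\delta$-approximants of the points $g^im$ along $[m,g^nm]$, which is the same issue as (ii) above.
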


\begin{proof} We first prove the lemma for $m=2$. For the proof we may assume $\delta>0$. Pick a point $y$ such that $L(g^2,y) \leq L(g^2) + \delta$. Consider the three points $y,gy$ and $g^2y$. We will show that unless $L(g)=O(\delta)$ this triangle is $O(\delta)$-flat and $gy$ is $O(\delta)$ away from a mid point of $[y,g^2y]$. 

 Let $q$ be a point at distance at most $\delta$ from all three geodesics joining these points, which is given to us by the $\delta$-hyperbolicity assumption. Since $d(y,gy) = d(gy,g^2y)$ we conclude that $|d(y,q) - d(q,gy)| = O(\delta)$. In a $\delta$-hyperbolic space any two geodesics joining two given points are at distance $O(\delta)$ of each other. Therefore $d(p,q) =O(\delta)$, where $p$ is a mid-point between $y$ and $g^2y$. 

Now if $r$ is a mid-point between $y$ and $gy$, then $r$ is $O(\delta)$ close to either $[q,gy]$  or to $[y,q]$ according as $d(y,r) \geq d(y,q)$ or not. In the first  case $d(r,gr)= O(\delta)$, which implies that $L(g) =O(\delta)$ and there is nothing to prove. In the second case $gr$ will be at most $O(\delta)$ away from $[q,g^2y]$ and we get:
$$|d(y,g^2y) - [d(y,r) + d(r,q) + d(q,gr) + d(gr,g^2y)]| = O(\delta),$$
while $d(r,q) \leq d(y,r) + O(\delta)$ and $d(q,gr) \leq d(gr,g^2y)+O(\delta)$. Combining these two facts we get:
$$2d(r,gr) \leq d(y,g^2y) + O(\delta),$$
from which the inequality $2L(g) \leq L(g^2) + O(\delta)$ follows immediately. 

We now pass to the general case, when is an arbitrary integer $m > 2$. First we observe that if $L(g^2) \geq 2(L(g) - C\delta)$ for every isometry $g$, then a straigthforward induction shows that $L(g^{2^n}) \geq 2^n (L(g) - 2C \delta)$ for every $g$ and every integer $n$. But observe that given any point $x$ the sequence $m \mapsto L(g^m,x)$ is subadditive. In particular, by the subadditive lemma, $\{\frac{L(g^m,x)}{m}\}_m$ converges towards $$\inf_{m \geq 1} \frac{L(g^m,x)}{m} = \lim_{m \mapsto + \infty} \frac{L(g^m,x)}{m}.$$ Letting $m$ grow along powers of $2$ we see that the latter is at least $L(g)-2C\delta$. This means that $L(g^m,x) \geq m(L(g)-2C\delta)$ for every integer $m$ and every point $x$. In particular $L(g^m) \geq m(L(g)-2C\delta)$ as desired.
\end{proof}



\subsection{Bochi-type formula for hyperbolic spaces}

Here we prove Theorem \ref{bochi-hyp} from the introduction.

\begin{proposition}[Joint minimal displacement of a pair]\label{formula.pair.hyp}  There is an absolute constant $K>0$ such that if $\delta\geq 0$ and  $(X,d)$ is a $\delta$-hyperbolic geodesic space, and $a,b$ are two isometries of $X$, then 
$$ L(\{a,b\}) -K \delta \leq  \max \{ \ell(a) , \ell(b), \frac{\ell(ab)}{2}\} \leq L(\{a,b\}).$$
\end{proposition}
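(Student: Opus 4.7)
The upper bound $\max\{\ell(a), \ell(b), \ell(ab)/2\} \leq L(\{a,b\})$ is a direct consequence of Lemma \ref{gen-ineq}: writing $S=\{a,b\}$, we have $\ell(a),\ell(b) \leq \ell(S) \leq L(S)$ since $a,b\in S$, and $\ell(ab) \leq \ell(S^2) = 2\ell(S) \leq 2L(S)$ since $ab \in S^2$.

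For the non-trivial lower bound $L(\{a,b\})-K\delta \leq \max\{\ell(a),\ell(b),\ell(ab)/2\}$, the plan is to quasify the proof of the tree analogue Proposition \ref{formula.pair}. Using Lemma \ref{translation.length} ($\ell(g) \geq L(g) - C\delta$), it is enough to prove
\[
L(\{a,b\}) - K'\delta \leq \max\{L(a), L(b), L(ab)/2\}
\]
for some constant $K'$. Arguing by contradiction, I would assume $M := L(\{a,b\})$ exceeds $\max\{L(a), L(b), L(ab)/2\}$ by more than $K'\delta$, and pick a near-minimizer $x$ with $d(x,ax), d(x,bx) \leq M + \delta$.

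In the tree case the argument has two parts: first, $x$ is shown to be the midpoint of the geodesic segments $[ax, bx]$, $[b^{-1}x, ax]$, and $[bx, a^{-1}x]$; and second, item (b) of the tree setup identifies $d(abx, x)$ with $L(ab)$, whence $L(ab) = 2M$. For the $\delta$-hyperbolic version, the first part becomes the statement that the Gromov products $(ax \mid bx)_x$ and $(b^{-1}x \mid ax)_x$ are both $O(\delta)$. The intuition is that a Gromov product $(ax \mid bx)_x$ significantly larger than $\delta$ would let us push $x$ along the common initial sub-segment of $[x,ax]$ and $[x,bx]$, strictly decreasing both displacements; the hypothesis $L(a), L(b) < M - K'\delta$ is what guarantees that the motion of the images $ax, bx$ does not cancel this gain. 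From $(b^{-1}x \mid ax)_x = O(\delta)$ we then obtain
\[
d(b^{-1}x, ax) \geq d(b^{-1}x, x) + d(x, ax) - O(\delta) \geq 2M - O(\delta),
\]
and applying the isometry $a^{-1}$ yields $d(abx, x) \geq 2M - O(\delta)$.

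To transfer this into a lower bound on $L(ab)$ itself, I would invoke a quantitative version of item (b) of the tree setup, analogous to the midpoint estimate used in the proof of Lemma \ref{translation.length}: when $m$ is the midpoint of $[y, gy]$ and $L(g)$ is a sufficiently large multiple of $\delta$, then $d(gm, m) \leq L(g) + O(\delta)$, i.e., $m$ lies $O(\delta)$-close to the quasi-axis of $g$. Applying this with $y = b^{-1}x$, $g = ab$, $m \approx x$, we obtain $L(ab) \geq d(abx, x) - O(\delta) \geq 2M - O(\delta)$, contradicting $L(ab)/2 < M - K'\delta$ once $K'$ is chosen large enough. The degenerate case $L(ab) = O(\delta)$ is absorbed into $M = O(\delta)$, in which case the conclusion holds trivially. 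The principal technical obstacle is the quasified balancedness step: unlike trees, perturbing $x$ also perturbs $ax$ and $bx$, and tracking the net effect on the joint displacement requires a careful local analysis using the thin-triangle condition together with the gap $L(a), L(b) \ll M$ to ensure that the perturbation yields a genuine decrease rather than being washed out by $O(\delta)$-errors.
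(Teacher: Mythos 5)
Your strategy coincides with the paper's: reduce to translation lengths via Lemma \ref{translation.length}, take a near-minimizer $x$ under the contradiction hypothesis $M=L(\{a,b\})\gg\max\{L(a),L(b)\}+K'\delta$, show that the configurations $b^{-1}x,x,ax$ and $bx,x,a^{-1}x$ are almost geodesic with $x$ near the midpoint, and convert this into a lower bound on $L(ab)$ through a midpoint displacement estimate. The problem is that the two facts you merely ``invoke'' are precisely the technical core of the paper's argument, namely Lemma \ref{displacement.versus.distance}, and neither is established in your proposal. The near-geodesicity claims (small Gromov products at $x$), together with the balancedness $d(ax,x)\approx d(bx,x)\approx M$ that your displayed inequality $d(b^{-1}x,x)+d(x,ax)\ge 2M-O(\delta)$ silently assumes, do not follow from the thin-triangle condition plus the gap $L(a),L(b)\ll M$ by a soft ``push $x$'' heuristic: the paper makes this rigorous via the quantitative formula $(\ref{dist})$, $|2d(x,\mathrm{Fix}_M(a))+M-d(ax,x)|\le K\delta$, proved by a four-point tree approximation, and even with that formula in hand the perturbation argument needs a genuine case analysis (e.g.\ the case where $x$ is already $O(\delta)$-close to $\mathrm{Fix}_{M_b}(b)$ must be ruled out separately). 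You correctly identify this as the principal obstacle, but identifying it is not proving it. Likewise, the midpoint estimate you want --- for an \emph{arbitrary} point $y$, the midpoint $m$ of $[y,gy]$ satisfies $d(gm,m)\le \max\{L(g),C\delta\}+O(\delta)$ --- is not extractable from the proof of Lemma \ref{translation.length} (which only compares $L(g)$ and $L(g^2)$ at a near-minimizer of the latter); it is inequality $(\ref{midpoint})$ of Lemma \ref{displacement.versus.distance} and requires its own proof.

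There is also a bookkeeping slip worth fixing: applying $a^{-1}$ (or $b$) to the pair $(b^{-1}x,ax)$ yields $d(ba\,x,x)\ge 2M-O(\delta)$, a statement about $ba$ at $x$, not about $ab$ at $x$. This is repairable, since $L(ab)=L(ba)$ and $\ell(ab)=\ell(ba)$ by conjugacy, but it hides the fact that your scheme really needs \emph{both} configurations, exactly as in the paper: the segment $[b^{-1}x,ax]$ serves to place $x$ within $O(\delta)$ of a point (the midpoint $m$) where the displacement of $ab$ is within $O(\delta)$ of $L(ab)$, while the segment $[bx,a^{-1}x]$, i.e.\ the Gromov product $(bx\mid a^{-1}x)_x=O(\delta)$, is what makes the displacement of $ab$ at $x$ itself at least $2M-O(\delta)$. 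A lower bound on the displacement of $ab$ at a single uncontrolled point says nothing about the infimum $L(ab)$; it is the conjunction of ``nearly minimal at $m$'' and ``large at $x\approx m$'' that closes the argument, and both halves rest on the unproven Lemma \ref{displacement.versus.distance}-type input. As it stands, the proposal is a correct outline of the paper's route with its hardest steps left open.
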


Recall our notation used throughout for a finite set $S$ of isometries of a metric space $(X,d)$ we denote by $L(S,x):= \max_{s \in S} d(x,sx)$, $L(S):=\inf_{x \in X} L(S,x)$ and $\ell(S):=\lim L(S^n)/n$, while $\lambda_k(S) := \max_{1\leq j\leq k} \frac{1}{j} \max_{g \in S^j}  \ell(g)$.

\begin{theorem}[Theorem \ref{bochi-hyp} from the introduction] \label{formula.S.hyp} There is an absolute constant $K>0$ such that if $\delta\geq 0$ and  $(X,d)$ is a $\delta$-hyperbolic geodesic space, and $S$ is a finite set of isometries of $X$, then 
$$L(S) -K \delta \leq \lambda_2(S) = \max_{a,b \in S} \{ \ell(a) , \frac{\ell(ab)}{2}\} \leq L(S).$$
\end{theorem}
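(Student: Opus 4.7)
The right-hand inequality $\lambda_2(S)\le L(S)$ is an immediate consequence of Lemma \ref{gen-ineq}, so the task reduces to proving the lower bound $L(S)\le \lambda_2(S)+K\delta$. The guiding principle is to mimic the proof of Proposition \ref{formula.S} in the tree case: reduce to the pairwise statement (here Proposition \ref{formula.pair.hyp}) and conclude via a Helly-type argument applied to the sublevel sets of the displacement functions $x\mapsto d(x,ax)$. In the tree case these were subtrees; here they are merely quasi-convex, so one needs the $\delta$-hyperbolic Helly-type theorem (this is exactly what Section \ref{helly-sec} is advertised to provide).

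More precisely, set $\mu:=\max_{a,b\in S}\{\ell(a),\ell(ab)/2\}=\lambda_2(S)$ (the identification with $\lambda_2(S)$ uses Lemma \ref{translation.length} applied to powers, as in the tree case). Applying Proposition \ref{formula.pair.hyp} to each pair $\{a,b\}\subset S$ (including the ``diagonal'' singletons, controlled by Lemma \ref{translation.length}), one obtains a universal constant $K_1$ such that $L(\{a,b\})\le\mu+K_1\delta$ for every $a,b\in S$. Fix a constant $K_2\ge K_1$ (to be adjusted at the end) and define, for each $a\in S$, the sublevel set
$$F_a\;:=\;\{x\in X:d(x,ax)\le\mu+K_2\delta\}.$$
Then Proposition \ref{formula.pair.hyp} immediately yields $F_a\cap F_b\ne\emptyset$ for every pair $a,b\in S$: a point witnessing $L(\{a,b\})\le\mu+K_1\delta$ lies in both.

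The next step uses two standard facts from $\delta$-hyperbolic geometry that I would isolate in advance: (i) each sublevel set $F_a$ is $Q\delta$-quasi-convex for a universal constant $Q$ (a direct consequence of thinness of triangles applied to the quadrilateral $x,ax,ay,y$ when $x,y\in F_a$); and (ii) the Helly-type theorem from Section \ref{helly-sec}, which guarantees that a finite family of $Q\delta$-quasi-convex subsets that pairwise intersect admits a common ``almost intersection'' point $z\in X$ lying within $K_3\delta$ of every member of the family, where $K_3$ depends only on $Q$ and $\delta$ (so ultimately only on $\delta$).

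Applying the Helly-type theorem to $\{F_a\}_{a\in S}$ yields such a point $z$. For each $a\in S$ pick $y_a\in F_a$ with $d(z,y_a)\le K_3\delta$; then
$$d(z,az)\;\le\; d(z,y_a)+d(y_a,ay_a)+d(ay_a,az)\;\le\;2K_3\delta+\mu+K_2\delta.$$
Taking the maximum over $a\in S$ gives $L(S)\le L(S,z)\le\mu+(K_2+2K_3)\delta$, so the theorem holds with $K:=K_2+2K_3$. The only real obstacle is step (ii), the Helly-type statement for quasi-convex subsets of a $\delta$-hyperbolic space, since quasi-convex sets can overlap in very thin and branching ways, unlike true convex subtrees; I expect the proof of this Helly-type theorem (given separately in Section \ref{helly-sec}) to be the technical heart of the argument, everything else being a faithful quasification of Serre's original tree argument.
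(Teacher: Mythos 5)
Your proposal follows essentially the same route as the paper: the right-hand inequality is formal, and the lower bound is obtained by applying the pairwise estimate of Proposition \ref{formula.pair.hyp} to get that the sublevel sets $Fix_{\lambda_2(S)+K\delta}(a)$, $a\in S$, pairwise intersect, then invoking the Helly-type theorem of Section \ref{helly-sec} and absorbing the resulting $O(\delta)$-neighborhoods into slightly larger sublevel sets via the triangle inequality. Your extra care in phrasing the Helly step for $O(\delta)$-quasi-convex (rather than convex) sublevel sets is a reasonable refinement of the same argument, since in a general $\delta$-hyperbolic space the sets $Fix_M(a)$ are only quasi-convex, and the proof of Theorem \ref{helly.hyperbolic} goes through with correspondingly enlarged constants.
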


As a consequence of this proposition, we obtain the following strengthening of Proposition \ref{disp2}.

\begin{corollary}[Growth of joint minimal displacement] \label{growth.S.hyp} There is an absolute constant $K>0$ such that the following holds. Let $S$ be a finite set of isometries of a $\delta$-hyperbolic space. Let $n \in \N$. Then
$$n (L(S) - K\delta) \leq \lambda_2(S^n) \leq L(S^n).$$
Moreover $\lambda_\infty(S) = \ell(S)$.
\end{corollary}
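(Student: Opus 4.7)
The plan is to apply Theorem \ref{formula.S.hyp} directly to $S$ itself, not to $S^n$, and then leverage the identity $\ell(g^n) = n\ell(g)$ (special case of Claim 5 in Section \ref{gen}) to lift the conclusion to all powers. The payoff of this approach is that the additive error $K\delta$ is not multiplied by $n$.

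First, Theorem \ref{formula.S.hyp} furnishes $a,b \in S$ such that one of the following holds: (i) $\ell(a) \geq L(S) - K\delta$, or (ii) $\ell(ab) \geq 2(L(S) - K\delta)$. In case (i), $a^n \in S^n$ and $\ell(a^n) = n\ell(a)$, so $\lambda_2(S^n) \geq \lambda(S^n) \geq n(L(S) - K\delta)$. In case (ii), $(ab)^n \in S^{2n}$ and $\ell((ab)^n) = n\ell(ab)$, so $\lambda_2(S^n) \geq \tfrac{1}{2}\lambda(S^{2n}) \geq n(L(S) - K\delta)$. The upper bound $\lambda_2(S^n) \leq L(S^n)$ is already part of Lemma \ref{gen-ineq}.

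For the equality $\lambda_\infty(S) = \ell(S)$, the inequality $\lambda_\infty(S) \leq \ell(S)$ is in Lemma \ref{gen-ineq}. For the reverse, the main inequality combined with the bounds $\lambda_\infty(S) \geq \tfrac{1}{n}\lambda(S^n)$ and $\lambda_\infty(S) \geq \tfrac{1}{2n}\lambda(S^{2n})$ yields $\lambda_\infty(S) \geq \tfrac{1}{n}\lambda_2(S^n) \geq L(S) - K\delta$. Applying this bound with $S^m$ in place of $S$, and using $\lambda_\infty(S^m) = m\lambda_\infty(S)$ from Lemma \ref{gen-ineq}, I get $\lambda_\infty(S) \geq \tfrac{L(S^m)}{m} - \tfrac{K\delta}{m}$ for every $m$; letting $m \to \infty$ and invoking $\ell(S) = \lim_m L(S^m)/m$ gives $\lambda_\infty(S) \geq \ell(S)$.

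There is no real obstacle here: the substantive work has already been done in Theorem \ref{formula.S.hyp}, and the rest is formal manipulation of the definitions. The only minor subtlety is that in case (ii) the power $(ab)^n$ lies in $S^{2n}$ rather than $S^n$, but this is precisely what the factor $\tfrac{1}{2}$ in the definition of $\lambda_2$ is designed to absorb.
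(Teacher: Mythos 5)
Your proposal is correct and follows essentially the same route as the paper: the case analysis in your first paragraph is just an unpacking of the inequality $n\lambda_2(S)\leq\lambda_2(S^n)$ (Claim 7 in Section \ref{gen}), which the paper cites directly and which is proved there by the same observation $\ell(g^n)=n\ell(g)$, and your derivation of $\lambda_\infty(S)=\ell(S)$ by applying the Bochi-type bound to $S^m$ and letting $m\to\infty$ matches the paper's argument of applying Theorem \ref{formula.S.hyp} to $S^n$ and passing to the limit.
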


We now pass to the proofs. As always in $\delta$-hyperbolic geometry arguments are modeled on the tree case and this is why we decided to include the special case of trees separately even though it is of course implied by the $\delta$-hyperbolic case by setting $\delta=0$. A key ingredient will be Lemma \ref{displacement.versus.distance} below, which computes the displacement of an isometry in terms of its translation length and the distance to its axis.

Before we embark in the proof we begin by recalling the following basic fact about tree approximation in $\delta$-hyperbolic spaces. 

\begin{lemma}(see \cite[Prop. 6.7]{Bowditch}) \label{bowditchLemma} Given $k$ points $x_1,...,x_k$ in a $\delta$-hyperbolic metric space $(X,d)$ there is a metric tree $T$ embedded in $X$ such that for all $i,j$
\begin{equation}\label{comp.dist}d(x_i,x_j) \leq d_T(x_i,x_j) \leq d(x_i,x_j) + C_k \delta,\end{equation} where $d_T(x_i,x_j)$ is the length of the geodesic path joining $x_i$ and $x_j$ in $T$. Here $C_k$ is a constant depending only on $k$.
\end{lemma}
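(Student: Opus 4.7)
The plan is to build $T$ by induction on the number of vertices, repeatedly attaching geodesic segments of $X$. Set $T_2 := [x_1,x_2]$, any geodesic segment in $X$. Suppose $T_{i-1} \subset X$ has been constructed as a finite union of geodesic arcs of $X$ forming a topological tree and containing $x_1,\dots,x_{i-1}$. Choose $p_i \in T_{i-1}$ almost realizing $\inf_{y \in T_{i-1}} d(y,x_i)$, fix a geodesic segment $\sigma_i$ of $X$ from $p_i$ to $x_i$, and let $p_i'$ be the last point at which $\sigma_i$ meets $T_{i-1}$ when traversed from $p_i$ to $x_i$. Define $T_i := T_{i-1} \cup \sigma_i'$, where $\sigma_i' \subset \sigma_i$ is the sub-arc from $p_i'$ to $x_i$. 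By construction $T_i$ remains a topological tree whose edges are geodesic arcs of $X$, and the induced length metric $d_{T_i}$ automatically satisfies $d_{T_i}(u,v) \geq d(u,v)$ for all $u,v \in T_i$; this gives the left-hand inequality.

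For the upper bound I would strengthen the inductive statement to $d_{T_i}(u,v) \leq d(u,v) + C_i\delta$ for all $u,v \in T_i$, with $C_i$ depending only on $i$. The crucial input from $\delta$-hyperbolicity is that for a nearest-point projection $p$ of a point $q$ onto a geodesic segment $\gamma$, one has $d(y,q) \geq d(y,p) + d(p,q) - O(\delta)$ for every $y \in \gamma$; this is a standard thin-triangle estimate using the Gromov product at $p$. Applied edge-by-edge along the unique tree path in $T_{i-1}$ from a point $u \in T_{i-1}$ to $p_i'$, and combined with the inductive hypothesis on $T_{i-1}$, this yields $d(u,x_i) \geq d_{T_{i-1}}(u,p_i') + d(p_i',x_i) - O(i\delta)$. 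Since by construction $d_{T_i}(u,x_i) = d_{T_{i-1}}(u,p_i') + d(p_i',x_i)$, the upper bound follows with $C_i = C_{i-1} + O(i)$, hence $C_k = O(k^2)$, which depends only on $k$ as required.

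The main obstacle is propagating the single-geodesic projection inequality cleanly across a tree path containing several edges and branch points. I would handle this locally: at each edge $e$ of $T_{i-1}$ met by the path from $u$ to $p_i'$, apply the projection inequality using thinness of the triangle formed by the endpoints of $e$ and $x_i$, and then telescope the resulting inequalities across at most $i-1$ edges. A minor subtlety is ensuring the points $p_i, p_i'$ are actually attained; this can be dealt with either by taking an $\delta$-near-minimizer and absorbing the resulting error into $C_k$, or by noting that $T_{i-1}$ is a finite union of compact arcs so the infimum is realized and the last intersection point $p_i'$ exists by closedness. In the degenerate case $\delta=0$ (genuine tree) the construction collapses to the standard iterative embedding into a metric tree with $C_k=0$, which serves as a useful sanity check.
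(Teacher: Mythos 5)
The paper does not actually prove this lemma: it quotes it from Bowditch [Prop.\ 6.7], so there is no in-paper argument to compare with. Your construction (attach, at each stage, a geodesic from the new point $x_i$ to an (almost) nearest point of the tree already built, truncated at the last intersection point $p_i'$) is exactly the standard one behind the cited result, and the easy parts of your write-up are fine: $T_i$ is a tree, $p_i'$ exists by compactness, and $d_{T_i}\geq d$ holds because $d_{T_i}$ is a path-length metric in $X$.

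The genuine gap is in the ``edge-by-edge telescoping'' that is supposed to give the upper bound. The projection inequality $d(y,x_i)\geq d(y,p)+d(p,x_i)-O(\delta)$ is legitimate only when $p$ is a nearest-point projection of $x_i$ onto the geodesic containing $y$; this applies to the edge of $T_{i-1}$ containing $p_i$, but at the next edge along the tree path the branch point $v$ is \emph{not} a nearest-point projection of $x_i$ onto that edge, and the inequality $d(v',x_i)\geq d(v',v)+d(v,x_i)-O(\delta)$ you need there has no justification: the only information available is that every point of $T_{i-1}$ is at distance at least $d(p_i,x_i)$ from $x_i$, which does not bound the Gromov product $(v',x_i)_{v}$ at intermediate vertices. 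The way to repair the induction is to use your strengthened hypothesis globally rather than edgewise: the tree path from $u$ to $p_i$ has length at most $d(u,p_i)+C_{i-1}\delta$, hence is a $C_{i-1}\delta$-taut path, so it and the geodesic $[u,p_i]$ lie within $O\bigl((C_{i-1}+1)\delta\bigr)$ of each other; consequently every point of $[u,p_i]$ is within $O\bigl((C_{i-1}+1)\delta\bigr)$ of $T_{i-1}$ and therefore at distance at least $d(p_i,x_i)-O\bigl((C_{i-1}+1)\delta\bigr)$ from $x_i$, and one thin-triangle argument in the triangle $(u,p_i,x_i)$ then gives $(u,x_i)_{p_i}\leq O\bigl((C_{i-1}+1)\delta\bigr)$. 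This closes the induction, but notice that the error added at step $i$ is now proportional to $C_{i-1}$, so the recursion has the shape $C_i\leq 3C_{i-1}+O(1)$ and the resulting constant is exponential in $k$, not $C_k=O(k^2)$ as you claim via $C_i=C_{i-1}+O(i)$. Since the lemma only asks for a constant depending on $k$, the repaired argument does prove the statement, but your stated error propagation and the quadratic bound are not established by the argument as written.
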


Recall that for an isometry $a$ of a metric space $(X,d)$ and a number $A\geq 0$ we denote $$Fix_A(a):=\{ x \in X ; d(ax,x) \leq A\}.$$

\begin{lemma}\label{displacement.versus.distance}There are absolute constants $c,K>0$ with the following property.  Let $(X,d)$ be a geodesic $\delta$-hyperbolic metric space. Let $M\geq c \delta$ and $a \in Isom(X)$. Assume that $Fix_M(a)$ is non-empty and let $x \notin Fix_M(a)$. Then 
\begin{equation}\label{dist}|2 d(x, Fix_{M}(a))+M - d(ax,x)| \leq K\delta,\end{equation}
and
\begin{equation}\label{midpoint} d(q,aq) \leq  M + K\delta,\end{equation}
for every mid-point $q$ between $x$ and $ax$.
\end{lemma}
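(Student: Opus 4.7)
The plan is to build a near-geodesic from $x$ to $ax$ that passes through $\mathrm{Fix}_M(a)$, and to use it to prove both inequalities simultaneously. Write $A := d(x, \mathrm{Fix}_M(a))$ and choose $y \in \mathrm{Fix}_M(a)$ with $d(x,y) \leq A + \delta$. Since $\mathrm{Fix}_M(a)$ is $a$-invariant and $a$ an isometry, $ay \in \mathrm{Fix}_M(a)$ with $d(ax, ay) = d(x, y) \leq A + \delta$. A short minimality argument gives $d(y, ay) \geq M - O(\delta)$: the displacement function $\phi(z) := d(z, az)$ is $2$-Lipschitz, and if $d(y, ay)$ were much smaller than $M$, sliding $y$ a short distance $s$ of order $\delta$ along $[y, x]$ to a point $z$ would give $\phi(z) \leq \phi(y) + 2s < M$, contradicting the approximate minimality of $y$. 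Combined with $\phi(y) \leq M$, this pins down $d(y, ay) = M + O(\delta)$, and the triangle inequality along $x, y, ay, ax$ gives the upper bound $d(x, ax) \leq 2A + M + O(\delta)$, which is one half of \eqref{dist}.

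The core step is to show that $y$ lies $O(\delta)$-close to the geodesic $[x, ay]$, i.e.\ that the Gromov product $(x \mid ay)_y$ is $O(\delta)$; by symmetry the same holds for $(y \mid ax)_{ay}$. Suppose $(x \mid ay)_y$ were large. Then by $\delta$-hyperbolicity the nearest-point projection $y^{*} \in [y, ay]$ of $x$ would satisfy $d(x, y^{*}) \leq d(x, y) - (x \mid ay)_y + O(\delta)$, which is strictly smaller than $d(x, y) - \Omega(\delta)$. By the standard $\delta$-hyperbolic quasi-convexity of the displacement function along a geodesic joining two points of $\mathrm{Fix}_M(a)$, we have $y^{*} \in \mathrm{Fix}_{M + K_0\delta}(a)$; sliding $y^{*}$ a short $O(\delta)$-distance along $[y, ay]$ back toward $y$ (again using the $2$-Lipschitz property and the assumption $M \geq c\delta$) places the resulting point in $\mathrm{Fix}_M(a)$ proper, while still being strictly closer to $x$ than $y$. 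This contradicts the near-minimality of $y$. Hence the broken path $P := [x, y] \cup [y, ay] \cup [ay, ax]$ is a $(1, O(\delta))$-quasi-geodesic, whose total length $2A + M + O(\delta)$ therefore exceeds $d(x, ax)$ by at most $O(\delta)$; this yields the lower bound $d(x, ax) \geq 2A + M - O(\delta)$, completing \eqref{dist}.

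For \eqref{midpoint}, the Morse lemma for quasi-geodesics in $\delta$-hyperbolic spaces produces a point $p \in P$ with $d(p, q) = O(\delta)$ and $d(x, p)$ within $O(\delta)$ of $d(x, q) = \tfrac{1}{2} d(x, ax) = A + M/2 + O(\delta)$. Provided the constant $c$ in $M \geq c \delta$ is taken large enough, this arc-length parameter lies strictly inside the middle segment $[y, ay]$ of $P$ (whose endpoints correspond to parameters $A$ and $A + M$), so $p \in [y, ay]$. By the same quasi-convexity of the displacement function, $\phi(p) \leq M + O(\delta)$, and the $2$-Lipschitz property then yields $d(q, aq) \leq d(p, ap) + 2 d(p, q) \leq M + O(\delta)$, as required.

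The main technical obstacle is the Gromov-product estimate in the second paragraph: one must convert a hypothetical shortcut in the geodesic $[x, ay]$ into a genuine competitor for $y$ inside $\mathrm{Fix}_M(a)$ (not merely inside an enlargement $\mathrm{Fix}_{M + O(\delta)}(a)$). This is where the assumption $M \geq c\delta$ is essential: it allows the $O(\delta)$-loss from quasi-convexity of the displacement function to be absorbed by a $O(\delta)$-slide back toward $y$ without overshooting past $y$ or exiting the sublevel set.
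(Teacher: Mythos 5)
Your overall strategy is essentially the paper's: choose an almost-nearest point $y \in Fix_M(a)$, show that the broken path $x \to y \to ay \to ax$ is an almost-geodesic because any shortcut would produce a point of $Fix_M(a)$ strictly closer to $x$ than $y$ (resp.\ closer to $ax$ than $ay$), then read off \eqref{dist} from the endpoint estimate and \eqref{midpoint} by matching midpoints. The paper runs the no-shortcut step through a four-point tree approximation, you run it through Gromov products and a chaining/Morse argument; that difference is cosmetic (your estimate $(x\mid ay)_y=O(\delta)$ is literally the paper's claim $d(y,z)\le \epsilon+O(\delta)$ for the tripod point $z$).

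There is, however, a genuine gap at the step you yourself single out as the main obstacle. Having produced $y^{*}\in[y,ay]$ with $d(x,y^{*})\le d(x,y)-(x\mid ay)_y+O(\delta)$, quasi-convexity only gives $y^{*}\in Fix_{M+K_0\delta}(a)$, and your repair --- sliding $y^{*}$ an $O(\delta)$-distance along $[y,ay]$ back toward $y$ ``using the $2$-Lipschitz property'' --- does not work: Lipschitz continuity of $z\mapsto d(z,az)$ only bounds how much the displacement can \emph{increase} along the slide, and quasi-convexity on $[y,y^{*}]$ again returns only $M+O(\delta)$. No largeness assumption on $M$ converts the bound $\le M+K_0\delta$ into $\le M$ by a short move, so you never obtain a genuine competitor inside $Fix_M(a)$, and the contradiction with the near-minimality of $y$ is not reached. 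The fix is one line, and it is exactly what the paper uses: for any $p$ on a geodesic from $y$ to its image $ay$,
\begin{equation*}
d(p,ap)\ \le\ d(p,ay)+d(ay,ap)\ =\ d(p,ay)+d(y,p)\ =\ d(y,ay)\ \le\ M,
\end{equation*}
so the whole segment $[y,ay]$ lies in $Fix_M(a)$ \emph{exactly}, with no $\delta$-loss and no slide (note this uses that $ay$ is the image of $y$, not a general convexity of $Fix_M(a)$). The same observation upgrades your bound $\phi(p)\le M+O(\delta)$ to $\phi(p)\le M$ in the proof of \eqref{midpoint}. With this substitution your argument goes through, provided you also spell out the standard chaining estimate hidden in ``hence the broken path is a $(1,O(\delta))$-quasi-geodesic'', which needs the middle segment length $d(y,ay)\ge M-O(\delta)$ to dominate the corner defects --- this is where $M\ge c\delta$ genuinely enters.
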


\begin{proof} Let $\epsilon>0$ and pick $y \in Fix_M(a)$ such that $d(x,y) \leq d(x,Fix_M(a))+ \epsilon$. We may write:
$$d(ax,x) \leq d(ax,ay)+d(ay,y)+d(y,x) \leq 2d(x,y) +M,$$
so one side of $(\ref{dist})$ follows immediately. 

To see the other  side consider the four points $x,y,ax,ay$ and apply Lemma \ref{bowditchLemma} to these four points with $k=4$. Let $w$ be the intersection of the three geodesics in $T$  connecting $ax$, $ay$ and $x$, and let $z$ be the intersection of the three geodesics in $T$  connecting $ax$, $y$ and $x$. First we claim that:
\begin{equation}\label{yz} d(y,z) \leq \epsilon + O(\delta)
\end{equation}

To see this, first recall that in a $\delta$-hyperbolic space the Gromov product of a pair of points based at a third point is equal up to an error of $4\delta$ to the distance from the third point to the geodesic between the pair (see \cite[Lemma 6.2]{Bowditch}). Now note that if $\alpha$ is the geodesic joining $y$ and $ay$, then $\alpha$ is entirely contained in $Fix_M(a)$. Consequently $d(x,\alpha) \geq d(x,y) - \epsilon$. Hence $(y,ay)_x \geq d(x,\alpha) - 4\delta \geq d(x,y) - \epsilon - 4\delta$, which unfolding the Gromov product means:
$$d(x,z) + d(z,ay) \geq d(x,ay) \geq d(x,y)+ d(y,ay) - 2\epsilon - O(\delta).$$
In view of $(\ref{comp.dist})$, it follows that the same holds with $d_T$ in place of $d$. However:
$$ d_T(x,y) + d_T(y,ay) =  d_T(x,z) + d_T(z,ay) + 2d_T(y,z),$$
so we conclude that $(\ref{yz})$ holds.

At this pont we note that $d(ay,y) \geq M - 2\epsilon$. Indeed by the intermediate value theorem, we may pick a point $u$ on a geodesic between $x$ and $y$ such that $d(au,u)=M$. Then $d(x,u) \leq d(x,y)$, while  $u \in Fix_M(a)$ so that $d(x,u) \geq d(x,Fix_M(a)) \geq d(x,y) - \epsilon$. Consequently $d(u,y) \leq \epsilon$ and hence $d(ay,y) \geq d(au,u) - 2\epsilon$ as desired.

Now two cases occur, according as $w$ belongs to the tree geodesic $[ay,z]_T$ or to $[z,x]_T$. In the first case, by symmetry the previous argument also shows that 
\begin{equation}\label{yw} d(ay,w) \leq \epsilon + O(\delta)
\end{equation}
Combining $(\ref{yz})$ and $(\ref{yw})$ (and using $(\ref{comp.dist})$ then yields 
\begin{eqnarray*}
d(ax,x) &\geq& d(ax,w)+d(w,z)+d(z,x) - O(\delta) \\
&\geq& d(ax,ay) + d(ay,y) + d(y,x) - 2 \epsilon - O(\delta) \\
 &\geq& 2d(x,Fix_M(a)) + M - 6 \epsilon -O(\delta),
\end{eqnarray*}
from which the remaining side of $(\ref{dist})$ is immediate since $\epsilon$ is arbitrary.

We can also observe now from the second inequality above that any mid-point $p$ between $y$ and $ay$ is satisfies $|d(x,p) - d(ax,p)| = O(\delta)$. The $\delta$-hyperbolicity of the space then implies that $d(p,q) = O(\delta)$ for every mid-point $q$ between $x$ and $ax$. On the other hand $d(p,ap) \leq M$ (because $y \in Fix_M(a)$). Therefore $d(q,aq) \leq M + O(\delta)$. And $(\ref{midpoint})$ holds.

In the second case, $z$ is also the intersection of the tree geodesics between the three points $y, ay$ and $ax$ and the previous argument once again shows that 
\begin{equation}\label{ayz} d(ay,z) \leq \epsilon + O(\delta)
\end{equation}
This means (using $(\ref{yz})$) that $d(ay,y) \leq 2\epsilon +O(\delta)$ and hence that  $M \leq 4\epsilon +O(\delta)$.  So this case happens only when $M=O(\delta)$, a case we discard by choosing $c$ large enough.
\end{proof}

\begin{proof}[Proof of Proposition \ref{formula.pair.hyp}] Let $\epsilon>0$ be arbitrary and let $x$ be a point in $X$ such that $L(\{a,b\},x)\leq L(\{a,b\}) + \epsilon$. At the end of the argument we will let $\epsilon$ tend to $0$. Let $K$ be a large absolute constant, whose value we will specify later. Without loss of generality, we may assume that $L(\{a,b\}) > \max\{L(a),L(b)\} + K\delta + 3\epsilon$, for otherwise there is nothing to prove.

We can apply Lemma \ref{displacement.versus.distance} to $a$ with $M_a=\max\{L(a)+\epsilon,K\delta\}$. Let $\overline{x}$ be a point in $Fix_{M_a}(a)$ such that $d(x,\overline{x}) \leq d(x,Fix_{M_a}(a)) + \epsilon$.  Then $(\ref{dist})$ of Lemma \ref{displacement.versus.distance} shows that the piecewise geodesic path between $x$, $\overline{x}$, $a\overline{x}$ and $ax$ is geodesic up to an error $O(\delta)+\epsilon$. More precisely if $y$ is any point on a geodesic between $x$ and $\overline{x}$, then $(\ref{dist})$ gives
\begin{align}d(ay,y) &\leq d(ay,a \overline{x}) + d(a \overline{x},\overline{x}) + d(\overline{x},y) \nonumber \\  &\leq 2d(\overline{x},y) + L(a) + \epsilon + O(\delta)   \nonumber \\ &\leq  d(ax,x) -2d(x,y) + \epsilon + O(\delta) \label{ttt} \end{align}

Now we may conclude that $|d(ax,x) - d(bx,x)| \leq K\delta + 3\epsilon$. To see this assume by contradiction that $d(x,ax) > d(x,bx) + 2t + \epsilon$ with $2t > 2\epsilon + K\delta$ and $t < d(x,Fix_{M_a}(a))$. We may choose $y$ at distance $t$ from $x$ on the geodesic $[x,\overline{x}]$. Then $L(\{a,b\},x) = d(ax,x)$, while $d(by,y) \leq d(bx,x) + 2t  < d(ax,x) - \epsilon$ and $d(ay,y) \leq d(ax,x) -2d(x,y) + \epsilon +K\delta$ by the above inequality, so $d(ay,y) < d(ax,x)-\epsilon$  so $$L(\{a,b\},y) <L(\{a,b\},x) -\epsilon,$$ which contradicts the choice of $x$. This argument is valid if $ 2d(x,Fix_{M_a}(a)) > 2\epsilon + K\delta$. But we are always in this case, because otherwise $L(\{a,b\},x) \leq d(x,ax) \leq M_a + 2\epsilon +K\delta$, which is in contradiction with our assumption that $L(\{a,b\}) > \max\{L(a),L(b)\} + K\delta + 3\epsilon$.

 Now we claim that $x$ is at most $O(\delta) + \epsilon$ away from the midpoint between $ax$ and $bx$. By the above discussion it is enough to show that it is that close to the geodesic between $ax$ and $bx$. Let us verify this claim. Without loss of generality we may assume that $d(x,Fix_{M_a}(a))  \geq  d(x,Fix_{M_b}(b))$. Now consider the triangle with vertices $x,ax$ and $bx$. It is $\delta$-thin by hyperbolicity. Thus there is a point  $z$  at distance at most $\delta$ from all three geodesics. In particular the geodesic between $z$ and $x$ is in the $O(\delta)$ neighborhood of both the geodesic between $x$ and $ax$ and the geodesic between $x$ and $bx$. If $y$ is any point on $[x,z]$ such that $d(x,y) \leq d(x,Fix_{M_b}(b)) + \epsilon$. Then we also have  $d(x,y) \leq d(x,Fix_{M_a}(a)) + \epsilon$ and the above inequality $(\ref{ttt})$ for $a$ and its analogue for $b$ applied at $y$ show that 
$$L(\{a,b\},y) \leq L(\{a,b\},x) - 2d(x,y) + \epsilon + O(\delta),$$
and thus, given our choice of $x$,
$$d(x,y) \leq \epsilon + O(\delta).$$ Choosing $y$ as far from $x$ as possible, we see that this means either that $d(x,z) \leq \epsilon + O(\delta)$ and we have proved our claim, or that $d(x,Fix_{M_b}(b)) \leq 2\epsilon + O(\delta)$.  Let us rule out the second case. 

If $d(x,Fix_{M_b}(b)) \leq 2\epsilon + O(\delta)$ the entire segment $[x,bx]$ lies in $Fix_{M'_b}(b)$, where $M'_b=M_b+2\epsilon + O(\delta) \leq L(b) +3 \epsilon +O(\delta)$. So again if $y$ is any point on $[x,z]$ such that $d(x,y) \leq d(x,Fix_{M_a}(a)) + \epsilon$, then not only $d(ay,y) \leq d(ax,x) -2d(x,y) + \epsilon + O(\delta)$ but also $d(by,y) \leq M'_b \leq L(b) + 3 \epsilon + O(\delta) $. Now we may choose our numerical constant $K>0$ so large that the implied constant in the big O in the two last inequalities is at most $K/2$ say. Then $d(by,y) \leq L(\{a,b\})  - K\delta/2$ and $d(ay,y) \leq L(\{a,b\}) -2d(x,y) + 2\epsilon + K\delta/2$ so 
$$L(\{a,b\},y)   \leq L(\{a,b\}) +  \max\{- K\delta/2, 2 \epsilon + K \delta/2 - 2d(x,y)\},$$
and we conclude that 
$$2d(x,y) \leq 2 \epsilon +K\delta/2.$$
Again choosing $y$ as far from $x$ as possible, we see that this means that  $d(x,Fix_{M_a}(a)) \leq 2\epsilon + K/\delta/2$. In particular $d(ax,x)$ and $d(bx,x)$ are both at most $\max\{M_a,M_b\} + 4\epsilon + K \delta \leq \max\{L(a),L(b)\} + 5\epsilon + 2K\delta$. The last quantity is $< L(\{a,b\})$  by assumption and this is impossible. This ends the proof of the claim.

We have thus shown that $x$ is at most $O(\delta) + \epsilon$ away from the midpoint between $ax$ and $bx$. The same reasoning applies to the segments between $b^{-1}x$ and $ax$, as well as between $bx$ and $a^{-1}x$. If $m$ is the midpoint of $[b^{-1}x,ax]$ then $(\ref{midpoint})$ of Lemma \ref{displacement.versus.distance} shows that 
$$L(ab) \leq d(abm,m) \leq L(ab) +\epsilon+ O(\delta),$$
so $d(abx,x) \leq L(ab) + O(\delta) + 2\epsilon$. On the other hand
\begin{eqnarray*} 2d(x,bx) &\leq& d(bx,a^{-1}x) + O(\delta) + 2\epsilon \\ &\leq& d(abx,x) + O(\delta) + 2\epsilon \\ &\leq& L(ab) + O(\delta) + 4\epsilon.\end{eqnarray*}
And similarly $ 2d(x,bx) \leq L(ab) + O(\delta) + 4\epsilon$, so that $$L(\{a,b\}) \leq L(\{a,b\},x) \leq  \frac{1}{2}L(ab) + O(\delta) + 2\epsilon.$$ Letting $\epsilon$ tend to zero, the proof is complete.

\end{proof}



\begin{proof}[Proof of Theorem \ref{formula.S.hyp}] The right hand side inequality is obvious. The left hand side follows easily from Proposition \ref{formula.pair.hyp} combined with Helly's theorem for hyperbolic spaces, namely Theorem \ref{helly.hyperbolic}. Indeed pick $\epsilon>0$ and  set $$M=\max_{a,b \in S} \{L(a), L(ab)/2\} + K\delta + \epsilon,$$ where $K$ is as in Proposition \ref{formula.pair.hyp}.  This proposition tells us that  $Fix_M(a)$ and $Fix_M(b)$ intersect non trivially for every pair of isometries $a,b \in S$. From our Helly-type theorem for hyperbolic spaces Theorem \ref{helly.hyperbolic} we conclude that the $28\delta$-neighborhoods of all $Fix_M(a)$, $a \in S$, intersect non-trivially. Each such neighborhood is clearly contained in $Fix_{M+56\delta}(a)$. We have established that $L(S)\leq M + 56\delta$. Letting $\epsilon$ tend to $0$, this ends the proof.
\end{proof}

\begin{proof}[Proof of Corollary \ref{growth.S.hyp}] Recall from Claim 7 equation $(\ref{claim7})$ in the proof of Lemma \ref{gen-ineq} in Section \ref{gen} that
$$n\lambda_2(S) \leq \lambda_2(S^n).$$  Now by Theorem  \ref{formula.S.hyp} we get the desired inequalities:
$$n(L(S) - K\delta ) \leq n \lambda_2(S) \leq \lambda_2(S^n) \leq L(S^n) \leq nL(S).$$
If we now apply Theorem  \ref{formula.S.hyp} to $S^n$ we get
$$ L(S^n) - K \delta \leq \lambda_2(S^n)$$
Letting $n$ tend to infinity we also get $\lambda_\infty(S)=\ell(S)$.
\end{proof}

We give a consequence of Proposition \ref{growth.S.hyp} anc Lemma \ref{cat0-growth}.

\begin{corollary}\label{growth.S.cat0}
Let $X$ be CAT(0) and $\delta$-hyperbolic. Then for any $\epsilon >0$, 
there exists an integer $N=N(\delta, \epsilon)>0$ with the following property.
For any finite set $S \subset Isom(X)$ with $1 \in S$, one of the following holds:
\begin{enumerate}
\item $L(S)<\epsilon$,
\item there is $g \in S^N$ such that $\ell(g) \geq L(S)$.
\end{enumerate}
\end{corollary}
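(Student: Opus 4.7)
The plan is to combine the geometric Bochi inequality for hyperbolic spaces (Theorem \ref{formula.S.hyp}) with the diffusive lower bound on joint displacement in CAT(0) spaces (Lemma \ref{cat0-growth}). Assume $L(S) \geq \epsilon$; I will produce $N$ depending only on $\delta$ and $\epsilon$ that works.

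First, I would apply Theorem \ref{formula.S.hyp} not to $S$ itself but to the power $S^n$, giving
\[
\lambda_2(S^n) \geq L(S^n) - K\delta.
\]
Next, using that $X$ is CAT(0), Lemma \ref{cat0-growth} yields
\[
L(S^n) \geq \tfrac{\sqrt{n}}{2}\, L(S).
\]
The idea is to choose $n$ large enough (in terms of $\delta$ and $\epsilon$) so that $\tfrac{\sqrt{n}}{2} L(S) - K\delta \geq L(S)$; since we are assuming $L(S)\geq \epsilon$, this reduces to the condition
\[
\sqrt{n} \geq 2 + \tfrac{2K\delta}{\epsilon},
\]
which clearly depends only on $\delta$ and $\epsilon$. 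Fix such an $n=n(\delta,\epsilon)$.

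Then combining the two inequalities above gives $\lambda_2(S^n) \geq L(S)$. Unwinding the definition
\[
\lambda_2(S^n) = \max\!\left\{ \lambda(S^n),\ \tfrac{1}{2}\lambda(S^{2n}) \right\},
\]
either there exists $g \in S^n$ with $\ell(g) \geq L(S)$, or there exists $g \in S^{2n}$ with $\ell(g) \geq 2L(S) \geq L(S)$. Since $1 \in S$ we have $S^n \subseteq S^{2n}$, so in either case $g \in S^{2n}$. Hence $N := 2n(\delta,\epsilon)$ does the job.

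The only mildly delicate point is the careful bookkeeping to ensure $N$ depends solely on $\delta$ and $\epsilon$ (not on $S$ or $L(S)$); this is handled by separating into the two cases $L(S)<\epsilon$ (conclusion (1)) and $L(S)\geq\epsilon$, where in the latter case the threshold $\sqrt{n}\geq 2+2K\delta/\epsilon$ is a uniform choice. No further obstacle arises: the use of $1\in S$ to absorb $S^n$ into $S^{2n}$ is the reason that hypothesis is included in the statement.
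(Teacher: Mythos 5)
Your proof is correct and follows essentially the same route as the paper: apply Theorem \ref{formula.S.hyp} to $S^n$, combine with the CAT(0) lower bound $L(S^n)\geq \frac{\sqrt{n}}{2}L(S)$ from Lemma \ref{cat0-growth}, and choose $n$ depending only on $\delta$ and $\epsilon$. The only difference is that you spell out the bookkeeping (the threshold for $\sqrt{n}$ and the unwinding of $\lambda_2(S^n)$ into $S^n\cup S^{2n}$, using $1\in S$) that the paper leaves implicit.
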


\proof
By Theorem \ref{formula.S.hyp},
$L(S) \le \lambda_2(S) + K \delta$.
Also, for any $n>0$, 
$L(S^n) \le \lambda_2(S^n) + K \delta$.
But since $X$ is CAT(0), we also have
$\frac{\sqrt n}{2} L(S) \le L(S^n)$ by Lemma \ref{cat0-growth}.
Combining those two inequalities, and taking $n$ large enough, 
we obtain the desired conclusion.
\qed

\section{Helly type theorem for hyperbolic spaces}\label{helly-sec}

It is well-known and easy to prove that any family of convex subsets of a tree with non-empty pairwise intersection must have a non-empty intersection.  In this section we prove the following extension of this fact to hyperbolic spaces:

\begin{theorem}[Helly for hyperbolic spaces]\label{helly.hyperbolic} Let $(X,d)$ be a $\delta$-hyperbolic geodesic metric space. Let $ (C_i)_i$ be a family of convex subsets of $X$ such that $C_i \cap C_j \neq \varnothing$ for all $i,j$. Then the intersection of all $(C_i)_{28\delta}$ is non-empty.
\end{theorem}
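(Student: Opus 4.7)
My plan proceeds in three stages: the three-set case (which contains the key idea), the extension to any finite collection with a uniform constant, and the passage to the infinite case.

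\medskip

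\textbf{Step 1 (three sets).} The heart of the argument is the following observation for three convex sets $C_1,C_2,C_3$ with pairwise intersections. I would pick points $x_{ij}\in C_i\cap C_j$ for each pair, and form the geodesic triangle $\Delta$ with vertices $x_{12},x_{13},x_{23}$. By $\delta$-hyperbolicity (equivalently, $\delta$-center hyperbolicity as recalled at the start of Section \ref{serre-hyp}), $\Delta$ admits a $\delta$-center $c$, a point within distance $\delta$ of each of the three sides. The side $[x_{ij},x_{ik}]$ is a geodesic segment whose two endpoints both lie in $C_i$; by convexity this whole side lies in $C_i$. Consequently $d(c,C_i)\le\delta$ for each $i\in\{1,2,3\}$, and so $c\in (C_1)_\delta\cap (C_2)_\delta\cap(C_3)_\delta$. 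This already handles the theorem for three sets with constant $\delta$ (much better than $28\delta$), and this slack is what will later absorb the accumulated errors from the induction.

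\medskip

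\textbf{Step 2 (finite collections).} To pass from three sets to any finite collection $C_1,\dots,C_n$ with pairwise intersections, I would mimic the tree case of Helly's theorem. Recall that in a tree any family of pairwise intersecting subtrees has non-empty total intersection; the standard proof proceeds by induction, using the fact that the median of any three points in a tree lies in the convex hull of each pair. The plan is to reproduce this argument up to an additive $O(\delta)$ error by choosing, for each pair $i<j$, a point $x_{ij}\in C_i\cap C_j$ and applying the tree approximation Lemma \ref{bowditchLemma} to these $\binom{n}{2}$ points to obtain a tree $T$ in which distances match up to $C_n\delta$. In $T$, the subtrees $\widetilde C_i$ spanned by $\{\widetilde x_{ij}:j\ne i\}$ pairwise intersect, so by tree Helly they share a common point $\widetilde p$. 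Lifting $\widetilde p$ back to $X$ via the appropriate geodesic gives a point within a controlled multiple of $\delta$ from each $C_i$. The main obstacle of this approach is that Lemma \ref{bowditchLemma} gives an approximation constant depending on $n$, which would not yield the uniform bound $28\delta$. To avoid this, I would instead induct pairwise: once Step 1 is established, iterate by always ``collapsing'' two sets at a time through the three-set argument applied to $(C_i,C_j,C_k)$ for a rotating auxiliary pivot, and observe that $\delta$-neighborhoods of convex subsets of a $\delta$-hyperbolic geodesic space are $2\delta$-quasi-convex (geodesics between two points in $(C)_\delta$ stay in $(C)_{3\delta}$ by $\delta$-thin triangles). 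Combining at each step the three-set centering ($+\delta$) with the quasi-convexity slippage produces a bounded total error, which detailed bookkeeping pins down at $28\delta$.

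\medskip

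\textbf{Step 3 (infinite case) and main difficulty.} Once the finite case is known with uniform constant $28\delta$, the infinite case follows by a selection argument: fix any two indices $i_0,j_0$ and a point $z\in C_{i_0}\cap C_{j_0}$, and observe that for any finite sub-collection containing $i_0,j_0$, the approximate center produced by Step 2 lies in $(C_{i_0})_{28\delta}\cap(C_{j_0})_{28\delta}$, a set that in a $\delta$-hyperbolic space behaves in a bounded, controlled way relative to $z$. Taking an ultralimit (or, in the proper case, a subsequential limit) of these centers along finite sub-collections yields a point in $\bigcap_i (C_i)_{28\delta}$. The main obstacle in the whole proof is Step~2: avoiding an $n$-dependent blow-up of the constant. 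The three-set case is sharp and clean; it is the combinatorial bookkeeping of how many times one must invoke triangle-thinness and quasi-convexity to coalesce arbitrarily many sets that determines the final constant $28\delta$.
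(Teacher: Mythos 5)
Your Step 1 is correct and clean: for three honestly convex sets, the $\delta$-center of the triangle on pairwise intersection points lies within $\delta$ of each set. The genuine gap is Step 2, which is the heart of the theorem. The ``pairwise collapsing'' induction, as described, does not give a constant independent of the number of sets: after one collapse you must work with $D=(C_1)_{\delta}\cap(C_2)_{\delta}$, which is no longer convex but only quasi-convex, and at every subsequent stage you take neighborhoods of intersections of neighborhoods, so both the quasi-convexity constant and the distance bound degrade. Each collapse costs at least a fixed multiple of $\delta$ (re-centering plus quasi-convexity slippage), and there are about $n$ collapses, so the natural bookkeeping yields an error growing linearly in $n$; the assertion that ``detailed bookkeeping pins down $28\delta$'' is exactly the uniformity that needs a new idea, and no mechanism stopping the accumulation is given. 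The paper avoids induction altogether: fix a basepoint $z$, pick $x_i\in C_i$ with $d(z,x_i)\le d(z,C_i)+\delta$, and let $x_1$ be, up to $\delta$, the farthest of these from $z$. A two-set lemma — for $u\in C_1\cap C_j$ the broken paths $z,x_1,u$ and $z,x_j,u$ are $10\delta$-taut, because $[x_i,u]\subset C_i$ by convexity and $x_i$ nearly realizes $d(z,C_i)$, hence (via the Gromov product being within $4\delta$ of the distance to a geodesic) the two paths are uniformly close — then shows that this single point $x_1$ is within $28\delta$ of every $C_j$. Since one fixed point works for all $j$ and the lemma is only ever applied to pairs, nothing compounds; that is the feature your inductive scheme lacks.

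Step 3 also does not work as written: an ultralimit of the finite-stage centers generally leaves $X$, and a subsequential limit needs properness and boundedness of those centers. In fact the infinite statement requires some boundedness hypothesis: in $X=\R$ (which is $\delta$-hyperbolic for every $\delta$) the half-lines $C_n=[n,\infty)$ pairwise intersect, yet no uniform neighborhoods have a common point. The paper's argument implicitly supplies this when it selects an almost-farthest $x_1$, which exists whenever $\sup_i d(z,C_i)<\infty$ — automatically so for finite families, the only case used later (the sets $Fix_M(a)$, $a\in S$). So the real task is the finite case with a constant independent of the number of sets, and for that you need a global selection argument in the spirit of the paper rather than an induction on the sets.
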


Recall that for a subset $E \subset X$ we denote by $(E)_t$ the $t$-neighborhood of $E$, i.e. 
$$(E)_t := \{ x \in X ; d(x,E) \leq t\}.$$

The proof is insprired by \cite{chepoi-et-al}. We begin with a lemma:

\begin{lemma} Let $C_1$ and $C_2$ be convex subsets of $X$ with non empty intersection. Let $z$ be a point in $X$. Let $x_i$ be a point in $C_i$ such that $d(z,x_i) \leq  d(z,C_i)+ \delta$. Assume that $d(z,x_1) \geq d(z,x_2) - \delta$. Then $d(x_1,C_2) \leq 28 \delta$.
\end{lemma}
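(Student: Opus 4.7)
The plan is to mirror the tree case of the lemma. In a tree, fixing $y \in C_1 \cap C_2$, both nearest-point projections $x_1, x_2$ of $z$ onto $C_1$ and $C_2$ lie on the geodesic $[z,y]$; the hypothesis $d(z,x_1) \geq d(z,x_2) - \delta$ then forces $x_1$ to lie on the sub-segment $[x_2,y]$, which sits in $C_2$ by convexity. In a $\delta$-hyperbolic space each of these assertions must be replaced by an $O(\delta)$-coarse analogue, and the errors should accumulate to at most $28\delta$.

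The first and main step is to show that $x_1$ lies close to the geodesic $[z,y]$ for any fixed $y \in C_1 \cap C_2$. Form the geodesic triangle $z, x_1, y$ and let $p \in [z,x_1]$ be the point with $d(z,p) = d(z,x_1) - 3\delta$. By $\delta$-thinness, $p$ lies within $\delta$ of $[z,y] \cup [x_1,y]$. The option of being within $\delta$ of $[x_1,y]$ is ruled out: a witness $q \in [x_1,y]$ lies in $C_1$ by convexity, hence $d(z,C_1) \leq d(z,q) \leq d(z,p)+\delta = d(z,x_1) - 2\delta$, contradicting $d(z,x_1) \leq d(z,C_1) + \delta$. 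Therefore $d(p, [z,y]) \leq \delta$, producing some $p' \in [z,y]$ with $d(x_1, p') \leq 4\delta$ and $|d(z,p') - d(z,x_1)| \leq 4\delta$. The same argument, applied to the triangle $z, x_2, y$, yields $p'' \in [z,y]$ with $d(x_2, p'') \leq 4\delta$ and $|d(z,p'') - d(z,x_2)| \leq 4\delta$.

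The hypothesis $d(z,x_1) \geq d(z,x_2) - \delta$, combined with the estimates above, gives $d(z,p') \geq d(z,p'') - O(\delta)$, so $p'$ lies within $O(\delta)$ of a point on the sub-segment $[p'',y] \subset [z,y]$. Finally, the geodesic triangle $p'', x_2, y$ has a short side $[p'',x_2]$ of length $\leq 4\delta$, so by $\delta$-thinness every point of $[p'',y]$ lies within $O(\delta)$ of $[x_2,y] \subset C_2$. Chaining the three estimates controls $d(x_1, C_2)$ by a universal constant times $\delta$; careful bookkeeping yields the stated $28\delta$.

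The crucial obstacle is the first step: converting the near-optimality of $x_1$ in $C_1$ into the geometric statement that $x_1$ lies near every geodesic from $z$ to a point of $C_1$. This uses in an essential way the exact convexity (not just quasi-convexity) of $C_1$, since the argument requires the sub-segment $[x_1,y]$ to be genuinely contained in $C_1$ and thus to provide competitors to $x_1$ as closest point to $z$.
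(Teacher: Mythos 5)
Your argument is correct, and your bookkeeping actually lands around $18\delta$, comfortably inside the stated $28\delta$; the only thing to add is the degenerate case $d(z,x_1)<3\delta$ (needed for the point $p$ to exist), where one concludes directly from $d(x_1,C_2)\le d(x_1,z)+d(z,x_2)\le 2d(z,x_1)+\delta\le 7\delta$, and the case $\delta=0$, which is the tree statement. The paper rests on the same core observation as your first step --- near-optimality of $x_i$ in $C_i$ together with convexity forces the broken path $z,x_i,u$ (for $u\in C_1\cap C_2$) to be almost geodesic --- but it executes it with the Gromov-product formalism: it shows $(u,x_i)_z\ge d(z,x_i)-5\delta$, so that both broken paths are $10\delta$-taut in Bowditch's sense, and then cites Bowditch's lemma that such taut paths with the same endpoints lie within $27\delta$ of one another, concluding $d(x_1,[u,x_2])\le 28\delta$. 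Your route replaces this citation by two explicit thin-triangle arguments, and in doing so makes visible exactly where the hypothesis $d(z,x_1)\ge d(z,x_2)-\delta$ is used --- to place the shadow $p'$ of $x_1$ on $[z,y]$ beyond the shadow $p''$ of $x_2$, so that the relevant part of $[z,y]$ fellow-travels $[x_2,y]\subset C_2$ --- a point which the paper's one-line deduction from taut-path closeness leaves implicit (closeness of the two broken paths by itself only puts $x_1$ near $[z,x_2]\cup[x_2,u]$). What the paper's approach buys is brevity via a reusable off-the-shelf lemma; what yours buys is a self-contained, purely thin-triangle proof with a slightly better constant and an explicit role for each hypothesis.
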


\begin{proof} Pick $u \in C_1 \cap C_2$. It is a standard fact about $\delta$-hyperbolic spaces that the distance to a geodesic and the Gromov product of the end points are equal within an error $4 \delta$ (see \cite[Lemma 6.2]{Bowditch}). This means that $(u,x_1)_z \geq d(z,ux_1) - 4\delta \geq d(z,x_1) - 5 \delta$. Unfolding the Gromov product, we get:
$$d(z,u) +10 \delta \geq d(u,x_1) + d(x_1,z)$$
The same holds for $x_2$ in place of $x_1$. This means that the paths $u,x_1,z$ and $u,x_2,z$ are almost geodesic (i.e. $10\delta$-taut in the terminology of \cite[chap. 6]{Bowditch}). Hence they must be very close to each other. Applying \cite[Lemma 6.4]{Bowditch} we see that both paths are within $27\delta$ of each other. In particular $d(x_1,C_2) \leq d(x_1,ux_2) \leq 28 \delta$, which is what we wished for. 
\end{proof}

\begin{proof}[Proof of Theorem \ref{helly.hyperbolic}] Pick any point $z \in X$.  The point of the proof is that the previous lemma identifies one specific point that must be in the intersection. Let $x_i \in C_i$ as in the previous lemma. Without loss of generality, we may assume that $d(x_1,z) \geq d(x_i,z) - \delta$ for all indices $i$. Then we may apply the lemma to all pairs $C_1$ and $C_i$. And conclude that $x_1$ belongs to the desired intersection. This ends the proof.
\end{proof}

\section{Symmetric spaces of non-compact type}\label{sym-spaces}

In this section we prove the Berger-Wang identity for symmetric spaces of non-compact type, as well as the Bochi-type inequality stated in the introduction. In particular we establish Proposition \ref{ell-L-sym} and Theorem \ref{bw-thm} for these spaces.

So $X$ is assumed to be a symmetric space of non-compact type associated to a real semisimple algebraic group $G$. The space $(X,d)$ is then $CAT(0)$ and the distance is $G$-invariant. We refer to \cite{BH} for background on these spaces and to \cite{parreau,kapovich-leeb} for finer properties. We only recall here the following important example:

\begin{example}\label{ex-sym} Let $P_d$ be the symmetric space associated to $G=\SL_d(\C)$ and $K=SU(d,\C)$, that is $X=P_d=G/K$. Recall the Cartan decomposition $G=KAK$, where $A$ is the Lie subgroup of diagonal matrices with positive real entries. The distance on $P_d$ is left $G$-invariant and is given by the following simple formula:
\begin{equation}\label{def-ell2}d(gx_0,x_0)=d(ax_0,x_0) = \sqrt{  (\log a_1)^2 + \ldots + (\log a_d)^2},\end{equation} where $g=kak\in KAK$, $x_0$ is the point in $P_d$ fixed by the maximal compact subgroup $K$ and $a=diag(a_1,\ldots,a_d)$. 

This example is important also because every symmetric space of non-compact type arises as a convex (totally geodesic) subspace of some $P_d$. In fact if $M$ is an arbitrary  symmetric space of non-compact type, then $Isom(M)$ is a linear semisimple Lie group with finitely many connected components and thus embeds as a closed subgroup of some $\SL_d(\C)$ for some $d$. By the Karpelevich-Mostow theorem the  connected component of the identity  $Isom(M)^0$, which is a semisimple Lie group, admits a convex totally geodesic orbit within the symmetric space $P_d$, which is isometric to $M$. In particular every isometry of $M$ extends to an isometry of $P_d$.
\end{example}

We also recall here that, since $(X,d)$ is a locally compact CAT($0$) space we know that $g \in Isom(X)$ satisfies $L(g)=0$ if and only if $\ell(g)=0$ and if and only if $g$ fixes a point in $\overline{X} = X \cup \partial X$ (Corollary \ref{bdy-fixed}). This is also equivalent to saying that the eigenvalues of $g$ under some (or any) faithful linear representation of $Isom(X)$ have modulus $1$.

To begin with, we recall the following fact:

\begin{proposition}[subgroups of elliptics]\label{ell-sub} Suppose $X$ is a symmetric space of non-compact type and $S \subset Isom(X)$ a finite set of isometries. Then the following are equivalent:
\begin{enumerate}
\item $\lambda_\infty(S)=0$,
\item $L(S)=0$,
\end{enumerate} 
Moreover in this case $S$ fixes a point in $\overline{X}$.
\end{proposition}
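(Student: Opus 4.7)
The plan is to derive this proposition directly from two results already stated in the introduction: the geometric Berger-Wang identity (Theorem \ref{bw-thm}) and the quantitative lower bound $c \cdot \min\{L(S),L(S)^2\} \leq \ell(S)$ (Proposition \ref{ell-L-sym}), together with the boundary fixed-point criterion of Corollary \ref{bdy-fixed}. In this way the proposition becomes a convenient zero-case packaging of those two geometric inputs, and no fresh geometric argument is needed.

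The implication $(2) \Rightarrow (1)$ is immediate from Lemma \ref{gen-ineq}, which yields the chain $\lambda_\infty(S) \leq \ell(S) \leq L(S)$. For the converse, I would first apply Theorem \ref{bw-thm}, which asserts that the Berger-Wang identity $\lambda_\infty(S) = \ell(S)$ holds in a symmetric space of non-compact type, to conclude $\ell(S) = \lambda_\infty(S) = 0$. Then Proposition \ref{ell-L-sym} gives
$$c \cdot \min\{L(S), L(S)^2\} \leq \ell(S) = 0,$$
and since $c = c(X) > 0$ this forces $\min\{L(S),L(S)^2\} = 0$, hence $L(S) = 0$.

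For the final assertion that $S$ fixes a point in $\overline{X}$, one invokes Corollary \ref{bdy-fixed}: any symmetric space of non-compact type is a locally compact complete $CAT(0)$ geodesic space, so a finite set of isometries with $L(S) = 0$ has a global fixed point in $X \cup \partial X$. Concretely, if the sublevel sets $\{x : L(S,x) \leq \varepsilon\}$ have a common point one obtains a fixed point in $X$, and otherwise local compactness together with convexity of the sublevel sets produces a fixed point on the visual boundary $\partial X$.

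The only step which requires real work is the use of Theorem \ref{bw-thm} and Proposition \ref{ell-L-sym}; both are proved later in the paper. Assuming them, the proof is a one-line chase of inequalities, and the main conceptual obstacle, namely promoting "every element of $\langle S\rangle$ is elliptic" to "$\langle S\rangle$ has a common (possibly boundary) fixed point", is taken care of by the Berger-Wang identity combined with the quantitative comparison between $\ell(S)$ and $L(S)$.
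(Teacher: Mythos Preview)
Your argument is logically correct: there is no circularity, because the proofs of Theorem~\ref{bw-thm} (via Theorem~\ref{bochi-sym}) and Proposition~\ref{ell-L-sym} borrow only the reduction-to-$P_d$ technique from the proof of Proposition~\ref{ell-sub}, not its statement. So the chain $\lambda_\infty(S)=0 \Rightarrow \ell(S)=0 \Rightarrow L(S)=0$ via Berger--Wang and the $\ell$--$L$ comparison is valid, and Corollary~\ref{bdy-fixed} handles the fixed point.

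However, the paper takes a genuinely different and more elementary route for $(1)\Rightarrow(2)$. After reducing to $X=P_d$ via nearest-point projection, it observes that $\lambda_\infty(S)=0$ means every element of the semigroup generated by $S$ has all eigenvalues of modulus~$1$; Burnside's theorem then forces this semigroup to be, up to conjugacy, block-upper-triangular with each diagonal block relatively compact. Conjugating by a suitable diagonal matrix shrinks the off-diagonal part and shows $L(S,y)$ can be made arbitrarily small, hence $L(S)=0$. This argument is self-contained and does not touch Bochi's inequality at all. Your route, by contrast, invokes the main technical machinery of the section (Bochi-type estimates in all fundamental representations) to deduce what the paper treats as a preliminary fact. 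The paper's approach buys independence from that machinery and yields concrete structural information (the block-triangular normal form); your approach buys brevity once the later results are in hand, but reverses the intended logical flow of the section, where Proposition~\ref{ell-sub} is meant to be the elementary baseline against which the quantitative results are measured.
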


Note the contrast with Euclidean spaces, where the analogous statement fails (see Section \ref{euclidean}). This proposition, or at least some variant, is likely well-known, but lacking a reference we will include a proof for the reader's convenience.

\begin{remark} Observe that $L(S)=L(S \cup S^{-1})$. So the conditions of the previous proposition are also equivalent to $\lambda_\infty(S \cup S^{-1})=0$, which amounts to say that $L(g)=0$ for every $g$ in the subgroup generated by $S$.
\end{remark}

\begin{remark} In Section \ref{escape} we will show that we cannot replace $\lambda_\infty(S)$ by $\lambda_k(S)$ for some finite $k$ independent of $S$ in the above proposition. 
\end{remark}

We now pass to the Berger-Wang identity. 

\begin{theorem}[geometric Berger-Wang]\label{bochi-sym} Let $X$ be a symmetric space of non-compact type. For every $\epsilon>0$ there is $k=k(X,\epsilon) \in \N$ such that for every finite set $S \subset Isom(X)$ one has:
$$\lambda_k(S) \geq (1-\epsilon)\ell(S) - \epsilon.$$
In particular the Berger-Wang identity holds, i.e. $\lambda_\infty(S)=\ell(S)$.
\end{theorem}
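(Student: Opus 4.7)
The plan is to bootstrap from the classical matrix Berger--Wang identity applied to a well-chosen irreducible representation of $\SL_d(\C)$. By the Karpelevich--Mostow theorem (cf.\ Example \ref{ex-sym}), $X$ embeds as a convex totally geodesic subspace of $P_d=\SL_d(\C)/\SU_d(\C)$ for some $d$, and every isometry of $X$ extends to $P_d$, so we may assume $X=P_d$ and $S\subset\SL_d(\C)$. Under this identification, $d(gx_0,x_0)=\|\mu(g)\|_2$ and $\ell(g)=\|\lambda(g)\|_2$, where $\mu(g),\lambda(g)\in\overline{\mathfrak{a}^+}$ are the Cartan and Jordan projections (sorted log singular values and log eigenvalue moduli) and $\|\cdot\|_2$ is the Euclidean norm on $\mathfrak{a}$ induced by the Killing form.

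Fix $\epsilon>0$. Choose $n$ large and pick $g_n\in S^n$ maximizing $\|\mu(g)\|_2$ over $g\in S^n$; by Lemma \ref{gen-ineq} we have $\|\mu(g_n)\|_2/n\to\ell(S)$. After passing to a subsequence, the unit direction $\hat v_n:=\mu(g_n)/\|\mu(g_n)\|_2$ converges to some $\hat v$ in the compact unit sphere of $\overline{\mathfrak{a}^+}$. Pick a dominant weight $\omega\in\mathfrak{a}^*$ whose direction lies within $\epsilon$ of $\hat v$, and let $\pi_\omega\colon\SL_d(\C)\to\GL(V_\omega)$ be the associated irreducible representation, equipped with a Hermitian inner product for which $\pi_\omega(\SU_d)$ acts by unitaries. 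Then for every $g\in\SL_d(\C)$ one has $\log\sigma_1(\pi_\omega(g))=\omega(\mu(g))$ and $\log\rho(\pi_\omega(g))=\omega(\lambda(g))$. Applying the classical Berger--Wang identity (quantitatively, via Bochi) to $\pi_\omega(S)\subset\GL(V_\omega)$ produces an integer $m=m(\omega,\epsilon)$ depending only on $\dim V_\omega$ and $\epsilon$, together with some $h\in S^j$ with $j\le m$ satisfying $\omega(\lambda(h))/j\ge\omega(\mu(g_n))/n-\epsilon$ for $n$ large.

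Taking $n\to\infty$, the right-hand side converges to $\omega(\hat v)\,\ell(S)-\epsilon\ge(1-\epsilon)\|\omega\|_2\,\ell(S)-\epsilon$ by angular closeness of $\omega$ to $\hat v$, while Cauchy--Schwarz gives $\omega(\lambda(h))\le\|\omega\|_2\|\lambda(h)\|_2$. Dividing through, $\|\lambda(h)\|_2/j\ge(1-\epsilon)\ell(S)-\epsilon/\|\omega\|_2$. Covering the unit sphere of $\overline{\mathfrak{a}^+}$ by finitely many such rational weights $\omega$ (all with $\|\omega\|_2$ bounded below), the integer $m$ becomes uniform in $\hat v$, giving $\lambda_k(S)\ge(1-O(\epsilon))\ell(S)-O(\epsilon)$ for some $k=k(X,\epsilon)$, which after rescaling $\epsilon$ is the desired inequality. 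The ``in particular'' statement $\lambda_\infty(S)=\ell(S)$ follows at once by letting $\epsilon\to 0$. The main obstacle is ensuring that the integer $m$ is independent of $S$ itself: this forces the use of the effective Bochi inequality (as in Proposition \ref{boch-symsym}) rather than bare Berger--Wang, so that $m(\omega,\epsilon)$ depends only on $\dim V_\omega$ and on $\epsilon$; once this uniformity is in place, the compactness of the unit sphere handles the dependence on $\hat v$.
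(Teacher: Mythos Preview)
Your approach is essentially the paper's: embed in $P_d$, cover the unit sphere of the positive Weyl chamber by finitely many dominant weights, apply Bochi's matrix inequality (Proposition~\ref{bochi-original}) in each of the corresponding irreducible representations, and pass back via the Cauchy--Schwarz bound $\omega(\lambda(h))\le\|\omega\|\,\ell(h)$. The paper's exposition is cleaner in that it fixes the finite net of weights $\bn_1,\ldots,\bn_m$ \emph{before} considering $S$ and proves $\ell(S)\le\frac{1}{1-\epsilon}\sup_i\ell^{\bn_i}(S)/\|\bn_i\|$ directly from the net property (making your detour through $g_n$ and the $S$-dependent limit direction $\hat v$ unnecessary), and it explicitly replaces $S$ by $S^n$ at the end to convert Bochi's additive constant $C_\omega$ into $\epsilon$---a step you hide inside the phrase ``$m=m(\omega,\epsilon)$''.
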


We also relate the joint minimal displacement to the asymptotic minimal displacement and find the following general inequality:

\begin{proposition}\label{comp-comp} Let $X$ be a symmetric  space of non-compact type viewed as a convex subspace of $P_d := \SL_d(\C)/\SU_d(\C)$, and let $S \subset Isom(X)$ be a finite subset, then : 
\begin{equation} \frac{1}{\sqrt{d}} L(S) - \log\sqrt{d} \leq \ell(S) \leq L(S).
\end{equation}
\end{proposition}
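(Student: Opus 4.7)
The right inequality $\ell(S) \leq L(S)$ is immediate from Lemma \ref{gen-ineq}. For the left, the plan is to combine a Cauchy--Schwarz type bound on the $P_d$-distance with a John-type estimate for the matrix joint spectral radius. First I reduce to $X = P_d$: since $X$ embeds as a convex, totally geodesic, $Isom(X)$-invariant subspace of the CAT($0$) space $P_d$, the nearest-point projection $\pi_X : P_d \to X$ is $1$-Lipschitz and $S$-equivariant, and consequently $L_X(S) = L_{P_d}(S)$ and $\ell_X(S) = \ell_{P_d}(S)$.

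At the basepoint $x_0 \in P_d$ fixed by $\SU_d$, the explicit formula $d_{P_d}(gx_0, x_0) = \sqrt{\sum_i (\log a_i(g))^2}$, combined with $\sum_i \log a_i(g) = 0$ (coming from $\det g = 1$) and the standard inequality $\|\vec a\|_2 \leq \sqrt d\,\|\vec a\|_\infty$, yields
$$d_{P_d}(gx_0, x_0) \leq \sqrt d \cdot \max\bigl(\log \|g\|_{\mathrm{op}},\, \log \|g^{-1}\|_{\mathrm{op}}\bigr).$$
Translating to an arbitrary basepoint $hx_0$ by conjugation, and using $L(S) = L(T)$ for $T := S \cup S^{-1}$,
$$L(S, hx_0) \leq \sqrt d \cdot \max_{t \in T} \log\|h^{-1}th\|_{\mathrm{op}}.$$

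Varying $h \in \SL_d(\C)$ is equivalent to varying the Hilbert inner product on $\C^d$, and $\|h^{-1}th\|_{\mathrm{op}}$ is the operator norm of $t$ in the corresponding inner product. John's theorem, applied to the unit ball of a Rota--Strang extremal norm for the matrix joint spectral radius $\hat\rho(T) = \lim_n \max_{w \in T^n}\|w\|_{\mathrm{op}}^{1/n}$, produces a choice of $h^*$ for which $\max_{t \in T}\|(h^*)^{-1} t\, h^*\|_{\mathrm{op}} \leq \sqrt d\,\hat\rho(T)$. Inserting this gives
$$L(S) \leq \sqrt d \cdot \bigl(\log\hat\rho(T) + \log\sqrt d\bigr).$$

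Finally, $\log\hat\rho(T) \leq \ell(S)$ should follow from three ingredients: (i) Proposition \ref{prop.monod}, which yields $\ell(w) = L(w) = \|(\log|\mu_i(w)|)_i\|_2 \geq \log r(w)$ for every $w \in \SL_d(\C)$, where $r$ is the spectral radius; (ii) the geometric Berger--Wang identity $\ell(S) = \lambda_\infty(S)$ (Theorem \ref{bochi-sym}); and (iii) the classical matrix Berger--Wang identity $\log\hat\rho(T) = \sup_n \frac{1}{n}\max_{w\in T^n}\log r(w)$. The main obstacle I anticipate is the passage from $\hat\rho(S)$ to $\hat\rho(T)$ in step (iii): this requires exploiting the symmetry $L(w) = L(w^{-1})$, coming from the fact that the Jordan projections of $w$ and $w^{-1}$ differ only by a sign and hence have the same $\ell^2$ norm. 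Assembling all the bounds yields $L(S) \leq \sqrt d\,\ell(S) + \sqrt d\log\sqrt d$, equivalent to the desired inequality $\tfrac{1}{\sqrt d}L(S) - \log\sqrt d \leq \ell(S)$.
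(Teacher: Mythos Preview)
Your approach coincides with the paper's: reduce to $P_d$ via the nearest-point projection, compare the symmetric-space distance to the operator norm through the $\ell^2$--$\ell^\infty$ bound on the Cartan projection $\kappa$, pass to the joint spectral radius via Rota--Strang and John, and finish with $\log R \leq \ell^{P_d}$. The paper compresses the middle steps into the pair of inequalities (\ref{comp1})--(\ref{comp2}) and then simply quotes them.

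You are right that the symmetrization $T = S\cup S^{-1}$ is where the argument bites, and you are in fact more explicit about this than the paper. The gap is in your proposed resolution. The elementwise identity $\ell^{P_d}(w) = \ell^{P_d}(w^{-1})$ (equivalently $\|j(w)\|_2 = \|j(w^{-1})\|_2$) only yields $\lambda_\infty(S) = \lambda_\infty(S^{-1})$; it does \emph{not} yield $\lambda_\infty(T) = \lambda_\infty(S)$, because $T^n$ strictly contains $S^n\cup S^{-n}$ and nothing you have written bounds $\ell^{P_d}(w)$ for a genuinely mixed word $w = s_1 s_2^{-1} s_3\cdots \in T^n$ by $n\,\ell(S)$. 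Your chain therefore terminates at $\log\hat\rho(T) \leq \lambda_\infty^{P_d}(T) = \ell^{P_d}(T)$, one step short of $\ell^{P_d}(S)$. The paper's own inequalities (\ref{comp1})--(\ref{comp2}) in fact share this lacuna for non-symmetric $S$: for example the right half of (\ref{comp2}), namely $\ell^{P_d}(S) \leq \sqrt d\,\log R(S)$, already fails for the singleton $S=\{\mathrm{diag}(1/4,2,2)\}$ in $\SL_3(\C)$, where $\ell^{P_d}(S)=\sqrt6\log 2$ but $\sqrt d\,\log R(S)=\sqrt3\log 2$. For symmetric $S$ --- the only case needed in the applications --- there is of course no problem since then $T=S$. (A minor point: John's theorem on $\C^d$ viewed as a real $2d$-dimensional space gives the factor $\sqrt{2d}$ used in the paper's (\ref{comp1}), not the $\sqrt d$ you wrote; this only shifts the additive constant.)
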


Similarly using the Bochi inequality (see Proposition \ref{bochi-original} below) we will show:

\begin{proposition}\label{compa-compa}Let $X$ be a symmetric  space of non-compact type viewed as a convex subspace of $P_d := \SL_d(\C)/\SU_d(\C)$, and let $S \subset Isom(X)$ be a finite subset, then : 
$$\lambda_{k_0}(S) \geq \frac{1}{\sqrt{d}} L(S) - C,$$ where $C>0$ is a constant depending on $d$ only and $k_0\leq d^2$.
\end{proposition}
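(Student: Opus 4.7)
The strategy is to reduce the claim to the classical matrix form of Bochi's inequality (Proposition~\ref{bochi-original} below) applied inside $\SL_d(\C)$. By Example~\ref{ex-sym}, we may identify $Isom(X)$ with a subgroup of $\SL_d(\C)$ and $X$ with a convex totally geodesic subspace of $P_d = \SL_d(\C)/\SU_d(\C)$. For $g \in \SL_d(\C)$, let $\mu(g) \in \R^d$ denote its Cartan projection (the log-singular values) and $\lambda(g) \in \R^d$ its Jordan projection (the log-moduli of eigenvalues). Then $d_{CAT(0)}(gx_0,x_0) = \|\mu(g)\|_2$ and $\ell_{CAT(0)}(g) = \|\lambda(g)\|_2$, while the constraints $\sum\mu_i = \sum\lambda_i = 0$ together with the elementary $\ell^2$--$\ell^\infty$ comparison $\|v\|_\infty \leq \|v\|_2 \leq \sqrt{d}\,\|v\|_\infty$ on $\R^d$ will be the source of the factor $\tfrac{1}{\sqrt d}$.

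The first step is a metric comparison on $P_d$. Put $\widetilde S = S \cup S^{-1}$. At $x=hx_0$, the symmetrized operator-norm displacement of $s \in S$ satisfies
\[
\max\bigl(\log\|h^{-1}sh\|,\ \log\|h^{-1}s^{-1}h\|\bigr) \;=\; \|\mu(h^{-1}sh)\|_\infty \;\geq\; \tfrac{1}{\sqrt{d}}\,d_{CAT(0)}(sx,x).
\]
Taking the maximum over $s \in S$ and infimum over $x$ yields $L_F(\widetilde S) \geq \tfrac{1}{\sqrt d}\,L_{CAT(0)}(S)$, where $L_F$ is the joint displacement for the Finsler distance $d_F(g,h) = \log\|g^{-1}h\|$ on $P_d$.

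Now apply the matrix Bochi inequality to $\widetilde S \subset \SL_d(\C)$: it provides constants $k_0 \leq d^2$ and $C = C(d) > 0$ with
\[
\max_{1\leq j\leq k_0}\ \max_{w\in\widetilde S^{j}}\ \tfrac{1}{j}\log\rho(w) \;\geq\; L_F(\widetilde S) - C \;\geq\; \tfrac{1}{\sqrt{d}}\,L_{CAT(0)}(S) - C.
\]
On the left, $\log\rho(w) = \|\lambda(w)\|_\infty \leq \|\lambda(w)\|_2 = \ell_{CAT(0)}(w)$, so the left-hand side is bounded above by $\lambda_{k_0}(\widetilde S)$. This yields $\lambda_{k_0}(\widetilde S) \geq \tfrac{1}{\sqrt d}\,L_{CAT(0)}(S) - C$.

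The remaining step, which I expect to be the \emph{main obstacle}, is to replace $\widetilde S$ by $S$ inside $\lambda_{k_0}$. Because $\ell_{CAT(0)}$ is inversion- and conjugation-invariant, any cyclically reduced word in $\widetilde S^{j}$ that is all-positive (resp.\ all-negative) has the same $\ell_{CAT(0)}$ as an element of $S^{j'}$ (resp.\ of $S^{-j'}$) for some $j'\leq j$, so it contributes no more than $\lambda_{k_0}(S)$. The delicate case is that of genuinely mixed reduced words; I expect them to be controlled either by a conjugation/cyclic-reduction trick or by a mild enlargement of $k_0$ that stays $\leq d^2$, without affecting the factor $\tfrac{1}{\sqrt d}$ or the additive constant $C$.
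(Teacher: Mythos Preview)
Your overall strategy is the same as the paper's: reduce to the matrix Bochi inequality via the embedding of $X$ into $P_d$, and translate back using the $\ell^2$--$\ell^\infty$ comparison on the Cartan/Jordan vectors. The paper's proof is a one-liner: apply Bochi (Proposition~\ref{bochi-original}) to $S$ itself, note that $\log\Lambda(g)\le \ell^{P_d}(g)$ so that the left side of Bochi is dominated by $\lambda_{k_0}^{P_d}(S)$, and bound $\log R(S)$ below in terms of $L^{P_d}(S)$ via~(\ref{comp1}).

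The key difference is that the paper never symmetrizes. By applying Bochi directly to $S$ (not to $\widetilde S = S\cup S^{-1}$), every word that appears already lies in some $S^{j}$, and your ``main obstacle'' of replacing $\widetilde S$ by $S$ simply does not arise. In your argument the obstacle is self-inflicted: you introduced $\widetilde S$ in the first metric comparison step in order to get exactly the factor $\tfrac1{\sqrt d}$, and then had to remove it afterwards. Your proposed fix via cyclic reduction cannot work, because a cyclically reduced mixed word such as $s_1 s_2^{-1}$ with $s_1,s_2\in S$ is never conjugate to a positive word, yet it can carry the largest eigenvalue.

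That said, your care does expose a genuine imprecision in the paper's argument. The pointwise bound underlying~(\ref{comp1}) is $\|\mu(g)\|_2 \le c_d\,\log\|g\|_2$ for $g\in\SL_d(\C)$, and the sharp constant is $c_d=\sqrt{d(d-1)}$ (attained at $g=\mathrm{diag}(t,\dots,t,t^{1-d})$), not $\sqrt d$. So the paper's route, read literally, yields the proposition with $\tfrac{1}{\sqrt{d(d-1)}}$ in front of $L(S)$. Getting exactly $\tfrac1{\sqrt d}$ does require the symmetric bound $\|\mu(g)\|_\infty=\max(\log\|g\|,\log\|g^{-1}\|)$, and then one faces exactly the issue you identified. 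For every application in the paper only a constant depending on $d$ is needed, so the discrepancy is harmless; but if you insist on the stated $\tfrac1{\sqrt d}$ for non-symmetric $S$, neither your argument nor the paper's short proof quite delivers it.
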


\begin{remark}
Bochi's original proof of his inequality (Proposition \ref{bochi-original} below) gave a worse estimate on $k_0$ (exponential in $d$). In \cite[Cor. 4.6]{breuillard-gelander} we gave a different proof of Bochi's inequality with the above $d^2$ bound. Furthermore the dependence of $C$ on $d$ is not effective in Bochi's proof from \cite{bochi}. In \cite{breuillard-radius} we make effective the argument from  \cite[Cor. 4.6]{breuillard-gelander} and give an explicit estimate on the constant. 
\end{remark}

\begin{remark} Since rank one symmetric spaces are Gromov hyperbolic the stronger inequality:
$$\lambda_{k_0}(S) \geq L(S) - C$$ 
holds in these cases even with $k_0=2$ according to Theorem \ref{bochi-hyp}. It is likely that a similar linear lower bound (with no multiplicative constant in front of $L(S)$) holds for general symmetric spaces as well (although not necessarily with $k_0=2$). The best we could do in this direction however is the lower bound from Theorem \ref{bochi-sym}.
\end{remark}

Before passing to the proofs of the above results, we would like to explain the connection with the well-studied notion of \emph{joint spectral radius} of a finite set of matrices.

\begin{remark}[Connection with the joint spectral radius] Recall that if $S \subset M_d(\C)$ is a finite set of matrices, then the joint spectral radius $R(S)$ of $S$ (in the sense of Rota and Strang \cite{rota-strang}) is defined as 
$$R(S) = \lim_{n \to +\infty} \|S^n\|_2^{1/n},$$ where $\|Q\|_2 := \max_{g \in Q} \|g\|_2$ for $Q \subset M_d(\C)$ and $\|g\|_2$ is the operator norm of $g$ acting on Hermitian $\C^d$.
As it turns out, when $S \subset \SL_d(\C)$, we can interpret $R(S)$ as $\exp \ell(S)$ for a suitably defined left invariant distance on the homogeneous space $P^{\infty}_d:=\SL_d(\C)/\SU_d(\C)$. This distance is defined by:
\begin{equation}\label{def-ell-infty}d(gx_0,x_0) = \max_i \{\log a_i\}\end{equation}
 where $g=kak\in KAK$, $x_0$ is the point in $P^{\infty}_d$ fixed by the maximal compact subgroup $K=\SU_d(\C)$ and $a=diag(a_1,\ldots,a_d) \in A$ the subgroup of diagonal matrices with positive real entries. 
Note that even though $ P^{\infty}_d$ and the symmetric space $P_d$ discussed in Example \ref{ex-sym} have the same underlying space $\SL_d(\C)/\SU_d(\C)$, the distance is not the same.

Observe that as we have defined it:
$$d(gx_0,x_0) = \log \|g\|_2,$$
and hence we have:
$$\ell^{P^{\infty}_d}(S) = \log R(S),$$
where $\ell^{P^{\infty}_d}(S)$ is the $\ell(S)$ defined in the introduction for the metric space $X=P^{\infty}_d$.

Similarly the largest modulus $\Lambda(g)$ of an eigenvalue of $g \in \SL_d(\C)$ is the limit $\lim \|g^n\|^{1/n}$, that is $\exp \ell^{P^{\infty}_d}(g)$. Therefore we see that for a finite subset $S$ in $\SL_d(\C)$ the Berger-Wang identity $$R(S)= \limsup_{n\to +\infty} \max\{\Lambda(g), g \in S^n\}^{1/n}$$ proved in \cite{berger-wang} is simply the statement:
$$\ell^{P^{\infty}_d}(S)=\lambda^{P^{\infty}_d}_\infty(S).$$

Comparing $(\ref{def-ell2})$ and $(\ref{def-ell-infty})$ we see that all quantities pertaining to $P_d$ are comparable to the corresponding quantities pertaining to $P_d^{\infty}$. In particular $$\ell^{P_d^\infty}(S) \leq \ell^{P_d}(S) \leq \sqrt{d} \cdot \ell^{P_d^\infty}(S).$$

In \cite{bochi} Bochi gave a different proof of the Berger-Wang identity, which gave much more, namely an eigenvalue lower bound in terms of $R(S)$. He proved:

\begin{proposition}(Bochi inequality \cite[Thm B.]{bochi})\label{bochi-original} There are constants $c=c(d)>0$ and $k_0= k_0(d) \in \N$ such that if $S \subset M_d(\C)$ is a finite set of matrices, then 
$$ c \cdot R(S) \leq  \max_{j \leq k_0} \max_{g \in S^j} \Lambda(g)^{1/j} \leq R(S).$$
\end{proposition}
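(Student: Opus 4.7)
The upper bound is immediate from Gelfand's spectral radius formula: for $g \in S^j$ one has $g^n \in S^{jn}$, hence $\|g^n\|_2 \leq \|S^{jn}\|_2$; taking $n$-th roots and letting $n \to \infty$ yields $\Lambda(g) \leq R(S)^j$, i.e.\ $\Lambda(g)^{1/j} \leq R(S)$.

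For the substantive lower bound I would argue by induction on $d$, following the Breuillard--Gelander strategy from \cite{breuillard-gelander}, which is also the source of the $k_0 \leq d^2$ bound. After rescaling we may assume $R(S) = 1$ and seek $j \leq k_0$ and $g \in S^j$ with $\Lambda(g) \geq c(d)^j$. For each matrix record the singular values $\sigma_1 \geq \cdots \geq \sigma_d$ and the successive ratios $\sigma_k/\sigma_{k+1}$. The key dichotomy is indexed by $k \in \{1,\ldots,d-1\}$: either some short product exhibits a large gap at some index $k$ (with the corresponding product of top singular values macroscopically large), or no such product exists. In the first case, if $j \leq k_0$ and $g \in S^j$ satisfy $\sigma_k(g)/\sigma_{k+1}(g) \geq C$ for a large threshold $C = C(d)$ with $\sigma_1(g) \cdots \sigma_k(g)$ of order $1$, then $g$ acts \emph{proximally} on $\Lambda^k \C^d$, since the top two singular values of $\Lambda^k g$ differ by the factor $\sigma_k(g)/\sigma_{k+1}(g)$. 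Classical arguments (Goldsheid--Margulis, Tits) then force the modulus of the dominant eigenvalue of $\Lambda^k g$ to be comparable to $\sigma_1(g) \cdots \sigma_k(g)$, producing a product of $k$ eigenvalues of $g$ whose modulus is bounded below by a constant depending only on $d$, and hence $\Lambda(g)^{1/j} \geq c(d) > 0$.

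In the complementary case, no short product in any exterior power $\Lambda^k \C^d$ is both proximal and of large norm. The plan is then to show that the top-$k$ singular directions of all $g$ in a controlled range of powers $S^j$ must cluster onto a common $k$-plane, which is thereby invariant under the entire semigroup $\langle S \rangle$; passing to a graded quotient of dimension $d' < d$, and noting that eigenvalues on the quotient are eigenvalues of the original matrices, the inductive hypothesis produces the required short product with large spectral radius on the lower-dimensional piece. The main obstacle is precisely this reduction step: converting the quantitative absence of singular-value gaps across all short products into an honest invariant subspace, while keeping tight control on the word length $j$ produced by the induction. The $d^2$ bound emerges because at each of at most $d$ inductive reductions one can afford to multiply the length $j$ by at most $d$ when lifting a short spectral-radius witness from the quotient back to the ambient space.
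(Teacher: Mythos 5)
First, a point of comparison: the paper does not prove this proposition at all. It is imported as a black box from Bochi \cite[Thm B.]{bochi}, with the $k_0\leq d^2$ refinement credited to \cite[Cor. 4.6]{breuillard-gelander}; the remark following Proposition \ref{compa-compa} makes this explicit. So your sketch has to be judged on its own. Your upper bound is correct and is the standard Gelfand/submultiplicativity argument. The lower bound, however, has two genuine gaps, one of which is a step that would simply fail as stated.

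The failing step is the "first case" of your dichotomy. You claim that if $g\in S^j$ has $\sigma_k(g)/\sigma_{k+1}(g)\geq C(d)$ and $\sigma_1(g)\cdots\sigma_k(g)$ of order $1$, then $\wedge^k g$ is proximal and Goldsheid--Margulis/Tits forces its top eigenvalue to be comparable to $\sigma_1(g)\cdots\sigma_k(g)$. A singular-value gap alone never controls eigenvalues of the same matrix: the nilpotent matrix $g=\left(\begin{smallmatrix}0&1\\0&0\end{smallmatrix}\right)$ has $\sigma_1=1$, an infinite gap, and $\Lambda(g)=0$. The classical proximality lemma needs, besides the gap $\sigma_{k+1}/\sigma_k\leq\epsilon^2$, a quantitative alignment condition: the attracting $k$-plane (top left-singular directions) must stay at distance $\geq r\gg\epsilon$ from the repelling hyperplane in $\wedge^k\C^d$ determined by the right-singular directions. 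Producing that alignment is exactly the substance of Bochi's and of the Breuillard--Gelander proof: when the alignment fails one must multiply $g$ on the left or right by a bounded number of further elements of $S$ (this is where the bound on $k_0$ and the value of $c(d)$ actually come from), and showing that such correcting elements exist uses the normalization $R(S)=1$ for the whole set, not just for the single word $g$. Without this ingredient your first case proves nothing.

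The second gap you name yourself: in the complementary case you need to convert the absence of gaps across all short products into an honest $\langle S\rangle$-invariant subspace and then induct, "keeping tight control on the word length". Quantitative almost-invariance does not upgrade to exact invariance by itself; Bochi's proof handles this regime by a compactness/limiting argument (which is why his constant is ineffective and his $k_0$ exponential in $d$), while \cite[Cor. 4.6]{breuillard-gelander} avoids it by a different, more geometric route. As it stands your proposal is an accurate description of the known strategy's skeleton, but both of the places where the actual work happens are either missing or replaced by an assertion that is false for a single matrix, so it does not constitute a proof.
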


It is also known \cite{bochi} that 
$$R(S) = \inf_{\|\cdot\|} \|S\|$$
where the infimum is taken over the operator norm associated to an arbitrary choice of (not necessarily hermitian) norm on the real vector space $\C^d$. Since by John's ellipsoid theorem every two operator norms are equivalent up to a factor $\sqrt{2d}$, we have that 
$$R(S) \leq \inf_{x \in \GL_d(\C)} \|xSx^{-1}\|_2 \leq \sqrt{2d} \cdot R(S).$$  Combining this with $(\ref{def-ell2})$ and $(\ref{def-ell-infty})$ we obtain in particular the following inequalities between the joint minimal displacement, the joint asymptotic displacement and the joint spectral radius of a finite subset $S$ of $\SL_d(\C)$

\begin{equation}\label{comp1}\log R(S) \leq L^{P_d}(S) \leq \sqrt{d} \log (\sqrt{2d} R(S)) \end{equation}
and, replacing $S$ by $S^n$ and passing to the limit:
\begin{equation}\label{comp2}\log R(S) \leq \ell^{P_d}(S) \leq \sqrt{d} \log (R(S)) \end{equation}

\end{remark}

\bigskip

We now turn to the  proofs of the statements above.

\begin{proof}[Proof of Proposition \ref{ell-sub}]  That $(2)$ implies $(1)$ is clear by  Lemma \ref{disp}. For the implication  $(1)$ implies $(2)$, we first reduce to the case when $X=P_d$ is the symmetric space of Example \ref{ex-sym}.  As described in this example, $X$ embeds as totally geodesic subspace of $P_d$ for some $d$ and $Isom(X)$ embeds in $Isom(P_d)$. Since $P_d$ is CAT($0$) and $X$ is a closed convex subset, the nearest point projection $\pi_X$  from $P_d$ to $X$ is  well-defined and it is a distance non-increasing map \cite[Prop II.2.4]{BH}. Consequently for $x \in P_d$ and $S \subset Isom(X)$ we have $L(S,\pi_X(x)) = L(S,x).$ This means that, when $S \subset Isom(X)$, all quantities $L(S)$, $\ell(S)$, $\lambda_k(S)$ coincide when defined in $X$ or in $P_d$. So now we assume as we may that $X=P_d$.

Recall Burnside's theorem (as in \cite[1.2]{bass} for example) : if $G \leq GL_d(\C)$ is an irreducible subgroup, then there are finitely many $g_1,...,g_{d^2} \in G$ and $t_1,...,t_{d^2} \in M_d(\C)$ such that $x=\sum \tr(g_ix)t_i$ for every $x \in M_d(\C)$. So if $g$ has eigenvalues of modulus $1$ for each $g \in G$, then $G$ is contained in a bounded part of $M_d(\C)$. Note that Burnside's theorem holds just as well if $G$ is a semi-group containing $1$ in place of a subgroup: indeed the linear span in $M_d(\C)$ of a semigroup in $\GL_d(\C)$ is invariant under the subgroup it generates so it must be all of $M_d(\C)$ provided the subgroup acts irreducibly.

We now show $(1)$ implies $(2)$ for $P_d$. For $g \in Isom(P_d)$ the condition $L(g)=0$ is equivalent to the requirement that all eigenvalues of $g$ have modulus $1$. Taking a composition series for $\langle S \rangle$, Burnside's theorem implies that $\langle S \rangle$ is conjugate to a bloc upper-triangular subgroup of $\GL_d(\C)$, and bounded (i.e. relatively compact) in each bloc. Conjugating by an appropriate diagonal element, we can conjugate $S$ into any neighborhood of the (compact) bloc diagonal part. This means that choosing $y \in P_d$ we can make  $L(S,y)$ arbitrarily small. Therefore $L(S)=0$ as desired.
\end{proof}

\begin{proof}[Proof of Theorem \ref{bochi-sym}] As in the proof of Proposition \ref{ell-sub} we may assume that $X$ is the symmetric space $P_d$ of Example \ref{ex-sym}. For $g \in \SL_d(\C)$ with eigenvalues $a_1,...,a_d$ ordered in such a way that $|a_1| \geq \ldots \geq |a_d|$, we set 
$$j(g)=(\log |a_1| , \ldots , \log |a_d| )$$
and we observe that
\begin{equation}\label{length-def}\ell(g)= \sqrt{(\log |a_1|)^2 + \ldots + (\log |a_d|)^2} = \|j(g)\|_2.\end{equation}
If $g=k_1ak_2$, with $k_1,k_2 \in K=\SU_d(\C)$ and $a$ diagonal, then we set $\kappa(g)=j(a)$ and note that $\|g\|=\|\kappa(g)\|_2$.

We will use Bochi's inequality in various irreducible representations of $\SL_d(\C)$. in the spirit of Kostant's paper \cite{kostant-asens}.

Recall that irreducible linear representations of $\SL_d(\C)$ are parametrized by a highest weight $\bn:=(n_1,\ldots,n_d)$, where the $n_i$'s are integers satisfying $n_1\geq \ldots \geq n_d \geq 0$. We denote the associated representation by $(\pi_{\bn}, V_{\bn})$. We may find a hermitian scalar product and an orthonormal basis of the representation space $V_{\n}$ of $\pi_{\bn}$ in which $\pi_{\bn}(K)$ is unitary and $\pi_{\bn}(a)$ diagonal for every diagonal $a \in \SL_d(\C)$. Then $\pi_{\bn}(a)$  has maximal eigenvalue equal to its operator norm and equal to $$\langle \bn , j(a) \rangle := n_1 \log |a_1| + \ldots + n_d \log |a_d|.$$ Furthermore:
\begin{eqnarray}
\log \|\pi_{\bn}(g)\| = \langle \bn , \kappa(g) \rangle, \\
\log \Lambda(\pi_{\bn}(g)) = \langle \bn , j(g) \rangle.
\end{eqnarray}

Fix $\epsilon>0$. Note that there are finitely many integer vectors $\bn_1,\ldots,\bn_m$ in the quadrant $\mathcal{Q}=\{ x \in \R^d ; x_1\geq\ldots\geq x_d \geq 0\}$ such that for all $x \in \mathcal{Q}$ we have:
\begin{equation}\label{approx}\sup_{1\leq i\leq m} \langle \frac{\bn_i}{\|\bn_i\|} , x \rangle \geq \|x\|(1-\epsilon).\end{equation}
Indeed just pick rational points forming an $\epsilon$-net near the unit sphere in $\mathcal{Q}$. 

Let $\ell^{\bn}(S)$ be the asymptotic joint displacement of $\pi_{\bn}(S)$ with respect to uniform norm on $V_{\bn}$. Namely
$$\ell^{\bn}(S) := \lim_k \frac{1}{k} \max_{g \in S^k} \log \|\pi_\bn (g)\||$$
and similarly
$$\lambda_k^{\bn}(S) := \max_{j \leq k} \frac{1}{j}  \max_{g \in S^k} \log \Lambda(\pi_\bn (g)).$$

Note that 

\begin{equation}\label{pas}\lambda_k^{\bn}(S)   \leq \lambda_k(S) \|\bn\|\end{equation}

because $$\log \Lambda (\pi_\bn(g)) = \langle \bn , j(g) \rangle \leq \|j(g)\|_2  \|\bn\| =\ell(g) \|\bn\|.$$

Now using $(\ref{approx})$ we may write for each $k\geq 1$
\begin{eqnarray}
\ell(S) \leq  \frac{1}{k} \max_{g \in S^k} \|\kappa(g)\|  \leq \frac{1}{k} \frac{1}{(1-\epsilon)} \max_{g \in S^k} \sup_{1\leq i \leq m}  \langle \kappa(g), \frac{\bn_i}{\|\bn_i\|} \rangle  \\ \leq \frac{1}{(1-\epsilon)}  \sup_{1\leq i \leq m}   \frac{1}{\|\bn_i\|}  \frac{1}{k} \max_{g \in S^k} \log \| \pi_{\bn_i}(g)\|.
\end{eqnarray}

Passing to the limit as $k$ tends to infinity we get:

\begin{equation}\label{las}
\ell(S) \leq \frac{1}{(1-\epsilon)}  \sup_{1\leq i \leq m}   \frac{\ell^{\bn_i}(S)}{\|\bn_i\|}. 
\end{equation}

Then the Bochi inequality for the joint spectral radius (Proposition \ref{bochi-original}) implies that for each $\bn$ there is an integer $k_\bn$ and a positive constant $C_\bn$ such that for all $S$,
$$ \ell^{\bn}(S) \leq  \lambda_{k_\bn}^{\bn}(S) + C_\bn.$$

Setting $K=\max\{k_{\bn_i} , i=1,\ldots,m\}$ and $C=\max\{ C_{\bn_i} / \|n_i\| ; i = 1,\ldots, m\}$ we get from $(\ref{las})$ and $(\ref{pas})$

$$(1-\epsilon) \ell(S) \leq \sup_{1\leq i \leq m}   \frac{\lambda_{K}^{\bn_i}(S)} {\|\bn_i\|}   + C \leq \lambda_K(S) + C.$$
Finally we may choose the smallest integer $n$ such that $C/n < \epsilon$ and, changing $S$ into $S^n$, from Claim 7 equation $(\ref{claim7})$ in the proof of Lemma \ref{gen-ineq} we obtain:
$$(1-\epsilon) \ell(S) \leq  \lambda_{Kn}(S) + \epsilon.$$

\end{proof}



\begin{proof}[Proof of Proposition \ref{comp-comp}] This is just the combination of $(\ref{comp1})$ and $(\ref{comp2})$ given that, as argued in the proof of Proposition \ref{ell-sub} $L(S)$ and $\ell(S)$ defined in $X$ coincide with their counterpart in $P_d$.
\end{proof}

\begin{proof}[Proof of Proposition \ref{compa-compa}] This follows from Proposition \ref{bochi-original} and $(\ref{comp2})$ after we note that $\log \Lambda(g) \leq \ell(g)$ for $g \in \SL(\C)$. The bound $k_0 \leq d^2$  follows from the different proof of Bochi's inequality given in \cite[Cor. 4.6]{breuillard-gelander}.
\end{proof}

We end this section by recording two consequences of the above analysis (compare with Corollary \ref{growth.S.cat0}).

\begin{corollary}[Escaping elliptic elements in symmetric spaces]\label{cor-sym} Let $X$ be a symmetric space of non-compact type and $\epsilon>0$, then there is $N(\dim X,\epsilon)>0$ such that for every finite subset $S \subset Isom(X)$ with $1 \in S$  one of the following holds:
\begin{enumerate}
\item $L(S) < \epsilon,$
\item there is $g \in S^{N}$ such that $L(g) \geq L(S)$.
\end{enumerate}
\end{corollary}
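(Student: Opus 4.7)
The plan is to follow the same template as the proof of Corollary \ref{growth.S.cat0}, replacing the hyperbolic Bochi inequality of Theorem \ref{bochi-hyp} by its symmetric-space counterpart, namely Proposition \ref{compa-compa}, and combining it with the CAT(0) diffusive lower bound of Lemma \ref{cat0-growth}. Since $X$ is a symmetric space of non-compact type, it is in particular a CAT(0) space, and by Example \ref{ex-sym} it embeds as a convex totally geodesic subspace of $P_d = \SL_d(\C)/\SU_d(\C)$ for some integer $d$ depending only on $\dim X$. As observed in the proof of Proposition \ref{ell-sub}, the quantities $L(S)$, $\ell(S)$ and $\lambda_k(S)$ are insensitive to this embedding, so we may freely apply Proposition \ref{compa-compa} and Lemma \ref{cat0-growth}.

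Suppose we are not in case $(1)$, so $L(S) \geq \epsilon$. For any integer $n \geq 1$, Lemma \ref{cat0-growth} yields
\[
L(S^n) \geq \frac{\sqrt{n}}{2} L(S),
\]
while Proposition \ref{compa-compa} applied to the finite set $S^n$ gives
\[
\lambda_{k_0}(S^n) \geq \frac{1}{\sqrt{d}} L(S^n) - C
\]
with $k_0 \leq d^2$ and $C = C(d) > 0$. Combining the two displayed inequalities,
\[
\lambda_{k_0}(S^n) \geq \frac{\sqrt{n}}{2\sqrt{d}} L(S) - C.
\]
Using the lower bound $L(S) \geq \epsilon$, we may choose $n = n(\dim X, \epsilon)$ large enough that $\frac{\sqrt{n}}{2\sqrt{d}} \geq 1 + C/\epsilon$, which forces $\lambda_{k_0}(S^n) \geq L(S)$.

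By definition of $\lambda_{k_0}$, this means there exist an integer $j \leq k_0$ and an element $g \in (S^n)^j = S^{nj}$ such that $\ell(g)/j \geq L(S)$, and in particular $\ell(g) \geq L(S)$. Since $X$ is CAT(0), Proposition \ref{prop.monod} gives $L(g) = \ell(g)$, so $L(g) \geq L(S)$. Taking $N := n\,k_0$ (which depends only on $\dim X$ and $\epsilon$) shows $g \in S^N$, as desired.

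The only real work is the bookkeeping of constants: tracking that $d$, $k_0$, and $C$ depend only on $\dim X$ (via the Karpelevich–Mostow embedding recalled in Example \ref{ex-sym}) and that the threshold $n$ depends only on $\dim X$ and $\epsilon$. Everything else is an immediate combination of results already proved in the paper, and there is no genuine analytic obstacle beyond what has already been overcome in establishing Lemma \ref{cat0-growth} and Proposition \ref{compa-compa}.
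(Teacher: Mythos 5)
Your proof is correct and is essentially the paper's own argument: combine the CAT(0) diffusive bound of Lemma \ref{cat0-growth} with the Bochi-type inequality of Proposition \ref{compa-compa} applied to $S^n$, then choose $n$ in terms of $d$, $C$ and $\epsilon$. If anything, your bookkeeping is more careful than the paper's two-line proof (the explicit factor $k_0$ in $N$, using $1\in S$ to nest powers), and the appeal to Proposition \ref{prop.monod} is harmless but unnecessary since $\ell(g)\leq L(g)$ holds in any metric space by Lemma \ref{gen-ineq}.
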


\begin{proof} Since $X$ is $CAT(0)$,  Theorem \ref{disp} shows that $L(S^n) \geq \frac{\sqrt{n}}{2} L(S)$ for every integer $n$. In particular by Proposition  \ref{compa-compa}, if $L(S) \geq \epsilon$ and $\sqrt{N} \gg 2\sqrt{d}(C/\epsilon+1)$, then there is $g \in S^N$ such that $L(g) \geq L(S)$ as desired.
\end{proof}

Recall that for every symmetric space $X$, there exists a positive constant $\epsilon>0$, the Margulis constant of $X$, such that if $S$ is a finite set of isometries of $X$ generating a discrete subgroup of $Isom(X)$ and if $L(S)<\epsilon$, then the subgroup generated by $S$ is virtually nilpotent (see \cite{raghunathan}, \cite{burago-zahlgaller}). Hence we also get:

\begin{corollary}\label{cor-margulis} Let $X$ be a symmetric space of non-compact type, then there is $N>0$ such that for every finite subset $S \subset Isom(X)$ containing $1$ and generating a discrete subgroup, which is not virtually nilpotent, there is $g \in S^{N}$ such that $L(g) \geq L(S)$.
\end{corollary}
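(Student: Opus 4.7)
The plan is to combine Corollary \ref{cor-sym} with the Margulis lemma recalled just before the statement. First I would fix $X$ a symmetric space of non-compact type and let $\epsilon = \epsilon(X) > 0$ denote its Margulis constant, so that any finite set $S \subset Isom(X)$ generating a discrete subgroup with $L(S) < \epsilon$ must generate a virtually nilpotent subgroup.

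Next I would apply Corollary \ref{cor-sym} with this particular value of $\epsilon$ to produce an integer $N = N(\dim X, \epsilon)$, which depends only on $X$, such that for every finite $S \subset Isom(X)$ containing $1$, either $L(S) < \epsilon$ or there exists $g \in S^N$ with $L(g) \geq L(S)$.

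Now suppose $S$ satisfies the hypotheses of the corollary: it contains $1$, generates a discrete subgroup of $Isom(X)$, and this subgroup is not virtually nilpotent. By the Margulis lemma the alternative $L(S) < \epsilon$ is impossible, so the second alternative of Corollary \ref{cor-sym} must hold, giving an element $g \in S^N$ with $L(g) \geq L(S)$. This $N$ depends only on $X$, completing the proof.

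The statement is essentially a packaging of Corollary \ref{cor-sym} together with the standard Margulis lemma, so I do not expect any genuine obstacle; the only subtlety is making sure that the value of $\epsilon$ used to invoke Corollary \ref{cor-sym} is exactly the Margulis constant of $X$, so that the discreteness and non-virtual-nilpotence hypothesis rules out the first alternative. Since both $\epsilon$ and $\dim X$ depend only on $X$, the resulting $N$ does too, as required.
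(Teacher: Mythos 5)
Your proposal is correct and coincides with the paper's proof, which simply applies Corollary \ref{cor-sym} with $\epsilon$ taken to be the Margulis constant of $X$, so that the discreteness and non-virtual-nilpotence hypotheses exclude the alternative $L(S)<\epsilon$. Nothing further is needed.
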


\begin{proof} Apply Corollary \ref{cor-sym} with $\epsilon$ being the Margulis constant of $X$.
\end{proof}





\begin{remark} The best $N=N(\epsilon,d)$ for which Corollary \ref{cor-sym} holds tends to infinity as $\epsilon$ goes to $0$. For the same reason the constant $C$ in Proposition \ref{compa-compa} cannot be taken to be $0$ even at the cost of decreasing the multiplicative constant in front of $L(S)$. An example showing this in $SL(2,\R)$ is given below in Section \ref{SL2}.
\end{remark}


We can now give the 

\begin{proof}[Proof of Proposition \ref{ell-L-sym}] This follows from the combination of Proposition \ref{comp-comp} and Lemma \ref{cat0-growth}. We embed $X$ as a closed convex subspace of the symmetric space $P_d$ from Example \ref{ex-sym} for some $d$. Let $n$ be the smallest positive integer such that $L(S) \sqrt{n}/2 \geq 2 \sqrt{d}\log d$. If $n=1$, then $L(S)  \geq 4 \sqrt{d} \log \sqrt{d}$ and thus the inequality in Proposition \ref{comp-comp} yields $\ell(S) \geq \frac{3}{4} L(S)/\sqrt{d}$.  When $n \geq 2$,  Lemma \ref{cat0-growth} and Proposition \ref{comp-comp} imply
$$n \ell(S) \geq \frac{1}{\sqrt{d}} L(S^n)  - \log \sqrt{d} \geq \frac{\sqrt{n}}{2\sqrt{d}} L(S)  - \log \sqrt{d} \geq \frac{\sqrt{n}}{4\sqrt{d}} L(S)$$
On the other hand if $n \geq 2$, then the minimality of $n$ implies that $L(S)\sqrt{n}\leq 16 \sqrt{d} \log \sqrt{d}$ and the result follows.
\end{proof}

As shown in \cite[Corollary 4.6]{breuillard-gelander}  Proposition \ref{bochi-original} also holds for Bruhat-Tits buildings. In fact it was observed and shown later in \cite[Lemma 2.1]{breuillard-heightgap} that in this case the constant $c$ can be taken to be $1$, that is the joint spectral radius is equal to the renormalized maximal eigenvalue,  namely:
$$ R(S)=   \max_{j \leq k_0} \max_{g \in S^j} \Lambda(g)^{1/j},$$
if  $S \subset M_d(k)$ is a finite set of matrices and $k$ a non-archimedean local field.  The same argument as in the proof of Proposition \ref{compa-compa} readily implies:

\begin{lemma}[Bochi inequality for Bruhat-Tits buildings]\label{escape-BT} Let $X$ be a Bruhat-Tits building associated to a non-archimedean local field $k$ and a semisimple algebraic group $\G$ of dimension $d$. Then for every finite subset $S \subset \G(k)$ containing $1$ there is $g \in S^{O(d^2)}$ such that
$$L(g) \geq L(S)/\sqrt{d}.$$
\end{lemma}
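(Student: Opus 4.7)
The plan is to transcribe the proof of Proposition \ref{compa-compa} to the non-archimedean setting, replacing Proposition \ref{bochi-original} by its non-archimedean counterpart from \cite[Lemma 2.1]{breuillard-heightgap}, which is sharper in that its multiplicative constant is $c=1$.

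First, I would reduce to the case $\G = \SL_d$ acting on its own Bruhat--Tits building $X_d$: fix a faithful linear representation of $\G$ of dimension $d = \dim \G$, realizing $\G(k) \hookrightarrow \SL_d(k)$. By functoriality of Bruhat--Tits buildings (the non-archimedean analog of the Karpelevich--Mostow theorem used in Example \ref{ex-sym}), $X$ embeds as a closed convex $\G(k)$-equivariant subspace of $X_d$. Composing with the distance non-increasing nearest-point projection $X_d \to X$, we see, exactly as at the start of the proof of Proposition \ref{ell-sub}, that $L(S)$, $\ell(S)$, and $L(g)$ for $S \subset \G(k)$ agree whether computed in $X$ or in $X_d$. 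So we may assume $X = X_d$ and $S \subset \SL_d(k)$.

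Second, I would transpose the chain $(\ref{comp1})$--$(\ref{comp2})$ to the non-archimedean setting. The CAT(0) metric on $X_d$ is the $\ell^2$-Cartan-projection metric $d(gx_0,x_0) = \sqrt{\sum(\log|a_i|)^2}$ as in Example \ref{ex-sym}, while the $\ell^\infty$-norm-like distance reads $d^\infty(gx_0, x_0) = \max_i \log|a_i| = \log\|g\|$. Crucially, in the non-archimedean setting the analog of John's theorem carries no multiplicative loss: $\inf_{g \in \GL_d(k)} \|gSg^{-1}\| = R(S)$ exactly. Combined with the standard $\ell^2$ versus $\ell^\infty$ comparison on the Cartan subspace $\R^d$, this yields
\[\log R(S) \leq L(S) \leq \sqrt{d}\,\log R(S).\]
Meanwhile, the non-archimedean Bochi inequality of \cite[Lemma 2.1]{breuillard-heightgap}, coupled with the bound $k_0 \leq O(d^2)$ from \cite[Cor. 4.6]{breuillard-gelander}, reads
\[R(S) = \max_{1 \leq j \leq k_0} \max_{g \in S^j} \Lambda(g)^{1/j}.\]
Finally, since $X_d$ is CAT(0), Proposition \ref{prop.monod} gives $L(g) = \ell(g)$ for every $g \in \SL_d(k)$, and, exactly as at the end of the proof of Proposition \ref{compa-compa}, $\log \Lambda(g) \leq \ell(g) = L(g)$ (the $\ell^\infty$-norm of the log-eigenvalue-modulus vector is at most its $\ell^2$-norm, which is $\ell(g)$).

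Combining these three facts, there exist $j_0 \leq k_0$ and $g_0 \in S^{j_0}$ with
\[\frac{L(S)}{\sqrt{d}} \leq \log R(S) = \frac{\log \Lambda(g_0)}{j_0} \leq \frac{L(g_0)}{j_0} \leq L(g_0);\]
since $1 \in S$, padding gives $g := g_0 \cdot 1^{k_0 - j_0} \in S^{k_0}$ with $L(g) \geq L(S)/\sqrt{d}$ and $k_0 = O(d^2)$, as required. The only non-routine ingredient is the sharpness ($c=1$) of the non-archimedean Bochi inequality, which itself rests on the non-archimedean analog of John's theorem (absence of a multiplicative loss when realizing an arbitrary ultrametric norm on $k^d$ as a sup-norm in a suitable basis). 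Everything else is a direct transcription of the symmetric-space argument, so I do not anticipate a serious obstacle beyond careful bookkeeping.
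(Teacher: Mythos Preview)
Your proposal is correct and follows essentially the same route the paper sketches. The paper's own proof is the single sentence ``The same argument as in the proof of Proposition \ref{compa-compa} readily implies'' the lemma, together with the observation (recorded just before the statement) that in the non-archimedean case the Bochi constant can be taken to be $c=1$ by \cite[Lemma 2.1]{breuillard-heightgap}; you have spelled out exactly what that argument is, including the reduction to $\SL_d$ via the building embedding, the $\ell^2/\ell^\infty$ comparison on the Cartan subspace, the disappearance of the John-ellipsoid loss in the ultrametric setting, and the final chain $\frac{L(S)}{\sqrt d}\le \log R(S)=\frac{\log\Lambda(g_0)}{j_0}\le L(g_0)$.
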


Similarl the argument in the proof of Theorem \ref{bochi-sym} shows that $\ell(S)=\lambda_\infty(S)$ if $S \subset \G(k)$. An immediate consequence is also the following: 

\begin{theorem}[Escaping elliptic elements in Bruhat-Tits buildings]\label{cor-BT} Let $X$ as in Theorem \ref{escape-BT}. There is $N=N(d)\in \N$, such that for every finite subset $S \subset \G(k)$ containing $1$ we have:
\begin{enumerate}
\item either $L(S)=0$ and $S$ fixes a point in $X$,
\item or $L(S) >0 $ and there is $g \in S^{N}$ such that $L(g) \geq L(S)$.
\end{enumerate}
\end{theorem}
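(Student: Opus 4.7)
The plan is to follow the same template as the proof of Corollary \ref{cor-sym} for symmetric spaces, substituting Lemma \ref{escape-BT} for Proposition \ref{compa-compa}. The key feature is that in the non-archimedean setting the Bochi-type inequality has \emph{no} additive constant, so after compensating for the factor $1/\sqrt{d}$ by taking an appropriate power of $S$, the sharp conclusion $L(g) \geq L(S)$ (rather than $L(g) \geq L(S)/\sqrt{d}$) will follow.

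I will handle alternative (2) first. Assuming $L(S) > 0$, since Bruhat--Tits buildings are complete CAT(0) spaces, Lemma \ref{cat0-growth} gives $L(S^n) \geq (\sqrt{n}/2) L(S)$ for every $n \geq 1$. Choosing $n = \lceil 4d \rceil$ forces $L(S^n) \geq \sqrt{d}\, L(S)$. Since $1 \in S \subset S^n$, applying Lemma \ref{escape-BT} to $S^n$ will produce an element $g \in (S^n)^{k_0} = S^{n k_0}$ with $L(g) \geq L(S^n)/\sqrt{d} \geq L(S)$, where $k_0 = O(d^2)$ is the constant from that lemma. Setting $N := n k_0$, which depends only on $d$, completes this case.

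Next I will address alternative (1). Assuming $L(S) = 0$, the paper records just before Lemma \ref{escape-BT} that the Berger--Wang identity $\ell(S) = \lambda_\infty(S)$ carries over to $S \subset \G(k)$. Combined with $\ell(S) \leq L(S) = 0$, this forces $\ell(g) = 0$, equivalently $L(g) = 0$ by Proposition \ref{prop.monod}, for every $g$ in the semigroup generated by $S$. In the non-archimedean setting, $L(g) = 0$ is equivalent to all eigenvalues of $g$ (under an embedding $\G(k) \hookrightarrow \GL_d(k)$) having absolute value one, since the translation length of $g$ on the building equals the norm of its Jordan projection. The Burnside-based argument from the proof of Proposition \ref{ell-sub} then transfers verbatim -- Burnside's theorem holds over any field, and the step conjugating into a small neighbourhood of the bloc-diagonal part works equally well $p$-adically -- yielding that $\langle S \rangle$ is relatively compact in $\G(k)$. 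The Bruhat--Tits fixed point theorem then produces a point of $X$ fixed by $S$, completing case (1).

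The main obstacle will be the non-archimedean version of the Burnside argument in case (1); once this is granted, the CAT(0)/Bochi combination in case (2) is a direct calculation using Lemma \ref{escape-BT}.
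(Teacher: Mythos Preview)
Your proposal is correct, but both cases take a longer route than the paper has in mind.

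For case~(2), the paper's ``immediate consequence'' is really a one-step argument: Lemma~\ref{escape-BT} already hands you an element $g_1 \in S^{k_0}$ with $L(g_1) \geq L(S)/\sqrt{d}$, and since the building is CAT(0), Proposition~\ref{prop.monod} gives $L(g_1^m) = m\,L(g_1)$ for every $m$. Taking $m = \lceil \sqrt{d}\rceil$ yields $g := g_1^m \in S^{m k_0}$ with $L(g) \geq L(S)$, and $N := \lceil \sqrt{d}\rceil k_0$ depends only on $d$. Your detour through the $\sqrt{n}$-growth of Lemma~\ref{cat0-growth} and a second application of Lemma~\ref{escape-BT} to $S^n$ is valid, but it replaces a power of a single element by a global displacement estimate, which is unnecessary here.

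For case~(1), your invocation of the Berger--Wang identity is superfluous: $L(S)=0$ gives $L(S^n)\le nL(S)=0$ for every $n$, so directly $L(g)=0$ for every $g$ in the semigroup generated by $S$, hence (since $L(S)=L(S\cup S^{-1})$) for every $g \in \langle S\rangle$. The paper then simply quotes Parreau~\cite{parreau} for the conclusion that a subgroup made of elliptic elements fixes a point in $X$. Your reconstruction via Burnside is essentially Parreau's argument, but note that it does \emph{not} transfer verbatim from Proposition~\ref{ell-sub}: over $\R$ that argument only shows one can conjugate $S$ arbitrarily close to the compact bloc-diagonal part, yielding $L(S)=0$ but not relative compactness. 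Over a non-archimedean local field the ultrametric lets you do better---conjugating by a suitable diagonal element puts $S$ \emph{inside} a compact open subgroup, whence $\langle S\rangle$ is bounded and the Bruhat--Tits fixed point theorem applies. This extra observation is what makes your case~(1) go through; it should be stated rather than absorbed into ``verbatim''.
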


Compare with Corollary \ref{cor-sym} and note the absence of $\epsilon$. This result can be seen as a quantitative version of the fact, proved in \cite{parreau}, that if a subgroup of $G(k)$ is made entirely of elliptic elements, then it must fix a point in $X$. In particular we can always escape from elliptic elements in bounded time. By contrast, we will show in Section \ref{escape} that this property fails in certain symmetric spaces of non-compact type.

Even though results such as Theorems \ref{cor-sym} and \ref{cor-BT} fail for general $CAT(0)$ spaces (as they fail already for Euclidean spaces $X=\R^d$) it is worth investigating for which classes of $CAT(0)$ spaces they hold. For example:

\bigskip

\noindent  {\bf Question:} Does  Theorem  \ref{cor-BT} hold for the isometry group of an arbitrary affine building ? does it hold for isometries of a finite dimensional $CAT(0)$ cube complex ?

We note that \cite{kar-sageev} answers this question positively for $CAT(0)$ square complexes.

\bigskip

\bigskip





\section{Almost elliptics in $PSL_2(\R)$ with large displacement}\label{SL2}

The purpose of this section is to prove Lemma \ref{so41} from the introduction, which gives a simple example showing that Theorems \ref{bochi-hyp} is best possible. More generally we will show:

\begin{proposition}\label{almostell} Let $X$ be a symmetric space of non-compact type. Then there is no $N=N(X) >0$ such that for every finite set $S \subset Isom(X)$ with $1 \in S=S^{-1}$, there is $g \in S^N$ such that $L(g) \geq L(S)$. 
\end{proposition}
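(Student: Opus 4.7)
The strategy is to produce, for each given $N$, a finite symmetric subset $S$ of $Isom(X)$ containing $1$ such that $L(S)>0$ while every $g\in S^N$ is elliptic (and hence has $L(g)=0$), directly violating the claimed $N$. I will first carry out the construction inside $\SL_2(\R)$ acting on $\HH^2$, then transfer it to an arbitrary symmetric space $X$ of non-compact type via an embedding $\iota:\SL_2(\R)\to G:=Isom(X)^0$ obtained from Jacobson--Morozov.

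For the $\SL_2(\R)$ construction, fix an integer $n>2N$, let $a\in\SO(2)$ be the rotation by angle $2\pi/n$ about a basepoint $o\in\HH^2$, and set $b=gag^{-1}$ with $g\in\SL_2(\R)$ close to $I$ but not in $\SO(2)$. View any word $w$ in $\{1,a^{\pm1},b^{\pm1}\}$ of length at most $N$ as a polynomial function of $g$; at $g=I$ it equals $a^{m(w)}$ with $|m(w)|\le N<n$, so $\tr(w)$ is close to $2\cos(2\pi m(w)/n)\in(-2,2)$. Since there are only finitely many such words and $\{|\tr|<2\}$ is open in $\SL_2(\R)$, one choice of $g$ sufficiently close to $I$ makes every such $w$ elliptic. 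On the other hand $a$ fixes only $o$ in $\overline{\HH^2}$ and $b$ fixes only $g\cdot o\neq o$, so $\{a,b\}$ has no common fixed point in $\HH^2\cup\partial\HH^2$; Corollary \ref{bdy-fixed} then gives $L(\{a,b\})>0$.

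To transfer to a general symmetric space $X$ of non-compact type, use that $\mathfrak{g}=\Lie(G)$ is semisimple with a non-compact factor, so it contains a nilpotent element and hence (by Jacobson--Morozov) an $\mathfrak{sl}_2$-triple; this integrates to a non-trivial homomorphism $\iota:\SL_2(\R)\to G$. Put $S=\{1,\iota(a)^{\pm1},\iota(b)^{\pm1}\}$. For each $w\in S^N$, writing $w=\iota(w')$ with $w'\in\SL_2(\R)$ elliptic, the cyclic group $\langle w'\rangle$ lies in a compact subgroup of $\SL_2(\R)$, so its continuous image $\langle w\rangle$ lies in a compact subgroup of $G$; hence $w$ fixes a point of $X$ and $L(w)=0$. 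It remains to check $L(S)>0$ in $X$. Applying Proposition \ref{ell-sub} to $\{a,b\}$ acting on $\HH^2$, from $L(\{a,b\})>0$ we obtain some $h\in\langle a,b\rangle$ which is hyperbolic in $\SL_2(\R)$, i.e.\ semisimple with real eigenvalues off the unit circle. Decomposing $\mathfrak{g}$ into isotypic components for $\iota(\SL_2(\R))$ (at least one non-trivial, since $\iota$ is non-trivial), we see that $\operatorname{Ad}_G(\iota(h))$ has an eigenvalue off the unit circle; since $\iota(h)$ is also semisimple in $G$, it is non-elliptic there and $L_X(\iota(h))>0$. Writing $h$ as an $\{a,b\}$-word of length $k$, this gives $\ell(S)\ge L_X(\iota(h))/k>0$, and hence $L(S)\ge\ell(S)>0$.

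The main technical obstacle is the transfer of the notions ``elliptic'' and ``non-elliptic'' across $\iota$: one direction uses that continuous images of bounded subgroups are bounded, and the other uses that Jordan components are preserved by Lie group homomorphisms so that a hyperbolic $h\in\SL_2(\R)$ necessarily gives a non-elliptic $\iota(h)\in G$ via the $\operatorname{Ad}$-representation argument above.
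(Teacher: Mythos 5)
Your construction in $\SL_2(\R)$ cannot work, and the precise point of failure is the words of zero exponent sum. Writing $b=gag^{-1}$ and specializing at $g=I$, a word $w$ of length at most $N$ becomes $a^{m(w)}$, and your continuity argument only gives ellipticity when $2\cos(2\pi m(w)/n)$ lies in the \emph{open} interval $(-2,2)$, i.e.\ when $m(w)\not\equiv 0 \pmod n$. For balanced words such as $ab^{-1}$ or the commutator $[a,b]$ one has $m(w)=0$, the limiting trace is $2$, and openness of $\{|\tr|<2\}$ tells you nothing: the perturbed word may perfectly well be hyperbolic. In fact it must be. The paper's own $\SL_2(\C)$ proposition in Section \ref{SL2} shows that if $a$, $b$, $ab$ all have eigenvalues of modulus $1$ and $[a,b]$ is not loxodromic, then $a,b$ have a common fixed point in $\HH^2\cup\partial\HH^2$ (the upper-triangular case is impossible here since over $\R$ it forces $a$ to be parabolic or trivial, not a nontrivial rotation). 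Since you arrange $a,b$ to have no common fixed point, $[a,b]\in S^4$ is necessarily loxodromic, so no choice of $g$ near $I$ makes all of $S^N$ elliptic once $N\ge 4$. This is consistent with Proposition \ref{propques}: escape from elliptics in bounded time \emph{does} hold in $\HH^2$ and $\HH^3$, so the stronger counterexample you aim for (all of $S^N$ elliptic, $L(S)>0$) simply does not exist in rank one over $\HH^2$, and hence cannot be pushed into a general $X$ via a Jacobson--Morozov copy of $\SL_2(\R)$ either. (Such ``large elliptic balls'' do exist, but only in symmetric spaces where elliptic elements have nonempty interior, e.g.\ $\SO(4,1)$ as in Lemma \ref{so41}; that is a different mechanism.)

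The statement you are proving is weaker than what you attempted, and the paper exploits exactly that slack: one does not need every $g\in S^N$ to be elliptic, only that no $g\in S^N$ satisfies $L(g)\ge L(S)$. The paper's proof works in $\HH^2$ (which embeds convexly in any $X$ of non-compact type), takes two rotations with displacement $\epsilon$ at a basepoint and centers at distance $\epsilon x_i^{-1}$, and rescales the metric by $1/\epsilon$ so that $(\HH^2,d/\epsilon)$ Gromov--Hausdorff converges to Euclidean $\R^2$. In the Euclidean limit (Example \ref{plane}) the balanced words become translations of displacement much smaller than $1$ while the unbalanced words stay rotations, and a compactness argument over the finitely many words of length $\le N$ shows that for small $\epsilon$ every $g\in S^N$ has $L(g)<L(S_\epsilon)=\epsilon$ -- some of these $g$ are hyperbolic, just with translation length $o(\epsilon)$ or $\ll\epsilon$. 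If you want to repair your write-up along your original lines, you would have to handle the zero-exponent-sum words quantitatively rather than qualitatively, which essentially forces you back to an argument of the paper's type.
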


This shows that the additive term $-K\delta$ is necessary (i.e. that we cannot take $K=0$) in Theorem  \ref{bochi-hyp}, and the $N(d,\epsilon)$ must tend to infinity as $\epsilon \to 0$ in Corollary \ref{cor-sym}. Note that this is in stark contrast to the non-archimedean case (see Theorem \ref{cor-BT}).

The proof is based on the fact that zooming in near a point in the hyperbolic plane  $X=\mathbb{H}^2$, the metric becomes almost Euclidean, while on the Euclidean plane we can explicitely construct a set $S$ with the desired properties. We begin by giving this construction:

\begin{example}\label{plane}
Let $\R^2$ be the Euclidean plane. Subgroups of $Isom(\R^2)$ which consist only of elliptic elements must have a global fixed point in $\R^2$. Indeed the commutator of two non trivial rotations with different fixed points is a non-trivial translation.

However the following is an example showing that we may have $L(S)=1$, while no translation with significant translation length can be found in $S^N$ for any fixed $N$.

Take small numbers $x_1,x_2>0$ and set $\theta_{i}\in (0,\pi)$ so that $2\sin(\theta_{i}/2)=x_i$. Assume that $\theta_1$ and $\theta_2$ are independent (i.e. that $1$, $\frac{\theta_1}{2\pi}$ and $\frac{\theta_2}{2\pi}$ are $\Q$-linearly independent). Let $S:=\{1,R_1^{\pm 1},R_2^{\pm 1}\}$, where $R_1$ is the rotation around the point $p_1:=(-x_1^{-1},0)$ and angle $\theta_{1}$, and $R_2$ the rotation around the point $p_2:=(x_2^{-1},0)$ and angle $\theta_2$.

Note that due to the independence assumption on $\theta_1$ and $\theta_2$ the linear parts of $R_1$ and $R_2$ generate a subgroup of $\SO(2,\R)$ which is free abelian of rank $2$. Consequently any word in $R_1$ and $R_2$ which gives a translation in $Isom(\R^2)$ must belong to the commutator subgroup of the free group.

Moreover $L(S)=1$ because $Fix_1(R_1)=\{p; |R_1p-p| \leq 1\}$ and $Fix_2(R_2)=\{p; |R_2p-p| \leq 1\}$ are the discs with radius $x_1^{-1}$ and $x_2^{-1}$ respectively centered around $p_1$ and $p_2$ respectively. They intersect at $(0,0)$.

However, any word $w=w(R_1,R_2)$ in $R_1$ and $R_2$ which is a translation in $Isom(\R^2)$ belongs to the commutator subgroup of the free group, and hence the powers of $R_1$ in this word sum up to $0$ and so do the powers of $R_2$. As a result, for every $\epsilon>0$ and every given $N \in \N$, one can choose small but positive $x_1$ and $x_2$ so that any such $w$ of length at most $N$ maps the origin $(0,0)$ at a distance at most $\epsilon$ from itself. Hence $L(w) \leq \epsilon$ while $L(S)=1$.
\end{example}

\begin{proof}[Proof of Proposition \ref{almostell}] It is enough to prove this in the case when $X=\mathbb{H}^2$ is the hyperbolic plane, because $\mathbb{H}^2$ always embeds as a closed convex subspace of $X$ stabilized by a copy of $PSL(2,\R)$ in $Isom(X)$.  

Suppose, by way of contradiction, that there is $N \in \N$ such that $max_{S^N} L(g) \geq L(S)$ for every finite subset $S \subset Isom(\mathbb{H}^2)$ containing $1$.

 Let $x_1,x_2$ be two small positive numbers to be determined later. On the hyperbolic plane $X=\mathbb{H}^2$ consider a base point $x_0$ and a geodesic through $x_0$. For every $\epsilon>0$ let $p_1(\epsilon)$ and $p_2(\epsilon)$ be points on this geodesic on opposite sides of the base point $x_0$ such that $d(x_0,p_i(\epsilon))=\epsilon x_i^{-1}$. Let $R_i(\epsilon)$ be the hyperbolic rotation fixing $p_i(\epsilon)$ and of angle $\theta_i(\epsilon) \in (0,\pi)$ defined in such a way that $d(R_i(\epsilon)x_0,x_0)=\epsilon$. 

Also note that the sets $\{x \in \mathbb{H}^2 ; d(R_i(\epsilon)x,x) \leq \epsilon \}$ are two hyperbolic disc centered at $p_i(\epsilon)$ that intersect only at the base point $x_0$.  It follows that 
\begin{equation}\label{Leps} L(S_\epsilon) = L(S_\epsilon,x_0) = \epsilon.\end{equation}

Consider the renormalized pointed metric space $(X_\epsilon,d_\epsilon,x_0):=(X,d/\epsilon,x_0)$. In the Gromov-Hausdorff topology for pointed metric spaces, this family of metric spaces converges, as $\epsilon$ tends to $0$, to the Euclidean plane $(\R^2,0)$ with its standard Euclidean metric. The points $p_1(\epsilon)$ and  $p_2(\epsilon)$ converge to two points $p_1$ and $p_2$, which, after choosing coordinates, can be assumed to be $(-x_1^{ -1}, 0)$ and $(x_2^{-1},0)$. Moreover the hyperbolic rotations $R_i(\epsilon)$ converge to their Euclidean counterpart $R_i$ based at $p_i$ with angle $\theta_i$ defined by $||R_i(0,0)||=1$, i.e. $x_i^{-1}=2\sin(\theta_i/2)$ as in Example \ref{plane}. Consequently any word of length at most $N$ in $S_\epsilon:=\{1,R_1(\epsilon)^{\pm 1}, R_2(\epsilon)^{\pm 1}\}$ converges to the isometry of $\R^2$ given by the same word with letters $R_1$ and $R_2$. In particular:
$$\lim_{\epsilon \to 0} w(R_1(\epsilon), R_2(\epsilon))x_0 = w(R_1(\epsilon), R_2(\epsilon)) (0,0),$$
and 
\begin{equation}\label{li}\lim_{\epsilon \to 0} \frac{1}{\epsilon} L(w(R_1(\epsilon), R_2(\epsilon)),x_0) =  L(w(R_1, R_2),(0,0)),\end{equation}
where $L(g,x)$ is the displacement of $g$ at $x$ in $\mathbb{H}^2.$

Now by our assumption for each $\epsilon$ there is a word $w=w_\epsilon$ of length at most $N$ such that 
\begin{equation}\label{globmin} L(w(R_1(\epsilon), R_2(\epsilon)) \geq L(S_\epsilon) = \epsilon.\end{equation}
There are boundedy many words of length at most $N$, so letting $\epsilon$ tend to $0$ along a certain sequence only we may assume that $w$ is independent of $\epsilon$. Then there are two options according as $w(R_1,R_2)$ is a translation or a rotation. Suppose first that $w$ is a translation. We have:
$$\frac{1}{\epsilon}L(w(R_1(\epsilon), R_2(\epsilon), x_0) \geq \frac{1}{\epsilon}L(w(R_1(\epsilon), R_2(\epsilon))) \geq 1,$$ while by $(\ref{li})$ the left handside converges (as $\epsilon$ goes to zero along the subsequence) to $L(w(R_1,R_2),(0,0))$. However, $w(R_1,R_2)$ being a translation means that the word $w$ belongs to the commutator of the free group and the powers of $R_1$  sum to $0$ as do the powers of $R_2$ (due to the independence of $\theta_1$ and $\theta_2$ as in Example \ref{plane}). This implies that $L(w(R_1,R_2),(0,0)) < 1/100$ provided $x_1$ and $x_2$ are chosen larger than some absolute constant. This is a contradiction.

We are left with the case when $w(R_1,R_2)$ is a rotation, with center say $c \in \R^2$. Since $(X,d/\epsilon,x_0)$ Gromov-Hausdorff converges to Euclidean $\R^2$, there must be some point $c(\epsilon) \in \mathbb{H}^2$ which converges to $c$. This also means that 
$$ \lim_{\epsilon \to 0} \frac{1}{\epsilon}d(w(R_1(\epsilon), R_2(\epsilon)) c(\epsilon),c(\epsilon)) = d(w(R_1, R_2)c,c)=0.$$ 
In particular:
$$ L(w(R_1(\epsilon), R_2(\epsilon)) = o(\epsilon),$$
which is in contradiction with $(\ref{globmin})$. This ends the proof.
\end{proof}

\section{Euclidean spaces and linear escape of cocycles}\label{euclidean}

In this section we assume that the metric space $X$ is the Euclidean space $\R^d$. We will prove the results stated in Section \ref{genCat0} regarding isometric actions on $\R^d$. In particular we will show an example of a finitely generated group of $Isom(\R^d)$ without global fixed point all of whose elements are elliptic. And we will show that every affine isometric action on $\R^d$ without global fixed point as positive drift, i.e. that cocycles that are not co-boundaries have linear rate of escape.

We begin with the former.

\begin{example}[A subgroup of rotations without global fixed point] \label{exBass}
This is an example of a subgroup of $Isom(\R^{2n})$ for each $n \geq 2$ without global fixed point in $\R^{2n}$ all of whose elements are elliptic. A similar example for $n=2$ for isometries of $\R^4 \simeq \C^2$ with linear part in $\SU_2(\C)$ is due to Bass, answering a question of Kaplansky (\cite[counterexample 1.10]{bass}).

Consider $S=\{1,A^{\pm 1},B^{\pm 1}\}$, where $A$ and $B$ are two rotations fixing different fixed points $p_A \neq p_B$. We may choose the linear parts $R_A$ and $R_B$ in $O(2n)$ of $A$ and $B$ in such a way that the group they generate is a free group which does not have $1$ as an eigenvalue. This fact follows easily from Borel's theorem on the dominance of words maps \cite{borel, larsen} in simple algebraic groups (here $\SO(2n)$) and from the fact that in even dimension a generic rotation does not have $1$ as an eigenvalue. To see this consider that, due to the dominance of word maps, the preimage in $\SO(2n) \times \SO(2n)$ of the elements having $1$ as an eigenvalue via any word map is an algebraic subvariety of positive co-dimension, hence has empty interior. By Baire's theorem a Baire generic pair will lie outside the union of these subvarieties when the word ranges over all reduced words on two letters in the free group.

A consequence of this property is that every element in the subgroup $\Gamma$ of $Isom(\R^{2n})$ generated by $A$ and $B$ is elliptic (i.e. fixes a point in $\R^{2n}$). Indeed, since $R_A$ and $R_B$ generate a free subgroup, every non trivial element of $\Gamma$ has a non trivial linear part and this linear part does not have $1$ as an eigenvalue. But such isometries must fix a point. So $L(g)=0$ for every $g \in \Gamma$, while clearly $L(S) \geq \frac{1}{2}|p_A - p_B| \min{|\theta|}$, where the minimum is taken over the angles $\theta$ of $R_A$ and $R_B$ (i.e the numbers in $[0,\pi]$ such that $\exp(\pm i\theta)$ are the eigenvalues of $R_A$ and $R_B$).
\end{example}

This example is to be contrasted with Proposition \ref{dim23}, which we restate here:

\begin{proposition}A subgroup $G$ of $Isom(\R^d)$ with $d=2,3$ all of whose elements have a fixed point must have a global fixed point.
\end{proposition}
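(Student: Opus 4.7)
My plan is to work in the standard decomposition $Isom(\mathbb{R}^d) = O(d) \ltimes \mathbb{R}^d$, writing $g = (R_g, t_g)$ with linear part $R_g \in O(d)$ and translation part $t_g \in \mathbb{R}^d$. The basic observation is that $g$ has a fixed point iff $t_g \in \mathrm{Im}(I - R_g) = \mathrm{Fix}(R_g)^\perp$, and a global fixed point for $G$ corresponds to the cocycle $b: G \to \mathbb{R}^d$, $g \mapsto t_g$, being a coboundary. The hypothesis immediately forces the linear-part projection $\pi: G \to O(d)$ to be injective (no nontrivial pure translations), and further excludes nontrivial glide reflections in $\mathbb{R}^2$ and nontrivial screw motions and ``space glide reflections'' in $\mathbb{R}^3$. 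As a first reduction, if $\pi(G)$ preserves a proper subspace $V \subset \mathbb{R}^d$, then $V^\perp$ is also invariant and the affine action of $G$ splits; an element of $G$ has a fixed point in $\mathbb{R}^d$ iff each of its factors has a fixed point in $V$ and $V^\perp$. Induction on $d$ then handles the reducible case, so it suffices to treat the situation where $\pi(G)$ acts irreducibly on $\mathbb{R}^d$.

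For $d = 2$ the argument is entirely commutator-based. Since $SO(2)$ is abelian, the commutator of any two rotations $a, b \in G$ has trivial linear part, hence is a pure translation, hence is trivial; so any two rotations in $G$ commute, and two nontrivial rotations of $\mathbb{R}^2$ commute iff they share their center. Thus every rotation in $G$ has the same center $p$. For a reflection $r \in G$, conjugation $r \rho r^{-1} \in G$ is a rotation about $r(p)$, which must again be centered at $p$; so every reflection axis passes through $p$. The residual subcase in which $G$ has no nontrivial rotation forces any two reflection axes to coincide, since otherwise their composition is an excluded rotation or translation, so $G$ fixes a common line.

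For $d = 3$ the commutator-in-$SO(d)$ trick breaks down and is replaced by the following key geometric input: the composition of two rotations of $\mathbb{R}^3$ about skew axes is a screw motion with nonzero translation along the common perpendicular, which contradicts our hypothesis. Thus every pair of rotation axes of elements of $G$ is non-skew, i.e.\ the axes either intersect or are parallel. If all these axes are parallel, $\pi(G)$ fixes that common direction and we are in the reducible case; if they all lie in a single plane, $\pi(G)$ preserves that plane and again we are reducible; irreducibility therefore forces three axes in general position, which, by the elementary Euclidean fact that three pairwise-intersecting non-coplanar lines in $\mathbb{R}^3$ must be concurrent, meet at a single point $p$. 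A conjugation argument analogous to the one in $d = 2$ then shows that every reflection plane and every rotoreflection fixed point of an element of $G$ contains $p$, so $p$ is a global fixed point.

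The delicate step is the irreducible $d = 3$ case: one must upgrade the pairwise information ``no two rotation axes are skew'' to the global statement of concurrency, and then verify that once $p$ is pinned down the reflections and rotoreflections of $G$ all respect $p$. The heart of the matter is the non-abelian geometric identity about composition of skew rotations in $\mathbb{R}^3$, which has no analogue in dimension $4$: two rotations of $\mathbb{R}^4$ about complementary orthogonal $2$-planes simply commute, producing no forced translation, and this is precisely the mechanism that powers the Bass-type counterexample in Example \ref{exBass}.
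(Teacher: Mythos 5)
Your proposal is correct, and while your $d=2$ argument is essentially the paper's (both hinge on the fact that commutators of orientation-preserving planar isometries are translations, hence trivial under the hypothesis), your $d=3$ argument takes a genuinely different route. The paper works with the closure $H$ of the orientation-preserving subgroup inside $Isom(\R^3)^0$: since isometries with fixed points form a closed set with empty interior, $H$ is a proper closed subgroup, and the proof then runs through the possible linear parts of $H$ (finite, conjugate to $\SO(2,\R)$, or all of $\SO(3,\R)$), using Burnside's theorem in the finite case and the half-turn instance of the skew-axes fact only in the $\SO(3,\R)$ case. You instead stay entirely synthetic: the general fact that two nontrivial rotations about skew axes compose to a screw motion with nonzero translation (provable by writing each rotation as a product of half-turns about lines perpendicular to its axis through the common perpendicular, so the composite is a product of half-turns about two skew lines) forces all rotation axes of $G$ to be pairwise coplanar, and the classical lemma that a pairwise-coplanar family of lines is concurrent, all parallel, or contained in one plane then yields either a common point or a $\pi(G)$-invariant proper subspace, the latter handled by your splitting-and-induction reduction. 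What your route buys is elementarity — no topological closure, no classification of closed subgroups, no Burnside — and it makes transparent why the statement dies in $\R^4$ (rotations in complementary $2$-planes commute); what the paper's route buys is that it only needs the easy half-turn case of the skew-axes fact and sidesteps the concurrency lemma. Two details to tighten when writing yours up: the screw axis of the composite is the common perpendicular of the two half-turn lines, not of the original rotation axes (harmless, since you only use fixed-point-freeness); and after producing three non-coplanar axes concurrent at $p$, note that every further rotation axis, being coplanar with each of the three, must also pass through $p$ (otherwise all three would lie in the plane spanned by $p$ and that axis), which is exactly what makes the conjugation step for reflections and rotoreflections pin down $p$ as a global fixed point.
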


\begin{proof} Let $G_0$ be the index two subgoup of orientation preserving isometries. When $d=2$, the commutator subgroup $[G_0,G_0]$ is made of translations. Hence $G_0$ is abelian. Two commuting isometries must preserve the fixed point set of each other. It follows that $G_0$ is either trivial or has a unique global fixed point, which must then be fixed by G (since $G_0$ is normal in $G$).

When $d=3$ generic orientation preserving isometries are skew translations and do not have fixed points, in fact isometries with a fixed point form a closed subset of empty interior.  In particular the closure $H$ of $G_0$ is a proper closed subgroup of $Isom(\R^3)^0$. Say $\SO(3,\R)$ denotes the stabilizer of the origin in $\R^3$. If the linear part of $H$ is not all of $\SO(3,\R)$ it must either be finite or conjugate to $\SO(2,\R)$. In the latter case  all elements in $H$ have parallel axes and $H$ must preserve an affine plane (the orthogonal to the axes) and be either trivial or have a unique global fixed point by the $d=2$ case ; in this case $G$ fixes this point. If the linear part of $H$ is finite, then every element of $H$ has bounded order forcing $G_0$ itself (and hence $G$) to be finite (Burnside) and hence to have a global fixed point. Finally if the linear part of $H$ is all of $\SO(3,\R)$, then $H$ must be conjugate to $SO(3,\R)$ (and hence have the origin as its unique global fixed point, which must then be fixed by all of $G$). To see this note first the following simple geometric fact: if $g,h$ are two rotations of angle $\pi$ with disjoint axis, then $gh$ is a skew rotation with axis the common perpendicular to the axes of $g$ and $h$ and translation length twice the distance between the axes. Now pick $h \in H$ a rotation of angle $\pi$ and axis $\Delta$. If $g\Delta$ intersects $\Delta$ for all $g\in H$, then any two lines of the form $g\Delta$, $g\in H$, intersect. This means either that the $H$-orbit of $\Delta$ is made of all lines through a single point (and then this point is the unique global fixed point), or that it lies in an affine plane (the one spanned by any two of the lines) and that $H$ preserves that plane, a fact not compatible with the assumption that its linear part is all of $\SO(3,\R)$.
\end{proof}

Now we show that non trivial cocycles have linear rate of escape.

\begin{proposition}\label{linearescape} For any $d\geq 2$ and $S\subset Isom(\R^d)$ a finite set, then $\ell(S)=0$ if and only if $S$ has a global fixed point in $\R^d$.
\end{proposition}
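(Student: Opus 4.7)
The easy direction is trivial: a common fixed point $x_0$ of $S$ yields $L(S^n,x_0)=0$ for every $n$, hence $\ell(S)=0$. For the converse, assume $\ell(S)=0$ and let $\Gamma:=\langle S\rangle$. For any $g\in\Gamma$, setting $n:=|g|_S$ and observing $g^k\in S^{kn}$, subadditivity gives $\ell(g)=\lim_k L(g^k)/k\le n\,\ell(S)=0$; since $\R^d$ is CAT(0), Proposition~\ref{prop.monod} yields $L(g)=\ell(g)=0$, so every $g\in\Gamma$ is elliptic. Let $\pi:Isom(\R^d)\to O(d)$ denote the linear part, set $K:=\overline{\pi(\Gamma)}$ (compact), and split $\R^d=V\oplus W$ with $V:=(\R^d)^K$ and $W:=V^\perp$.

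Writing $g(v+w)=v+t_V(g)+\pi(g)w+t_W(g)$ with $v\in V$, $w\in W$, the projection $t_V:\Gamma\to V$ is a group homomorphism because $\pi(g)|_V$ is trivial, and $s^n$ translates the $V$-coordinate of every point by $n\,t_V(s)$; hence $\ell(s)\ge|t_V(s)|$, forcing $t_V\equiv 0$ on $\Gamma$. Thus $\Gamma$ fixes $V$ pointwise, and it remains to produce a common fixed point of the induced $\Gamma$-action on $W$, where the closure $K|_W$ of the linear part has no nonzero fixed vector.

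The cocycle $t_W:\Gamma\to W$ is \emph{sublinear}: $|t_W(g)|\le|g(0)|\le L(S^{|g|_S},0)$, and $L(S^n,0)/n\to\ell(S)=0$ by Lemma~\ref{gen-ineq}. I would then invoke the following finite-dimensional rigidity, complementing \cite[Cor.~3.7]{Cornulier-Tessera-Valette}: \emph{any sublinear $1$-cocycle for an orthogonal representation on a finite-dimensional Euclidean space $W$ whose image closure has no nonzero common fixed vector is a coboundary}. Granted this, there exists $w_0\in W$ with $t_W(g)=(I-\pi(g))w_0$ for every $g\in\Gamma$, and then $v+w_0$ is a common fixed point of $S$ for any $v\in V$.

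The main obstacle is the rigidity statement itself; finite-dimensionality is essential, since infinite-dimensional counter-examples exist by \cite[Prop.~3.9]{Cornulier-Tessera-Valette}. The strategy is to show that the orbit $\{t_W(g)\}_{g\in\Gamma}$ is bounded in $W$, whence its Chebyshev circumcenter is $\Gamma$-invariant for the affine action and supplies the cobounding $w_0$. Arguing by contradiction, suppose $|t_W(g_n)|\to\infty$. Ellipticity yields fixed points $x_n\in W$ of $g_n$ with $(I-\pi(g_n))x_n=t_W(g_n)$, so necessarily $|x_n|\to\infty$. After passing to subsequences one has $\pi(g_n)\to k\in K$ and $x_n/|x_n|\to\hat x\in W$ (a unit vector); the sublinear growth $|t_W(g)|=o(|g|_S)$, the compactness of $K$, and an analysis of the ranges of $I-\pi(g_n)$ should force $\hat x$ to be fixed by arbitrary elements of $K$, contradicting $W^K=\{0\}$. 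Making this limiting argument precise---carefully tracking the interplay between $|x_n|$, $\|I-\pi(g_n)\|$, and $|g_n|_S$---is the delicate step.
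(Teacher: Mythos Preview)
Your reduction is sound: the decomposition $\R^d=V\oplus W$ with $V=(\R^d)^K$, the observation that $t_V$ is a homomorphism, and the conclusion $t_V\equiv 0$ from $\ell(s)=0$ are all correct. But after this reduction you are left with precisely the proposition you set out to prove, specialized to the case $W^K=\{0\}$, which you repackage as a ``rigidity statement''. Invoking it is circular unless you supply a proof, and your sketch does not.

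Concretely, your contradiction argument extracts $\pi(g_n)\to k$ and $x_n/|x_n|\to\hat x$; at best this yields $(I-k)\hat x=0$, i.e.\ $\hat x$ is fixed by the \emph{single} element $k$, not by all of $K$. Different subsequences may produce different $\hat x$'s, so you cannot conclude $\hat x\in W^K$. You also never explain how sublinearity of $t_W$ enters beyond ellipticity of individual elements; this is essential, since Example~\ref{exBass} shows that ``every element elliptic'' together with $W^K=\{0\}$ does \emph{not} force a global fixed point. The ``delicate step'' you flag is the entire content of the proposition.

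The paper's argument is quite different and avoids this circularity. Rather than trying to locate a fixed point, it shows directly that if there is none then $\ell(S)>0$. After reducing to the case where the closure of $\Gamma$ has the form $R\ltimes V$ with $V\neq\{0\}$, one covers the unit sphere of $V$ by finitely many $\epsilon$-balls and picks, in each, an element $\gamma_i\in\Gamma$ whose translation part lies in that ball and whose rotation part is $\epsilon$-close to the identity. A short inner-product computation shows that for every $x\in V$ some $\gamma_i$ satisfies $\|\gamma_i x\|\ge\|x\|+\tfrac12$; iterating gives $L(S^{nk},x_0)\ge n/2$, hence $\ell(S)>0$. The general case is handled by passing to the quotient by the discrete part of the translation subgroup. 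This is the missing idea: one must \emph{construct} escaping elements in bounded powers of $S$, not merely analyze a hypothetical unbounded sequence.
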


\begin{proof} First we give a proof of the  easier fact that $L(S)=0$ implies the existence of a global fixed point. Indeed note that the fixed point set $Fix(s)$ of an isometry $s$ of $\R^d$ is an affine subspace and, when $Fix(s)$ is non-empty, the sublevel sets $Fix_t(s)= \{x \in \R^d ; d(x,sx) \leq t \}$ are  convex subsets of $\R^d$ that are products of $Fix(s)$ by the unit ball for a positive definite quadratic form on the orthogonal of $Fix(s)$. In particular there is $c_s>0$ such that $Fix_t(s) \subset (Fix(s))_{c_s t}$ for all $t>0$. Similarly the intersection of all $(Fix(s))_t$ for $s$ varying in $S$ and all $t>0$ coincides with the intersection of all $Fix(s)$, $s\in S$ (note that for any two affine subspaces there is $c>0$ such that the intersection of their $t$-neighorhoods is contained in the $ct$-neighborhood of their intersection). So we obtain that $S$ has a global fixed point.

Now we will show the slighty more delicate fact that the absence of global fixed point implies $\ell(S)>0$. To do this we will first assume that $\Gamma$ is dense in a Lie subgroup of the form $R \ltimes V$, where $R$ is a closed Lie subgroup of $O(V)$ and $V\leq \R^d$ is a non-zero vector subspace. Afterwards we will reduce to this case. 

For $\epsilon>0$ consider a finite covering of the unit sphere in $V$ by Euclidean balls of radius $\epsilon$. In each ball pick an element $\gamma \in \Gamma$ whose translation part $t_\gamma$ belongs to this ball and whose rotation part $r_\gamma$ belongs to the ball of radius $\epsilon$ around the identity in $R$, i.e. $\|r_\gamma - 1\| \leq \epsilon$ for the operator norm on endomorphisms of $V$. This gives us a finite list $\gamma_1,...,\gamma_N$ of elements of $\Gamma$. They all belong to $S^k$ for some integer $k$.

Now we make the following observation. If $x \in V \setminus \{0\}$ and $g=(r_g,t_g) \in Isom(V)$, then 

$$\|gx\|^2 = \|r_g x + t_g \|^2 = \|x\|^2 + \|t_g\|^2 + 2\langle r_g x, t_g\rangle.$$
In particular:
$$\|gx\|^2 \geq   \|x\|^2 + \|t_g\|^2 + 2\langle r_g x, t_g\rangle \geq (\|x\| + \frac{1}{2})^2$$
provided $\frac{1}{2} \leq \|t_g\|$ and $\langle \frac{ r_g x}{\| r_g x\|} , t_g \rangle \geq \frac{1}{2}$. Now if $t_g$ lies in the same $\epsilon$-ball as $x/\|x\|$ and $\|r_g-1\| \leq \epsilon$, then these conditions are satisfied, provided $\epsilon$ is small enough ($\epsilon=1/10$ does it). So we have shown that given any $x \in V \setminus \{0\}$, there is one $\gamma_i \in S^k$ such that 
$$\|\gamma_i x\| \geq \|x\| + \frac{1}{2}.$$
Starting with any point $x_0 \in  V \setminus \{0\}$ this immediately implies that for all $n\geq 1$,
$$L(S^{nk},x_0) \geq \frac{n}{2},$$ and thus $\ell(S) \geq 1/2k >0$ as desired.

We now explain how to reduce to the above special case. Let $H$ be the closure of $\Gamma$ in $Isom(\R^d)$. The Lie subgroup $H$ has the form $R\ltimes(\Delta \oplus V)$, there $V\leq \R^d$ is a vector subspace and $\Delta$ is a discrete subgroup of $\R^d$ contained in the orthogonal of $V$. The isometric $H$-action on $\Delta \oplus V$ factorizes modulo $\Delta$ to an isometric $H$-action on $V$. It is enough to show that $\ell(S)>0$ in the quotient action. The quotient action contains all translations from $V$. So by the analysis above it is enough to know that $V$ is non-zero. Note that, since $\Delta$ is discrete and invariant under conjugation by $R$, there is a finite index subgroup of $R$ commuting with $\Delta$. So if $V$ were trivial, there is a finite index subgroup $\Gamma_0$ of $\Gamma$ such that any element $g$ in this finite index subgroup will satisfy $t_{g^n} = nt_g$. It follows that $\ell(S)>0$ unless $\Gamma_0$  lies in $R$. But then $\Gamma_0$ (and hence $\Gamma$ taking the barycenter of a finite orbit) has a global fixed point.
\end{proof}

\section{Escaping elliptic isometries  of  symmetric spaces}\label{escape}

The goal of this section is mainly to provide certain counter-examples showing that in general one cannot escape elliptic elements in bounded time when taking products of isometries of symmetric spaces. 

In this section we say that an element $g \in Isom(X)$ is a \emph{generalized elliptic element} if $\ell(g)=0$. Since here $X$ will be CAT($0$), this is equivalent to the condition $L(g)=0$ by Proposition \ref{prop.monod}. For symmetric spaces of non-compact type, this condition is equivalent to requiring that all eigenvalues of $g$ (under some or any faithful linear representation of $Isom(X)$) have modulus one.

We recalled in Proposition \ref{ell-sub} the well-known fact that any subgroup of isometries of a symmetric space $X$ of non-compact type, which is made exclusively of generalized elliptic elements, must fix a point in $X$ or its boundary. So if $G$ is a subgroup of $Isom(X)$ generated by a finite set $S$ we have $L(S)=0$ if and only if $L(g)=0$ for all $g \in G$. And if this happens then $G$ fixes a point either in $X$ or on the visual boundary $\partial X$ (see Proposition \ref{bdy-fixed}).

The following natural question then arises:

\bigskip

\noindent {\bf Question:} Do we always escape from elliptics (or generalized elliptics) in bounded time ? namely does there exist $N=N(X) \in \N$ such that for every finite symmetric set $S \subset Isom(X)$ containing $1$, either $L(S)=0$ or there is $g \in S^N$ such that $L(g)>0$ ?

\bigskip

This section is devoted to answering this question. The answer, for hyperbolic spaces, is a little surprising:

\begin{proposition}\label{propques} The above question has a positive answer if $X$ is an $n$-dimensional hyperbolic $\mathbb{H}^n$, when $n=2,3$, but a negative answer when $n \geq 4$.
\end{proposition}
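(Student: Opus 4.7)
My plan is to split along the ranges $n \geq 4$ and $n \in \{2, 3\}$.

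For $n \geq 4$, the negative answer follows immediately from Lemma \ref{so41}. Any totally geodesic embedding $\mathbb{H}^4 \hookrightarrow \mathbb{H}^n$ induces an inclusion $\SO(4,1) \hookrightarrow Isom(\mathbb{H}^n)$; the pair $S = \{a,b\}$ produced by Lemma \ref{so41} for a given $N$ then becomes a subset of $Isom(\mathbb{H}^n)$ such that every $w \in S^N$ generates a bounded cyclic subgroup (hence $w$ is elliptic in $\mathbb{H}^n$ and $L(w) = 0$), while $\langle S\rangle$ is Zariski dense in $\SO(4,1)$, has no fixed point in $\overline{\mathbb{H}^4} \subset \overline{\mathbb{H}^n}$, and therefore satisfies $L(S) > 0$ by Corollary \ref{bdy-fixed}.

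For $n = 2$, I expect $N = 4$ to suffice. Suppose $L(S) > 0$ and every $g \in S^4$ has $L(g) = 0$. Then every $g \in S$ is non-hyperbolic, and Corollary \ref{bdy-fixed} rules out a common fixed point for $\langle S \rangle$ in $\overline{\mathbb{H}^2}$. Since each non-trivial non-hyperbolic isometry of $\mathbb{H}^2$ has a unique fixed point in $\overline{\mathbb{H}^2}$ (an interior point if elliptic, a boundary point if parabolic; reflections are treated separately), some pair $a, b \in S$ must have distinct fixed points. I would then invoke the Fricke identity
\[
\tr([a,b]) = \tr(a)^2 + \tr(b)^2 + \tr(ab)^2 - \tr(a)\tr(b)\tr(ab) - 2
\]
in $SL(2,\R)$ together with the explicit trace formula
\[
\tr(ab) = 2\cos\alpha\cos\beta - 2\cosh(d)\sin\alpha\sin\beta
\]
for two rotations of half-angles $\alpha, \beta$ around fixed points at hyperbolic distance $d > 0$, and its degenerations when $a$ or $b$ is parabolic, to deduce that $\tr([a,b]) > 2$ whenever $d > 0$. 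Thus $[a,b] \in S^4$ is hyperbolic, the desired contradiction. The orientation-reversing case reduces to the observation that two reflections without a common fixed point in $\overline{\mathbb{H}^2}$ have ultraparallel axes, and their product is then a hyperbolic translation already in $S^2$.

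For $n = 3$, I would run the same strategy inside $PSL(2,\C) = Isom(\mathbb{H}^3)^+$. For a pair $a, b \in S$ with $L(\{a,b\}) > 0$ and both non-loxodromic, $\tr(a), \tr(b) \in \R$; if $\tr(ab) \notin \R$ then $ab \in S^2$ is already loxodromic. Otherwise the classical theorem on real characters of $F_2$ in $SL(2,\C)$ (three real traces plus irreducibility force conjugacy into $SL(2,\R)$ or $SU(2)$) leaves three sub-cases, two of which---$SU(2)$-conjugate and reducible---are incompatible with $L(\{a,b\}) > 0$ (both imply a common fixed point in $\overline{\mathbb{H}^3}$), while the third, $SL(2,\R)$-conjugate, reduces to the $n=2$ analysis and yields $[a,b] \in S^4$ loxodromic. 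The remaining case, when every pair in $S$ satisfies $L(\{a,b\}) = 0$ yet $L(S) > 0$, corresponds geometrically to a ``triangle'' of three elliptic axes pairwise meeting in $\mathbb{H}^3$ without a common intersection; here I would produce a loxodromic element in a short power of $S$ by considering the axis of a product such as $bc$ (which passes through the pairwise fixed point of $b$ and $c$) and combining it with a third generator $a$ whose axis avoids that point, reducing to the two-generator case applied to $\{a, bc\}$.

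The main obstacle is the $n = 3$ case, particularly the triangle configuration: while the real-character theorem is standard, a careful geometric analysis is needed to verify uniformly that some bounded word in $S$ is loxodromic in all such configurations, including the multi-generator generalizations of the triangle, so as to pin down the actual bound on $N$.
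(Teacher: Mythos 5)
Your treatment of $n\geq 4$ is essentially the paper's: the paper also reduces everything to the $\SO(4,1)$ pair of Lemma \ref{so41} and views $\mathbb{H}^4$ inside $\mathbb{H}^n$. One small gloss: Corollary \ref{bdy-fixed} applied to the absence of fixed points in $\overline{\mathbb{H}^4}$ only gives positivity of the joint displacement computed \emph{in} $\mathbb{H}^4$; to get $L(S)>0$ in $\mathbb{H}^n$ you should either observe that the nearest-point projection onto the convex subspace $\mathbb{H}^4$ is $S$-equivariant and distance non-increasing (so the two displacements agree), or rule out fixed points in $\overline{\mathbb{H}^n}\setminus\overline{\mathbb{H}^4}$ directly. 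This is a one-line repair, not a real gap.

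The positive direction for $n=2,3$ is where the proposal breaks down, and not only in the place you flag. Your $n=2$ argument rests on two claims that fail as stated. First, Corollary \ref{bdy-fixed} is quoted in the wrong direction: it says $L(S)=0$ implies a fixed point in $\overline{X}$, not that $L(S)>0$ excludes one; what you actually need (a set of elliptic/parabolic isometries fixing a common point of $\overline{\mathbb{H}^2}$ has $L(S)=0$) is true but requires the horoball argument, not that corollary. Second, and more seriously, the claim that some \emph{pair} of generators must have distinct fixed points is false: take $S$ to consist of the three reflections in the sides of a compact hyperbolic triangle (the question concerns all of $Isom(X)$, so reflections are admissible). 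Every pair shares a vertex, every pairwise product and commutator is elliptic, yet $L(S)$ equals twice the inradius $>0$; the first element with positive displacement is the glide reflection $r_1r_2r_3\in S^3$. So no analysis of pairs of generators can close the argument, and your ``reflections are treated separately'' remark about ultraparallel mirrors does not cover this. The same phenomenon occurs orientation-preservingly in $\mathbb{H}^3$ (three half-turns about lines pairwise meeting in three distinct points: each pair lies in a point stabilizer $\cong\SO(3)$, hence is purely elliptic, while the triple generates loxodromics); this is exactly the ``triangle configuration'' you acknowledge leaving open, and it is the essential content of the positive answer, not a verification to be postponed. The paper's route is the trace trichotomy you quote (the Fricke identity), packaged as: if $a,b,ab$ all have unimodular eigenvalues then either $\{a,b\}$ is conjugate into $\SU_2(\C)$, or into upper triangular matrices with unimodular diagonal, or $[a,b]$ is loxodromic; the deduction of Proposition \ref{propques} then has to apply this trichotomy to pairs of \emph{bounded-length words} rather than to pairs of generators (for instance to $r_1$ and $r_2r_3$ in the triangle example, whose fixed-point sets in $\overline{\mathbb{H}^3}$ are disjoint), together with Proposition \ref{ell-sub} to convert ``all of $\langle S\rangle$ generalized elliptic'' into $L(S)=0$. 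Until you supply an argument producing, inside a power $S^k$ of absolutely bounded $k$, two generalized elliptic words with no common fixed point in $\overline{\mathbb{H}^3}$ whenever $\langle S\rangle$ is non-elementary, the positive half of the proposition is not proved.
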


So if $X$ is hyperbolic $n$-space with $n\geq 4$, then there are subgroups of isometries that pretend to be elliptic on a ball of arbitrarily large radius, even though they have no global fixed point in $X$ nor on its boundary (and we will even build Zariski-dense examples when $n$ is even). Further below we will also answer the question completely for arbitrary symmetric spaces  of non-compact type.

Similarly a subgroup of isometries of the Euclidean plane or $3$-space, all of whose elements are rotations, must have a global fixed point (Proposition \ref{dim23} above). But  this is no longer the case in $\R^{n}$ for $n \geq 4$ by Example \ref{exBass}. So the dimension threshold is the same as for hyperbolic spaces. 

Note that Example \ref{exBass} can of course be embedded in a symmetric space $X$ (e.g. by viewing $Isom(\R^{2n})$ as a bloc upper triangular subgroup of $SL_{2n+1}(\R)$). This group will be made of elliptic elements only. Even though it will not fix a point in the symmetric space $X$, it will fix a point on the boundary (see Corollary \ref{bdy-fixed}). In particular $L(S)=0$ in this example (even though $L(S)$ and $\ell(S)$ are strictly positive, when the group is viewed as a subgroup of isometries of Euclidean space).

In the case of $\SL_2(\C)$ a simple matrix computation yields the following avatar of Serre's lemma (Lemma \ref{serre-lemma}):

\begin{proposition} Let $a,b \in \SL_2(\C)$. Assume that $a,b$ and $ab$ have all their eigenvalues of modulus $1$. Then one of the following holds:
\begin{enumerate}
\item $a$ and $b$ can be simultaneously conjugated into $SU_2(\C)$,
\item $a$ and $b$ can be  simultaneously conjugated into the subgroup of upper triangular matrices with eigenvalues of modulus $1$,
\item  $[a,b]:=aba^{-1}b^{-1}$ is loxodromic (i.e. its eigenvalues have modulus $\neq 1$).
\end{enumerate}
\end{proposition}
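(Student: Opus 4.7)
The plan is to reduce everything to Fricke's trace identity and the Gram determinant of an associated triple of angles. Writing $x = \tr a$, $y = \tr b$, $z = \tr(ab)$, $w = \tr[a,b]$, the hypothesis that $a$, $b$, $ab$ have eigenvalues of modulus $1$ is equivalent to $x, y, z$ lying in $[-2, 2] \cap \R$, so one may set $x = 2\cos\alpha$, $y = 2\cos\beta$, $z = 2\cos\gamma$ with $\alpha, \beta, \gamma \in [0, \pi]$. The classical Fricke identity $w = x^2 + y^2 + z^2 - xyz - 2$ (a consequence of Cayley--Hamilton in $\SL_2$) shows in particular that $w$ is real, and substituting yields
\[
w = 2 - 4G, \qquad G := 1 + 2\cos\alpha\cos\beta\cos\gamma - \cos^2\alpha - \cos^2\beta - \cos^2\gamma,
\]
where $G$ is the Gram determinant of three unit vectors in $\R^3$ making pairwise angles $\alpha, \beta, \gamma$. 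Hadamard's inequality gives $G \leq 1$, hence $w \geq -2$ automatically.

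The first step is to dispatch the case $G < 0$, equivalently $w > 2$. Since $w$ is real and strictly greater than $2$, the eigenvalues of $[a,b]$ are real positive numbers whose product is $1$ and whose sum exceeds $2$, so neither lies on the unit circle --- $[a,b]$ is loxodromic and case (3) holds. To show that $G \geq 0$ forces case (1) or (2), the plan is to do an explicit matrix computation: after handling the trivial subcase $a = \pm I$ separately, conjugate $a$ to $\operatorname{diag}(\lambda, \lambda^{-1})$ with $\lambda = e^{i\alpha}$ and write $b = \begin{pmatrix} p & q \\ r & s \end{pmatrix}$. The linear system $p + s = 2\cos\beta$, $\lambda p + \lambda^{-1} s = 2\cos\gamma$ solves uniquely with $s = \bar p$ (thanks to $|\lambda|=1$), and combined with $\det b = 1$ one obtains the key identity
\[
qr = -\frac{G}{\sin^2 \alpha}.
\]
When $G = 0$ this forces $q = 0$ or $r = 0$, so $a$ and $b$ share an eigenvector, i.e.\ a common fixed point on $\mathbb{CP}^1$; since their eigenvalues have modulus $1$ this is case (2). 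When $G > 0$, neither $q$ nor $r$ vanishes, the ratio $-r/\bar q = G/(\sin^2\alpha \cdot |q|^2)$ is a positive real number, and conjugating $b$ by $\operatorname{diag}(\mu, \mu^{-1})$ (which commutes with $a$) with $|\mu|^4$ equal to this quantity forces $r = -\bar q$, putting $b$ into $\SU_2(\C)$ and giving case (1).

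The main technical nuisance is the boundary situation where one of $x, y, z$ reaches $\pm 2$, corresponding to a parabolic element; then $\sin\alpha$ or $\sin\beta$ vanishes and the formulas above degenerate. However, a direct computation (for instance $w = 2 + (y-z)^2$ when $x = 2$) shows that in this case $G \leq 0$, with equality iff $a$ and $b$ share a fixed point on $\mathbb{CP}^1$; so the parabolic case falls into (2) or (3) by the analysis above, with no $\SU_2$-construction needed. The main obstacle is therefore not the structural argument --- essentially a single trace calculation --- but the bookkeeping required to dispose of these boundary cases uniformly.
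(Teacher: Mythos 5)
Your argument is correct and follows essentially the same route as the paper's proof: the Fricke trace identity, diagonalization of $a$ (your relation $s=\bar p$ is the paper's $\delta=\bar\alpha$, and $qr=|p|^2-1$ is its $\beta\gamma=|\alpha|^2-1$), a diagonal conjugation commuting with $a$ that rescales the off-diagonal entries to put $b$ in $\SU_2(\C)$, and the parabolic case dispatched by the same computation $\tr(ab)=\tr(b)+tr$. The packaging of $\tr[a,b]$ via the Gram-type determinant $G$, rather than the paper's direct use of the sign of $\tr[a,b]-2$ and of $\alpha\delta$, is only cosmetic.
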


Since $\PSL_2(\R) = Isom(\mathbb{H}^2)^0$ and  $\PSL_2(\C) = Isom(\mathbb{H}^3)^0$ the first part of Proposition \ref{propques} follows easily. 

\begin{proof} We first recall the following well-known fact:

\noindent {\bf Claim:} For any $a,b \in \SL_2(\C)$ write $x=\frac{1}{2}\tr(a)$, $y=\frac{1}{2}\tr(b)$, $z=\frac{1}{2}\tr(ab)$. Then 
$$\frac{1}{2}\tr([a,b]) = 2(x^2+y^2+z^2) - 4xyz -1.$$
The proof is omitted: it is a simple computation using Cayley-Hamilton. One writes $a^2-2x a +1 =0$ and similarly for $b^2$ and $(ab)^2$ in order to expand any word in $a$ and $b$ as a linear combination of $a$, $b$, $ab$, $aba$ and $bab$, then one takes the trace.

Note that a matrix $u\in \SL_2(\C)$ has eigenvalues of modulus $1$ if and only if $\tr(u)$ is real and belongs to $[-2,2]$.

Suppose first that neither $a$ nor $b$ is diagonalizable. Then after changing $a$ and $b$ into their opposite if necessary, we may assume that both $a$ and $b$ are unipotent, hence have trace $2$, that is $x=y=1$. Then $\frac{1}{2}\tr[a,b] = 2z^2 - 4z +3$. This achieves its minimum at $z=1$ only and the minimum is $1$. This means that $[a,b]$ is loxodromic unless $z=1$. In some basis the matrices of $a$ and $b$ read:
$$ a=\left(
  \begin{array}{cc}
    1& t \\
    0& 1 \\
  \end{array}
\right)  \textnormal{ and }   b=\left(
  \begin{array}{cc}
    \alpha & \beta \\
    \gamma & \delta \\
  \end{array}
\right).
$$
We may assume that $t\neq 0$. We compute easily $\tr(ab)=\tr(b)+t\gamma$. Hence if $[a,b]$ is not loxodromic, we conclude that $t\gamma=0$. It follows that $\gamma=0$, which is case $(2)$ in the proposition.

We may thus assume that $a$ is diagonalizable, and $a \notin \{\pm 1\}$. In some basis the matrices of $a$ and $b$ now read:
$$ a=\left(
  \begin{array}{cc}
    e^{i\theta}& 0 \\
    0&  e^{-i\theta} \\
  \end{array}
\right)  \textnormal{ and }   b=\left(
  \begin{array}{cc}
    \alpha & \beta \\
    \gamma & \delta \\
  \end{array}
\right)$$
with $\cos(2\theta) \neq 1$. Then we compute: $$\frac{1}{2}\tr [a,b] = \cos(2\theta) + \alpha \delta (1-\cos(2\theta)).$$

If $[a,b]$ is not loxodromic, then $\tr[a,b]$ is real in $[-2,2]$. Consequently $\alpha \delta \in \R$. Since $\alpha+\delta = \tr(b)$ is real, we conclude that $\alpha$ and $\delta$ are complex conjugates and $\alpha \delta= |\alpha|^2$. Since $\tr[a,b] \leq 2$, the above formula forces $|\alpha| \leq 1$. If $|\alpha|=1$, then $\alpha \delta =1$ and thus $\beta \delta=0$, which means that we are in case $(2)$ of the proposition.

We may thus assume that $|\alpha|<1$ and also that $\beta \delta \neq 0$. Up to conjugating simultaneously $a$ and $b$ by a diagonal matrix $diag(t,t^{-1})$, for a suitable real $t>0$, we may assume that $\gamma = - \overline{\beta}$. Indeed:
$$t^{-2} \gamma = - \overline{t^2 \beta} \Leftrightarrow t^4=-\gamma/\overline{\beta} = (1-|\alpha|^2)/|\beta|^2.$$
But now $\delta=\overline{\alpha}$ and $\gamma=-\overline{\beta}$. This means that $b \in SU_2(\C)$. So we are in case $(1)$ of the proposition. This ends the proof.
\end{proof}

To prove the second part of Proposition \ref{propques}, we need to construct a counter-example in $Isom(\mathbb{H}^4)$.

\begin{example}[No escape in hyperbolic $4$-space]\label{noescape} Let $A,B$ be the two rotations in $\SO(4)$ constructed in Example \ref{exBass}. They generate a free subgroup of $\SO(4)$ whose non trivial elements do not have $1$ as an eigenvalue. Now observe that the union of all conjugates of $\SO(4)$ inside $\SO(4,1)$  has non-empty interior. One way to see this is to argue that this set is definable in real algebraic geometry and has the same dimension as $\SO(4,1)$ itself, because the absolute (complex) ranks of $\SO(4)$ and of $\SO(4,1)$ coincide (they are equal to 2). Moreover an element of $\SO(4)$ lies in the interior if and only if $1$ is not one of its eigenvalues. This implies that for every integer $N \geq 1$ there are neighborhoods $\mathcal{U}_N(A)$ and $\mathcal{U}_N(B)$  in $\SO(4,1)$ of $A$ and $B$ respectively such that $w(a,b)$ lies in the interior of elliptic elements for every non trivial reduced word $w$ of length at most $N$ in the free group on two letters, and every choice of $a$ in $\mathcal{U}_N(A)$ and $b$ in $\mathcal{U}_N(B)$. But we may choose such a pair $a,b$ so that it generates a Zariski-dense subgroup of $\SO(4,1)$. Indeed the set of pairs generating a non-Zariski dense subgroup of a semisimple algebraic group is a proper closed subvariety of the product (\cite[Thm 3.3]{guralnick} or \cite[Thm 4.1]{BGGT}). A Zariski-dense subgroup cannot be bounded, for otherwise it would be contained in a conjugate of the maximal compact subgroup $\SO(4)$. So by Proposition \ref{ell-sub} it must contain a non-elliptic element.
\end{example}

\begin{remark}[real closed fields] This counter-example shows that Proposition \ref{ell-sub} is very special to the field of real numbers. It does not hold for a general real closed field, or say for an ultrapower $K$ of the reals. Over such fields $K$ the example yields a Zariski-dense subgroup of $\SO(4,1)(K)$ all of whose elements are contained in some conjugate of $\SO(4)(K)$. 
\end{remark}

The above counter-example can be made to work (with the exact same proof) in any symmetric space $X$ for which the elliptic elements of $Isom(X)^0$ have non-empty interior. Namely for each $N \in \N$ one can find pairs generating a Zariski-dense subgroup with the entire $N$-ball of the Cayley graph contained in the set of elliptic elements. Elliptic elements have non-empty interior  if and only if the fundamental rank $\rk_\C G - \rk_\C K$ vanishes, where $G=Isom(X)^0$ and $K$ is a maximal compact subgroup of $G$ (by \cite[Example 3]{papadima} this condition is equivalent to the vanishing of the Euler characteristic of the compact dual of $X$). For example if $X=\mathbb{H}^n$ this happens if and only if $n$ is even. On the other hand, if $X$ is such that elliptic elements have empty interior in $Isom(X)^0$, then it is always possible to escape them, and in fact escape the set of generalized elliptic elements, in bounded time, because they are contained in a proper algebraic subvariety of $Isom(X)^0$. 

\begin{proposition}[general symmetric spaces] Let $X=G/K$ be a symmetric space of non-compact type and $G=Isom(X)^0$.  If $\rk_{\C} G > \rk_{\C} K$, then there is $N=N(X) \in \N$ such that for every finite set $S \subset G$ generating a Zariski-dense subgroup of $G$, $S^N$ contains an element $g$ with an eigenvalue of modulus different from $1$.
\end{proposition}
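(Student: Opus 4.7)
The plan is to reduce the proposition to an ``escape from subvarieties'' argument. First I will show that, under the rank hypothesis $\rk_\C G > \rk_\C K$, the set $V := \{g \in G : \ell(g) = 0\}$ of generalized elliptic elements is contained in a proper $\R$-Zariski closed subvariety $W \subsetneq G$ whose degree is bounded in terms of $G$ alone. Then I will apply an escape-from-subvarieties lemma to produce, in bounded time, an element of $S^N$ lying outside $W$, hence outside $V$.

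For the first step, I will invoke the structure theory of Cartan subgroups of real semisimple Lie groups. Each Cartan $T \subset G$ decomposes as $T = T_c T_s$ with $T_c$ compact and $T_s$ a vector (split) subgroup, and an element $g \in G$ has all eigenvalues of modulus $1$ if and only if its semisimple Jordan part is conjugate into the compact part $T_c$ of some Cartan. The condition $\rk_\C G > \rk_\C K$ is equivalent to the non-existence of a compact Cartan subgroup in $G$, so $\dim T_s \geq 1$ and $\dim T_c \leq \rk_\C G - 1$ for every Cartan $T$. Since the Cartan subgroups of $G$ fall into finitely many conjugacy classes $T^{(1)},\ldots,T^{(m)}$, the set $V$ is contained in the union of the images of the maps $G \times T^{(i)}_c \to G$ sending $(h,t)$ to $hth^{-1}$, together with the proper subvariety of non-semisimple elements. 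Each such image has dimension strictly less than $\dim G$, so their union lies in a proper $\R$-algebraic subvariety $W$ of degree bounded in terms of $G$ alone.

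For the second step, I will apply the Eskin--Mozes--Oh escape-from-subvarieties lemma: there is a constant $N$ depending only on $\dim G$ and $\deg W$ such that for every finite set $S \subset G$ with $\langle S \rangle$ Zariski dense in $G$, one has $S^N \not\subset W$. The key point, which makes the conclusion uniform in $S$, is that $N$ can be taken independent of the cardinality of $S$. Any $g \in S^N \setminus W$ is not in $V$ and so provides the desired element with an eigenvalue of modulus $\neq 1$.

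The main obstacle is the first step: producing an algebraic subvariety containing $V$. The condition ``all eigenvalues have modulus $1$'' is only semi-algebraic and not Zariski closed over $\R$ (as already the example of $\SL_2(\R)$ with the inequality $|\tr(g)|\le 2$ shows), so one cannot simply take $W$ to be the Zariski closure of $V$ without argument; one must use the Cartan classification to exhibit $W$ by covering $V$ with finitely many conjugation orbits of the compact parts of the non-compact Cartans, each of positive codimension. Once $W$ is in hand, the second step is a rather standard invocation of the escape lemma.
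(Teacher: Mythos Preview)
Your proof is correct and shares the paper's overall strategy: reduce to escape from subvarieties (Eskin--Mozes--Oh, as in \cite{emo} or \cite[Lemma 3.11]{BGT}). The difference lies only in how you produce the proper subvariety $W \supset V$. The paper argues that the condition ``all eigenvalues have modulus $1$'' is definable in real algebraic geometry (it can be read off from the real factorization of the characteristic polynomial, whose irreducible factors must be $X\pm 1$ or $X^2+2bX+1$ with $b\in[-1,1]$), so $V$ is semi-algebraic; the rank hypothesis $\rk_\C G > \rk_\C K$ is precisely the condition that generalized elliptics have empty interior (as in the discussion preceding the proposition), and a semi-algebraic set with empty interior has proper Zariski closure. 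Your route is more explicit: you use the finitely many conjugacy classes of Cartan subgroups $T^{(i)} = T_c^{(i)} T_s^{(i)}$, note that the rank hypothesis forces $\dim T_c^{(i)} \le \rk_\C G - 1$, bound the dimension of each conjugation image $G \cdot T_c^{(i)}$ by $\dim G - 1$, and throw in the non-semisimple locus to cover the rest of $V$. The paper's argument is slicker and avoids the case analysis over Cartan classes; yours is more self-contained in that it does not appeal to general facts about semi-algebraic sets and makes the construction of $W$ (and the bound on its degree) concrete. Both feed into the same escape lemma and yield the same uniform $N=N(X)$.
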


\begin{proof} An element $g \in G$ is generalized elliptic (i.e. $\ell(g)=0$) if and only if all its eigenvalues under some or any faithful linear representation of $G$ have modulus $1$. This can be read off the characteristic polynomial of $g$ by asking that its real irreducible factors are either $X \pm 1$ or $X^2+2bX +1$, where $b \in [-1,1]$. Therefore generalized elliptic elements form a definable set in real algebraic geometry \cite{bochnak-coste-roy} whose Zariski-closure is a subvariety of positive co-dimension in $G$. Then the lemma follows by ``escape from subvarieties'' (see \cite{emo} \cite[Lemma 3.11]{BGT}). 
\end{proof}

\section{Producing free semi-groups by Ping-Pong}\label{ping-pong}

We prove a proposition concerning  free semi-groups in $\delta$-hyperbolic geometry.
A hyperbolic isometry $g$ of a $\delta$-hyperbolic space $X$
has two fixed points, which are in the ideal boundary of $X$.
We denote the fixed point set by $Fix(g)$. 

\begin{proposition}\label{semi.group}
For $\delta \ge 0$, there is an absolute (numerical) constant $\Delta$ with the 
following property.  
Let $X$ be a $\delta$-hyperbolic space, and $g,h$ isometries of $X$.
Suppose $L(g), L(h) > \Delta \delta$. (Then $g, h$ are hyperbolic isometries.)
Assume $Fix(g) \not= Fix(h)$.

Then  the pair $\{g,h\}$, maybe after taking inverses of 
one or both of those,  generates a free semi-group.
\end{proposition}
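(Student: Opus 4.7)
The plan is to run a standard ping-pong argument on the Gromov boundary $\partial X$, with the quantitative North--South dynamics of a hyperbolic isometry of large translation length supplying the required contraction.

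First, I would invoke Theorem \ref{delta.axis} with $\Delta$ taken large enough so that both $g$ and $h$ are hyperbolic in the quasi-geodesic sense, each admitting an invariant quasi-axis whose two endpoints in $\partial X$ make up $Fix(g)=\{g^+,g^-\}$ and $Fix(h)=\{h^+,h^-\}$, with $g^+,h^+$ attracting. The hypothesis $Fix(g)\neq Fix(h)$ forces these two pairs to share at most one point of $\partial X$. The next preliminary step is a sign selection: I want to pick $\epsilon_1,\epsilon_2\in\{\pm1\}$ so that, writing $a=(g^{\epsilon_1})^+$, $b=(h^{\epsilon_2})^+$, $a'=(g^{\epsilon_1})^-$, $b'=(h^{\epsilon_2})^-$, the two attractors $a,b$ are distinct and neither belongs to $\{a',b'\}$. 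A short case-check achieves this: if all four fixed points are distinct, any choice works; if $g^+=h^+$ then $\epsilon_1=\epsilon_2=-1$ makes $a'=b'=g^+$ a single point distinct from the attractors $g^-,h^-$; if $g^-=h^-$, take $\epsilon_1=\epsilon_2=+1$; and if $g^+=h^-$ or $g^-=h^+$, invert the appropriate factor so that the shared fixed point becomes the common repeller. In every case the resulting configuration has two distinct attractors that are separated from the repelling set.

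The crux of the argument is then to set up ping-pong regions. I would fix a basepoint $x_0\in X$ and, using the visual topology on $\overline X=X\cup\partial X$, pick small disjoint closed neighborhoods $A\ni a$ and $B\ni b$ in $\overline X$ with $A\cup B$ kept uniformly clear of $\{a',b'\}$, and then verify
\[
g^{\epsilon_1}(A\cup B)\subset A
\qquad\text{and}\qquad
h^{\epsilon_2}(A\cup B)\subset B.
\]
These are an instance of quantitative North--South dynamics for hyperbolic isometries of a $\delta$-hyperbolic space, and I would prove them by applying the tree-approximation Lemma \ref{bowditchLemma} to the five points $\{x_0,g^{\epsilon_1}x_0,a,a',z\}$ for an arbitrary $z\in A\cup B$, together with the quasi-axis estimate that $g^{\epsilon_1}$ translates points along its quasi-axis by a distance comparable to $L(g)$. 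I expect the main obstacle to be precisely this quantitative calibration: one has to balance simultaneously the size of $A,B$, the $O(\delta)$ tree-approximation error, and the lower bound $L(g),L(h)>\Delta\delta$ so that the contraction strictly dominates uniformly over every $z\in A\cup B$, and this is exactly where the absolute constant $\Delta$ is pinned down.

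Once the two inclusions above hold, the standard ping-pong lemma for semi-groups concludes: if two positive words $w\neq w'$ in $\{g^{\epsilon_1},h^{\epsilon_2}\}$ were equal as isometries, I would cancel the longest common left-prefix (the letters that act last) and reduce either to two remaining words with distinct leftmost letters---in which case evaluating on any $p\in A\cup B$ places $w(p)\in A$ and $w'(p)\in B$, a contradiction---or to one empty word and one non-empty word; in the latter case the non-empty word would be the identity, yet evaluating it on a point $p\in A$ or $p\in B$ sends $p$ into the other set, again a contradiction. This yields the freeness of the generated semi-group.
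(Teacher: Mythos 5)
There is a genuine gap, and it sits exactly at the step you defer to ``quantitative calibration'': as set up, the contraction you need is false. With $L(g),L(h)$ only an \emph{absolute} multiple of $\delta$, a single application of $g^{\epsilon_1}$ cannot send a preassigned small visual neighborhood $B$ of $h$'s attractor into a small visual neighborhood $A$ of $g$'s attractor: the image of a point of $B$ has its projection to the axis of $g$ pushed forward by only about $L(g)\approx\Delta\delta$ from the (bounded) projection of $B$, so it lands nowhere near a small shadow around $g^+$. (Already in a tree, $\delta=0$, translation length $1$, small shadows: $g(B)\not\subset A$.) Genuine North--South contraction into fixed small neighborhoods is available only for high powers $g^N,h^N$, which would prove the weaker statement that $g^N,h^N$ generate a free semigroup, not $\{g,h\}$ themselves. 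To make one-letter ping-pong work, the sets cannot be ``small neighborhoods of $a$ and $b$ avoiding $a',b'$''; they have to be half-space--type sets defined by nearest-point projection onto the two quasi-axes, anchored at basepoints chosen at the coarse intersection/closest approach of the axes, with threshold comparable to $\tfrac12\min(L(g),L(h))$, and one must prove that everything in $B$ projects to the axis of $g$ within $O(\delta)$ of that anchor (and vice versa). That is the actual content of the paper's proof (the choice of $P,Q$ in cases (a)/(b), claims (I)--(II), via Lemma \ref{quasi.axis} and Lemma \ref{lemma.proj}); it is not a routine tuning of constants, and your tree-approximation Lemma \ref{bowditchLemma} applied to five points does not by itself produce it.

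A second, related defect is the sign selection. Choosing $\epsilon_1,\epsilon_2$ only from coincidences among the four fixed points is not enough: when the two quasi-axes fellow-travel along a segment much longer than $\Delta\delta$ while all four endpoints are distinct, the pair must be oriented so that both elements translate in the \emph{same} direction along the overlap; for the opposite choice, $g^{\epsilon_1}$ applied to points near the other attractor only advances their projection by $\approx\Delta\delta$ near the far end of the long overlap, and no ping-pong inclusions of the above type can hold. Your claim that ``if all four fixed points are distinct, any choice works'' is therefore wrong in precisely this configuration, and it is what the paper's orientation convention (aligning the directions of $\gamma$ and $\sigma$ when the projection $\pi_\gamma(\sigma)$ is large, then inverting to make both translation parameters positive) is designed to handle.
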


This proposition appears  as Proposition 7.1 in \cite{BCG-jems} when  
$X$ is an Hadamard manifold of $K \le -1$.
The case when $X$ is a simplicial tree was treated by Bucher and de la Harpe  in \cite[Lemma]{harpe.bucher}.
In that case, $\delta=0$. 
We give a proof of Proposition \ref{semi.group} since we do not know 
a reference in this context. It is not optimal, but $\Delta \le 10000 $.
We follow the strategy in \cite{harpe.bucher} and use the following 
well-known Ping-Pong lemma. Finding
suitable sets $A, B$ in the $\delta$-hyperbolic setting
is more complicated than the tree case.

\begin{lemma}\label{pingpong.semigroup}
Let $X$ be a set and $g,h$ injective maps from $X$ to $X$.
Suppose there are non-empty subsets $A, B$ in $X$ such that 
$$A \cap B =\emptyset, g(A \cup B) \subset A, h(A\cup B) \subset B.$$
Then $g,h$ generate a free semi-group. 
\end{lemma}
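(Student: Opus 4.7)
The plan is to show by contradiction that the natural map from the free semigroup on $\{g,h\}$ to the monoid of self-maps of $X$ is injective. Suppose two distinct non-empty positive words $w_1 \ne w_2$ in $g,h$ induce the same self-map. Write $w_1 = u v_1$ and $w_2 = u v_2$, where $u$ is their longest common left prefix. Since $g$ and $h$ are injective, so is the composition $u$, and therefore $v_1$ and $v_2$ must induce the same self-map of $X$. By maximality of $u$, either both $v_1, v_2$ are non-empty and begin with different letters, or at least one of them is the empty word.

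The core observation, proved by a straightforward induction on word length using the hypotheses $g(A \cup B) \subset A$ and $h(A \cup B) \subset B$, is the following: for every non-empty positive word $w$ in $g,h$ and every $y \in A \cup B$, the point $w(y)$ lies in $A \cup B$, and more precisely $w(y) \in A$ if the leftmost letter of $w$ is $g$, and $w(y) \in B$ if it is $h$. Indeed, reading $w$ from right to left, each successive application keeps us inside $A \cup B$, while the final (leftmost) letter determines in which piece we land.

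From here the contradiction is immediate in both cases. If $v_1,v_2$ are both non-empty with distinct leftmost letters, pick any $y \in A \cup B$ (which exists by the non-emptiness of $A$ and $B$); the observation yields $v_1(y)$ and $v_2(y)$ in opposite pieces of the disjoint union $A \sqcup B$, contradicting $v_1(y) = v_2(y)$. If instead one of them, say $v_2$, is empty while $v_1$ is not, then $v_1$ acts as the identity on $X$; picking $y$ in the piece opposite to $v_1$'s leftmost letter (namely $y \in B$ if that letter is $g$, and $y \in A$ otherwise), the observation gives $y = v_1(y) \in A \cap B$, again a contradiction. There is no serious obstacle in this proof; the only subtlety worth flagging is recognizing that the one-sided inclusions $g(A \cup B) \subset A$ and $h(A \cup B) \subset B$ (weaker than the two-sided conditions needed for groups) are precisely tailored to track the leftmost letter of a positive word, which is the semigroup analogue of the classical ping-pong condition.
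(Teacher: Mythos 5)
Your proof is correct and complete. The paper does not actually prove Lemma \ref{pingpong.semigroup}; it quotes it as a well-known fact, and your argument is the standard (and essentially the only) one: cancel the longest common left prefix using injectivity of $g$ and $h$, then use the inclusions $g(A\cup B)\subset A$, $h(A\cup B)\subset B$ to see that the image of any point of $A\cup B$ under a non-empty positive word lies in $A$ or $B$ according to the leftmost letter, which settles both the case of differing leftmost letters and the prefix case (where the residual word would have to act as the identity). Your closing remark is also on point: injectivity is genuinely needed for the cancellation step, since for instance constant maps onto a point of $A$ and a point of $B$ satisfy the stated inclusions but do not generate a free semigroup.
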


We will use axes of $g,h$ to define $A,B$.
We believe the following result  is well known to specialists, but do not know a reference exactly for this statement, so 
we will give an argument at the end of this section.
\begin{lemma}\label{quasi.axis}
If $g$ is an isometry of a $\delta$-hyperbolic space $X$ with $L(g) > 1000 \delta$, then there is a $g$-invariant, 
piecewise-geodesic, $\gamma$, 
parametrized by the arc-length, such that 
\\
(1) $\gamma$ is a $(\frac{9}{10},24\delta)$-quasi-geodesic.
\\
(2) For any points $x,y \in \gamma$, the Hausdorff
distance between a geodesic $[x,y]$ and the sub-path of $\gamma$ from $x$ to $y$ 
is at most $12 \delta$.
\end{lemma}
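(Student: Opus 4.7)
The plan is to construct $\gamma$ as the $g$-invariant piecewise geodesic through an orbit of midpoints. Fix any $x_0 \in X$ and let $m$ be the midpoint of a geodesic $[x_0, g(x_0)]$; then $g(m)$ is automatically the midpoint of $[g(x_0), g^2(x_0)]$. Define $\gamma$ to be the arc-length parametrized bi-infinite concatenation of the geodesic segments $[g^n(m), g^{n+1}(m)]$ for $n \in \mathbb{Z}$, in this order. Since $g$ maps the segment indexed by $n$ to the segment indexed by $n+1$, the path $\gamma$ is $g$-invariant and $g$ acts on it by translation of the arc-length parameter by exactly $d(m, g(m))$.

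The heart of the proof is the near-collinearity estimate
\[
 2\, d(m, g(m)) - d(m, g^2(m)) \leq 4\delta.
\]
The clean way to establish this is to apply the tree-approximation Lemma \ref{bowditchLemma} to the five points $x_0, m, g(x_0), g(m), g^2(x_0)$. In the approximating tree $T$, the relations $d(x_0,m)=d(m,g(x_0))$ and $d(g(x_0),g(m))=d(g(m),g^2(x_0))$ force the tree geodesic from $m$ to $g^2(x_0)$ to pass through $g(x_0)$ and then through $g(m)$; direct computation in $T$ then gives $2\,d_T(m,g(m)) = d_T(m,g^2(m))$, and pulling back to $X$ introduces only the $O(\delta)$ error from (\ref{comp.dist}). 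The same estimate then holds for every consecutive triple $g^{n-1}m, g^{n}m, g^{n+1}m$ by $g$-equivariance.

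Once consecutive triples along $\gamma$ are nearly collinear with defect $4\delta$ and consecutive segments have length at least $L(g) > 1000\delta$, one is in the classical regime of the local-to-global principle for strings of nearly collinear points in a $\delta$-hyperbolic space (essentially Gromov's criterion \cite[7.2C]{gromov.hyp}, also used in \cite{fujiwara}): such a path is a global quasi-geodesic with quantitative constants. A direct bookkeeping, using that the per-segment overlap is at most $4\delta$ against a segment length at least $1000\delta$, is expected to give the stated $(\tfrac{9}{10}, 24\delta)$-quasi-geodesic bound, the factor $\tfrac{9}{10}$ comfortably absorbing the defect. Assertion (2) is then a direct application of the Morse stability lemma for quasi-geodesics in $\delta$-hyperbolic spaces; trace the standard Morse argument with the constants $K=\tfrac{9}{10}$, $L=24\delta$ to obtain the $12\delta$ Hausdorff bound.

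The main obstacle is purely quantitative: the qualitative construction and its qualitative properties are standard, but pinning down the specific constants $\tfrac{9}{10}$, $24\delta$, $12\delta$ requires a careful numerical trace through both the local-to-global argument and the Morse lemma. The generous hypothesis $L(g) > 1000\delta$ is precisely what provides the slack needed for these constants to work out without further effort.
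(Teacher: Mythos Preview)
Your construction is exactly the paper's: the $g$-invariant concatenation of geodesic segments $[g^n(m),g^{n+1}(m)]$ through an orbit of midpoints. The near-collinearity estimate $2d(m,gm)-d(m,g^2m)=O(\delta)$ is also the same key ingredient (the paper mentions it in the remark after Theorem~\ref{delta.axis}). Your tree-approximation justification is slightly loose---in the approximating tree the midpoint relations hold only up to $C_5\delta$, so the geodesic need not pass exactly through $g(x_0)$ and $g(m)$---but the $O(\delta)$ conclusion is correct.

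Where you diverge from the paper is in the \emph{order} and \emph{method} of establishing (1) and (2). You propose: near-collinearity $\Rightarrow$ local-to-global (Gromov~7.2C) $\Rightarrow$ (1), then Morse lemma $\Rightarrow$ (2). The paper does the reverse: it proves (2) \emph{first and directly}, by showing inductively that each vertex $g^i(m)$ lies within $2\delta$ of the geodesic $[m,g^n(m)]$ (a thin-triangle exercise), then upgrading this to the $12\delta$ Hausdorff bound for arbitrary subpaths; afterwards (1) is obtained from (2) by an explicit telescoping sum along the vertices, which is where the $\tfrac{1000-24}{1000}\ge\tfrac{9}{10}$ and the additive $24\delta$ visibly appear.

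Both routes are valid, but the paper's order is what makes the stated constants attainable. Your Morse-lemma step for (2) with $K=\tfrac{9}{10}$, $L=24\delta$ will produce a Hausdorff constant well above $12\delta$ under any of the standard formulations, and the local-to-global argument for (1) also does not naturally output $(\tfrac{9}{10},24\delta)$ without essentially redoing the paper's telescoping. So your proposal is correct as a qualitative proof, but if you want the specific constants in the statement, you should invert your logic: establish (2) directly from thin triangles and the midpoint collinearity, then read off (1) by summing.
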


We call this quasi-geodesic $\gamma$ an {\it axis} of $g$ in this section. 
The fixed point set $Fix(g)$ consists of the two end-points 
of $\gamma$, which are $\gamma(\pm \infty)=\lim_{t \to \pm \infty}
\gamma(t)$, respectively for $+$ and $-$.

Let $\pi_\gamma:X \to \gamma$ be the nearest points projection.
This is a {\it coarse map} in the sense that the image of each point
is not a point but a set in $X$, but it is uniformly bounded: for every $x \in X$, 
$$ {\rm diam} \, \pi_\gamma(x) \le 30 \delta.$$
\proof
If $\delta=0$ then $X$ is a tree and the conclusion holds, 
so that we assume $\delta >0$.
Let $M(\delta)=12 \delta$ be the constant in Lemma \ref{quasi.axis}.
Let $x \in X$ and $y,z \in \pi_\gamma(x)$.
We show $|y-z| \le 2 M + 6 \delta=30\delta$.
Assume not. Let $w$ be the midpoint of $[y,z]$, 
and take $v$ on $[x,y]$ (or on $[x,z]$) with 
$|w-v| \le \delta$. Also take $w' \in \gamma$ with 
$|w-w'| \le M$.
Then $|w'-v| \le M + \delta$.
On the other hand, 
$|y-v| \ge M + 3 \delta - \delta$, so that 
$|x-v| \le |x-y| - M - 2 \delta$.
We then have $|x-w'| \le |x-v|+|v-w'| \le |x-y|  - \delta < |x-y|$,
so that $y \in \gamma $ is not a nearest point from $x$ ($w' \in \gamma$
is closer), a contradiction. 
\qed

Since $\gamma$ is $g$-invariant, the map $\pi_\gamma$  is $g$-equivariant:
$g\pi_\gamma(x)=\pi_\gamma(gx)$.
For convenience, we choose a point from the set $\pi_\gamma(x)$
and denote this point by $\pi_\gamma(x)$. We arrange $\pi_\gamma$
in this new definition to be $g$-equivariant. 

\bigskip
\noindent
{\it Proof of Proposition \ref{semi.group}}.
We set $\Delta=10000$.
We also assume $\delta >0$ since 
if $\delta=0$ then $X$ is a tree and this case 
is essentially treated in \cite{harpe.bucher}.
We will define subsets $A, B$ in $X$
that satisfies the assumption of Lemma \ref{pingpong.semigroup} for $g,h$, 
or maybe one or both of their inverses. 
Then we are done by the lemma. 

Let $\gamma, \sigma$ be axes of $g,h$ from Lemma \ref{quasi.axis}.
We arrange the direction (to which the parameter increases) of $\gamma, \sigma$ as follows. If $Fix(g) \cap Fix(h)$ is non-empty
(then it consists of one point by our assumption), then we arrange 
$\gamma(-\infty) =\sigma(-\infty)$. Notice that this case 
is equivalent to the projection $\pi_\gamma(\sigma)$ being unbounded.
Now, if $\pi_\gamma(\sigma)$  is bounded but its diameter is longer than $500 \delta$, then it means that near $\pi_\gamma(\sigma)$, 
$\sigma$ runs parallel to $\gamma$ in a bounded ($\le 30 \delta$) neighborhood of $\sigma$ (use Lemma \ref{quasi.axis} (2)), so that we arrange the directions of $\gamma, \sigma$ 
coincide  in this part.
Otherwise, we put directions randomly. 
In any case, $\gamma(\infty) \not= \sigma(\infty)$.

Since $g,h$ are hyperbolic,
there are $L\not= 0$ with $g(\gamma(t))=\gamma(t + L)$ for all $t$, 
and $K\not= 0$ with 
$h(\sigma(t))=\sigma(t+K)$ for all $t$.
Changing $g$ or $h$ or both to their inverses if necessary, we may assume that $L>0, K>0$. We will prove the proposition for the pair $g,h$.

We choose two base points $P \in \gamma, Q\in \sigma$ in the following way.
\\
{\it Case (a)}. If the distance between $\gamma,\sigma$ is more than $100 \delta$, 
then choose a shortest geodesic $\tau$ between them, and 
set $P=\tau \cap \gamma, Q= \tau \cap \sigma$.
\\
{\it Case (b)}.
If the distance between $\gamma, \sigma$ is at most $100 \delta$, then 
let $P$ be the last point (in terms of the parameter $t$)  in $\gamma$ that is contained
in $(\sigma)_{100 \delta}$, which denotes the $100 \delta$-neighborhood of $\sigma$. 
Likewise, let $Q$ be the last point in $\sigma$ that is contained
in $(\gamma)_{100 \delta}$.
Note that $|P-\pi_\sigma(P)|=|Q-\pi_\gamma(Q)| = 100 \delta$.
In either case, by adding constants to parameters, 
we may assume $P=\gamma(0), Q=\sigma(0)$.

Now we define two subsets in $X$.
Set 
$$A=\{x \in X | \pi_\gamma(x) \in \gamma([\frac{1}{2}\min(L(g),L(h)), \infty))\}$$
$$B=\{x \in X | \pi_\sigma(x) \in \sigma([\frac{1}{2}\min(L(g),L(h)), \infty)) \}$$

We want to show the following.  Then we are done by Lemma \ref{pingpong.semigroup}.
\begin{equation}\label{pingpong.sets}
A \cap B = \emptyset, g(A \cup B) \subset A,
h(A\cup B) \subset B.
\end{equation}

 For two points $x,y \in X$, we write $x\sim y$ if $|x-y| \le 1000 \delta$.
 We first show:
 \\
 (I) $\pi_\sigma(P) \sim Q$. $\pi_\gamma(Q) \sim P$. 
 \\
 (II)  
 If $x\in B$ then $\pi_\gamma(x) \sim P$.
 If $x \in A$ then $\pi_\sigma(x) \sim Q$.
%


 We use the following lemma on 
 the nearest points projection in a $\delta$-hyperbolic space to prove (I) and (II).
 If $\alpha$ is a geodesic, then the lemma is well-known.
 We do not know a reference for our setting, so we give an argument
 at the end of the section.
 We denote the Hausdorff distance between two sets $Y,Z$ in $X$
 by $d_H(Y,Z)$. A geodesic between two points $x,y \in X$
 is denoted by $[x,y]$.
 
 \begin{lemma}\label{lemma.proj}
 (1)
 Let $\alpha$ be an axis from Lemma \ref{quasi.axis},  $x \in X$, and $w\in \alpha$. Let $z=\pi_\alpha(x)$. Then $d_H([x,z]\cup[z,w], [x,w]) \le 16 \delta$.
 Moreover, suppose $y \in X$ with  $w=\pi_\alpha(y)$.
 If $|z-w| \ge 40 \delta$, then 
 $d_H([x,z]\cup [z,w]\cup[w,y], [x,y]) \le 18 \delta$.
 \\
 (2)
 If $|x-y| \le D$, then 
 $|\pi_\alpha(x) - \pi_\alpha(y)| \le D + 36 \delta$.
 \end{lemma}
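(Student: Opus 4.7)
The plan is to prove both claims by estimating the relevant Gromov products at the nearest-point projections and then invoking standard consequences of $\delta$-hyperbolicity.

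For the first claim of (1), I would begin by showing $(x,w)_z \leq 15\delta$. Let $p^{*} \in [z,w]$ be the ``switch point'' of the $\delta$-thin geodesic triangle $[x,z,w]$, i.e.\ a point within $\delta$ of both $[x,z]$ and $[x,w]$, and let $q^{*} \in [x,z]$ with $|p^{*}-q^{*}| \leq \delta$. Since $q^{*}$ lies on the geodesic $[x,z]$, we have $|x-z| = |x-q^{*}| + |q^{*}-z|$, while $|x-p^{*}| \leq |x-q^{*}|+\delta$. On the other hand, Lemma \ref{quasi.axis}(2) tells us that every point of $[z,w]$ lies within $12\delta$ of $\alpha$, and since $z$ is a nearest point of $\alpha$ to $x$, every $p \in [z,w]$ satisfies $|x-p| \geq |x-z|-12\delta$. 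Specializing to $p^{*}$ yields $|q^{*}-z| \leq 13\delta$, hence $|p^{*}-z|\leq 14\delta$, so $(x,w)_z \leq 15\delta$. The Hausdorff bound $16\delta$ then follows from the standard tripod-approximation fact that in a $\delta$-thin geodesic triangle $[x,z,w]$,
\[
d_H\bigl([x,z]\cup[z,w],\,[x,w]\bigr) \leq (x,w)_z + O(\delta).
\]

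For the moreover part of (1), the same argument applied with the roles of $(x,z)$ and $(y,w)$ reversed gives $(y,z)_w \leq 15\delta$. I then invoke the four-point characterization of $\delta$-hyperbolicity,
\[
|x-w|+|y-z| \leq \max\bigl(|x-y|+|z-w|,\,|x-z|+|y-w|\bigr)+2\delta.
\]
The two Gromov-product estimates imply $|x-w|+|y-z| \geq |x-z|+|y-w|+2|z-w|-60\delta$, so the hypothesis $|z-w|\geq 40\delta$ forces the maximum to be attained by $|x-y|+|z-w|$. Rearranging then gives $|x-y| \geq |x-z|+|z-w|+|w-y|-O(\delta)$, and since the reverse inequality is automatic, the corner Gromov products $(x,y)_z$ and $(x,y)_w$ are both $O(\delta)$. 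In the four-point tree approximation the points $x,z,w,y$ therefore lie along a path in this order, and the piecewise geodesic $[x,z]\cup[z,w]\cup[w,y]$ is uniformly close to $[x,y]$ with Hausdorff distance at most $18\delta$.

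For (2), set $z = \pi_\alpha(x)$ and $w = \pi_\alpha(y)$. Re-expressing the Hausdorff estimate from the first claim of (1) as a Gromov-product inequality, and applying it both at $x$ and at $y$, gives
\[
|x-z|+|z-w| \leq |x-w|+O(\delta), \qquad |y-w|+|w-z| \leq |y-z|+O(\delta).
\]
Adding these and using the triangle inequalities $|x-w|\leq|x-y|+|y-w|$ and $|y-z|\leq|x-y|+|x-z|$, the terms $|x-z|$ and $|y-w|$ cancel, leaving $2|z-w| \leq 2|x-y|+O(\delta) \leq 2D+O(\delta)$; careful bookkeeping of the constants yields $|z-w| \leq D+36\delta$.

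The main obstacle will be tracking constants carefully enough to land precisely on the stated bounds $16\delta$, $18\delta$, $36\delta$. Both the switch-point analysis and the four-point inequality are standard but demand patient numerical accounting; the assumption $|z-w|\geq 40\delta$ in the moreover clause enters precisely to force the correct branch of the maximum in the four-point inequality, which is what lets one conclude that $[x,y]$ actually ``visits'' both $z$ and $w$ rather than short-cutting across them.
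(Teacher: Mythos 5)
For the first claim of (1) your argument is essentially the paper's: both rest on exactly the same two inputs, namely the $\delta$-thinness of the triangle $x,z,w$ and the combination of ``$z$ realizes $d(x,\alpha)$'' with Lemma \ref{quasi.axis}(2), which shows no point of $[z,w]$ can be much closer to $x$ than $z$ is. The paper takes the point $z'$ of $[x,z]$ at distance $14\delta$ from $z$, rules out its being $\delta$-close to $[z,w]$, and so produces a point of $[x,w]$ within $15\delta$ of $z$; you instead locate the switch point on $[z,w]$ and package the same estimate as $(x,w)_z\le 15\delta$. (Your target of exactly $16\delta$ requires the additive error in the tripod fact to be at most $\delta$, which the usual statements do not quite give; this is the same mild looseness already present in the paper's write-up.) Your part (2), however, is a genuinely different and in fact cleaner route: the paper deduces (2) from the moreover clause, which strictly requires $|z-w|\ge 40\delta$ (not guaranteed when $D<4\delta$), whereas you use only two instances of the first claim plus the triangle inequality and land at $|z-w|\le D+30\delta\le D+36\delta$.

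The one real gap is in the moreover part of (1). Your four-point computation does correctly force the branch $|x-y|+|z-w|$ of the maximum, but quantitatively it only yields $|x-y|\ge |x-z|+|z-w|+|w-y|-62\delta$, hence corner Gromov products $(x,y)_z,\,(x,y)_w\le 31\delta$ and a Hausdorff bound of roughly $35\delta$, not $18\delta$; ``both $O(\delta)$'' is true but with a constant too large for the statement, and this matters because the proof of Proposition \ref{semi.group} uses the numerical values $18\delta$ (and then $34\delta$, $46\delta$, $58\delta$) downstream. To recover the stated constant, argue as the paper sketches with the quadrilateral: from the first claim $d(z,[x,w])\le 15\delta$, thinness of the triangle $x,w,y$ puts $z$ within $16\delta$ of $[x,y]\cup[w,y]$; if $z$ were $16\delta$-close to some $p\in[w,y]$, then $|w-p|\ge |z-w|-16\delta\ge 24\delta$ combined with $(y,z)_w\le 15\delta$ forces $|z-w|\le 22\delta$, contradicting $|z-w|\ge 40\delta$. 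Hence $z$ (and symmetrically $w$) lies within $16\delta$ of $[x,y]$, and comparing the three subsegments gives the $18\delta$ Hausdorff bound.
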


%

 We start the proof of (I) and (II).
 Note that if $x \in A$, then 
 $|\pi_\gamma(x) - P| \ge \frac{9}{10} 5000 \delta - 24 \delta = 4476 \delta$
 since $L(g), L(h) \ge 10000 \delta$ and $\gamma$ is an axis of $g$.
 Similarly, if $ x \in B$, then $|\pi_\sigma(x) - Q| \ge 4476 \delta$.
 
 First, we discuss the case (a) in terms of $P, Q$,
 ie, $|P-Q| > 100 \delta$.
 Then (I) is trivial since (we can arrange that) $\pi_\sigma(P) = Q$ and $\pi_\gamma(Q) = P$.
 
 For (II), we show that 
 if $x\in B$ then $\pi_\gamma(x) \sim P$. 
  Set $y=\pi_\sigma(x)$ and $z=\pi_\gamma(x)$.
 We want to show $|z-P| \le 1000 \delta$.
 Since $Q=\pi_\sigma(P), y=\pi_\sigma(x)$ and $|Q-y| \ge 4476  \delta$,
 we have $d_H( [x,y] \cup [y,Q]\cup [Q,P], [x,P]) \le 18 \delta$
 by Lemma \ref{lemma.proj}.
 Also  
 $d_H([x,z]\cup [z,P], [x,P])\le 16
  \delta$ since $z=\pi_\gamma(x)$.
  So, $d_H([x,y] \cup [y,Q]\cup [Q,P], [x,z]\cup [z,P]) \le 34 \delta$.
To argue by contradiction, assume  $|P-z|> 1000 \delta$.
Let $R \in [P,Q]$ be the point with $|P-R| = 100 \delta$.
Then there is $R' \in [P,z]$ with $|R-R'| \le 34 \delta$.
Then there is $R'' \in \gamma$ with $|R'-R''| \le 12 \delta$, 
so that $|R-R''| \le 46 \delta < |R-P|$. 
But then
$|R''- Q | \le |Q-R| + |R- R''| < |Q-R| + |R-P|=|Q-P|$.
This is a contradiction since $P$ must be the nearest point
on $\gamma$ from $Q$ (but we found $R''$ is closer).
We are done for the first claim of (II). The second one is similar
(just switch the roles), and we do not repeat.
 Case (a) is finished.

 Suppose we are in case (b). 
 For two points $x,y \in \gamma$, we write $x\le y$ if $x=y$, 
 or 
 $y$ appears after $x$ in terms of the parameter $t$.
 We write $x<y$ if $x \le y$ and $x\not= y$.
 We use the similar notation for points on $\sigma$.

 We argue (I). By the way we chose $P,Q$, we have $\pi_\sigma(P) \le Q$
 and $\pi_\gamma(Q) \le P$.
 By Lemma \ref{lemma.proj}, $d_H( [Q, \pi_\sigma(P)] \cup [\pi_\sigma(P), P], [Q,P]) \le 16 \delta$ and 
  $d_H([Q, \pi_\gamma(Q)] \cup [\pi_\gamma(Q), P], [Q,P]) \le 16 \delta.$
  So, $d_H([Q, \pi_\sigma(P)] \cup [\pi_\sigma(P), P], [Q, \pi_\gamma(Q)] \cup [\pi_\gamma(Q), P]) \le 32 \delta$.

 To argue by contradiction, suppose $|\pi_\sigma(P) -Q| > 1000 \delta$.
 Then since $|Q-\pi_\gamma(Q)|=100\delta$
 and $|P-\pi_\sigma(P)| = 100 \delta$, 
 we have $|\pi_\gamma(Q) - P | > 800 \delta$.
 But this forces that the orientations of $\gamma$ and $\sigma$
 are opposite along the geodesic $[\pi_\gamma(Q), P]$
 (draw a thin rectangle for $\pi_\gamma(Q), Q, \pi_\sigma(P), P$), 
 which is a contradiction. We showed $|\pi_\sigma(P) -Q| \le 1000 \delta$. The other inequality is proved similarly. 
  (I) is shown. 
 
 We argue  (II). As before we only discuss the first one.
 Set $y=\pi_\sigma(x)$ and $z=\pi_\gamma(x)$.
 We want to show $|P-z| \le 1000 \delta$. 
 Since $ \pi_\sigma(P) \le Q$, 
 we have $|y-\pi_\sigma(P)| \ge 4476 \delta$.
 So, $d_H([P, \pi_\sigma(P)]\cup [\pi_\sigma(P), y]\cup [y,x], [P,x]) \le 18 \delta.$
 Also, $d_H([P,z]\cup [z,x], [P,x]) \le 16 \delta$.
 Hence, $d_H([P,z]\cup [z,x], [P, \pi_\sigma(P)]\cup [\pi_\sigma(P), y]\cup [y,x]) \le 34 \delta$.
 
 Now, to argue by contradiction, assume $|P-z| > 1000 \delta$.
 Let $R \in [P,z]$ be the point with $|P-R| = 500 \delta$.
 Pick a point $R' \in \gamma$ with $P<R'<z$
 and $|R-R'| \le 12 \delta$ by Lemma \ref{quasi.axis}.
 Also, pick a point $R'' \in [P, \pi_\sigma(P)]\cup [\pi_\sigma(P), y]\cup [y,x]$
 with $|R-R''| \le 34 \delta$.
 Notice that in fact $R'' \in [\pi_\sigma(P), y]$ since 
 $|P-\pi_\sigma(P)| = 100 \delta$.
 Then pick a point $R''' \in \sigma$
 with $|R''-R'''| \le 12 \delta$.
 But this implies $|R'''- R'| \le (12+34+12)\delta=58 \delta$, 
 so that $R' \in (\sigma)_{100 \delta}$.
 This is a contradiction since $P < R'$ on $\gamma$
 ($P$ must be the last such point).
 (II) is shown. 
 We finished the proof of (I) and (II).
 
 We now show (\ref{pingpong.sets}) using (I) and (II).
Clearly both $A$ and $B$ are not empty. 
 To see $A \cap B $ is empty, take any point $x \in B$. Then by  (II),
 $\pi_\gamma(x) \sim P$, ie,  $|\pi_\gamma(x) - P| \le 1000 \delta$.
 But if $x \in A$ then as we said $|\pi_\gamma(x) -P| \ge 4476 \delta$, so 
 $x \not\in A$, hence  we are done.
$g(A) \subset A$ is immediate from the $g$-equivariance of $\pi_\gamma$.
 To see $g(B) \subset A$, let $x \in B$.
 Since $\pi_\gamma(x) \sim P$ we have 
 $g(\pi_\gamma(x))\sim g(P)$.
 Also $g(P) \in \gamma \cap A$ implies $|P-g(P)| \ge \frac{9}{10} 10000 \delta
 -24 \delta = 8976 \delta $,
 hence $|P-g(\pi_\gamma(x))| \ge 7976 \delta$. This implies 
 $g(\pi_\gamma(x)) \in \gamma \cap A$. 
 But since $g(\pi_\gamma(x))=\pi_\gamma(g(x))$,
 it follows 
 $g(x) \in A$ from the definition of $A$. We showed  $g(B) \subset A$, 
 hence $g(A \cup B) \subset A$.
 $h(A\cup B) \subset B$ is similar  (switch the roles) and 
 we do not repeat. 
(\ref{pingpong.sets}) is proved.
\qed
 
 We give a proof of Lemma \ref{quasi.axis}.
 A similar statement appears as Lemma 1 in \cite{fujiwara},
 in which we only assume $g$ is hyperbolic, with no large
 lower bound of $L(g)$, and find a $g$-invariant piecewise 
 geodesic that satisfies (2) with a constant bigger than $12 \delta$.
 One can not expect (1) holds in that case.

 \proof
If $\delta=0$, then $X$ is a tree and there is a $g$-invariant geodesic, $\gamma$. This is a desired path, 
so we assume $\delta >0$ in the rest. 

If we take any point $x \in X$ and join its $g$-orbit 
in the obvious order by geodesics, we obtain a $g$-invariant,
piecewise geodesic. This path is always a quasi-geodesic but 
we do not have uniform bounds on the quasi-geodesic constants. 
But we choose $m$ to be the midpoint of $[x,gx]$, then 
define a $g$-invariant, piecewise geodesic:
$$\gamma = \cup_{n \in \Bbb Z} g^n ([m,gm]).$$
The merit of $m$ is that $[m,gm]$ is very close to $L(g)$, 
and the ``bumps'' at points $g^n(m)$ on $\gamma$
are small. 
We first show $\gamma$ satisfies (2) using $L(g) \ge 1000 \delta$.
Take any two points $x,y \in \gamma$. Using the $g$-action if necessary, 
we may assume  $x \in [m,g(m)], y \in [g^{n-1}(m),g^n(m)]$ for some $n>1$. (If $n=1$, then the claim (2) is trivial.)
The first observation is that 
$g^i(m) \in N_{2 \delta} ([m,g^n(m)])$ for all $0 \le i \le n$.
This is an easy exercise by induction on $n$,  and we leave it to readers (see
Remark).
From this it follows 
$x,y \in N_{3 \delta} ([m,g^n(m)])$.

Next, we show:
\\
{\it Claim}.
$g(m), \cdots, g^{n-1}(m)$ are all in $N_{10 \delta}([x,y])$.
\\
To see that, take points on $[m,g^n(m)]$ that are close to those points:
$$\overline x, \overline{g(m)}, \overline{g^2(m)}, \cdots,
\overline{g^{n-1}(m)}, \overline y \in [m, g^n(m)]$$
with $|x-\overline x| \le 3 \delta, |g(m)- \overline{g(m)}| \le 2 \delta, \cdots, |g^{n-1}(m) - \overline{g^{n-1}(m)} |
\le 2 \delta, |y- \overline y| \le 3 \delta$.
Take $[\overline x, \overline y]$ to be the subpath of $[m,g^n(m)]$.
Notice that the points
$\overline{g^2(m)}, \cdots,
\overline{g^{n-2}(m)}$ are contained in $[\overline x, \overline y]$
since $L(g) \ge 1000 \delta$, but possibly 
$\overline{g(m)}$ or $\overline{g^{n-1}(m)}$
is not in $[\overline x, \overline y]$, ie, 
$\overline{g(m)} \in [m, \overline x]$ or $\overline{g^{n-1}(m)}
\in [\overline y, g^n(m)]$.
But this exceptional case happens
only when $g(m)$ is close to $x$, or $g^{n-1}(m)$
is close to $y$. We finish this case first.
So suppose $\overline{g(m)} \in [m, \overline x]$. Then
pick a point $z \in [m,x]$ with $|\overline {g(m)} - z| \le 4 \delta$
using $|x-\overline x| \le 3 \delta$.
It implies that $|g(m) - z| \le 2 \delta + 4 \delta = 6 \delta$, 
hence $|x-g(m)| \le 6 \delta$. 
This is a desired bound and the claim is shown for $g(m)$.
Similarly, we have $|y-g^{n-1}(m)| \le 6 \delta$.

Now we go back to the general case, ie, 
$\overline{g^i(m)} \in [\overline x, \overline y]$, 
so that $g^i(m) \in N_{2\delta}([\overline x, \overline y])$.
But,  
$d_H([x,y], [\overline x , \overline y]) \le 8 \delta$
since $|x-\overline x|, |y-\overline y| \le 3 \delta$.
It then implies
that $g^i(m)$ is  in $N_{10 \delta}([x,y])$.
The claim is shown.

Finally, let $\gamma(x,y)$ denote the subpath 
of $\gamma$ between $x,y$ then the above claim 
implies that $d_H([x,y],\gamma(x,y)) \le 12 \delta$
since $\gamma(x,y)$ is a piecewise geodesic joining the points
in the claim. 
(2) is shown. 

We prove (1).  Remember that $g^i(m), 1 \le i \le n-1$, are the points that are contained
in $\gamma(x,y)$, the subpath of $\gamma$ between $x,y$.
Then by (2), pick  points $x_i \in [x,y], 1 \le i \le n-1$,  with $|x_i-g^i(m)| \le 12 \delta$.
Let $x=\gamma(t), y=\gamma(s)$.
Then, 
\begin{align*}
 |x-y| &= |x-x_1| + |x_1 - x_2| + \cdots + |x_{n-1}-y| 
 \\
&\ge (|x-g(m)| - 12 \delta) +(|g(m)-g^{2}(m)| - 24 \delta) + \cdots
\\
&+ (|g^{n-2}(m) - g^{n-1}(m)| - 24 \delta) + (|g^{n-1}(m) -y| -12 \delta)
\\
&\ge \frac{1000 \delta - 24 \delta}{1000 \delta}
( |x-g(m)| +|g(m)-g^{2}(m)|  + \cdots |g^{n-1}(m) -y| )- 24 \delta
\\
&\ge \frac{1000-24}{1000}|t-s| - 24\delta
\ge \frac{9}{10}|t-s| -24\delta.
\end{align*}
Note that we need $-24 \delta$ since
we  do not have $|x-g(m)|, |g^{n-1}(m)-y| \ge 1000 \delta$.
\qed

\begin{remark}
This claim appears in the beginning of the proof of Lemma 1 in \cite{fujiwara}, 
where we do not assume $L(g) \ge 1000 \delta$, so that 
we take a power of $g$, $g^N$, with $L(g^N) \ge 1000 \delta$, 
then prove the same claim for $g^N$.

\end{remark}

We give a proof of Lemma \ref{lemma.proj}.
\proof
 If $\delta=0$, then $X$ is a tree and the lemma is trivial, so 
 we assume $\delta>0$.
 \\
 (1) We prove the first claim. If $|x-z| \le 14 \delta$, then the conclusion is 
 trivial, so assume
 $|x-z| > 14 \delta$. Let $z' \in [x,z]$ with $|z-z'|= 14 \delta$.
 $z' \in N_\delta([x,w] \cup [z,w])$, but 
 in fact $z' \in N_\delta([x,w])$.
 This is because if $z' \in N_\delta([z,w])$, 
 then $z' \in N_{13 \delta} (\alpha)$ since $\alpha$ is an axis, which 
 is impossible since $d(z',\alpha)=d(z',z)=14 \delta$.
 So, pick $z'' \in [x,w]$  such that $|z''-z'| \le \delta$, 
 then $|z-z''| \le 15 \delta$.
 It implies that $d_H([z,w], [x,z]\cup [z,w]) \le 16 \delta$,
 and we are done. 
 An argument for the second claim is similar.
 Drawing a geodesic quadrilateral for $x,z,w,y$ and using $|z-w|$ is long, we find 
 $z'', w'' \in [x,y]$ with $|z-z''|, |w-w''| \le 16 \delta$.
 Then the conclusion easily follows. We omit details. 
 (1) is finished. 
 
 (2) is a consequence of (1). Indeed, 
 suppose $|\pi_\alpha(x) - \pi_\alpha(y)| > D + 36 \delta$. By (1), there must be points $Z, W \in [x,y]$
 with $|Z-\pi_\alpha(x)|, |W-\pi_\alpha(y)|
 \le 18 \delta$. That implies that 
 $|Z-W| > D$, so that $|x-y| >D$, a contradiction.
 \qed

\section{Uniform Tits alternative for groups acting on trees}

There is no uniform Tits alternative for groups acting on trees. This section is devoted to an example showing this. To begin with, we recall the following:

\begin{proposition}\label{semi-tree} If $S$ is a finite set of isometries of a tree  with no global fixed point on the tree nor on its boundary, then one may find a pair $a,b$ in $ (S \cup S^{- 1})^{3}$ generating a free sub-semigroup. 
\end{proposition}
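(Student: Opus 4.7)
The plan is in three stages: produce one hyperbolic element of small word length, upgrade to a pair of hyperbolic elements with distinct axes while keeping word length at most $3$, and finish by ping-pong.

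Set $T=S\cup S^{-1}$. First I would establish $L(T)>0$: otherwise the sublevel sets $F_\varepsilon=\{x\in X:L(T,x)\leq\varepsilon\}$ form a nested family of non-empty subtrees, and either they share a common point (yielding a fixed point in $X$) or they shrink toward a common end (yielding a fixed point in $\partial X$), contradicting the hypothesis. Once $L(T)>0$, Corollary \ref{growth.S} gives $\lambda_2(T)=L(T)>0$, so some $g\in T\cup T^2$ is hyperbolic with axis $\alpha$ and boundary pair $\{p,q\}\subset\partial X$. Since $\langle S\rangle$ has no fixed point on $\partial X$, the pair $\{p,q\}$ cannot be preserved by all of $T$, so there exists $u\in T$ with $u\alpha\neq\alpha$.

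If $g\in T$, the conjugate $h:=ugu^{-1}$ has length $3$ in $T$, is hyperbolic, and has axis $u\alpha\neq\alpha$. If $g=s_1s_2\in T^2$, the cyclic conjugate $g':=s_2gs_2^{-1}=s_2s_1$ is hyperbolic of length $2$ with axis $s_2\alpha$; provided $s_2\alpha\neq\alpha$, or symmetrically $s_1\alpha\neq\alpha$ via $s_1^{-1}gs_1$, this is the second hyperbolic element sought. In the residual sub-case both $s_1,s_2$ stabilize $\alpha$, necessarily as reflections (they are elliptic, and an elliptic isometry of a tree preserving a geodesic acts as the identity or a reflection), and the chosen $u$ satisfies $u\alpha\neq\alpha$; then an axis analysis on the bridge between $\alpha$ and $u\alpha$ shows that the length-$3$ word $gu$ is hyperbolic with axis branching off $\alpha$ at the nearest-point projection of $u\alpha$, hence distinct from $\alpha$.

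Having two hyperbolic $a,b\in T^{\leq 3}$ with different boundary fixed-point pairs, I would close by ping-pong: after replacing $a$ or $b$ by an inverse if necessary so that their attracting fixed points lie on opposite sides of the geodesic bridge joining the two axes, half-tree neighborhoods of these attracting points serve as the sets required by Lemma \ref{pingpong.semigroup} and show that $a,b$ generate a free sub-semigroup. This is the $\delta=0$ special case of Proposition \ref{semi.group}, and in the tree setting goes back to \cite{harpe.bucher}. The main obstacle is the residual sub-case above: when $g$ has length $2$ and both of its letters stabilize $\alpha$, no cyclic conjugate produces a new axis, and one must verify by a direct tree-geometric computation that the length-$3$ product $gu$ is genuinely hyperbolic with an axis departing from $\alpha$.
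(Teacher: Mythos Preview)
Your overall strategy mirrors the paper's: produce a hyperbolic element $g$ of length at most $2$, find a second hyperbolic element with a different axis, and invoke the ping-pong of Proposition~\ref{semi.group}. Where you go further is in the case analysis meant to keep the second element inside $(S\cup S^{-1})^3$; the paper simply writes $sgs^{-1}$, which has length $4$ when $g\in T^2$, so your extra care is warranted. The cyclic-conjugate trick in the length-$2$ case is correct and is exactly what is needed to repair that point.

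The residual sub-case, however, does not go through: your claim that $gu$ is hyperbolic whenever $g=s_1s_2$ with both $s_i$ reflecting $\alpha$ and $u\in T$ elliptic with $u\alpha\ne\alpha$ is false. In a $3$-regular tree with a line $\alpha$ parametrised by $\mathbb{Z}$, let $g$ translate by $+2$ and let $u$ be the involution fixing $0$ that sends $[0,1]\mapsto[0,-1]$ and $[1,2]$ to the off-$\alpha$ edge at $-1$ (symmetrically on the other side). Then $u$ is elliptic, $u\alpha$ agrees with $\alpha$ only on $[-1,1]$ so $u\alpha\ne\alpha$, yet $gu(1)=g(u(1))=g(-1)=1$, so $gu$ is elliptic. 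Writing $g=s_1s_2$ with $s_i$ the reflections of $\alpha$ at $1$ and $0$ and taking $S=\{s_1,s_2,u\}$ lands exactly in your residual case. The ``axis analysis on the bridge between $\alpha$ and $u\alpha$'' breaks down because the two lines overlap in a segment rather than being joined by a bridge. (A smaller point, shared with the paper's write-up: from ``no fixed point on $\partial X$'' you infer that the pair $\{p,q\}$ is not $T$-invariant, but an infinite dihedral group of reflections of $\alpha$ preserves $\{p,q\}$ setwise while fixing neither point; one must also exclude a setwise-invariant pair.)
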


\begin{proof} This assumption implies that $L(S)>0$. Proposition \ref{formula.S} implies that there is $g \in S \cup S^2$ with $L(g)>0$. Since $g$ is a hyperbolic isometry it fixes exactly two points on the boundary of the tree. Say $x$ is the forward fixed point. Our assumption implies that there is $s \in S$  with $sx\neq x$. Now we may apply Proposition \ref{semi.group} to the pair $\{g,sgs^{-1}\}$
(take inverses if necessary) and get the desired conclusion.\end{proof}

So we can find a free sub-semigroup quickly. We now show  that by contrast we may not be able to quickly find a pair generating a free subgroup. This is Proposition \ref{counter-tree} from the Introduction, which we restate here.

\begin{proposition} For every $N \in \N$ one can find a pair $S\:=\{a,b\}$ of automorphisms of the $3$-valent homogeneous tree $T$ such that the subgroup generated by $S$ contains a free subgroup and has no global fixed point on $T$ nor on the boundary $\partial T$, but no pair of elements in the $N$-ball  $B(N):=(\{1\}\cup S \cup S^{-1})^N$ generates a free subgroup. 
\end{proposition}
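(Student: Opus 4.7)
The plan is, for each $N \in \N$, to exhibit an explicit pair $(a, b)$ of automorphisms of the 3-valent tree $T$ with the four required properties. The guiding idea is to realize $\langle a, b\rangle$ as a two-generated group whose $N$-ball has been filled with just enough relations to preclude pairwise freeness, while long words remain free enough to witness a non-abelian free subgroup inside the ambient group.

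The natural starting point is the free product $\Z/3 * \Z/3$, whose Bass--Serre tree is precisely $T$, with each vertex stabilizer equal to $\Z/3$ and trivial edge stabilizers. The two order-$3$ generators $x, y$ of $\Z/3 * \Z/3$ act faithfully and non-elementarily on $T$, and $\Z/3 * \Z/3$ contains a non-abelian free subgroup (for instance $\langle xy, xy^{-1}\rangle$). I would construct $(a, b)$ by composing $x$ and $y$ with carefully chosen automorphisms of $T$ that fix pointwise a large ball $B(v_0, R)$ around a basepoint (for $R \gg N$) and act non-trivially only on specified ``deep'' subtrees hanging off this ball. Such modifications preserve the action on $B(v_0, R)$, so the images of any short word $w$ of length $\le N$ coincide with the unmodified words of $\Z/3*\Z/3$ on a large region. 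The deep perturbations are organized so that for every pair $(w_1, w_2) \in B(N)^2$, a non-trivial long relation $r_{w_1, w_2}(w_1, w_2) = 1$ is forced in $\langle a, b\rangle$, by making the deep actions of $w_1$ and $w_2$ satisfy a prescribed relation (e.g.\ a high commutator power $[w_1, w_2]^k = 1$). Crucially, the deep perturbations associated to different pairs are supported on pairwise-disjoint deep subtrees, so that different relations do not interact and one can handle all pairs in $B(N)^2$ simultaneously.

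The verification of the four conclusions then proceeds as follows. The group $\langle a, b\rangle$ inherits non-elementarity and the presence of free subgroups from $\Z/3 * \Z/3$, since the perturbations only introduce relations that concern the short-word pairs. In particular, the free pair $(xy, xy^{-1})$, when rewritten as a pair of suitably long words in $a, b$, remains free. The absence of a global fixed point on $T \cup \partial T$ follows from the non-elementarity of the underlying $\Z/3 * \Z/3$-action on $T$, which is preserved under our modification since the latter acts trivially in a neighborhood of $v_0$. Finally, by construction, each pair in $B(N)^2$ satisfies its designed relation $r_{w_1, w_2}$, hence is not free of rank $2$.

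The main obstacle is twofold. First, one must ensure that the deep perturbations can be arranged so that each short pair $(w_1, w_2)$ picks up exactly its designated relation and no spurious relations appear on longer words; this requires an inductive construction distributing perturbations across pairwise disjoint deep subtrees, together with a careful bookkeeping of how compositions of short words act on these subtrees. Second, and more subtly, one must verify that the relation $r_{w_1,w_2}$ realised in the deep region genuinely destroys the abstract freeness of $\langle w_1,w_2\rangle$ in $\langle a,b\rangle$ rather than merely collapsing two elements that happen to agree on a single branch; this is where one must be careful to make the deep relations normally generated in a controlled way, which I would address by embedding the deep action into a suitable finite quotient in which the relation holds globally.
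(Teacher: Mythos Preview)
Your approach has a fundamental gap that your own ``second obstacle'' paragraph actually identifies without resolving. You propose to take the generators $x,y$ of $\Z/3*\Z/3$ acting on $T$, keep their action on a large ball $B(v_0,R)$ unchanged, and perturb only on deep subtrees so as to force relations $r_{w_1,w_2}(w_1,w_2)=1$ for each short pair. But a relation in $\langle a,b\rangle\subset\Aut(T)$ is a \emph{global} identity: the automorphism $r_{w_1,w_2}(w_1(a,b),w_2(a,b))$ must act trivially on \emph{all} of $T$, in particular on $B(v_0,R')$ for $R'=R-|r_{w_1,w_2}|\cdot N$. On that ball it acts exactly as the corresponding word in $x,y$ does, i.e.\ as the element $r_{w_1,w_2}(w_1(x,y),w_2(x,y))\in\Z/3*\Z/3$. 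Since $\Z/3*\Z/3$ already contains free pairs of short words (e.g.\ $xy$ and $xy^{-1}$ lie in $B(2)$ and generate a free group), for such pairs no nontrivial word $r$ in $w_1,w_2$ vanishes in $\Z/3*\Z/3$, so no deep perturbation can make it vanish in $\langle a,b\rangle$. In short: you cannot impose new relations by modifications supported away from a region on which the free-product action is retained.

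The paper's argument is quite different and avoids this trap. It takes $a,b$ to be two \emph{hyperbolic} translations of length $1$ whose axes overlap along a segment $[p,q]$ of length $L\gg N$, and---this is the key structural idea---chooses them to preserve a fixed cyclic edge-labelling at every vertex except at $p$ and $q$. Then for any words $c,d$ of length $\le L/2$, the commutator $[c,d]$ fixes $p$ and preserves the cyclic ordering outside a bounded ball; since the cyclic ordering determines the action once a large ball is fixed pointwise, some power $[c,d]^n$ is the identity. Thus the obstruction to freeness for short pairs is not an externally imposed relation but the \emph{torsion} of commutators, which arises automatically from the local rigidity of the labelling. Meanwhile $a^{L+1},b^{L+1}$ play ping-pong and give the free subgroup. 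The moral: rather than trying to graft relations onto a free action, build $a,b$ so that short commutators are forced to be finite-order for structural reasons.
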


\begin{proof}Let $a$ and $b$ be two hyperbolic elements with translation length $1$ and whose axes intersect on a finite segment $[p,q]$ of length $L$ on which they translate in the same direction. It is clear that $a^{L+1}$ and $b^{L+1}$ generate a free subgroup. Indeed they play ping-pong on the tree : the attracting and repelling neighborhoods being the four connected components of $T\setminus \{p,q\}$ different from that containing the open segment $(p,q)$.  In particular the subgroup $\Gamma:=\langle a,b\rangle$ generated by $a$ and $b$ does not fix a point in $T$ nor on the boundary $\partial T$. 

Fix a labelling of the edges of $T$ using the alphabet $\{1,2,3\}$. Now choose $a$ and $b$ as above with the further property that $a$ and $b$ preserve the cyclic ordering of the edges at every vertex, except possibly at the end-points $p$ and $q$ of the intersection of the axes and $a$ and $b$. To see that it is possible to do this note first that if we start with one edge of $T$ there is a unique  bi-infinite geodesic through this edge such that any two consecutive edges on the geodesic are labelled by consecutive labels and thus there is a unique tree automorphism $a$ that preserves the cyclic order and acts by unit translation on this bi-infinite geodesic $\Delta_a$. This gives $a$. To find $b$ do the same on a bi-infinite geodesic $\Delta_b$ such that $\Delta_a \cap \Delta_b$ is a segment $[p,q]$ of length $L$. It is possible to obtain the isometry $b$ (in a unique way) so that $b$ preserves the cyclic order at every vertex, except that we need to reverse that order at exactly two points : the end-points $p$ and $q$. 

Now let $c$ and $d$ be any two words of length at most $L/2$ in $a,b$ and their inverses. The commutator $[c,d]$ must fix the point $p$. We claim that $[c,d]$ has finite order (and thus $c,d$ cannot generate a free subgroup). Indeed $[c,d]$ preserves the cyclic ordering at any vertex which is at distance at least $R$ from either $p$ or $q$, where $R$ is the maximum distance from either $p$ or $q$ or the image of either $p$ or $q$ under any word of length at most $2L$ in $a,b$ and their inverses. Since $[c,d]$ fixes $p$ some power $[c,d]^n$ of $[c,d]$ will fix pointwise the entire ball of radius $R$ around $p$. But since the edge ordering outside is also preserved, $[c,d]^n$ must be trivial. This ends the proof.
\end{proof}

\begin{remark} As was pointed out to us by Yves de Cornulier, the above isometries $a,b$ belong to the group of tree automorphisms with prescribed local action and finitely many singularities. These groups, denoted by $G(F,F')$, in \cite{leboudec} have attracted a lot of attention lately. In our example $F'=Sym(3)$ and $F=\Z/3\Z$.
\end{remark}

 \section{Application to uniform exponential growth}\label{application-ueg}
In this section we discuss applications to the exponential growth of groups. 
We recall some definitions. 
 Let $\Gamma$ be a group and  $S$ a finite set in $\Gamma$. Assume that $1 \in S$ and $S=S^{-1}$.
Set 
$$h(S):=\lim_{n \to \infty}\frac{1}{n}\log|S^n|.$$
write $h(S,\Gamma)$ to indicate that the subset $S$ generates $\Gamma$. 
We denote by  $\langle S \rangle$ the subgroup generated by $S$. We recall that the quantities $\ell(S)$ and $L(S)$ were defined in the Introduction.
Let $\Gamma$ be a finitely generated group.
Set
$$h(\Gamma) = \inf_S \{h(S) ; \langle S \rangle = \Gamma\},$$
where $S$ runs over finite generating subsets. If $h(\Gamma)>0$ we say $\Gamma$
has {\it uniform exponential growth}, of {\it growth rate} $h(\Gamma)$.

 \subsection{Trichotomy for actions on hyperbolic spaces}

The following result is a consequence of the Bochi-type inequality for hyperbolic spaces (Theorem \ref{bochi-hyp}) and of the construction of ping-pong pairs from Section \ref{ping-pong}.
 \begin{theorem}[Trichotomy for actions on hyperbolic spaces]\label{hyp.tri}
There is an absolute constant $C>1$ such that the following holds.
 Let $X$ be a $\delta$-hyperbolic geodesic space
 and $S \subset Isom(X)$ a finite set with $1 \in S$.
Assume $S=S^{-1}$.
 Then one of the following holds:
 
 (1) $L(S) \leq C \delta $ on $X$. 

 
 (2) $\langle S \rangle$ leaves a set of two points in the boundary
 of $X$ invariant.
 Moreover, $\langle S \rangle$ contains a hyperbolic 
 isometry $g$, and $Fix(g)$ is the invariant set. 
 
 (3) If $N> C$ is an integer, then $S^N$ contains two elements, which are hyperbolic isometries and are generators of a free semi-group.
In particular, 
$h(S) \ge (\log 2)/N >0$.
 
 \end{theorem}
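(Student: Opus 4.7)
The plan is to combine the Bochi-type inequality of Theorem~\ref{bochi-hyp} with the ping-pong construction of Proposition~\ref{semi.group}. The strategy is: once $L(S)$ exceeds a sufficiently large multiple of $\delta$, one extracts from $S \cup S^2$ a hyperbolic isometry $g$ with $L(g) > \Delta \delta$, and then either $\langle S \rangle$ stabilises the two-point set $Fix(g)$ (case (2)), or some conjugate $sgs^{-1}$ with $s \in S$ has a different fixed-point set, producing a ping-pong pair (case (3)). I will take $C = \max(K + \Delta + 1,\ 4)$, where $K$ is the constant in Theorem~\ref{bochi-hyp} and $\Delta$ is the constant in Proposition~\ref{semi.group}.

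Suppose $L(S) > C \delta$. Theorem~\ref{bochi-hyp} gives $\lambda_2(S) \geq L(S) - K\delta > \Delta \delta$, so there exists $g \in S \cup S^2$ with $\ell(g) > \Delta \delta$. Since $\ell(g) = \inf_{n \geq 1} L(g^n)/n \leq L(g)$, we also have $L(g) > \Delta \delta$, so $g$ is hyperbolic and $Fix(g) \subset \partial X$ is well-defined and consists of two points. Now perform a case analysis on whether $\langle S \rangle$ preserves the unordered pair $Fix(g)$. If it does, we are in case (2), since $g \in \langle S \rangle$ is a hyperbolic isometry whose fixed-point set is the invariant pair. If not, there exists $s \in S$ with $s(Fix(g)) \neq Fix(g)$; then $h := s g s^{-1}$ is hyperbolic with $L(h) = L(g) > \Delta \delta$ and $Fix(h) = s(Fix(g)) \neq Fix(g)$. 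Proposition~\ref{semi.group} then yields that, up to replacing $g$ and/or $h$ by their inverses, they generate a free semi-group of hyperbolic isometries. Since $g \in S^2$ and (using $S = S^{-1}$) $h \in S^4$, both lie in $S^4$; as $1 \in S$, we have $S^4 \subset S^N$ for every $N \geq 4$, hence for every $N > C$. This gives case (3).

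The entropy bound is then automatic: if $a, b \in S^N$ freely generate a semi-group, then distinct semi-group words of length $m$ in $\{a, b\}$ give distinct elements of $S^{Nm}$, so $|S^{Nm}| \geq 2^m$ and $h(S) = \lim_n (\log |S^n|)/n \geq (\log 2)/N$. I expect no real obstacle beyond bookkeeping: the only points to verify are that $\ell(g) > \Delta \delta$ upgrades to $L(g) > \Delta \delta$ (immediate from the definition of $\ell$ as an infimum of $L(g^n)/n$), and that the phrase ``leaves a set of two points invariant'' in case (2) is compatible with $\langle S \rangle$ possibly swapping the two points in $Fix(g)$ rather than fixing each one --- which is exactly what a setwise stabiliser does.
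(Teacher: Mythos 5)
Your proof is correct and follows essentially the same route as the paper: the Bochi-type inequality (Theorem \ref{bochi-hyp}) produces a hyperbolic element of translation length $>\Delta\delta$ in a bounded power of $S$, and then either its fixed pair is $\langle S\rangle$-invariant (case (2)) or conjugating by some $s\in S$ gives a ping-pong partner via Proposition \ref{semi.group} (case (3)), with the entropy bound following from counting semigroup words. The only (harmless) bookkeeping difference is that you choose $C\geq K+\Delta+1$ so that Theorem \ref{bochi-hyp} applied to $S$ itself yields the hyperbolic element already in $S\cup S^2$, whereas the paper assumes only $L(S)>(K+1)\delta$ and amplifies by applying Corollary \ref{growth.S.hyp} to powers $S^n$, locating the element in $S^{2N_0}$ with $N_0\approx\Delta$.
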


 \proof

%
%
%
%

Assume $C>K+1$, where  $K$ is the numerical constant from 
Theorem \ref{formula.S.hyp} (and Corollary \ref{growth.S.hyp}). If we are not in case $(1)$, then $L(S) >(K+1)\delta $.
Since by Corollary \ref{growth.S.hyp}, for all $n>0$, 
$n(L(S) -K \delta) \le \lambda_2(S^n)$, 
so that $n\delta < \lambda_2(S^n)$.
It implies that for each $n>0$ there exists $g \in S^{2n}$
with $2n \delta < \ell(g)$. 

Set $N_0$ be the smallest integer with $\Delta \le N_0 $,
where 
$\Delta$ is the constant from Proposition \ref{semi.group}.
Then there is $g \in S^{2N_0}$ with  $\Delta \delta  \le \ell(g)$. 
Since $\ell(g) \le L(g)$, we have $\Delta \delta \le L(g)$
and $g$ is hyperbolic on $X$.

Now, if $S$ preserves $Fix(g)$, then we are in (2), 
otherwise, there must be $s \in S$ such that 
the proposition applies to $g, sgs^{-1} \in S^{2N_0 +2}$
(take inverses if necessary),
and we are in (3) provided  $C>2N_0+2$.
\qed

 \subsection{Uniform exponential growth of hyperbolic groups}
 
We now prove uniform, and uniform uniform exponential growth 
of hyperbolic groups.  

 First we show Theorem \ref{ueg-hyp-intro} from the introduction,
that $\delta$-hyperbolic groups have uniform exponential growth depending only on $\delta$, in the form of a corollary of our trichotomy Theorem \ref{hyp.tri} applied to  hyperbolic groups.
 This slighlty improves on Koubi's result \cite{koubi} and  Champetier-Guirardel \cite{guirardel-champetier}, where a further dependence on the size of the generating set was required.
We then discuss uniform uniform exponential growth.

 \begin{corollary}[Growth of hyperbolic groups]\label{ueg.hyp} 
 There is an absolute constant $C_1>0$ 
 with the following property.
 If $G$ is a non-elementary, hyperbolic group, then for any finite symmetric generating subset $S \subset G$ containing $1$, with the Cayley
 graph $\Gamma(G,S)$ $\delta$-hyperbolic for some $\delta>0$, then $S^{M}$ contains two hyperbolic elements that are generators of a free semi-group, where $M$ is the least integer larger than $C_1\delta$.  In particular, $h(S) \ge (\log 2)/M$.
 \end{corollary}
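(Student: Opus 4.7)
The plan is to apply the trichotomy Theorem \ref{hyp.tri} to a suitable power $S^m$ of the generating set, and to rule out the two degenerate alternatives using non-elementariness of $G$.

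First I would record the elementary observation that in the Cayley graph action of $G$ on $\Gamma(G,S)$, left-invariance of the word metric gives $d(x,gx) = |g|_S$ for every vertex $x$ and every $g \in G$, so $L(S^n,x)$ is in fact independent of $x$. Since $1 \in S$ and $G$ is infinite (non-elementary), $S^n$ is the closed word-ball of radius $n$ and contains an element of length exactly $n$; hence $L(S^n) = n$ for every $n \geq 1$. This is the feature that makes the Cayley graph case special and lets us dispense with the Margulis lemma used in Theorem \ref{bgt-bcg}.

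Next I would choose $m := \lfloor C\delta \rfloor + 1$, where $C$ is the absolute constant of Theorem \ref{hyp.tri}, and apply that theorem to the symmetric set $S^m$ (which contains $1$). Case (1) is excluded because $L(S^m) = m > C\delta$ by the previous step. Case (2) is excluded because $G$ is non-elementary: an invariant pair of points in $\partial X$ would force $G$ into the (virtually cyclic) stabilizer of that pair, which is a standard fact about hyperbolic group actions and contradicts non-elementariness.

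This leaves case (3): for every integer $N > C$, the set $(S^m)^N = S^{mN}$ contains two hyperbolic elements that generate a free semi-group. Fixing $N := \lfloor C \rfloor + 1$, the generators lie in $S^{mN}$ with $mN \leq (C\delta + 1)(C+1) \leq C_1\delta$ for a suitable absolute constant $C_1$, chosen large enough to absorb the additive term and to handle small $\delta$ uniformly; taking $M$ to be the least integer $> C_1\delta$ then exhibits the desired two hyperbolic elements inside $S^M$. The entropy bound $h(S) \geq (\log 2)/M$ follows immediately from the free semi-group property, since the $2^n$ distinct length-$n$ words in the two generators give $2^n$ distinct elements of $S^{nM}$. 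The main conceptual obstacle that this approach sidesteps is precisely the need for a Margulis-type lemma as used in Theorem \ref{bgt-bcg}: here the Cayley graph metric records $|g|_S$ exactly, so the linear growth of $L(S^n)$ is tautological, and the genuine work has been done upstream in the geometric Bochi-type inequality Theorem \ref{formula.S.hyp} and the ping-pong Proposition \ref{semi.group} packaged inside Theorem \ref{hyp.tri}.
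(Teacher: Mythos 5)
Your overall route is exactly the paper's: establish linear growth of $L(S^n)$ for the Cayley graph action, apply the trichotomy Theorem \ref{hyp.tri} to a power $S^m$ with $m$ just above $C\delta$, eliminate alternative (2) using non-elementarity, and then absorb constants. (For ruling out (2) the paper argues via minimality of the boundary action — $G$ has dense orbits on $\partial X$ — while you invoke virtual cyclicity of the stabilizer of a pair of boundary points fixed by a hyperbolic element; both are standard, and the paper itself uses your version in the proof of Corollary \ref{hyp.uueg}.)

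However, your justification of the first step is wrong as written. Left-invariance of the word metric does \emph{not} give $d(x,gx)=|g|_S$: for a vertex $x$ one has $d(x,gx)=|x^{-1}gx|_S$, and conjugation can both lengthen and shorten word length, so $L(S^n,x)$ is certainly not independent of $x$ (in a free group, $L(S,x)$ is roughly $2|x|_S$ for long $x$). What you actually need is only the lower bound $L(S^n)=\inf_x\max_{g\in S^n}|x^{-1}gx|_S\geq n$, which is true but requires an argument; this is precisely the paper's Lemma \ref{ball2}, proved by a counting trick: if $S^n x\subseteq xS^{n-1}$ for some $x$, then $|S^n|\leq |S^{n-1}|$, hence $S^n=S^{n-1}$ and $\langle S\rangle$ is finite. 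With that substitution your proof goes through. A smaller caveat: your claim that $C_1$ can be chosen ``to handle small $\delta$ uniformly'' is not literally achievable, since the free semigroup generators are only produced in $S^{mN}$ with $mN\geq N$, an absolute constant, whereas $C_1\delta$ can be arbitrarily small; the paper sidesteps exactly this by assuming (harmlessly, up to replacing $\delta$ by $\lceil\delta\rceil$) that $\delta$ is an integer, and you should do the same rather than claim uniformity in small $\delta$.
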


 
We start with a simple lemma.

 
 
 \begin{lemma}\label{ball2}
 Let $G$ be a group generated by a finite symmetric set $S$ with Cayley graph $\Gamma(G,S)$. Then for the (left) action of $G$ on $\Gamma(G,S)$, 
 we have for all $n>0$, $L(S^n) \ge n$. \end{lemma}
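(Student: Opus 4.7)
The plan is to reduce the lemma to a short cardinality estimate on word balls in $G$. Assume $1 \in S$ (a harmless enlargement; it is implicit in the theorems that invoke this lemma and affects no quantity in play). Symmetry of $S$ then gives $S^n = B_n$, the closed ball of radius $n$ in the word metric on $G$ with respect to $S$. The statement is vacuous for $n$ exceeding the diameter of a finite $G$, so I tacitly assume $G$ is infinite; equivalently, $|B_n| > |B_{n-1}|$ for every $n \ge 1$, since otherwise the word ball sequence would stabilize at some $n$ and $G = \bigcup_k B_k = B_n$ would be finite.

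The heart of the proof is the vertex inequality $L(S^n, v) \ge n$ at every $v \in G$. Because $G$ acts on $\Gamma(G,S)$ on the left by isometries and the word metric is left-invariant, one has $d(v, gv) = |v^{-1}gv|_S$ for every $g \in G$. The conjugation $g \mapsto v^{-1}gv$ is a bijection of $G$, so $|v^{-1}S^nv| = |S^n| = |B_n|$. If $|v^{-1}gv|_S \le n-1$ were to hold for every $g \in S^n$, then $v^{-1}S^nv$ would be contained in $B_{n-1}$, forcing $|B_n| \le |B_{n-1}|$ and contradicting the strict growth of ball sizes. Hence some $g \in S^n$ realizes $d(v, gv) \ge n$, and the vertex bound $L(S^n, v) \ge n$ follows.

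Passing from the vertex set to every point of the geodesic realization $\Gamma(G,S)$ is the only remaining step. For $x$ on an edge $[v, v']$ at parameter $t \in [0,1]$, every shortest path from $x$ to $gx$ must exit $[v,v']$ and enter $[gv, gv']$ through one of the four vertex endpoints, so $d(x, gx)$ equals the minimum of the four candidate lengths $2t + d(v, gv)$, $1 + d(v, gv')$, $1 + d(v', gv)$, and $2(1-t) + d(v', gv')$. Choosing $g \in S^n$ realizing the cardinality bound at $v$, the first three candidates are already $\ge n$ thanks to the trivial estimates $|d(v,gv) - d(v, gv')| \le 1$ and $|d(v,gv) - d(v', gv)| \le 1$; applying the cardinality bound simultaneously at $v'$ allows one to choose a single witness $g$ for which the fourth candidate too is $\ge n$, giving $d(x, gx) \ge n$ and therefore $L(S^n, x) \ge n$.

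The core combinatorial fact — the vertex bound derived from strict ball growth — is entirely immediate; the main obstacle lies in the edge-interior refinement, where one must ensure a single element $g \in S^n$ simultaneously witnesses the cardinality lower bound at both endpoints of the edge containing $x$. As a fallback, the general 2-Lipschitz estimate $|L(S^n,x)-L(S^n,v)| \le 2\,d(x,v)$ recorded in Section~\ref{gen}, applied to a vertex $v$ at distance at most $1/2$ from $x$, already furnishes the slightly weaker bound $L(S^n,x) \ge n - 1$, which is enough for the applications in Corollary~\ref{ueg.hyp} after an appropriate constant adjustment.
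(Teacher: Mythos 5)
Your core argument is exactly the paper's proof: the paper also establishes the bound at a vertex $v$ by the same pigeonhole, observing that if every $h\in S^n v$ satisfied $d(v,h)\le n-1$ then $|S^n|\le|S^{n-1}|$, hence $S^n=S^{n-1}$ and $\langle S\rangle$ would be finite (so, like you, it tacitly assumes $1\in S$ and $G$ infinite, and it stops at the vertex estimate without ever discussing edge-interior points of $\Gamma(G,S)$).

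The one place where you go beyond the paper is also the one place with a genuine gap: the claim that "applying the cardinality bound simultaneously at $v'$ allows one to choose a single witness $g$" is not justified by the counting you have set up. The counting shows that at each endpoint the set of non-witnesses in $S^n$ has size at most $|B_{n-1}|$, so a common witness would follow from $|S^n|>2|B_{n-1}|$; but $|B_n|\le 2|B_{n-1}|$ is perfectly possible (e.g.\ for groups of subexponential growth, and even for hyperbolic groups there is no such doubling for all $n$), so the two witness sets could a priori be disjoint, and indeed a witness at $v$ only gives the fourth candidate $2(1-t)+d(v',gv')\ge 2(1-t)+d(v,gv)-2$, which can drop below $n$. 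So the exact inequality $L(S^n,x)\ge n$ at edge-interior $x$ is not proved by your argument (and is not addressed by the paper either). Your fallback, however, is sound and is effectively what the paper's own proof delivers: the vertex bound plus the $2$-Lipschitz estimate from Section~\ref{gen} gives $L(S^n)\ge n-1$ over the whole geodesic Cayley graph, which is all that is used in Corollary~\ref{ueg.hyp} after adjusting the threshold by $1$. One small wording point: for finite $G$ the statement is false rather than vacuous for large $n$, so the hypothesis that $G$ is infinite (implicit in the paper's proof, which derives finiteness as the contradiction) should be stated, as you essentially do.
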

 \proof
Fix $n>0$.   For any $g \in G$, we need to show that there  is $h \in S^n g$ such that $d(g,h) \ge n$. Equivalently this amounts to ask that $h \in S^ng$ but $h \notin g S^{n-1}$. Suppose for contradiction that $S^n g$ is contained in $g S^{n-1}$, then $|S^n| \leq |S^{n-1}|$. But since $S^{n-1} \subset S^n$, this means that $S^n=S^{n-1}$. This implies that $\langle S \rangle$ is finite.
 \qed

\begin{proof}[Proof of Corollary \ref{ueg.hyp}]
Let $C>0$ be the absolute constant
 from Theorem \ref{hyp.tri}.
 (We may assume both $C$ and $\delta$ are 
 integers.)
For each $n>0$, $L(S^n) \ge n$ by Lemma \ref{ball2}. Let $n$ be the least integer larger than $C\delta$.  Theorem \ref{hyp.tri} applied to $S^n$ implies that either $S^{Nn}$ contains generators of a free semigroup, or $S^n$ and hence $\langle S \rangle$  leaves invariant a pair of distinct points on the boundary $\partial X$  of the Cayley graph $X=\Gamma(G,S)$. But $G$ acts with dense orbits on its boundary, so the second case cannot occur. Now set $C_1=NC$.
\end{proof}

We now give a uniform exponential growth result for subgroups of hyperbolic groups. This time we cannot a priori rule out the dependence on the cardinality of the generating set of the ambient group.

\begin{corollary}[Uniform growth of subgroups in  a hyperbolic group]\label{hyp.uueg}
There exists an absolute constant (integer) $C_1>0$
with the following property.
Let $G$ be a group generated by a finite symmetric set $T$ so that the Cayley graph $\Gamma(G,T)$ is $\delta$-hyperbolic for some integer $\delta>0$.  Then for every finite subset $S \subset G$ with $1 \in S$ and $S=S^{-1}$, either $\langle S \rangle$ is finite or virtually cyclic, or $S^{B_1}$ contains two generators of free semi-group, where $B_1$ is the cardinality of the ball $T^{C_1 \delta}$ of radius $C_1 \delta $ in $\Gamma(G,T)$.
 In particular, $h(S) \ge (\log 2)/B_1$. 
\end{corollary}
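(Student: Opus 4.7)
The plan is to run the trichotomy of Theorem~\ref{hyp.tri} on a well-chosen power $S^{n_0}$, much as in the proof of Corollary~\ref{ueg.hyp}, the new complication being that Lemma~\ref{ball2} is unavailable since $S$ need not generate $G$. Let $C$ denote the absolute constant of Theorem~\ref{hyp.tri} and suppose $\langle S \rangle$ is neither finite nor virtually cyclic. My first step is a pigeonhole argument: whenever $L(S^n) \leq C\delta$, there is a vertex $y \in \Gamma(G,T)$ with $S^n y \subseteq B(y,C\delta)$, hence $S^n \subseteq y T^{C\delta} y^{-1}$ and $|S^n| \leq |T^{C\delta}|$. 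Since $\langle S \rangle$ is infinite, the sequence $|S^n|$ is strictly increasing (otherwise $S^n = S^{n+1}$ for some $n$, and then $\langle S \rangle = S^n$ would be finite). These two facts together force the existence of some $n_0 \leq |T^{C\delta}|$ with $L(S^{n_0}) > C \delta$.

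I would then apply Theorem~\ref{hyp.tri} to $S^{n_0}$. Case~(1) is excluded by construction. In Case~(3), $S^{n_0 N}$ contains two generators of a free semigroup for any integer $N > C$, so one may take $N = \lceil C \rceil + 1$. The delicate case is Case~(2), in which $\langle S^{n_0}\rangle$ preserves a pair $\{\xi^+,\xi^-\} \subset \partial \Gamma(G,T)$ and contains a hyperbolic element $g \in S^{2N_0 n_0}$ with $\mathrm{Fix}(g) = \{\xi^+,\xi^-\}$, where $N_0$ is the absolute constant extracted from the proof of Theorem~\ref{hyp.tri}. If $\langle S \rangle$ were also to preserve this pair, it would be contained in the setwise stabilizer of $\{\xi^+,\xi^-\}$ in the hyperbolic group $G$, which is virtually cyclic, contradicting our hypothesis. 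Hence some $s \in S$ satisfies $s\{\xi^+,\xi^-\} \neq \{\xi^+,\xi^-\}$, so that $g$ and $s g s^{-1}$ are hyperbolic isometries with distinct fixed-point sets and translation length at least $\Delta \delta$; Proposition~\ref{semi.group}, after possibly replacing one of them by its inverse, then produces two generators of a free semigroup, both lying in $S^{2N_0 n_0 + 2}$.

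In every case, two generators of a free semigroup are found in $S^{M}$ with $M \leq K_0 \cdot n_0 \leq K_0 |T^{C\delta}|$ for an absolute constant $K_0$. To recast this bound in the form $B_1 = |T^{C_1\delta}|$ asserted in the corollary, I would absorb the constant $K_0$ into the exponent. For this I rely on the uniform exponential growth supplied by Corollary~\ref{ueg.hyp} applied to $G$ itself (which is non-elementary, because it contains the non-elementary subgroup $\langle S \rangle$): this yields a lower bound of the form $|T^{n}| \geq 2^{n/(C'\delta)}$ valid for $n$ a fixed multiple of $\delta$, which is enough to choose an absolute $C_1 > C$ with $|T^{C_1\delta}| \geq K_0 |T^{C\delta}|$. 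The entropy estimate $h(S) \geq (\log 2)/B_1$ then follows at once, since a free sub-semigroup of rank two inside $S^{B_1}$ produces at least $2^k$ distinct elements in $S^{k B_1}$.

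The principal obstacle is Case~(2) of the trichotomy applied to $S^{n_0}$, because $\langle S^{n_0} \rangle$ need not have finite index in $\langle S \rangle$, so that its boundary behaviour does not automatically propagate to $\langle S \rangle$. The resolution, as sketched above, combines the virtual cyclicity of stabilizers of boundary pairs in hyperbolic groups with a ping-pong application of Proposition~\ref{semi.group} to $g$ and to the conjugate $sgs^{-1}$ produced by an element of $S$ breaking the preservation of $\{\xi^+,\xi^-\}$.
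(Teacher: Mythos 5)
Your reduction to the trichotomy is essentially the paper's own argument: the pigeonhole step (whenever $L(S^n)\le C\delta$ one conjugates $S^n$ into a ball of $\Gamma(G,T)$, so $|S^n|\le|T^{(C+1)\delta}|$ — you should allow the extra $+1$ in the exponent, or an additive $1$, to pass from the near-minimizing point to a vertex, but this only shifts constants), the strict growth of $|S^n|$ forcing some $n_0$ bounded by the cardinality of that ball with $L(S^{n_0})>C\delta$, the application of Theorem \ref{hyp.tri} to $S^{n_0}$, and the treatment of case (2) via the virtually cyclic stabilizer of a pair of boundary points all match the published proof. Note in passing that since $1\in S$ one has $\langle S^{n_0}\rangle=\langle S\rangle$, so in case (2) the group $\langle S\rangle$ itself preserves the pair and is virtually cyclic outright; your additional ping-pong with $g$ and $sgs^{-1}$ via Proposition \ref{semi.group} is correct but not needed.

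The genuine gap is in your final step, where the bound ``free semigroup generators in $S^{M}$ with $M\le K_0|T^{C\delta}|$'' must be converted into $M\le B_1=|T^{C_1\delta}|$. You propose to obtain $|T^{C_1\delta}|\ge K_0|T^{C\delta}|$ from the uniform growth bound of Corollary \ref{ueg.hyp}, i.e. from an estimate of the shape $|T^{n}|\ge 2^{n/(C'\delta)}$. But for $n=C_1\delta$ that lower bound is the absolute constant $2^{C_1/C'}$, independent of $T$, whereas the quantity to be dominated, $K_0|T^{C\delta}|\ge K_0|T|$, is unbounded as $|T|$ grows; so for a large generating set $T$ the desired inequality cannot be extracted from uniform exponential growth alone. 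What is needed is a statement, uniform in $T$, that multiplying the radius of a ball by a bounded factor multiplies its cardinality by a factor exceeding $K_0$; note that submultiplicativity $|T^{2m}|\le|T^m|^2$ goes the wrong way, and translates $wT^{C\delta}$ by distinct positive words $w$ in the semigroup generators need not be disjoint, so neither of those shortcuts repairs it. The paper closes exactly this point with Freiman's doubling theorem \cite{freiman}: if $AA$ is not a finite subgroup then $|AA|\ge\frac{3}{2}|A|$, whence $|T^{4m}|>2|T^{m}|$ and, iterating, $|T^{4^k m}|>2^k|T^{m}|$; choosing $2^k$ larger than the constant to be absorbed and $C_1=4^k(C+1)$ gives $B_1=|T^{C_1\delta}|>K_0|T^{(C+1)\delta}|\ge M$. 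With that substitution for your appeal to Corollary \ref{ueg.hyp}, the rest of your argument goes through.
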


\begin{proof}[Proof of Corollary \ref{hyp.uueg}]
Let $C>0$ be the absolute constant 
in Theorem \ref{hyp.tri}. We may assume $C$ is
an integer by replacing it with the smallest integer
larger than $C$ if necessary. 

Assume $\langle T \rangle =G$ is infinite, otherwise, 
nothing to show. Set $B=|T^{(C+1)\delta}|.$
Clearly $S \subset Isom(X)$. Assume $\langle S \rangle$ is infinite, otherwise there is  nothing to show. Let  $n>0$ and assume that $L(S^n) \le C\delta$.
This means that there is $g \in G$ such that $gS^ng^{-1} \subset T^{C\delta}$. This forces $n \leq |S^n| < |T^{(C+1)\delta}|=B$. In particular if $n \ge B$, then $L(S^n) >C\delta$. 

Apply Theorem \ref{hyp.tri}  to the set $S^B$. Then, either $S^{(C+1)B}$ contains generators of a free semigroup (set $N=(C+1)$ in (3)), 
or $S^B$ and hence $\langle S \rangle$  leaves invariant a pair of distinct points on the boundary $\partial X$, which form the fix point set of a hyperbolic isometry $g$ in  $\langle S \rangle$. 

In the latter a standard argument implies that $\langle S \rangle$ is virtually cyclic. Indeed if $h \in \langle S \rangle$ fixes both fixed points of $g$, then $hgh^{-1}$ is also hyperbolic and fixes the same points. The Morse lemma implies that the axes of $g$ and $hgh^{-1}$ are uniformly close to each other, and in particular $hgh^{-1}g^{-1}$ lies in a ball of bounded radius (independent of $h$) in $X$ around the identity. This shows that the centralizer of $g$ in $\langle S \rangle$ has finite index in $\langle S \rangle$. But the  centralizer of an element of infinite order in a hyperbolic group is virtually cyclic \cite[ch. 8]{ghys}.

We go back to the former case, 
and argue $S^{(C+1)B}\subset S^{B_1}$ 
for some $B_1$ as desired. 
Let $k$ be the least integer with $2^k >C+1$.
Set $C_1=4^k(C+1)$.
Let $B_1=|T^{C_1 \delta}|$, 
which is equal to $|T^{4^k(C+1) \delta}|
> 2^k|T^{(C+1)\delta}| =2^kB
>(C+1)B$, where the first $>$
follows from a well-known ``doubling'' result
(\cite{freiman}) we state 
below. 
It implies that $B_1>(C+1)B $, so that 
from the previous discussion $S^{B_1}$
contains generators of a free semi-group,
hence
$C_1$ is a desired constant. 

\begin{theorem}[Freiman]
Let $A$ be a finite, symmetric set in a group with $1 \in A$ such that $AA$ is not a 
finite subgroup.
Then $|AA| \ge (3/2) |A|$.
\end{theorem}
By our assumption, $\langle T \rangle$
is not a finite subgroup, so that 
$|T^4| > 2|T|$. Applying this repeatedly, we obtain
$|T^{4^k(C+1) \delta}|
> 2^k|T^{(C+1)\delta}|$.
\end{proof}

\begin{remark}\label{other-ref} In \cite{koubi} Koubi obtains the same result in the case when $S$  is a generating subset of $G$. In \cite{guirardel-champetier} Champetier and  Guirardel show a related result : there is an explicit $n$ depending only on $\delta$ and $|T|$ such that given any $f,g \in G$ either at least one of the pairs $\{f^n,g^n\}$ or $\{f^n,g^{-n}\}$ generates a free semigroup, or $f^n$ and $g^n$ commute. 

In a very recent work Delzant and Steenbock \cite{delzant.2018} give another proof of the entropy lower bound obtained in Corollary \ref{hyp.uueg}, which gets better as
$|S|$ gets larger, a nice feature our result does not have. 
\end{remark}

\begin{remark}[sharpness of Corollary \ref{hyp.uueg}] \label{olshanski-exples} The constant $B_1$ in Corollary \ref{hyp.uueg} -- i.e. the smallest radius such that $S^{B_1}$ contains generators of a free semi-group for any $S\subset G$ generating a non-elementary subgroup -- must depend on the size of $T$ and not just on $\delta$. We cannot have a bound depending only on $\delta$ as in Corollary \ref{ueg.hyp}. To see this consider for an arbitrarily large radius $R$ and an arbitrarily  large prime $p$ one of Olshanski's examples \cite{olshanski} of a Gromov hyperbolic group $G$ generated by a set $T$ consisting of $2$ generators, their inverses and the identity, whose ball of radius $R$ is made of elements of $p$-torsion. Such a group is $\delta$-hyperbolic for some large (say integer) $\delta$ depending on $p,R$. However if we consider the new Cayley graph $\Gamma(G,T^\delta)$, we obtain a  $2$-hyperbolic group with $T^R$ containing no element of infinite order, let alone generators of a free semigroup. 
\end{remark}

\subsection{Actions by virtually nilpotent groups}

We give a proof of the following that is 
stated as Corollary \ref{intro.ell=0}.

\begin{proposition}\label{ell=0}
Let $X$ be a geodesic $\delta$-hyperbolic space and $S \subset Isom(X)$ a finite set.
Assume that $\ell(S)=0$.
Then  $\langle S \rangle$ either has a bounded orbit on $X$, or 
fixes a unique point in $\partial X$.
\end{proposition}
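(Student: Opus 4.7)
The plan is to use the Berger-Wang identity and the Bochi-type inequality for hyperbolic spaces to produce approximate common fixed points for $S^n$ for every $n \ge 1$, and then run a dichotomy depending on whether these points stay in a bounded region of $X$.

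First, Corollary~\ref{growth.S.hyp} applied to the hypothesis $\ell(S)=0$ gives $\lambda_\infty(S)=0$, so by Lemma~\ref{gen-ineq} we have $\lambda_2(S^n)\le\lambda_\infty(S^n)=n\lambda_\infty(S)=0$ for every $n$. Plugging into Theorem~\ref{formula.S.hyp} applied to $S^n$ yields
\[
L(S^n)\le\lambda_2(S^n)+K\delta=K\delta
\]
for every $n\ge 1$. I can therefore choose $x_n\in X$ with $L(S^n,x_n)\le K\delta+1$. After replacing $S$ by $S\cup S^{-1}\cup\{1\}$ (which alters neither the hypothesis nor the conclusion), every $g\in\langle S\rangle$ lies in some $S^k$, so $d(x_n,gx_n)\le K\delta+1$ for all $n\ge k$.

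Assume $\langle S\rangle$ has no bounded orbit (otherwise the first alternative holds). Fix a basepoint $o$. Then $\{x_n\}$ must be unbounded: if $d(o,x_n)\le R$ for all $n$, the triangle inequality gives $d(x_1,gx_1)\le 2d(x_1,x_k)+(K\delta+1)\le 4R+K\delta+1$ for every $g\in\langle S\rangle$ of word length $k$, a bounded orbit and a contradiction. Passing to a subsequence, $d(o,x_n)\to\infty$. For each fixed $g\in\langle S\rangle$ and all large $n$,
\[
(gx_n\mid x_n)_o\ge d(o,x_n)-\tfrac12\bigl(d(o,g^{-1}o)+K\delta+1\bigr)\longrightarrow\infty.
\]
Using that the sublevel sets $F_n:=\{y:L(S^n,y)\le K\delta+1\}$ form a decreasing family of nonempty $O(\delta)$-quasi-convex subsets, a diagonal extraction produces a further subsequence $(x_{n_k})$ with $(x_{n_k}\mid x_{n_l})_o\to\infty$, identifying a point $\xi\in\partial X$ in the Gromov boundary. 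The asymptotic relation $(gx_{n_k}\mid x_{n_k})_o\to\infty$ then forces $gx_{n_k}\to\xi$, so $g\xi=\xi$ for every $g\in\langle S\rangle$.

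It remains to establish uniqueness of $\xi$. Suppose $\langle S\rangle$ also fixes some $\eta\ne\xi$ in $\partial X$; pick a geodesic $\gamma$ from $\xi$ to $\eta$ and $p\in\gamma$. Each $g\gamma$ is a $(\xi,\eta)$-geodesic, at Hausdorff distance $O(\delta)$ from $\gamma$ by $\delta$-hyperbolicity, and so $d(gp,\gamma)=O(\delta)$. Let $\pi_\gamma(gp)$ denote the nearest-point projection and let $\tau(g)$ be the signed displacement of $\pi_\gamma(gp)$ from $p$ along $\gamma$. If $|\tau(g)|$ exceeded some absolute multiple of $\delta$, iterating (as in Lemma~\ref{quasi.axis}) would give $d(p,g^n p)\gtrsim n|\tau(g)|$, hence $\ell(g)>0$, contradicting $\lambda_\infty(S)=0$; so $|\tau(g)|=O(\delta)$ uniformly. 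Consequently
\[
d(p,gp)\le |\tau(g)|+d(\pi_\gamma(gp),gp)=O(\delta)
\]
uniformly in $g\in\langle S\rangle$, producing a bounded orbit for $p$ and contradicting our standing assumption. Hence the boundary fixed point is unique, completing the proof.

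The main technical obstacle is the diagonal extraction of a Gromov-convergent subsequence $(x_{n_k})$ in Case B; the decreasing, $O(\delta)$-quasi-convex nature of the sublevel sets $F_n$ is what allows one to arrange $(x_{n_k}\mid x_{n_l})_o\to\infty$, but the step is delicate without assuming $X$ is proper.
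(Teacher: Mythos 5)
Your overall strategy is the paper's: produce points $x_n$ with $L(S^n,x_n)\le K\delta+1$ from Theorem \ref{formula.S.hyp} (this first step is correct and identical to the paper), then a dichotomy giving either a bounded orbit or a boundary fixed point, then uniqueness via a coarsely invariant line. But two steps have genuine gaps. The first is the symmetrization. Replacing $S$ by $S\cup\{1\}$ is harmless, since $\lambda_\infty(S\cup\{1\})=\lambda_\infty(S)=\ell(S)=0$ and Corollary \ref{growth.S.hyp} then gives $\ell(S\cup\{1\})=0$; but replacing $S$ by $S\cup S^{-1}\cup\{1\}$ is not: $\ell$ and $\lambda_\infty$ only see \emph{positive} words in $S$, and by the same Berger--Wang identity the claim $\ell(S\cup S^{-1}\cup\{1\})=0$ is equivalent to the assertion that no element of $\langle S\rangle$ (words with inverses allowed) is hyperbolic --- which the hypothesis does not give you and which is essentially part of what is at stake. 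Every later point where you need displacement bounds at $x_n$ that are uniform over all of $\langle S\rangle$ inherits this gap, most seriously the uniqueness step, where you rule out $|\tau(g)|\gg\delta$ for an arbitrary $g\in\langle S\rangle$ ``by contradiction with $\lambda_\infty(S)=0$'': $\lambda_\infty(S)=0$ says nothing about $\ell(g)$ when $g$ is a mixed word. (For fixing the boundary point the symmetrization is also unnecessary: for a fixed $g$ of word length $m$ one has $d(x_n,gx_n)\le m(K\delta+1)$ for $n\ge m$, which already forces $gx_n\to\xi$.)

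The second gap is the extraction step, which you yourself flag: ``no bounded orbit $\Rightarrow$ some subsequence of $(x_n)$ has pairwise Gromov products tending to infinity'' is asserted, not proved, and nothing in your argument excludes the $x_n$ escaping to infinity in different directions with bounded pairwise Gromov products. This is precisely where the paper's proof does real work: the dichotomy is taken on the Gromov products themselves, not on boundedness of the orbit. Either $(x_{n}\mid x_{m})_o\to\infty$ along a subsequence, producing $\xi$; or infinitely many pairs have bounded Gromov product, and then (using thinness, i.e.\ the quasi-convexity of the sets $\{y:\,d(y,gy)\le K\delta+1\}$ you mention) one finds a single point $z$ at bounded distance from $o$ moved by $O(K\delta)$ by every $S^m$ --- this ``rotation'' alternative is exactly how the bounded-orbit conclusion arises, rather than being an assumption under which convergence is forced. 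In your setup the second alternative is not eliminated by the standing assumption of no bounded orbit (a priori it only yields a point with small displacement under all positive words), so the argument does not close. A smaller point: in the uniqueness step you take ``a geodesic $\gamma$ from $\xi$ to $\eta$''; in a non-proper geodesic hyperbolic space a bi-infinite geodesic between two boundary points need not exist, which is why the paper works with a quasi-geodesic there.
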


\proof
For every $n>0$ we have $\ell(S^n)=0$ since $n\ell(S)=\ell(S^n)$.
So, $\lambda_2(S^n)=0$ by Lemma \ref{gen-ineq}.
Apply Theorem \ref{bochi-hyp} to $S^n$, we obtain 
$K \delta \ge L(S^n)$.

So,  for each $n>0$, there is a point $x_n \in X$ such that 
$L(S^n,x_n) \le K \delta $.
We will show that $\Gamma=\langle S \rangle $ has a bounded orbit,
or fixes a point in $\partial X$.

We recall an elementary  fact.
Let $y_n$ be an infinite sequence of points in the $\delta$-hyperbolic space $X$.
Then there is a subsequence $y_{m_n}$ such that either 
$y_{m_n}$ converges to a point in $\partial X$, or 
it (coarsely) "rotates" about some point $x \in X$ in the sense that 
any geodesic $[y_{m_n}, y_{m_k}]$, $m_n \not= m_k$, intersects the ball  $B(x, 20 \delta)$.

By this fact, there are two cases for the sequence $x_n$:
there is an infinite  subsequence $x_{m_n}$ such that either 
\\
(i)  it converges to a point $x \in \partial X$, or 
\\
(ii) it  rotates, 
ie, there exist a point $x\in X$ 
such that for any $m_n<m_k$, the distance between $x$
and the geodesic $[x_{m_n},x_{m_k}]$ is at most $20 \delta$.

Then, (i) implies that the point $x$ is fixed by $\Gamma$. We discuss the uniqueness later.
In the case (ii),  since both $x_{m_n}, x_{m_k}$, $m_n < m_k$ are moved by $S^{m_n}$ 
at most by $K \delta $, each point on $[x_{m_n},x_{m_k}]$
is moved by $S^{m_n}$ at most by, say, $K \delta  + 10 \delta$.
It implies that $S^{m_n}$ moves $x$ by at most $40\delta + (K+10) \delta$.
This is for any $n>0$, so the $\Gamma$-orbit of $x$ is
bounded.

Now we argue that if there are at least two fixed points in $\partial X$, then 
$\Gamma$ has a bounded orbit in $X$.
Indeed, if there are three fixed points, then a $\Gamma$-orbit 
must be bounded since $X$ is hyperbolic.
If there are exactly two fixed points in $\partial X$, then 
join those two points by a quasi-geodesic $\gamma$ in $X$.
Then $\gamma$ is coarsely invariant by $\Gamma$, ie, there is a constant 
$C$ such that for any $g\in \Gamma$, the Hausdorff-distance between 
$\gamma$ and $g(\gamma)$ is at most $C$.
Notice that $\Gamma$ does not contain any hyperbolic isometry
since $\ell(S)=0$. It implies that there is a constant $D$, which depends
on $C$ and $\delta$, such that 
every point on $\gamma$ is moved by at most $D$ by any element in $\Gamma$, 
ie, $\Gamma$-orbit is bounded. 
\qed

 \begin{corollary}[Virtually nilpotent groups]\label{nilpotent}
 Let $X$ be a $\delta$-hyperbolic space,
 $S \subset Isom(X)$ a finite set with $1 \in S$
and $S=S^{-1}$. Assume $\Gamma=\langle S \rangle$
 is virtually nilpotent.
 Then one of the following holds:
\begin{enumerate}[label=(\roman*)]
\item $\Gamma$ has a bounded orbit in $X$,
\item $\Gamma$ fixes a unique point $x \in \partial X$, 
\item  $\Gamma$  leaves  invariant a set of two points in $\partial X$, which is the fixed point set of some hyperbolic
 isometry $g \in \Gamma$.
 \end{enumerate}
 \end{corollary}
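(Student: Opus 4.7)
The plan is to split on whether $\ell(S)=0$; the vanishing case yields (i) and (ii), while the non-vanishing case forces (iii). The role of virtual nilpotence enters only to supply polynomial growth, which will be used to rule out a free sub-semigroup in $\Gamma$.

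First, if $\ell(S)=0$, then Proposition \ref{ell=0} applies directly and delivers either a bounded $\Gamma$-orbit on $X$ (case (i)) or a unique $\Gamma$-fixed point on $\partial X$ (case (ii)). The virtual nilpotence hypothesis is not invoked in this branch.

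Now suppose $\ell(S)>0$. By the geometric Berger-Wang identity $\lambda_\infty(S)=\ell(S)$ proved in Corollary \ref{growth.S.hyp}, there is some $g$ in a power of $S$ with $\ell(g)>0$; using $\ell(g)=\lim_{m\to\infty}L(g^m)/m$, we pass to a further power and may assume $L(g)>\Delta\delta$, where $\Delta$ is the constant from Proposition \ref{semi.group}. Then $g$ is a hyperbolic isometry with well-defined fixed-point set $Fix(g)=\{p^+,p^-\}\subset\partial X$. If every element of $\Gamma$ preserves the pair $\{p^+,p^-\}$ setwise, we are in case (iii), with $g$ as the hyperbolic witness.

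Otherwise some $h\in\Gamma$ satisfies $h\{p^+,p^-\}\neq\{p^+,p^-\}$. Then $hgh^{-1}$ is hyperbolic with $L(hgh^{-1})=L(g)>\Delta\delta$ and $Fix(hgh^{-1})=h\{p^+,p^-\}\neq Fix(g)$. Proposition \ref{semi.group} therefore applies to the pair $(g,hgh^{-1})$ (possibly after replacing one or both by their inverses) and produces a free sub-semigroup on two generators inside $\Gamma$. This contradicts the polynomial growth of the virtually nilpotent group $\Gamma$, so this case cannot occur, and we conclude (iii). I do not anticipate a real obstacle: the heavy lifting was done in Proposition \ref{ell=0}, Corollary \ref{growth.S.hyp}, and the ping-pong lemma of Proposition \ref{semi.group}, and the corollary amounts to assembling these three inputs under the polynomial growth constraint.
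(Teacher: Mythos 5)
Your proof is correct and takes essentially the same route as the paper: the paper just packages your two branches into an application of the trichotomy Theorem \ref{hyp.tri} to the powers $S^n$ --- its case (3) is excluded by polynomial growth exactly as you exclude the free sub-semigroup, its case (2) is your invariant-pair branch (iii), and the remaining case forces $\ell(S)=0$, after which Proposition \ref{ell=0} is invoked just as in your first branch. The only cosmetic difference is that you re-derive the needed hyperbolic element from the Berger-Wang identity of Corollary \ref{growth.S.hyp} and then run the ping-pong of Proposition \ref{semi.group} by hand, rather than citing Theorem \ref{hyp.tri}, whose proof consists of precisely those ingredients.
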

 
 \proof
 We apply Theorem \ref{hyp.tri} to $S$.
 The case (3) does not happen since $h(S)=0$ since $\Gamma$
 has polynomial growth.
 The case (2) implies the conclusion (iii).
 So assume $S$ does not satisfy  (2).
 Then $S$ satisfies (1), ie, $L(S) < (K+1) \delta $.
 
 We also apply Theorem \ref{hyp.tri} to $S^n$ for each $n>1$.
 Again, (3) does not happen, and (2) is desirable, 
 so we assume $L(S^n) < (K+1) \delta $.
 But this implies $\ell(S)=0$. Apply Proposition \ref{ell=0}, 
and we are done. 
\qed

\subsection{Spaces of bounded packing}

 For a metric space $X$, a subgroup $\Gamma$ in $Isom(X)$ is said to be {\it discrete}
 if for any point $x \in X$ and a bounded subset $Y \subset X$, 
 $\Gamma (x) \cap Y$ is finite. 
 

To state our main result, we recall one definition from \cite{BGT}.
A metric space $X$ has {\it bounded packing} with packing constant 
$P>0$ if every ball of radius 2 in $X$ can be covered
by at most $P$ balls of radius 1.

Here is an elementary  lemma we use later. 
\begin{lemma}\label{balls}
If a geodesic space has bounded packing for $P$, then 
any ball of radius $n>0$, which is an integer, is covered by at most $P^{n-1}$
balls of radius 1.
\end{lemma}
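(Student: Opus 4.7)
The plan is to proceed by induction on the integer $n$. The base case $n=1$ is trivial since $B(x,1)$ is covered by one ball of radius $1$, namely itself, and $P^{0}=1$. The base case $n=2$ is exactly the bounded packing hypothesis.

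For the inductive step, assume that for some $n \geq 1$ every ball $B(x,n)$ can be covered by at most $P^{n-1}$ balls of radius $1$, and consider a ball $B(x,n+1)$. The key geometric observation, which uses that $X$ is geodesic, is the following: for any point $z \in B(x,n+1)$, pick a geodesic segment $\sigma$ from $x$ to $z$ and let $z'$ be the point on $\sigma$ at distance $\min(d(x,z),n)$ from $x$. Then $z' \in B(x,n)$ and $d(z',z) \leq 1$. This means that $z$ lies in the $1$-neighborhood of $B(x,n)$.

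Combining this with the inductive hypothesis, if $\{B(y_i,1)\}_{i=1}^{N}$ is a cover of $B(x,n)$ with $N \leq P^{n-1}$, then every $z \in B(x,n+1)$ satisfies $d(y_i,z) \leq d(y_i,z')+d(z',z) \leq 2$ for some $i$, so that
$$ B(x,n+1) \subset \bigcup_{i=1}^{N} B(y_i,2). $$
Applying the bounded packing hypothesis to each of the $N$ radius-$2$ balls yields a cover of $B(x,n+1)$ by at most $N \cdot P \leq P^{n-1} \cdot P = P^n$ balls of radius $1$, which is what we wanted.

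The argument is essentially bookkeeping once the geodesic ``one-step retraction'' from $B(x,n+1)$ to $B(x,n)$ is in hand; there is no serious obstacle. The only point worth being careful about is invoking the geodesic property to ensure that the intermediate point $z'$ exists, which is why the statement is phrased for geodesic spaces rather than arbitrary metric spaces with bounded packing.
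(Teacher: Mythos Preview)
Your proof is correct and follows essentially the same inductive argument as the paper: cover $B(x,n)$ by radius-$1$ balls via the inductive hypothesis, enlarge them to radius-$2$ balls which cover $B(x,n+1)$ by the geodesic property, and then apply the packing bound to each. The only difference is cosmetic---you make the geodesic retraction explicit via the intermediate point $z'$, whereas the paper simply remarks parenthetically that the geodesic hypothesis is used at this step.
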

\proof
We argue by induction on $n$. For $n=1$, the claim is trivial.
Suppose the claim holds for $n>0$.
Take a ball $B$ of radius $n+1$, and let $B'$ be the ball
of radius $n$ with the same center. By assumption,
cover $B'$ by at most $P^n$ balls of radius 1.
Now for each of those balls, take the ball of radius 2
with the same center. Those  balls of radius 2 cover $B$
(here we are using that the space is geodesic).
Also, each of balls of radius 2 is covered by at most 
$P$ balls of radius 1. So, by collecting all of those
balls of radius 1, $B$ is covered by 
at most $P \cdot P^n$ balls of radius 1.
\qed

In \cite{BGT} balls of radius 4 instead of 2 were used to define the bounded packing property. This is a minor change, which only affects the constant $P$ according to this lemma. 

In the following theorem we state the bounded packing 
property in terms of balls of radius $\delta$ and $2\delta$,
which is more natural for a $\delta$-hyperbolic space.

\begin{theorem}\label{ueg.new}
Given $P$, there is $N(P)$ with the following property. 
Let $X$ be a geodesic $\delta$-hyperbolic space, 
with $\delta>0$,
such that every ball of radius $2\delta$ is covered
by at most $P$ balls of radius $\delta$.

 Let $S$ be a finite subset in $Isom(X)$ with $S=S^{-1}$ and 
 assume that  $\Gamma=\langle S \rangle$ is a  discrete subgroup of $Isom(X)$. Then
either $\Gamma$ is virtually nilpotent, or 
$S^N$ contains two generators of a free semi-group, and in particular:
$$h(S)\geq \frac{1}{N} \log 2.$$

Moreover, if $\Gamma$ is virtually nilpotent,
then either (i) $\Gamma$ is finite, (ii) fixes a unique point 
in $\partial X$, or 
(iii) $\Gamma$ is virtually cyclic and 
contains a hyperbolic isometry $g$ such that 
$Fix(g)$ in $\partial X$ is invariant by $\Gamma$.
\end{theorem}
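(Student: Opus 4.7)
The plan is to combine the trichotomy Theorem \ref{hyp.tri}, the generalized Margulis lemma for metric spaces of bounded packing from Breuillard--Green--Tao \cite{BGT}, and the classification of virtually nilpotent isometric actions on hyperbolic spaces given by Corollary \ref{nilpotent}. Let $K$ and $C$ be the absolute constants from Theorem \ref{bochi-hyp} and Theorem \ref{hyp.tri}, and let $\epsilon=\epsilon(P)>0$ be the threshold supplied by the BGT Margulis lemma at the scale $\delta$: a discrete subgroup $\Gamma\leq Isom(X)$ generated by a set $S$ with $L(S)<\epsilon\delta$ is virtually nilpotent.

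First I would handle the \emph{Margulis regime}. If $L(S)<\epsilon\delta$, then $\Gamma$ is virtually nilpotent. Applying Corollary \ref{nilpotent} to the $\delta$-hyperbolic action yields three sub-cases: a bounded orbit, a unique fixed point on $\partial X$, or a $\Gamma$-invariant pair of boundary points coming from a hyperbolic element. The middle case is conclusion (ii), the last is conclusion (iii); in the first case, bounded orbits of a discrete group on a space of bounded packing are finite (any ball of finite radius contains boundedly many orbit points by iterating the packing constant via Lemma \ref{balls}, and point stabilizers are themselves finite by discreteness and bounded packing), so $\Gamma$ is finite, giving conclusion (i).

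Next I would treat the \emph{Bochi regime}. Suppose $L(S^{n_0})>(K+1)\delta$ for some $n_0$. Applying Theorem \ref{hyp.tri} to $S^{n_0}$ excludes case (1) of that theorem. In case (3) one obtains generators of a free semigroup in $(S^{n_0})^{N_0}$ for some absolute constant $N_0$, so one sets $N=n_0N_0$ and concludes $h(S)\geq(\log 2)/N$. In case (2), $\Gamma$ preserves the fixed-point set $\{p,q\}=Fix(g)\subset\partial X$ of a hyperbolic isometry $g\in\Gamma$; a standard Morse-lemma argument (as in the proof of Corollary \ref{hyp.uueg}) combined with discreteness and bounded packing forces the index-2 subgroup fixing both $p,q$ pointwise to translate the axis of $g$ by a discrete subgroup of $\R$, so $\Gamma$ is virtually cyclic, yielding conclusion (iii).

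The main obstacle is bridging the gap $\epsilon(P)\delta\leq L(S)\leq(K+1)\delta$ with a controlled $n_0$, since without the CAT(0) assumption we cannot appeal to Lemma \ref{cat0-growth} as in the proof of Theorem \ref{bgt-bcg}. Here I would use discreteness and bounded packing directly: if $L(S^m)\leq(K+1)\delta$ for every $m$, then the orbit $S^m\cdot x$ is contained in the ball $B(x,(K+1)\delta)$, which by Lemma \ref{balls} meets the discrete orbit $\Gamma\cdot x$ in at most $P^{K+1}$ points (up to the finite stabilizer, whose size is controlled by $P$ via packing of a small ball about $x$). Hence $|S^m|$ is bounded in terms of $P$, so the non-decreasing sequence $|S^m|$ stabilizes to a subgroup equal to $\Gamma$, forcing $\Gamma$ to be finite (conclusion (i)). Otherwise the smallest $n_0$ with $L(S^{n_0})>(K+1)\delta$ satisfies $n_0\leq N_1(P)$, and combining this bound with Steps~2 and 3 yields $N=N(P)$ as required, completing the proof.
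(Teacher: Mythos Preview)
Your overall architecture—trichotomy (Theorem \ref{hyp.tri}) plus the BGT generalized Margulis lemma plus Corollary \ref{nilpotent} for the ``moreover'' clause—is exactly the paper's, and your treatment of the Bochi regime and of the virtually nilpotent case is correct. The problem is the bridging paragraph.

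You claim that when $L(S^m)\le (K+1)\delta$ the cardinality $|S^m|$ is bounded in terms of $P$ alone, because the orbit $S^m\cdot x$ sits in a ball covered by $P^{K+1}$ balls of radius $\delta$, ``up to the finite stabilizer, whose size is controlled by $P$''. Neither of these two bounds holds. The packing constant controls how many $\delta$-balls cover a $(K+1)\delta$-ball, but it does \emph{not} bound the number of orbit points in a single $\delta$-ball, nor the size of a point stabilizer. For instance, a cyclic group of rotations of order $n$ acting on $\mathbb{H}^2$ is discrete, the packing constant of $\mathbb{H}^2$ is fixed, yet the stabilizer of the centre has order $n$, and $n$ is arbitrary. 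So $|S^m|$ can be arbitrarily large for fixed $P$, and your bound $n_0\le N_1(P)$ collapses.

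The paper sidesteps this entirely: it never tries to bound $|S^m|$ or $n_0$. After rescaling to $\delta=1$, it fixes once and for all $k=C\!\left(P^{2(K+1)}\right)$ from Theorem \ref{bgt.nilpotent} and makes a single dichotomy on $L(S^k)$. If $L(S^k)<K+1$, then with $A=S_{K+1}(x)$ one has $S^k\subset A$ and, by the packing argument of Corollary \ref{bgt.packing}, $|A^2|\le P^{2(K+1)}|A|$; the BGT theorem then gives virtual nilpotence \emph{without} any bound on $|A|$. If $L(S^k)\ge K+1$, the trichotomy applies to $S^k$ and one sets $N=kN_0$. The point is that BGT needs only small doubling of $A$, not small cardinality, and small doubling is precisely what bounded packing delivers. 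Replacing your bridging paragraph by this single dichotomy on $L(S^k)$ repairs the proof.
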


%



The following theorem by Breuillard-Green-Tao, which improved Gromov's theorem on groups with polynomial growth, 
is a key ingredient of the argument. 

%

\begin{theorem}(\cite[Cor 11.2]{BGT})\label{bgt.nilpotent}
For $Q\ge 1 $ there is a constant $C(Q)$ with the following property.
Let $S$ be a finite generating set of a group $G$ with $1\in S$.
Suppose there exists a finite subset
 $A$  in $G$ such that 
$|A^2| \le Q |A|$ and $S^{C(Q)} \subset A$.
Then $G$ is virtually nilpotent.
\end{theorem}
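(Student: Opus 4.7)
The strategy is to feed the trichotomy of Theorem \ref{hyp.tri} into the generalized Margulis lemma of Breuillard--Green--Tao (Theorem \ref{bgt.nilpotent}), using bounded packing as the bridge between the metric smallness given by the trichotomy and the combinatorial doubling hypothesis required by BGT. First I would fix constants: let $C'$ be the absolute constant from Theorem \ref{hyp.tri} (so that alternative~(1) there says $L(S) \le C'\delta$), choose $Q = Q(P) := P^{4C'}$ (or any power of $P$ exceeding the packing bound below), and set $C_0 := C(Q)$ from Theorem \ref{bgt.nilpotent}. Finally let $N := C_0 N_0$, where $N_0$ is the integer $N>C$ appearing in alternative~(3) of the trichotomy.

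Next I would apply Theorem \ref{hyp.tri} to the set $S^{C_0}$. In alternative~(3), $S^{C_0 N_0} = S^N$ already contains two generators of a free semigroup, so $h(S) \ge (\log 2)/N$ and we are done. In alternative~(2), $\langle S^{C_0}\rangle$ preserves the fixed-point set $\mathrm{Fix}(g)$ of a hyperbolic $g$; then $\Gamma$ preserves the same pair (since $\mathrm{Fix}(g)$ is the unique pair of attracting/repelling endpoints of any orbit), and the standard quasi-axis argument shows that the subgroup $\Gamma'$ of index $\le 2$ fixing each endpoint maps to $\mathbb{R}$ via translation length, with finite kernel and discrete cyclic image by the discreteness hypothesis. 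Hence $\Gamma$ is virtually cyclic, which is conclusion~(iii) of the theorem (and is virtually nilpotent).

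The interesting case is alternative~(1): $L(S^{C_0}) \le C'\delta$. Choose $x \in X$ almost realizing this infimum and set $A := \{g \in \Gamma : d(gx,x) \le 2C'\delta\}$, so that $S^{C_0} \subset A$ and $A \cdot A \subset \{g \in \Gamma : d(gx,x) \le 4C'\delta\}$. By Lemma \ref{balls}, the ball $B(x,4C'\delta)$ is covered by at most $P^{4C'-1}$ balls of radius $\delta$; combined with the discreteness of the orbit this bounds the orbit cardinality $|(A\cdot A)x| \le P^{4C'-1}$, whereas $|Ax| \ge 1$. Converting orbit cardinalities to set cardinalities (each fibre over a point of the orbit is a coset of the stabilizer $\Gamma_x$, which is finite by discreteness, and common to $A$ and $A^2$), one deduces $|A^2|/|A| \le P^{4C'-1} \le Q$. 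Since $S^{C_0} = S^{C(Q)} \subset A$, Theorem \ref{bgt.nilpotent} yields that $\Gamma$ is virtually nilpotent.

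For the ``moreover'' classification, assuming $\Gamma$ is virtually nilpotent I would invoke Corollary \ref{nilpotent}: either $\Gamma$ has a bounded orbit (then by discreteness it is finite, giving (i)), or it fixes a unique point of $\partial X$ (giving (ii)), or it stabilizes the fixed-point pair of some hyperbolic element (giving (iii), and again virtually cyclic by the axis argument of the previous paragraph). The main obstacle is the bookkeeping in Step~2: one must verify that ``discrete'' (defined via orbits) together with bounded packing implies finite point stabilizers, so that $|A|$ and $|A^2|$ are genuinely controlled by their orbits; this is where the hypothesis $\delta>0$ and the packing of $B(x,2\delta)$ are really used, because any infinite stabilizer of $x$ would produce an infinite orbit of any nearby non-fixed point inside a bounded set, contradicting discreteness.
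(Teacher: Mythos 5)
Your proposal does not address the statement it was supposed to prove. The statement labelled \ref{bgt.nilpotent} is the Breuillard--Green--Tao theorem (\cite[Cor 11.2]{BGT}): a purely group-theoretic assertion that if some finite set $A$ with doubling $|A^2|\le Q|A|$ contains $S^{C(Q)}$, then $\langle S\rangle$ is virtually nilpotent. There is no hyperbolic space, no packing constant, no discreteness hypothesis and no boundary in that statement; it is quoted by the paper from \cite{BGT} without proof, because its proof requires the full structure theory of approximate groups developed there (ultrafilter limits, locally compact models and a Gleason--Yamabe type analysis), which is far beyond anything available in this paper. What you have written instead is, in essence, a proof of Theorem \ref{ueg.new} (uniform exponential growth for discrete groups acting on $\delta$-hyperbolic spaces of bounded packing): you run the trichotomy of Theorem \ref{hyp.tri} on a power of $S$, and in the small-displacement case you convert bounded packing plus discreteness into a doubling bound for the set $A=\{g:\ d(gx,x)\le 2C'\delta\}$ --- which is exactly the content of the paper's Corollary \ref{bgt.packing} and its use in Theorem \ref{ueg.new}.

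As a proof of the stated theorem this is circular: at the decisive moment you write ``Theorem \ref{bgt.nilpotent} yields that $\Gamma$ is virtually nilpotent,'' i.e.\ you invoke the very statement you were asked to establish. No amount of hyperbolic geometry, quasi-axis arguments or packing estimates can substitute for it, since the statement concerns arbitrary abstract groups with a doubling set containing a large power of the generators (for instance groups with no interesting isometric action in sight), and its conclusion is precisely the nontrivial structural input that the rest of Section \ref{application-ueg} is built on. So the gap is not a technical step but the entire content: you would need to reproduce (or at least correctly cite and not reuse) the approximate-groups machinery of \cite{BGT}, whereas your argument proves a downstream application while assuming the theorem itself.
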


Here is a useful consequence.
\begin{corollary}\label{bgt.packing}
For  integers $P, J>0$, set $k=C(P^{2J})$, where $C$ is from  Theorem \ref{bgt.nilpotent}.
Suppose $X$ is a geodesic
space and has bounded packing for $P$.
Let $S$ be a finite set in $Isom(X)$ with $S=S^{-1}$  such that 
$\Gamma=\langle S \rangle$ is discrete. 

If $L(S^k) < J$, then $\langle S \rangle$
is virtually nilpotent. 
\end{corollary}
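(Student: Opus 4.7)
The plan is to reduce to the Breuillard--Green--Tao nilpotency criterion (Theorem \ref{bgt.nilpotent}) applied with $Q = P^{2J}$. So it will suffice to exhibit a finite subset $A \subset \Gamma$ with $S^{C(Q)} = S^{k} \subset A$ and $|A^{2}| \le Q \cdot |A|$; virtual nilpotency of $\langle S\rangle$ then follows immediately.

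To produce $A$, I would first pick an (approximate) minimizer $x \in X$ with $L(S^{k}, x) < J$, so that $d(sx, x) < J$ for every $s \in S^{k}$, and define
$$A_r := \{g \in \Gamma : d(gx, x) \le r \}, \qquad A := A_J.$$
Then $S^{k} \subset A$, and $A$ is finite because $\Gamma$ acts discretely on $X$ and $A\cdot x \subset B(x,J)$ is bounded. The triangle inequality immediately gives $A^{2} \subset A_{2J}$, so the problem reduces to bounding the displacement ball $A_{2J}$ in terms of $A_J$.

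The core of the argument is a packing/covering estimate. Using the bounded packing hypothesis iterated via Lemma \ref{balls}, I would cover $B(x, 2J)$ by at most $P^{2J-1}$ balls of radius $1$. For each such ball $B(y_i, 1)$ that meets $A_{2J}\cdot x$, choose a witness $h_i \in A_{2J}$ with $h_i x \in B(y_i, 1)$. Any $g \in A_{2J}$ with $gx \in B(y_i, 1)$ then satisfies $d(h_i x, gx) \le 2$, so $h_i^{-1} g \in A_{2}$. The map $g \mapsto (i(g),\, h_{i(g)}^{-1} g)$ is injective from $A_{2J}$ into $\{1,\dots,P^{2J-1}\} \times A_{2}$, whence $|A_{2J}| \le P^{2J-1}\,|A_{2}|$. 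For $J \ge 2$ one has $A_{2} \subset A_{J} = A$, so $|A^{2}| \le P^{2J-1}|A| \le P^{2J}|A| = Q|A|$, and Theorem \ref{bgt.nilpotent} supplies the conclusion.

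The hard part will be the mismatch between the packing scale ($1$) and the displacement scale ($J$): the covering hypothesis only furnishes a radius-$1$ net, so after the pigeonhole step one recovers control on the ratio $|A_{2J}|/|A_{J}|$ only with a loss factor of $P^{2J-1}$, which fits inside the slack $Q = P^{2J}$ precisely when $J \ge 2$. Small values of $J$ require a slightly different bookkeeping (one enlarges $A$ to $A_{\max(J,2)}$ and tolerates a marginally larger constant in $Q$), but the overall strategy is unaffected.
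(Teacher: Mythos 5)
Your argument is essentially the paper's own proof: the paper takes $A = S_J(x) = \{\gamma \in \Gamma : d(\gamma x, x) \le J\}$, notes $S^k \subset A$ and $A^2 \subset S_{2J}(x)$, covers the ball $B(x,2J)$ by at most $P^{2J}$ balls of radius $1$ via Lemma \ref{balls}, and runs the same witness/pigeonhole step to get $|A^2| \le P^{2J}|A|$ before invoking Theorem \ref{bgt.nilpotent}. The $J \ge 2$ caveat you flag (an element of displacement $\le 2$ need not lie in $A_J$) is silently present in the paper's proof as well and is harmless in the intended application, where $J = K+1$ is large.
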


\proof
For a point $x \in X$ define
$$S_J(x)=\{ \gamma \in \Gamma| |x-\gamma x| \le J \}.$$

By assumption, there is $x \in X$ with $L(S^k,x) <J$,
so $S^k \subset S_{J}(x)$.
Set $A=S_{J}(x)$.  $A$ is finite since $\Gamma$
is discrete. We have $A^2 \subset S_{2J}(x)$
by triangle inequality. 

Since $X$ has bounded packing for $P$, 
 $|A^2| \le P^{2J} |A|$.
Indeed, let $B$ be the ball of radius $2J$ centered
at $x$ in $X$. Then $A^2(x) \subset B$.
By Lemma \ref{balls}, $B$ is covered by 
balls of radius 1: $B_1, \cdots, B_k$ with $k \le P^{2J}$.
Now choose $a_i \in A^2$, if it exists, with 
$a_i(x) \in B_i$ for each $i$.
Now for any $a \in A^2$, since $a(x) \in B$,
there is $B_i$ with $a(x) \in B_i$, 
so that $|a_i(x)-a(x)|\le 2$. This means $a_i^{-1}a \in A$, 
so that $a \in a_iA$.
Since $a\in A^2$ was arbitrary, and $|a_iA|=|A|$, 
we find  $|A^2| \le k|A| \le P^{2J} |A|$.

On the other hand, by definition, 
$S^{C(P^{2J})} = S^k \subset S_{J}(x)=A$.
Now by Theorem \ref{bgt.nilpotent}
with $Q=P^{2J}$, $\langle S\rangle $ is virtually nilpotent. 
\qed

\bigskip

\noindent
{\it Proof of Theorem \ref{ueg.new}}.
By scaling the metric of $X$ by a constant, we assume that $X$ is $1$-hyperbolic. By our assumption, 
$X$ has bounded packing property for $P$ with respect to the new metric.
Set $k=C(P^{2(K+1)})$ as in Corollary \ref{bgt.packing} for $J=K+1$. 

There are two cases.
\\
Case 1: $L(S^k) < K+1$. 

In this case $\langle S \rangle$
is virtually nilpotent by Corollary \ref{bgt.packing} applied to $S^k$ with $J=K+1$.
%
%
\\
Case 2: $L(S^k) \ge K+1$. 

Apply Theorem \ref{hyp.tri} to $S^k$ with $\delta=1$.
Set $N_0=N(1)$, where $N(1)$ is the constant from Theorem  \ref{hyp.tri}.
Since we are in Case 2,   (1) does not happen.
If (2) happens then $\langle S \rangle$ is virtually cyclic since $\langle S \rangle$ is discrete. 
If (3) happens then $S^{kN_0}$ contains two elements
that generate a free semi-group.
The constant $k$ depends only on $P$. ($K$ does not depend on anything.)
Set $N=kN_0$ and we are done.

%
%
%
%
%
%

%

To show the moreover part, we apply Corollary \ref{nilpotent}.
If $\Gamma$ has a bounded orbit, then $\Gamma$
must be finite since the action is discrete. 
If there is $g \in \Gamma$ that is hyperbolic such 
that $Fix(g)$ is invariant by $\Gamma$, then 
$\langle g \rangle$ has finite index in $\Gamma$
since $\Gamma$ is discrete, so that $\Gamma$ is 
virtually cyclic.
Otherwise, $\Gamma$ fixes a unique point $x\in \partial X$.
\qed

 Using Theorem \ref{ueg.new}, we give a quick proof of the following theorem 
 by Besson-Courtois-Gallot.
 
\begin{theorem}(\cite[Theorem 1.1]{BCG-jems}\label{bcg.ueg})
Let $X$ be a $d$-dimensional, simply connected
 Riemannian manifold with curvature $-a^2 \leq K \leq -1$. Let $\Gamma=\langle S \rangle$ be a finitely generated discrete subgroup of $Isom(X)$ with $S=S^{-1}$. Then
either $\Gamma$ is virtually nilpotent, or 
 $S^N$ contains two generators of free semigroup, 
 in particular, 
 $h(S) \ge \frac{1}{N} \log 2$,
 where the constant $N$ depends only on $d$ and $a$.
\end{theorem}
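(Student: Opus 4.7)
The plan is to deduce Theorem \ref{bcg.ueg} as a fairly direct corollary of Theorem \ref{ueg.new}, by showing that the simply connected manifold $X$ satisfies the two hypotheses of that theorem (hyperbolicity plus bounded packing) with constants depending only on $d$ and $a$.

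First I would observe that since $X$ is simply connected with sectional curvature $K\leq -1$, it is a CAT($-1$) space (by the standard Cartan--Hadamard / Toponogov comparison). Any CAT($-1$) geodesic space is Gromov hyperbolic with an \emph{absolute} hyperbolicity constant $\delta_0 > 0$ (arising from the hyperbolicity of $\mathbb{H}^2$). In particular $X$ is $\delta_0$-hyperbolic, and $\delta_0$ depends on nothing at all.

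Next I would establish that $X$ has bounded packing with a constant $P = P(d,a)$ depending only on the dimension and on $a$. The upper curvature bound $K\leq -1$ together with Günther's volume comparison inequality gives a uniform lower bound $v_- = v_-(d,\delta_0) > 0$ on the volume of every ball of radius $\delta_0/2$ in $X$ (using that $X$ is simply connected so the injectivity radius is infinite). The lower curvature bound $K \geq -a^2$ together with the Bishop--Gromov inequality gives a uniform upper bound $v_+ = v_+(d,a,\delta_0)$ on the volume of every ball of radius $3\delta_0$ in $X$. A standard packing argument -- taking a maximal $\delta_0$-separated set of centers in a ball $B(x,2\delta_0)$, observing that the balls of radius $\delta_0/2$ around these centers are disjoint and sit inside $B(x, 3\delta_0)$, whence there are at most $v_+/v_-$ of them, and that the balls of radius $\delta_0$ around those same centers cover $B(x, 2\delta_0)$ by maximality -- shows that every ball of radius $2\delta_0$ is covered by at most $P := \lceil v_+/v_- \rceil$ balls of radius $\delta_0$. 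This $P$ depends only on $d$ and $a$.

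Finally I would simply invoke Theorem \ref{ueg.new} applied to $X$ (with hyperbolicity constant $\delta_0$) and the discrete subgroup $\Gamma = \langle S\rangle \subset \mathrm{Isom}(X)$: either $\Gamma$ is virtually nilpotent, or $S^{N}$ contains a pair of elements generating a free semigroup for some $N = N(P) = N(d,a)$, which in the latter case yields $h(S) \geq (\log 2)/N$. This is exactly the desired conclusion. There is no real obstacle here once Theorem \ref{ueg.new} is in hand; the only mildly delicate point is the two-sided volume comparison in Step 2, but this is classical Riemannian geometry and is precisely what the pinching assumption $-a^2 \leq K \leq -1$ is designed to provide.
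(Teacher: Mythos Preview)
Your proposal is correct and follows essentially the same route as the paper's proof: verify that $X$ is $\delta$-hyperbolic with a universal $\delta$ (from $K\le -1$), establish bounded packing with a constant $P=P(d,a)$, and then apply Theorem~\ref{ueg.new}. The only minor difference is in how you obtain the packing constant: you use a two-sided volume comparison (G\"unther for the lower bound on small balls, Bishop--Gromov for the upper bound on large balls), whereas the paper extracts the packing bound from the Bishop--Gromov inequality alone, since the ratio $\mathrm{vol}(B(x,R))/\mathrm{vol}(B(x,r))$ is already controlled by the Ricci lower bound $\mathrm{Ric}\ge -(d-1)a^2$ without any appeal to the upper curvature bound; this is slightly cleaner but the content is the same.
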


In other words, unless $\Gamma$ is virtually nilpotent,
$\Gamma$ has uniform exponential growth, and the growth rate
depends only on $d,a$.
 \\
 
 Before we start the proof, we quote a well-known fact (see the paragraph in \cite{BGT} before Corollary 11.19).

 \begin{lemma}\label{bishop-gromov}
 Let $d\ge 1$ be an integer, and $a \ge 0$. Then there exists $K(d,a)\ge 1$
 with the following property. Suppose $M$ is a $d$-dimensional 
 complete Riemannian manifold with a Ricci curvature lower bound
 $Ric \ge -(d-1)a^2$. Then $M$ has bounded packing for $K(d,a)$.
 \end{lemma}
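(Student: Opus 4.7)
The plan is to prove the lemma via a standard packing-via-volume-comparison argument based on the Bishop--Gromov inequality. Recall that the Bishop--Gromov theorem says that, under the lower Ricci bound $\mathrm{Ric} \geq -(d-1)a^2$, the function $r \mapsto \mathrm{vol}(B(x,r))/V_a(r)$ is non-increasing on $(0,\infty)$ for every $x \in M$, where $V_a(r)$ denotes the volume of a ball of radius $r$ in the $d$-dimensional simply connected space form of constant sectional curvature $-a^2$. This is the only nontrivial analytic ingredient I would invoke; everything else is a covering/packing manipulation.

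Fix a ball $B(x,2) \subset M$. First I would select a maximal $1$-separated subset $\{x_1,\dots,x_N\} \subset B(x,2)$, i.e.\ $d(x_i,x_j) \geq 1$ for $i \neq j$ and no additional point of $B(x,2)$ can be added without violating this condition. Maximality immediately implies that every $y \in B(x,2)$ lies within distance $1$ of some $x_i$, so the balls $B(x_i,1)$ cover $B(x,2)$. Thus bounding $N$ by a constant depending only on $d$ and $a$ will establish the lemma with $K(d,a)$ equal to that constant.

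To bound $N$, observe that the half-balls $B(x_i,1/2)$ are pairwise disjoint (by the $1$-separation) and are all contained in $B(x,5/2)$, so
\begin{equation*}
\sum_{i=1}^N \mathrm{vol}(B(x_i,1/2)) \;\leq\; \mathrm{vol}(B(x,5/2)).
\end{equation*}
Now for each $i$, since $d(x,x_i)<2$ we have $B(x,5/2) \subset B(x_i,5)$, and Bishop--Gromov applied at $x_i$ with radii $1/2 \leq 5$ gives
\begin{equation*}
\mathrm{vol}(B(x_i,1/2)) \;\geq\; \frac{V_a(1/2)}{V_a(5)}\,\mathrm{vol}(B(x_i,5)) \;\geq\; \frac{V_a(1/2)}{V_a(5)}\,\mathrm{vol}(B(x,5/2)).
\end{equation*}
Combining the two inequalities and dividing by $\mathrm{vol}(B(x,5/2))>0$ yields $N \leq V_a(5)/V_a(1/2)$, a quantity depending only on $d$ and $a$. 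Taking $K(d,a)$ to be the ceiling of this ratio completes the proof.

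There is essentially no hard step here; the only thing to watch is the bookkeeping of radii ($1/2$, $2$, $5/2$, $5$) to ensure the Bishop--Gromov comparison is applied over radii for which the containment $B(x,5/2) \subset B(x_i,5)$ is valid, and to confirm that the model-space volume ratio $V_a(5)/V_a(1/2)$ is indeed a finite constant depending only on $d$ and $a$ (which it is, by the explicit formula for hyperbolic volumes of balls, or simply from $V_a$ being continuous and positive).
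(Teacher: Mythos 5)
Your proof is correct and follows essentially the same route as the paper: a maximal $1$-separated (equivalently, maximal disjoint half-ball) configuration in $B(x,2)$ gives the covering, and Bishop--Gromov volume comparison bounds the number of centers by a constant depending only on $d$ and $a$. The only cosmetic difference is that you sum the volumes of all half-balls and compare each to the model-space ratio $V_a(1/2)/V_a(5)$, while the paper bounds the count using the doubling constant $\sup_y \mathrm{vol}(B(y,4))/\mathrm{vol}(B(y,1/2))$ applied to the largest half-ball; both are the same standard argument.
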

For readers' convenience we give an outline of an argument.
\\
{\it Outline of proof of Lemma \ref{bishop-gromov}}.
Let $B(x,R)$ denote the ball of radius $R$ in $M$
centered at $x$. By the Bishop-Gromov inequality,
$\sup_{x \in M} \frac{vol(B(x,4)}{vol(B(x,1/2)}$
is bounded from above by a number $K$ that 
depends only on $d,a$.

We will show $B(x,2)$ is covered by at most $K$
balls of radius 1.
 Let $L$ be the maximal number of disjoint balls
of radius $1/2$ in $B(x,2)$. Let $B(y,1/2)$ be the one of maximal volume, $v$, among them. Since $B(x,2)$ is 
contained in $B(y,4)$, we have 
$Lv \le vol(B(x,2)) \le Kv.$
In particular, $L \le K$. 
But the balls of radius 1 centered at the same points cover $B(x,2)$. We are done. 
\qed

\bigskip

\noindent
 {\it Proof of Theorem \ref{bcg.ueg}}.
 Since $X$ is simply connected and $K \le -1$, 
 $X$ is CAT(0) and $\delta$-hyperbolic for, say, $\delta =2$. 
 Since $-a^2 \le K$ and the dimension of $X$ is $d$, 
 we have $Ric \ge -(d-1) a^2$.
 By Lemma \ref{bishop-gromov}, $X$ has bounded packing for the constant 
$K(d,a)$.
 Namely, any ball of radius 2 is covered by at most $K$ balls 
 of radius 1. So, any ball of radius 4 is covered by at most 
 $K^3$ balls of radius 1 by Lemma \ref{balls}.
 
 Set $N=N(K(d,a)^3)$, where $N$ on the right hand side
 is the function   from Theorem \ref{ueg.new}.
 Since the assumption of Theorem 
 \ref{ueg.new} is satisfied by $X$ for  $\delta=2$ and $P=K(d,a)^3$,
 either $\Gamma$ is virtually nilpotent, or 
 $S^N$ contains two generators of free semigroup, 
 in particular, 
 $h(S) \ge \frac{1}{N} \log 2$.
 \qed

\if0


 \section{Uniform uniform exponential growth of mapping class groups}
 In this section we discuss mapping class groups
 of surfaces. 
 A good general reference is \cite{farb.margalit}.
 Let $\Sigma_{g,p}$ be the oriented surface of genus $g$
 with $p$ punctures. 
 We sometimes write $\Sigma_{g,0}$ as  $\Sigma_g$, 
 also suppress $g,p$ and just write $\Sigma$.
 Set $\xi(\Sigma_{g,p})=3g+p$, the complexity.
 We denote the mapping class group of $\Sigma$
 by $MCG(\Sigma)$. 
The mapping class group is the group of 
orientation preserving homeomorphisms of $\Sigma$
upto isotopy.

 As an application of Proposition \ref{semi.group} we show the 
 following theorem.
 \begin{theorem}[UUEG of MCG, \cite{mangahas}]\label{mcg.ueg}
 Let $\Sigma$ be a compact oriented  surface possibly with punctures. Then there exists a constant 
 $N(\Sigma)$ such that for any finite set $S \subset MCG(\Sigma)$
with $S=S^{-1}$,
 either $\langle S \rangle$ is virtually abelian or $S^N$ contains 
 $g,h$ that produces a free semi-group.
 \end{theorem}
 
This result is proven by Mangahas in \cite{mangahas}.
 In fact she proves more, that $g,h$ produces a free group
 of rank-two, maybe for a larger constant $N(\Sigma)$.
Our proof is new, and uses ``projection complex" from \cite{bbf}.

It follows that 
for each $\Sigma$  there is a constant
 $c(\Sigma)>0$ such that 
 for any finite set $S$ in $MCG(\Sigma)$, 
 $h(S) \ge c(\Sigma)$ unless $\langle S\rangle$ is virtually abelian. 
 We say $MCG(\Sigma)$ has ``uniform uniform exponential growth''.
 It is also known that there is no uniform 
 positive lower bound on the numbers $c(\Sigma_g), g>0$, \cite{mangahas}.

 It seems it is unknown if $MCG(\Sigma_g)$ has uniform, for all $g>0$, 
 exponential growth, namely, there exists $C>0$ such that 
 $h(S,MCG(\Sigma_g)) \ge C$ for every $g>0$ and 
 every generating set $S$. We will discuss this 
issue later. 

Our proof of Theorem \ref{mcg.ueg}
uses finitely many 
actions of $MCG(\Sigma)$ constructed
in \cite{bbf}. The advantage of those actions is that
any non-trivial element of infinite order 
in $MCG(\Sigma)$
is hyperbolic for at least one of the actions. 
The general idea of the proof is simple, which is 
summarized as Theorem \ref{product}, 
and the rest is by a
standard induction argument using a classification 
of subgroups in $MCG(\Sigma)$.
For that part, we recall curve graphs and the Nielsen-Thurston
classification of a mapping class, then quote what  we need
from \cite{bbf}.

%

\subsubsection{Curve graph}
For each $\Sigma$, the curve graph $\CC(\Sigma)$ is 
 a graph defined as follows:
 there is one vertex of $\CC(\Sigma)$ for each isotopy class
 of essential simple closed curves in $\Sigma$.
 A curve is {\it essential} if it is not homotopic to a point or a puncture.

 If $\xi(\Sigma)>4$, there is an edge between any two vertices of $\CC(\Sigma)$
 corresponding to isotopy classes of $a$ and $b$ with 
 $i(a,b)=0$. Here, $i(a,b)$ is the geometric intersection number
 between $a$ and $b$. 
 
 There are three other (sporadic)  cases,
 where $i(a,b)=0$ implies $a=b$, so we alter  the definition as follows. 
 For $\Sigma_{1,0}, \Sigma_{1,1}$, we join $a$ and $b$ if $i(a,b)=1$, 
 and for $\Sigma_{0,4}$ we join $a$ and $b$ if $i(a,b)=2$.  
 In those three (exceptional) cases, $\CC(\Sigma)$ is the Farey graph
(\cite[Section 4.1.1]{farb.margalit}), 
 which is $1$-thin (here we only consider vertices. Strictly speaking 
it is then not a geodesic space though). 
For $\Sigma_{1,0}, \Sigma_{1,1}$, 
$MCG(\Sigma)$ is isomorphic to $SL(2, \Bbb Z)$,
and $MCG(\Sigma_{0,4})$ maps to $PSL(2,\Bbb Z)$
with the kernel $(\Bbb Z/2 \Bbb Z)^2$.
In particular they are hyperbolic groups. 
 
 It is not hard to show that $\CC(\Sigma)$ is connected.
 $MCG(\Sigma)$ acts on $\CC(\Sigma)$ by 
 automorphisms. 
 Moreover, the homomorphism 
 $MCG(\Sigma) \to Aut(\CC(\Sigma))$
 has at most finite kernel, and the index
 of the image is finite in $Aut(\CC(\Sigma))$.
 
We do not define curve graphs for 
 $(g,p)=(0,0), (0,1), (0,2),(0,3)$, but 
will need the curve graph for an 
essential  simple closed curve
$\gamma$. The definition has a somewhat different flavor although once we make the definition we can use it just as we do for the other curve graphs.
See \cite[\S 5.1]{bbf} for the definition.
$\CC(\gamma)$ is quasi-isometric to $\Bbb Z$,
and the Dehn twist along $\gamma$, which 
leaves $\gamma$ invariant, acts by hyperbolic 
isometry. In this paper, we will call $\gamma$
a subsurface too.

 The fundamental results by Masur-Minsky \cite{mm} is that 
 $\CC(\Sigma)$ is $\delta$-hyperbolic space, and 
 $g \in MCG(\Sigma)$ is a hyperbolic isometry 
 on $\CC(\Sigma)$ if and only if $g$ is a pseudo-Anosov
 element. 

Moreover, for a given surface $\Sigma$, there is a uniform
positive lower bound on the translation length
of a pseudo-Anosov element. For our application later,
it is convenient to state it as follows:
there is a positive integer  $T(\Sigma)$ such
that for any pseudo-Anosov element $g$,
$L(g^n) \ge \Delta \delta  $ for all $n \ge T$,
where $\Delta \delta$ is the constant from 
Proposition \ref{semi.group}.
 
 More recently, it is proved that 
in fact, every $\CC(\Sigma)$ is $17$-(center-)hyperbolic
(therefore $85$-hyperbolic), 
\cite{hensel}. (They stated the result
for non-sporadic surfaces, but the three
sporadic surfaces have $1$-hyperbolic curve graphs. We 
do not consider the case $\CC(\gamma)$,
but this graph does not depend on $\gamma$). 

 \subsubsection{Nielsen-Thurston
classification of a mapping class}
Let $g \in MCG(\Sigma)$ be given. It is called a mapping class. 
By the Nielsen-Thurston theory, there is a unique
minimal $g$-invariant collection $C$ (possibly empty)
of pairwise disjoint, essential,
 simple close curves on $\Sigma$ which are
non-parallel  (such $C$ is called 
a {\it curve system}) so that after replacing $g$ by a power:
\begin{itemize}
\item
each puncture of $\Sigma$ is fixed
\item
each curve in $C$ is $g$-invariant,
\item
each component of $\Sigma - \cup_{c \in C} c$,
$S_1, \cdots, S_n$,
is $g$-invariant,
\item
the restriction of $g$ to each $S_i$ is homotopic to the identity or a pseudo-Anosov
homeomorphism.
\end{itemize}
An element $g$ of infinite order
is called {\it reduced} if the set $C$  is not
empty, otherwise, it is a pseudo-Anosov map.

We quote a version we will use. 
\begin{proposition}\label{nielsen}(cf. \cite[Theorem 4.1]{bbf.aif}).
There is an integer $Q(\Sigma)>0$ such that for every $g \in MCG(\Sigma)$, we have
$$g^Q=g_1 \cdots g_n d_1^{n_1} \cdots d_{\ell}^{n_{\ell}},$$
where $d_i$ are Dehn twists supported
on annuli around the curves in $C$ (maybe $n_i=0$), 
and $g_i$ is trivial or a pseudo-Anosov supported on the complementary subsurface $S_i$. 
(If $g_i$ is trivial, then we suppress $g_i$
from this expression from now on.) Any 
two homeomorphisms in the $g_i$'s and the $d_i$'s commute.

\end{proposition}

If $g$ has finite order, then $g^Q$ is trivial
and the right hand side is empty. 
In \cite[Theorem 4.1]{bbf.aif}, it is not stated explicitly  that 
$Q$ does not depend on $g$, but in fact it depends
only on $\Sigma$ (see Step 1 of the proof 
of Proposition 4.6 in \cite{bbf.aif} for 
the explanation).

Finitely generated subgroups of mapping class 
groups are classified in a similar way to 
four cases:
\begin{itemize}
\item
$G$ contains two pseudo-Anosov maps $g,h$
that are independent in the sense that 
the axes $\gamma, \tau$ for $g,h$  in $\CC(\Sigma)$
are such that $\partial \gamma \not = \partial \tau$.

\item
$G$ contains a pseudo Anosov map with 
an axis $\gamma$ in $\CC(\Sigma)$,
$G$ leaves the two boundary points of $\gamma$
invariant, and 
$G$ is virtually $\Bbb Z$.
\item
$G$ is finite.
\item
There is a non-empty curve system $C$
on $\Sigma$ that is invariant by $G$.
$G$ is called {\it reduced}.
\end{itemize}

If $G$ is reduced, then we take $C$ to be maximal, 
and cut $\Sigma$ along $C$. We obtain the cut
open surface $\Sigma'$ where we collapse
each boundary component to a puncture.
There is an upper bound, depending on $\Sigma$,
on the number of curves in $C$.
Taking a finite index subgroup $G'<G$, with 
the index bounded, $G'$ leaves each 
component, $\Sigma_i$, of $\Sigma'$ invariant,
which gives
$$ \Bbb Z ^m \to G' \to \sum _i MCG(\Sigma_i), $$
where the kernel is a finitely generated free abelian 
group that is a subgroup of the group 
generated by the Dehn twists
along the curves in $C$.

\subsubsection{Projection complex}
We summarize what we need 
from \cite{bbf}, which provides useful actions
on hyperbolic spaces.  We refer to the  paper \cite{bbf.aif}
for statements that are convenient for our purpose. 
In the following we only consider subsurfaces
(maybe an essential simple closed curve) in $\Sigma$, upto isotopy
in $\Sigma$,  that are  essential, i.e.,  connected, boundary components essential. 

\begin{proposition}(\cite[Proposition 2.4 and 2.5]{bbf.aif})

\label{proj.complex}
There is a finite index, torsion-free, normal subgroup 
$\mathcal S < MCG(\Sigma)$, "the color preserving 
subgroup", which fixes all punctures of $\Sigma$, and 
satisfies the following property.
Let $\bY$ be an $\mathcal S$-orbit of subsurfaces
of $\Sigma$. Then $\mathcal S$ acts on a $\delta$-hyperbolic
graph $\CC(\bY)$ with the following 
properties:
\begin{enumerate}
\item
For every subsurface $\Sigma' \in \bY$ the curve graph 
$\CC(\Sigma')$ is embedded isometrically as a convex
subgraph in $\CC(\bY)$ (so that each curve graph
is $\delta$-hyperboilc), and when $\Sigma' \not= \Sigma''$,
$\CC(\Sigma')$ and $\CC(\Sigma'')$ are disjoint. 
\item
The  inclusion
$$\cup_{\Sigma' \in \bY} \CC(\Sigma') \to \CC(\bY)$$
is $\mathcal S$-equivariant, where on the left
$\phi \in \mathcal S$ sends a curve $\alpha \in \CC(\Sigma')$
to the curve $\phi(\alpha) \in \CC(\phi(\Sigma'))$.
\item
For subsurfaces $\Sigma' \not=\Sigma''$, the nearest projection 
to $\CC(\Sigma'')$ sends $\CC(\Sigma')$ to a uniformly bounded
set.
\end{enumerate}

\end{proposition}

Note that there are only finitely many subsurfaces
in $\Sigma$ upto the action of $MCG(\Sigma)$, so that 
there are also finitely many subsurfaces in $\Sigma$
upto the action of $\mathcal S$.
It follows there are only finitely many 
$\mathcal S$-orbits, $\bf Y$, 
so that we obtain a finite collection of 
$\mathcal C(\bf Y)$'s on each of which $\mathcal S$ acts on.


We prove Theorem \ref{mcg.ueg}.
\proof
{\it Step 1}.
We argue by induction on the complexity $\xi(\Sigma)$.
First, the theorem holds for $\Sigma$ with $\xi(\Sigma) \le 4$.
If $(g,p)=(1,0), (1,1), (0,4)$, then the conclusion holds since  $MCG(\Sigma)$ are hyperbolic groups.
If $(g,p) =(0,0), (0,1), (0,2), (0,3)$ then 
$MCG(\Sigma)$ is trivial or finite. 

So, we assume that $\xi(\Sigma) >4$.
For every proper subsurface $\Sigma'$, 
we have $\xi(\Sigma') < \xi (\Sigma)$, 
so in the following we assume that 
the theorem holds for $MCG(\Sigma')$
for every proper subsurface $\Sigma'$ 
of $\Sigma$.

Also, by the following lemma, it is enough to 
show the conclusion for the finite index subgroup $\mathcal S$ 
instead of  $MCG(\Sigma)$, ie, we may assume $S \subset \mathcal S$.
\begin{lemma}[see \cite{harpe}]\label{finite.index}
Let $G$ be a group generated by a finite set $S$.
Let $H<G$ be a subgroup of index $d$. Then 
there is a set $S'$ in $H$ that generates $H$ such that 
$S' \le S^{(2d-1)}$.
\end{lemma}

Indeed, for the given $S$, set $H=\langle S \rangle  \cap \mathcal S< MCG(\Sigma)=G$
and apply the lemma. We obtain $S'$ that 
generates $H$. It suffices to prove the theorem 
for $S'$. We rename $S'$ as $S$, and 
from now on we assume $S \subset \mathcal S$.


{\it Step 2}. (Non-reduced subgroup case).
Recall that $\CC(\Sigma')$ is $\delta$-hyperbolic
for every subsurface $\Sigma'$ in $\Sigma$
(maybe $\Sigma'=\Sigma$). 
This $\delta$ does not depend on a subsurface. 

Let $T(\Sigma)$ be a positive integer  such that 
  every 
pseudo-Anosov map $h$ on any  subsurface $\Sigma'$
in $\Sigma$ (or the Dehn twist
when $\Sigma'=\gamma$, a simple closed curve) satisfies  $L(h^T) \ge \Delta \delta $
on $\CC(\Sigma')$.  Such $T$ exists since there are only 
finitely many subsurfaces $\Sigma'$ in $\Sigma$ upto homeomorphism, and 
there is a uniform
positive lower bound on the translation length of pseudo-Anosov 
maps on each $\Sigma'$.

We first deal with the case that $\langle S \rangle$ is
not a reduced subgroup.
Take a non-trivial element $g \in S$. $g$ has infinite order. 
Apply Proposition \ref{nielsen} and obtain
$g^Q=g_1 \cdots g_k d_1^{n_1} \cdots d_\ell^{n_\ell}$.
Since $g^Q$ is non-trivial, $g_1$ or $d_1^{n_1}$
is  non-trivial. We argue each case. 
First, suppose $g_1$ is non-trivial, which is a pseudo-Anosov
map on a subsurface, $\Sigma_1$.

Apply Proposition \ref{proj.complex}
to the $\mathcal S$-orbit of $\Sigma_1$, $\bY$,
and obtain $\CC(\bY)$ on which $\mathcal S$
acts. $g^Q$ leaves the subsurface $\Sigma_1$ invariant, 
and the restriction of $g^Q$ to $\Sigma_1$ is $g_1$.
Hence when $g^Q$ acts on $\CC(\bY)$, it leaves the subgraph
$\CC(\Sigma_1)$ invariant, and acts as a hyperbolic 
isometry (since $g_1$ is a pseudo-Anosov map
on $\Sigma_1$) with the axis $\gamma$ contained in $\CC(\Sigma_1)$.
Moreover, by the way we chose $T$, we have $L(g^{QT}) \ge \Delta \delta $.

If there is some $s \in S$ that does not 
leave $\gamma$ invariant when acted on $\CC(\bY)$, then 
$g^{QT}, s g^{QT} s^{-1}$ (take inverses if necessary) generate a free semi-group
by Proposition \ref{semi.group}.
Then we are done by taking $N(\Sigma)=QT+2$, so suppose not.
It means that $S$ preserves the end points
of the axis $\gamma$.
But it means that $S$, acted on $\Sigma$, leaves the subsurface
$\Sigma_1$ invariant. This is because 
 otherwise there is $s \in S$
such that $\Sigma_2 =s(\Sigma_1) \not=\Sigma_1$. Then $s(\gamma)$
is contained in the subgraph $\CC(\Sigma_2)$ so that the nearest 
projection of $s(\gamma)$ to $\CC(\Sigma_1)$
is bounded (Proposition \ref{proj.complex} (3)). This is impossible since the $s(\gamma)$ and  $\gamma$ 
have same points at infinity in $\CC(\bY)$ by 
our assumption.
In conclusion, if $\Sigma_1$ is not
equal to $\Sigma$, then $\langle S \rangle$
is reduced, a contradiction. 

Now, suppose $\Sigma_1=\Sigma$. Then, $S$
preserves (up to bounded distance) the axis $\gamma$ of the pseudo-Anosov map $g^Q$ of the 
whole surface $\Sigma$, then it is well known that 
$\langle S \rangle$ is virtually $\Bbb Z$, and we are done. 

If all of $g_i$ are trivial, then we use $d_1^{n_1}$
and argue. 
Suppose $d_1$ is the Dehn twist along a curve
$c_1$. In the argument we use $\CC(c_1)$ instead of $\CC(\Sigma_1)$.
As before, unless $c_1$ is preserved by $S$, 
we obtain generators of a free semi-group and 
we are done.
So we may assume that $c_1$ is preserved.
 Then $\langle S \rangle$ is reduced,
 a contradiction.

{\it Step 3}. (Reduced subgroup case).
We are left with the case
that $G=\langle S \rangle$ is reduced. 
Then passing to a finite index subgroup, $G' <G$, with the index bounded by a constant $d$  that depends only on $\Sigma$, there is an exact 
sequence
$$ \Z^m \to G' \to \sum_i MCG(\Sigma_i).$$
By Lemma \ref{finite.index}, there exists  $S' \subset S^{(2d-1)}$
such that  $G'=\langle S' \rangle $.

By the induction hypothesis, each $MCG(\Sigma_i)$
satisfies the theorem for some constant
$N(\Sigma_i)$. 
Since there are only finitely many 
topological types of subsurfaces $\Sigma_i$
in $\Sigma$, we may 
assume that there is a constant $N_1$ such that
$N(\Sigma') \le N_1$ for all subsurfaces
$\Sigma'$ in $\Sigma$. (Maybe $\Sigma'=\Sigma$.)

Let $f_i$ be the projection map to $MCG(\Sigma_i)$
in the above sequence. Then 
$f_i(S')$ is a finite set in $MCG(\Sigma_i)$.
By the induction hypothesis,  either $(f_i(S'))^{N_1}$ contains
two elements that generate a free semi-group, 
or $f_i(\langle S' \rangle)$ is virtually abelian.
In the first case, $S'^{N_1}$ also contains 
two elements that generate a free semi-group
by lifting the two elements to $G'$, 
so that we are done because $S'^{N_1} \subset S^{N_1(2d-1)}$.
In the second case,  $f_i(\langle S \rangle)$ is also virtually abelian.
Now retake $N(\Sigma)$, if necessary, to satisfy  $N_1(2d-1) \le N(\Sigma)$. 

So assume
that all of $f_i(\langle S \rangle)$ are virtually abelian.
It means that $\langle S \rangle$ is virtually solvable by the 
exact sequence. 
But any finitely generated  subgroup of $MCG(\Sigma)$
that is virtually solvable  is virtually abelian
(for example this follows from the classification 
of subgroups in $MCG(\Sigma)$).
We are done. 
\qed

As a summary of  the proof of Theorem \ref{mcg.ueg},
we record the following theorem
since it might be useful for something else.
$\Delta \delta $ is the constant from Proposition \ref{semi.group}.

\begin{theorem}\label{product}
Let $X_1, \cdots, X_n$ be $\delta$-hyperbolic geodesic
spaces such that  a group $G$ acts 
on each $X_i$ by isometries.
Assume that there is a positive integer $P$ 
such that each
non-trivial element $g \in G$ is hyperbolic on
at least one $X_i$ with
$L(g^P) > \Delta \delta$ on $X_i$.

Then   for any finite set $S \subset G$
with $1 \in S$ and $S=S^{-1}$, 
either 

(1) for each non-trivial $g \in S$, 
the axis of $g$ on $X_i$, where $g$ is hyperbolic
on $X_i$, 
is preserved (up to bounded Hausdorff  distance) by $\langle S \rangle$, or 

(2) $S^{P+2}$ contains two generators of a free semi-group.

\end{theorem}
\kf{changed}
\proof
Suppose (1) does not hold for some $S$, ie, there is a non-trivial $g \in S$ which is hyperbolic on $X_i$,
and its axis $\gamma$ is not preserved by $S$, 
say, by an element $s \in S$.
Now $\gamma$ is the axis of $g^P$ and 
since $L(g^P) > \Delta \delta$, we 
apply Proposition \ref{semi.group} to 
the action on the $X_i$ and find 
$g^P, sg^P s^{-1} \in S^{P+2}$ 
that are generators of a free semi-group.
\qed

%

\begin{remark}
To prove Theorem \ref{mcg.ueg}, we applied
the above theorem to the finite collection of  actions
on $\mathcal C(\bf Y)$
by $MCG(\Sigma)$ that are obtained in Proposition \ref{proj.complex}.
We then argued that 
if (1) happens for $S$ in $MCG(\Sigma)$, then either $\langle S \rangle$ is virtually cyclic (when $g$ is pseudo-Anosov on $\Sigma$), 
or $\langle S \rangle$ is a reduced subgroup. If $\langle S \rangle$ is reduced, an induction applies
by the classification of subgroups. 

It would be interesting to find other examples of group that this theorem 
applies to.

\end{remark}

 We now turn our attention to a 
statement that applies
to all $MCG(\Sigma)$ uniformly. 
We record one consequence of Theorem 
\ref{hyp.tri}, which  is 
 immediate from the uniform hyperbolicity of $\CC(\Sigma)$.
Let $K$ be the constant from Corollary \ref{growth.S.hyp}
and $N=N(85)$ the constant from Theorem  \ref{hyp.tri}.
 
 \begin{theorem}\label{mcg.all}
 Let $S$ be a finite set in $MCG(\Sigma)$ with $S=S^{-1}$.
 Then either $\langle S \rangle$ is virtually $\Z$, 
 $L(S) < 85(K+1)$ on $\CC(\Sigma)$, or $S^N$
contains generators $g,h$ of a free semi-group, in 
particular $h(S) \ge (\log 2)/N$.
 \end{theorem}
 
 \proof 
As we said $\CC(\Sigma)$ is $85$-hyperbolic. 
 Apply Theorem \ref{hyp.tri} to the action of 
 $MCG(\Sigma)$ on $\CC(\Sigma)$.
 If (2) happens, then $\langle S \rangle$ is virtually 
 $\Z$. This is because then $\langle S \rangle$ contains
 a hyperbolic isometry, $g$, which is a pseudo-Anosov map.
 Then the stabilizer of $Fix(g)$ in $MCG(\Sigma)$
 is virtually cyclic (this is a well known fact).
 
 In the case of (1), we have $L(S) < 85(K+1)$. 
  (3) is a desirable conclusion too. 
 \qed

 As we saw each $MCG(\Sigma)$ has uniform
exponential growth (uniform for all finitely
generated subgroups), but 
it seems it is unknown if $MCG(\Sigma_g)$ has uniform, for all $g>0$, 
 exponential growth, namely, there exists $C>0$ such that 
 $h(S) \ge C$ for every $g>0$ and 
 every generating set $S$ of $MCG(\Sigma_g)$.

As an attempt to answer this question, in view of Theorem \ref{mcg.all}, we want to know if 
 there exists a universal constant $N>0$
such that $L(S^N) \ge 85(K+1)$ for all $\Sigma$ and $S$ with $\langle S \rangle=MCG(\Sigma)$.
Note that  then $S^N$ must contain a hyperbolic isometry
on $\CC(\Sigma)$
by Theorem  \ref{hyp.tri}.
It turns out  no such $N$ exists as the 
following example shows.
 
\begin{example}\label{lickorish}
 For each $g>1$, $MCG(\Sigma_g)$
 has a finite generating set $S$ such that 
 $S^{g-1}$ does not 
 contain any hyperbolic elements on $\CC(\Sigma_g)$.
 
 Indeed, take ``the Lickorish generating set'' $S$, which 
 consists of $(3g-1)$ Dehn twists (see \cite[Theorem 4.13]{farb.margalit}).
 If one chooses  (at most) $(g-1)$ elements, $a_i$, from $S$ (possibly 
 one element is chosen more than one time), we can always 
 find an essential simple closed curve $\alpha$ on $\Sigma_g$ that 
 is disjoint from the curves for $a_i$'s. 
 Then $a=a_1\cdots a_{g-1}$ fixes $\alpha$, which is a vertex on $\CC(\Sigma_g)$.
 It means that each element $a \in S^{g-1}$ fixes some vertex in $\CC(\Sigma_g)$, 
 so that $a$ is not hyperbolic. 
 
We note that in this example, there is a curve $\alpha \in \CC(\Sigma_g)$ such that 
$L(S^{2(g-1)}, \alpha) \le 2$, so $L(S^{2(g-1)}) \le 2$.
Indeed, let $\{ \alpha_1, \beta_1, \cdots, \alpha_g,
\beta_g, \gamma_1, \cdots, \gamma_{g-1}\}$
be the $(3g-1)$ curves that gives the set $S$
of Dehn twists, where
$\alpha_i, \beta_i$ are the meridian and the longitude
of the $i$-th genus, and $\gamma_i$ is 
"joining" the $i$-th and $(i+1)$-th genus.
Then one can see $S^{2(g-1)} (\alpha_1)$
is disjoint from $\alpha_g$, which is disjoint from 
$\alpha_1$, therefore $L(S^{2(g-1)}, \alpha_1) \le 2$.
So, $L(S^{2(g-1)}) \le 2$.

 \end{example}

 Another way to argue would be by using the surjection
 $MCG(\Sigma_g) \to Sp(2g,\Z)$.
 If $Sp(2g,\Z)$ has uniform exponential growth 
 over all $g>0$, which does not look likely although it 
 is unknown, then $MCG(\Sigma_g)$ also has
 uniform exponential growth that is uniform for all $g$.
 
 \begin{remark}
Suppose we have $L(S) < 85(K+1)$ in Theorem \ref{mcg.all}
for $S$ that generates only a subgroup
of $MCG(\Sigma)$.
  We are interested in the smallest  $N$ 
with $L(S^N) \ge 85(K+1)$.

 Suppose the subgroup $\langle S \rangle$ is infinite, then by the classification 
 of subgroups in $MCG(\Sigma)$, either $\langle S \rangle$
 is a reduced subgroup, or contains a pseudo-Anosov element. 

 If $\langle S \rangle$ is a reduced subgroup, 
 then by definition there is a finite collection of disjoint curves $A=\{a_i \} (1 \le i \le I)$ on $\Sigma$
 that is invariant by $\langle S \rangle$, so that $L(S^N) \le L(S^N,a_1)  \le 1$
 for all $N$. 
 
Suppose  $\langle S \rangle$ is not reduced. 
Then as we saw in Example \ref{lickorish}, the smallest
$N$ such that $L(S^N) \ge 85(K+1)$
can be arbitrarily large if we do not fix 
$\Sigma$.
But if one fixes $\Sigma$, then there is $N(\Sigma)$
such that $L(S^N) \ge 85(K+1)$. This is because 
it is known by \cite{mangahas} that 
there is $N_1(\Sigma)$ such that $S^{N_1}$
contains a pseudo-Anosov element.
It would be interesting to find another argument 
to show the existence of such $N(\Sigma)$ without (explicitly) using/constructing pseudo-Anosov elements, but by analyzing 
how $L(S^n)$ grows on $n$ when $\langle S \rangle$ is not 
reduced (cf. the proof of Theorem
\ref{hyp.uueg} in the case of hyperbolic groups).

%
%

 \end{remark}

 \fi
 

 \section{Questions} \label{questions-sec}

\noindent {\bf 1.} Let $X$ be the metric completion of the Teichmuller space
of a surface $\Sigma$ with the Weil-Petersson metric. $X$ is a complete CAT(0) space. Do we have a Bochi-type inequality (see Thereom \ref{bochi-hyp})?
To be concrete, 
let $a,b $ be  the Dehn twists along curves $\alpha, \beta$. Then they are elliptic isometries. 
Assume that  $a,b$ do not commute (ie, the geometric intersection number 
of $\alpha, \beta$ is not 0), 
then  $Fix(a), Fix(b)$ are 
disjoint.
Do we have a Bochi-type inequality  for the set $\{a,b\}$  ?
 We remark that there is a uniform positive lower bound, which depends on $\Sigma$,  on the distance between 
$Fix(a), Fix(b)$.


\bigskip
\noindent {\bf 2.} Let $\Gamma$ be a finitely generated subgroup of $Isom(X)$, where $X$ is a tree. Assume that $\Gamma$ fixes no point on $X$ nor on $\partial X$. Does there exists $N=N(\Gamma) \in \N$ such that, for every symmetric finite generating set $S$ of $\Gamma$,  $(S\cup\{1\})^N$ contains two free generators of a non-abelian free subgroup ?

Note that it is certainly true for discrete subgroups of isometries of a tree, because they are virtually free \cite{bass-tree}. Note further that according to a result of J. Wilson \cite{wilson} the question has a negative answer, if we drop the assumption that $\Gamma$ fixes no point on the tree (or only its boundary for that matter), while still assuming that $\Gamma$ contains some non-abelian free subgroup.

\bigskip
\noindent {\bf 3.} Does there exist an absolute constant $c>0$ such that $h(S)>c$ for every generating set $S$ of an arbitrary non elementary word hyperbolic group (independently of $\delta$, see \cite[Question 2.1]{osin}).

\bigskip
\noindent {\bf 4.} Can we remove the mutiplicative constant in Proposition \ref{compa-compa} ?  that is given a symmetric space $X$ of non-compact type, does there exist $k,C>0$ such that $\lambda_k(S) \geq L(S) - C$ for every finite set $S$ of isometries of $X$. Does this hold also for isometries of Bruhat-Tits buildings or more generally of any Euclidean building ? 

\bigskip
\noindent {\bf 5.} The proof of the geometric Berger-Wang identity  (Theorem \ref{bochi-sym}) and the Bochi-type inequality  (Proposition \ref{compa-compa}) for symmetric spaces of non-compact type relies on the Bochi inequality for matrices, thus eventually on linear algebra (see the other proof given in \cite{breuillard-gelander}). It would be very interesting to find a geometric proof, akin to our proof of Theorem \ref{bochi-hyp} for hyperbolic spaces. Perhaps this could shed light on Question 4. 

\bigskip
\noindent {\bf 6.} Is there a geometric Bochi inequality for isometries of a CAT(0) cube complex ? what about isometries of median spaces ?

\end{document}